\numberwithin{equation}{section}
\newtheorem{theorem}{Theorem}[section]
\newtheorem{lemma}[theorem]{Lemma}
\newtheorem{proposition}[theorem]{Proposition}
\newtheorem{definition}[theorem]{Definition}
\newtheorem{remark}[theorem]{Remark}
\newtheorem{prob}[theorem]{\bf Problem}
\newcommand{\Vol}{{\rm vol}}
\newcommand{\KK}{\mathfrak K}
\newcommand{\TBRW}{\mathscr{T}^{\rm B}}
\newcommand{\TWN}{\mathscr{T}^{\rm W}}
\newcommand{\T}{\mathscr{T}}
\title{Percolation of thick points of the log-correlated Gaussian field in high dimensions}
\author{Jian Ding\thanks{Peking University}\qquad Ewain Gwynne\thanks{University of Chicago}\qquad Zijie Zhuang\thanks{University of Pennsylvania}}
\begin{document}

\maketitle

\begin{abstract}
    We prove that the set of thick points of the log-correlated Gaussian field contains an unbounded path in sufficiently high dimensions. This contrasts with the two-dimensional case, where Aru, Papon, and Powell (2023) showed that the set of thick points is totally disconnected. This result has an interesting implication for the exponential metric of the log-correlated Gaussian field: in sufficiently high dimensions, when the parameter $\xi$ is large, the set-to-set distance exponent (if it exists) is negative. This suggests that a new phase may emerge for the exponential metric, which does not appear in two dimensions. In addition, we establish similar results for the set of thick points of the branching random walk. As an intermediate result, we also prove that the critical probability for fractal percolation converges to 0 as $d \to \infty$.
\end{abstract}

\setcounter{tocdepth}{1}
\tableofcontents

\section{Introduction}

\subsection{Thick points of the log-correlated Gaussian field}

Consider $\mathbb{R}^d$ with $d \geq 2$. Let $\phi$ be the whole-plane log-correlated Gaussian field, which is a random generalized function on $\mathbb{R}^d$ defined up to a global additive constant. The covariance of $\phi$ is given by
\begin{equation}
    \label{eq:cov-lgf}
    {\rm Cov}[(\phi, f_1), (\phi, f_2)] = \int_{\mathbb{R}^d \times \mathbb{R}^d} f_1(x) f_2(y) \log \frac{1}{|x-y|} dx dy,
\end{equation}
where $f_1$ and $f_2$ are smooth and compactly supported functions with average 0. For $d=2$, the log-correlated Gaussian field is the same as the whole-plane Gaussian free field (GFF). However, for $d \geq 3$, the GFF is not the same as the log-correlated Gaussian field. We refer to~\cite{lgf-survey, fgf-survey} for basic properties of this process. Throughout the paper, we choose the global additive constant of $\phi$ such that the average of $\phi$ over the unit sphere is 0; see \cite[Section 11.1]{fgf-survey} for the well-definedness of the spherical average of $\phi$. In fact, this particular choice of normalization does not affect the set of thick points defined below, as it only shifts $\phi$ by some random constant.

The thick points of $\phi$ are formally those places where the value of $\phi$ is exceptionally large. Since $\phi$ is not defined pointwise, a regularization procedure is required to define the thick points, as we now elaborate. Let $\rho$ be a smooth, compactly supported and non-negative function on $\mathbb{R}^d$ with $\int_{\mathbb{R}^d} \rho(x) dx = 1$. For integers $n \geq 0$, define the approximation $\phi_n$ of $\phi$ as
$$
\phi_n = \phi * \rho_n \quad \mbox{with} \quad \rho_n(x):=2^{nd} \rho(2^n x) \quad \mbox{for } x \in \mathbb{R}^d.
$$
In other words, $\phi_n(x) = \int_{\mathbb{R}^d} \phi(y) 2^{nd}\rho(2^n(x - y)) dy$ for all $x \in \mathbb{R}^d$. For $\alpha > 0$, define the $\alpha$-thick point set
\begin{equation}\label{eq:lgf-thick}
\T_\alpha := \{ x \in \mathbb{R}^d : \liminf_{n \rightarrow \infty} \frac{1}{n}\phi_n(x) \geq \alpha \sqrt{\log 2} \}.
\end{equation}
We normalize by the factor $\sqrt{\log 2}$ since in our choice of approximation, ${\rm Var}[\phi_n(x)] = n \log 2 + O(1)$. In fact, the sets of thick points are almost surely the same for different choices of $\rho$; see Lemma~\ref{lem:approx-thick} for the proof. In~\cite{hmp-thick-pts, CH-thick-point}, it was shown that the Hausdorff dimension of $\T_\alpha$ is almost surely equal to $d - \frac{\alpha^2}{2}$ when $0 < \alpha \leq \sqrt{2d} $. Moreover, when $\alpha > \sqrt{2d}$, we have $\T_\alpha = \emptyset$ almost surely. See also~\cite{Chen-thick} for studies about thick points for power-law-correlated Gaussian fields.

In this paper, we focus on the geometric properties of $\T_\alpha$, specifically its percolative properties. In two dimensions, where \(\phi\) is the Gaussian free field, it was proved in~\cite{APP-disconnect} that the set of thick points $\T_\alpha$ is totally disconnected for all \(\alpha > 0\), i.e., any connected component is a single point.\footnote{The result in~\cite{APP-disconnect} was stated for the thick point set defined using the circle average process $\{\widetilde h_n \}_{n \geq 1}$ with the condition $\lim_{n \to \infty} \widetilde h_n(x)/n = \alpha$. However, their argument can be easily extended to our case. Indeed, their result is based on comparing the thick point set of the GFF with that of the weighted nesting CLE$_4$ field. The thick point sets of the later are totally disconnected under both the lim and liminf conditions. Moreover, difference choices of approximation will not impact the thick point set; see the estimates in Lemma~\ref{lem:sec4-1}.} However, in sufficiently high dimensions, the set of thick points may contain an unbounded path.

\begin{theorem}\label{thm:lgf-thick}
    For any fixed $\alpha>0$, there exists a constant $C > 0$ depending on $\alpha$ such that for all $d \geq C$, the set $\T_\alpha$ contains an unbounded path almost surely.
\end{theorem}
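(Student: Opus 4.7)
The strategy is to decouple scales: first approximate $\phi$ by a branching random walk (BRW) on the $2^d$-ary tree of dyadic subcubes of $\mathbb{R}^d$, and then reduce the existence of an unbounded path in $\T_\alpha$ to a fractal percolation problem in dimension $d$. The two basic inputs are a multi-scale decomposition of $\phi$, which is standard for log-correlated Gaussian fields, and the fractal percolation estimate $p_c(d)\to 0$ announced in the abstract as an intermediate result.

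Concretely, I would write $\phi=\sum_{n\geq 0}\psi_n$ where $\psi_n$ is a smooth Gaussian field living at scale $2^{-n}$, chosen so that the dyadic-cell averages of $\psi_n$ are approximately i.i.d.\ centered Gaussians of variance $\log 2$ for non-nested cells of side $2^{-n}$, and sum along the dyadic tree for nested ones. Up to a bounded Gaussian error term, $\phi_n$ evaluated at the center of a dyadic cell $Q$ then equals a branching random walk value $S_n(Q)$. Call $Q$ an $(\alpha,C)$\emph{-thick} cell if, for every ancestor $Q'$ of $Q$ at every generation $k\leq n$, one has $S_k(Q')\geq \alpha\sqrt{\log 2}\,k - C$. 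A short Gaussian tail/barrier computation gives that, conditionally on the parent of $Q$ being $(\alpha,C)$-thick, each of its $2^d$ children is independently $(\alpha,C)$-thick with some probability $p_\alpha(C)>0$ depending only on $\alpha$ and $C$, not on $d$. Hence the collection of $(\alpha,C)$-thick cells is stochastically bounded below by a $2^d$-ary fractal percolation in $\mathbb{R}^d$ with parameter $p_\alpha(C)$.

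Using the intermediate result that $p_c(d)\to 0$ as $d\to\infty$, I now choose $C$ large enough that the BRW barrier argument is not ruined, and then pick $d\geq C(\alpha)$ with $p_c(d)<p_\alpha(C)$. A Chayes--Chayes--Durrett-type statement for supercritical fractal percolation produces an unbounded connected component in the surviving set; the corresponding nested sequence of closed dyadic cubes has intersection lying in $\T_\alpha$ for the BRW proxy. Because the fractal percolation is translation invariant and the event ``the origin lies in an unbounded component'' is tail-measurable, a zero-one law upgrades positive probability to probability one. Absorbing the $O(1)$ coupling error between $\phi_n$ and $S_n$ into the barrier constant $C$ then gives the same conclusion for the actual log-correlated field.

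The main obstacle is precisely the last step: the transfer from the tree/BRW picture back to the true field $\phi$. While individual thick points are robust under the $O(1)$ coupling error, a genuine topological path in $\T_\alpha$ is sensitive to perturbations that accumulate across infinitely many scales and along an infinite path of cells, so I need uniform rather than pointwise control. My plan is to choose $C$ large enough that the weakened $(\alpha,C)$-thickness condition both survives the coupling error with high probability on every scale simultaneously (via a Borel--Cantelli argument over dyadic cells, exploiting the Gaussianity of $\phi-\psi^{(\le n)}$) and still yields a parameter exceeding $p_c(d)$. A secondary difficulty is that the fractal percolation input must supply an actual connected component of the limit set, not merely an infinite cluster in the cell tree; I expect this to follow from a crossing estimate for fractal percolation in $[0,1]^d$ for large $d$, which I anticipate to be part of the same intermediate result.
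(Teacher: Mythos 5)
Your proposal takes a genuinely different route from the paper, but it has two serious issues, one logical and one technical.

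\textbf{Circularity.} You invoke $p_c(d)\to 0$ for fractal percolation as an available ``intermediate result.'' In the paper that statement (Theorem~1.6) is a \emph{corollary} of the BRW thick-point theorem (Theorem~1.5), which in turn is proved by the same first/second-moment path-counting machinery that the log-correlated-field argument also relies on; there is no independent proof of $p_c(d)\to 0$ provided anywhere. So using $p_c(d)\to 0$ as a black-box input while aiming to rederive Theorem~1.5/1.6 is circular unless you supply a self-contained percolation argument. (For the BRW case alone, if you \emph{did} have an independent proof of $p_c(d)\to 0$ together with an almost-sure unbounded-cluster statement in the supercritical regime, the reduction $A_\infty\subset\TBRW_\alpha$ would be clean, since the BRW increments are genuinely constant on dyadic cells. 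The paper's proof is essentially equivalent to proving that percolation statement directly.)

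\textbf{The transfer step is where the real difficulty lies, and your sketch does not resolve it.} Your key claim is that $\phi$ admits a decomposition $\phi=\sum_n\psi_n$ whose cell averages behave like a BRW, and that the discrepancy between $\phi_n(x)$ and the BRW proxy $S_n(\mathcal B_n(x))$ is an $O(1)$ error that can be absorbed into the barrier constant $C$. No such $O(1)$ coupling exists. The increments $h_n-h_{n-1}$ (or $\phi*\rho_n-\phi*\rho_{n-1}$) fluctuate nontrivially \emph{within} each scale-$n$ cell; a piecewise-constant-on-cells approximant of the log-correlated field necessarily destroys the covariance structure at $O(1)$ scales, and along a path passing through infinitely many cells at infinitely many scales the per-cell errors are not uniformly bounded. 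Your proposed fix --- Borel--Cantelli over dyadic cells to control $\phi-\psi^{(\le n)}$ uniformly --- controls a \emph{fixed} family of cells but does not produce simultaneous control along a path located \emph{a posteriori} by a fractal-percolation cluster: that cluster is a random, non-Markovian object, and conditioning on its existence changes the law of the field in exactly the cells you now need to control. This is precisely why the paper's Section~3 is far more involved than Section~2. The authors avoid this difficulty by working entirely with the white-noise approximation, fixing a deterministic family of candidate paths in advance, and replacing the pointwise thickness condition along the path with an \emph{auxiliary} condition on a white-noise average along the whole path (Condition~(a) of Definition~3.13). They then show via Gaussian concentration plus an entropic-repulsion lemma (Lemmas~3.16--3.17) that conditioning on the auxiliary event forces the pointwise field values in a tube around the path to be large, with a failure probability summable over cells. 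That conditional implication is the genuine content you would still need to supply, and ``absorb the $O(1)$ error into $C$'' does not substitute for it.

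\textbf{Two smaller gaps.} First, a Chayes--Chayes--Durrett-type statement giving an actual unbounded connected component of the limit set $A_\infty$ is, as proved in the literature, specifically two-dimensional (it uses planar topology); in high dimensions you would need an analogue, and the paper obtains the connected path instead via a refinement scheme on $8^{-n}\mathbb Z^d$ and a Hausdorff-limit lemma (Lemmas~2.11, 3.10), not via fractal-percolation cluster structure. Second, the zero-one law you propose for upgrading positive probability to a.s.\ existence is unnecessary in the paper's setup and not entirely straightforward here; the paper instead uses the asymptotic ($\liminf$) nature of the thick-point condition to drop the first $k$ scales and send $\epsilon(k)\to 0$, which sidesteps measurability concerns.

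In short: your reduction would give a clean proof of Theorem~1.5 (BRW) modulo the circularity, but for the present statement the ``absorb the coupling error'' step is not a small fix --- it is the heart of the argument, and the paper's answer to it (fixed path family $+$ auxiliary white-noise condition $+$ entropic repulsion) requires machinery your outline does not anticipate.
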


We can also show that $\T_\alpha$ contains a path crossing any box.

\begin{theorem}\label{thm:crossing}
    Fix $\alpha>0$. The following holds almost surely for all sufficiently large $d$. For every box $B = [a_1,b_1] \times \ldots \times [a_d,b_d] \subset \mathbb{R}^d$ and any two opposite faces of $B$, the set $\T_\alpha \cap B$ contains a path connecting these two faces.
\end{theorem}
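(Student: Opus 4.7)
The plan is to leverage the same fractal percolation approach used in the proof of Theorem~\ref{thm:lgf-thick}, while extracting finer geometric information. That proof couples $\T_\alpha$ with a supercritical fractal percolation on dyadic sub-cubes whose infinite cluster produces an unbounded path in $\T_\alpha$. For $d$ large, the fractal percolation is deeply supercritical (exploiting the fact, proved elsewhere in the paper, that the critical probability $p_c(d)$ tends to $0$ as $d \to \infty$), and classical results on supercritical fractal percolation yield crossings of the unit cube between any two opposite faces with probability arbitrarily close to $1$.

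The first step is to show that for the fixed cube $[0,1]^d$ and each pair of opposite faces, the probability that $\T_\alpha \cap [0,1]^d$ contains a connecting path is $1$ once $d$ is large enough. This would follow from the fractal percolation coupling together with a one-step renormalization: once the single-scale crossing probability exceeds a suitable threshold, an FKG-type concatenation of sub-cube crossings bootstraps the probability up to one. Combined with the scale and translation invariance of $\T_\alpha$ in distribution---which follows from the corresponding invariance of $\phi$ modulo additive constants, those constants having no effect on the thick point set---this yields almost-sure crossings for every translate and rescaling of the unit cube. To cover a general box $B$, I would cover $B$ by a finite collection of overlapping cubes at a sufficiently fine scale, each of which almost surely has crossings between its opposite faces, and glue the crossings through the overlaps. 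Handling all boxes simultaneously is then achieved by intersecting the almost-sure events over a countable dense family of rational-cornered boxes and approximating a general box by such rational boxes of the same aspect ratio, using compactness of the candidate paths to extract a limiting crossing in $B$. As a backup, one can note that the event ``every box has a crossing between every pair of opposite faces'' is translation invariant, so by ergodicity of $\phi$ it has probability zero or one, reducing the task to a positive-probability statement.

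The main obstacle will be handling boxes of extreme aspect ratio, where traversing the long direction requires stitching together many small-cube crossings inside a narrow cross-section; the stitching must produce an actual connected path rather than a disconnected chain, which likely requires a refined multi-scale crossing estimate for the fractal percolation, controlling the probability of crossings through thin parallelepipeds, or alternatively a direct self-similarity argument along the elongated coordinate. A further subtlety lies in the approximation step from rational to arbitrary boxes, where one must ensure that the limiting set of candidate paths remains a continuous curve meeting the correct faces of $B$.
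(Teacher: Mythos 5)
Your central premise is incorrect: the paper does \emph{not} prove Theorem~\ref{thm:lgf-thick} by coupling $\T_\alpha$ with fractal percolation. That coupling exists only for the branching random walk (Section~\ref{subsec:BRW}, where the retained set $A_\infty \subset \TBRW_\alpha$ because the increments $a_{j,\mathcal{B}_j(x)}$ are \emph{constant} on dyadic boxes and \emph{independent} across boxes). Neither property holds for the log-correlated field $\phi$ or the white-noise field $h$: the increments $h_j - h_{j-1}$ fluctuate within a box and are correlated across nearby boxes, and handling exactly this is the entire purpose of Section~\ref{sec:WN}, with its auxiliary condition in Definition~\ref{def:good-WN} and the Gaussian correlation estimates of Lemmas~\ref{lem:gaussian-correlation} and~\ref{lem:gaussian-to-intersection}. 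There is no ``deeply supercritical fractal percolation'' for $\T_\alpha$ to which one could apply classical crossing results or a renormalization bootstrap. Consequently, the proposed strategy of deducing single-cube crossings from fractal percolation, then gluing via FKG, does not get off the ground for $\T_\alpha$; it would at best reprove the $\TBRW_\alpha$ case.

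The paper's actual route is more direct and avoids the difficulties you flag. It uses a variant of the first/second moment path-counting argument for the white-noise field (see Remark~\ref{rmk:path-choose} and Proposition~\ref{prop:WN-thick-variant}): by choosing the base family of oriented paths $\widetilde{\mathcal{P}^{0,M}}$ to run between the two target faces and refining as in Section~\ref{subsec:path-WN}, one produces with high probability a single good path crossing the box, rather than many small-cube crossings that then have to be concatenated. This sidesteps both the aspect-ratio issue you raise and the genuine problem that crossings of overlapping cubes need not intersect, so gluing them into one continuous path is not automatic. The argument is then transferred from $\TWN_\alpha$ to $\T_\alpha$ via Lemma~\ref{lem:approx-thick}, a step your proposal omits entirely. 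Finally, the reduction to arbitrary boxes does not take a limit of crossing paths of shrinking rational boxes (which would be delicate, since $\T_\alpha$ is defined by a $\liminf$ and is not a closed set); instead one chooses a single rational box $B'$ such that any face-to-face crossing of $B'$ already contains a face-to-face crossing of $B$, so no compactness or limiting argument is needed. Your ergodicity remark reduces the target to a positive-probability statement, but establishing positive probability is exactly where the first/second moment machinery is required, so no work is saved.
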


\begin{remark}
    \begin{enumerate}
        \item (Level surfaces of the log-correlated Gaussian field) In two dimensions, one can define the level lines of the Gaussian free field (GFF), which are SLE$_4$ curves~\cite{ss-contour}. To be precise, a level line is a continuous curve almost surely determined by the GFF. Furthermore, conditional on the level line, the restrictions of the GFF to the two sides of the curve are conditionally independent GFFs with constant boundary values, see e.g.~\cite{ss-contour, pw-gff-notes}. One may hope to extend the definition of level lines to higher dimensions where they would become level surfaces, as discussed in~\cite{lgf-survey}. However, our Theorem~\ref{thm:crossing} suggests that in sufficiently high dimensions, the level surface of the log-correlated Gaussian field might not be a nice object. In particular, we expect that the level surface cannot intersect the thick point set. Thus, by Theorem~\ref{thm:crossing}, it does not separate the space unlike in the two-dimensional case. 
        
        \item (Different definitions of thick points) One can use an alternative condition $\lim_{n \rightarrow \infty} \frac{1}{n}\phi_n(x) = \alpha \sqrt{\log 2}$ instead of the condition in~\eqref{eq:lgf-thick} to define the thick point set. This alternative condition was considered in many other papers, e.g.,~\cite{hmp-thick-pts, CH-thick-point, APP-disconnect}, but we adopt the condition as in~\eqref{eq:lgf-thick} for technical reasons. While the sets of thick points defined by these two conditions are different, we expect the same result to hold for the set of thick points under the alternative condition, though additional technical work would be required.

        \item (General log-correlated Gaussian fields). By the definition of thick points~\eqref{eq:lgf-thick}, Theorems~\ref{thm:lgf-thick} and~\ref{thm:crossing} extend to random fields that differ from the log-correlated Gaussian field in~\eqref{eq:cov-lgf} by a random continuous function. By \cite[Theorem A]{JSW-decomposition}, this includes the Gaussian log-correlated field with kernel $K(x,y) = \log \frac{1}{|x-y|} + g(x,y)$, where $g \in H_{\rm loc}^{d+\epsilon}(\mathbb{R}^d \times \mathbb{R}^d)$ for some $\epsilon>0$.
    \end{enumerate}
\end{remark}

\subsection{Thick points of other fields}

In this paper, we also consider other log-correlated Gaussian fields and prove that their thick point sets contain an unbounded path when the dimension is sufficiently high.

\subsubsection{Branching random walk and fractal percolation}\label{subsec:BRW}

We consider the branching random walk (BRW) defined as follows. For integers $j \geq 0$, let $\mathcal{B}_j$ be the set of disjoint $d$-dimensional boxes of the form $[0, 2^{-j})^d + 2^{-j} \mathbb{Z}^d$ that cover $\mathbb{R}^d$. For $x \in \mathbb{R}^d$, let $\mathcal{B}_j(x)$ denote the unique box in $\mathcal{B}_j$ that contains $x$. Let $\{ a_{j, B} \}_{j \geq 0, B \in \mathcal{B}_j}$ be i.i.d.\ Gaussian random variables with mean 0 and variance $\log 2$. For $n \geq 1$, the branching random walk $\mathcal{R}_n$ is defined as follows:
\begin{equation}\label{eq:def-brw}
\mathcal{R}_n(x) = \sum_{j = 0}^n a_{j, \mathcal{B}_j(x)} \quad \mbox{for all } x \in \mathbb{R}^d.
\end{equation}
For $\alpha>0$, define the thick point set
\begin{equation}\label{eq:brw-thick}
    \TBRW_\alpha := \{ x \in \mathbb{R}^d: \liminf_{n \rightarrow \infty} \frac{1}{n}\mathcal{R}_n(x) \geq \alpha \}.
\end{equation}

We have an analog of Theorem~\ref{thm:lgf-thick} for the branching random walk. 

\begin{theorem}\label{thm:BRW-thick}
    For any fixed $\alpha>0$, there exists a constant $C > 0$ depending on $\alpha$ such that for all $d \geq C$, the set $\TBRW_\alpha$ contains an unbounded path almost surely.
\end{theorem}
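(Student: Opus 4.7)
The plan is to reduce Theorem~\ref{thm:BRW-thick} to the fractal percolation result advertised in the abstract, namely that the critical parameter $p_c^{\mathrm{frac}}(d)$ for dyadic fractal percolation in $\mathbb{R}^d$ to contain an unbounded path tends to $0$ as $d \to \infty$. Since the BRW already has a built-in dyadic branching structure, this reduction is implemented naturally by a level-by-level thresholding of the Gaussian weights.

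First, for each $j \geq 1$ and $B \in \mathcal{B}_j$, declare $B$ to be \emph{open} if $a_{j,B} \geq \alpha$. Since the $a_{j,B}$ are i.i.d.\ $\mathcal{N}(0,\log 2)$, the open events are jointly independent with common probability $p_\alpha := \mathbb{P}[\mathcal{N}(0,\log 2) \geq \alpha] > 0$. Set
\[
\mathcal{K} := \{x \in \mathbb{R}^d : \mathcal{B}_j(x) \text{ is open for every } j \geq 1\}.
\]
Then $\mathcal{K}$ has the law of the limit set of dyadic fractal percolation on $\mathbb{R}^d$ with retention probability $p_\alpha$, started from the unit tiling $\mathcal{B}_0$. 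Crucially, $\mathcal{K} \subseteq \TBRW_\alpha$, since for any $x \in \mathcal{K}$,
\[
\mathcal{R}_n(x) = a_{0,\mathcal{B}_0(x)} + \sum_{j=1}^{n} a_{j,\mathcal{B}_j(x)} \geq a_{0,\mathcal{B}_0(x)} + n\alpha,
\]
so $\liminf_n \mathcal{R}_n(x)/n \geq \alpha$. This coupling is wasteful---we force every level to contribute at least $\alpha$, rather than only demanding the running average to do so---but the waste is harmless because we only need $p_\alpha > 0$.

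Next, one applies the fractal percolation input with $d$ large enough (depending on $\alpha$) that $p_\alpha > p_c^{\mathrm{frac}}(d)$, which is possible since $p_c^{\mathrm{frac}}(d) \to 0$. This yields that $\mathcal{K}$, and hence $\TBRW_\alpha$, contains an unbounded path with positive probability. To upgrade to almost sure existence: the event of containing an unbounded path is invariant under integer translations of $\mathbb{R}^d$, and the shift action of $\mathbb{Z}^d$ on the i.i.d.\ family $(a_{j,B})$ is ergodic, so the probability is in $\{0,1\}$ and therefore equal to $1$.

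The main obstacle is the fractal percolation input itself, which is a substantive intermediate result and should rely on a renormalization argument exploiting the abundance of disjoint paths available in high dimensions. Given that input, the remainder of the proof of Theorem~\ref{thm:BRW-thick} amounts to the short coupling above.
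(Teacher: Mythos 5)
The reduction you write down is exactly the observation the paper itself makes in Section 1.2.1: setting $A_\infty = \{x : a_{j,\mathcal{B}_j(x)} \geq \alpha \text{ for all } j\}$ gives a fractal-percolation limit set with retention probability $p_\alpha = \mathbb{P}[\mathcal{N}(0,\log 2)\geq\alpha]$, and $A_\infty \subset \TBRW_\alpha$. Your ergodicity upgrade from positive probability to a.s.\ is also valid, and is a clean alternative to what the paper does (the paper instead works with $k$-good paths, i.e.\ imposes the threshold only at levels $m \geq k$, and sends $k\to\infty$). However, there is a genuine logical gap: the fractal percolation input $p_c^{\mathrm{frac}}(d)\to 0$ is not a black-box result available from the literature --- it is Theorem 1.4 of this paper, and the paper derives it as a \emph{corollary} of the BRW argument, not the other way around. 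Concretely, the proof of Theorem 1.4 in the paper reads ``we will show in Section 2 that $A_\infty$ almost surely contains an unbounded path,'' and Section 2 is the proof of Theorem~\ref{thm:BRW-thick}. So using the fractal percolation statement to prove Theorem~\ref{thm:BRW-thick} would be circular within this paper's logic.

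The entire technical content of the theorem is therefore the part you flag as ``the main obstacle'' and leave unaddressed: proving directly that the level-$\alpha$ retained set percolates in high dimensions. The paper does this by a first- and second-moment argument on a carefully engineered family of paths: it builds oriented paths on $\mathbb{Z}^d$ (whose pairwise intersection count is dominated by a geometric random variable because oriented random walk is transient for $d\geq 3$, Lemma~\ref{lem:high-d-intersect}), refines them inductively on $8^{-j}\mathbb{Z}^d$ so that two uniformly random refinements rarely meet (Lemmas~\ref{lem:path-tube}, \ref{lem:intersect-BRW}), and then compares $\mathbb{E}\mathcal{N}^2$ to $(\mathbb{E}\mathcal{N})^2$ for the weighted count $\mathcal{N}$ of $k$-good paths in $\mathcal{P}^{n,M}$ (Proposition~\ref{prop:BRW-thick-point}), before passing to a local Hausdorff limit (Lemma~\ref{lem:Hausdorff-limit-brw}). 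Calling this ``a renormalization argument'' is a reasonable guess at the flavor but does not identify the key ideas (the tube construction giving $\mathfrak{q}$ disjoint refinements, the geometric domination from transience, and the sequence lemma controlling the second-moment recursion). So the proposal correctly isolates the coupling and the 0--1 upgrade, but the core of the proof is missing.
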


By the structure of the BRW, there is a simple way to find (part of) the thick point set as follows. At scale 0, we divide $\mathbb{R}^d$ into boxes in $\mathcal{B}_0$. For each box, we sample a Gaussian random variable (corresponding to $a_{0,B}$) and retain the box if and only if this random variable is at least $\alpha$. This corresponds to retaining each box independently with probability $p=\mathbb{P}[a_{0,B} \geq \alpha]$. At scale 1, we further divide each retained box into $2^d$ sub-boxes in $\mathcal{B}_1$, sample a Gaussian random variable (corresponding to $a_{1,B}$) for each sub-box, and retain it if and only if the random variable is at least $\alpha$, and so on. Let $A_\infty$ be the intersection of the retained sets at each scale. For all $x \in A_\infty$, we have $a_{j, \mathcal{B}_j(x)} \geq \alpha$ for all $j \geq 0$. Therefore, $\mathcal{R}_n(x) \geq \alpha n$ for all $n \geq 1$, which implies that $A_\infty \subset \TBRW_\alpha$. In fact, in Section~\ref{sec:brw}, we will prove that $A_\infty$ almost surely contains an unbounded path in sufficiently high dimensions.

This set $A_\infty$ is reminiscent of the fractal percolation studied in~\cite{mandelbrot-book, CCD-fractal}. Indeed, $A_\infty$ is almost the same as the retained set in fractal percolation in~\cite{CCD-fractal} with $N=2$ and open probability $p$ except that in the above procedure we retain the set $[0,2^{-j})^d$, while in fractal percolation, the set $[0,2^{-j}]^d$ is retained. Let $A_\infty'$ denote the retained set obtained in the fractal percolation, and thus, $A_\infty' \supset A_\infty$. Then, we have an analogous result for the fractal percolation. Define 
$$
p_c(d) = \inf \{p : \mathbb{P}_p[ \mbox{there exists an unbounded path in }A_\infty'] > 0 \}.
$$

\begin{theorem}
    We have $\lim_{d \to \infty} p_c(d) = 0$.
\end{theorem}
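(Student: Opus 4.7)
The plan is to reduce this statement to the stronger version of Theorem~\ref{thm:BRW-thick} promised in Section~\ref{sec:brw}, where the authors assert that the BRW-derived set $A_\infty$ itself almost surely contains an unbounded path in sufficiently high dimensions. The key observation is that the construction of $A_\infty$ given in the paragraph after Theorem~\ref{thm:BRW-thick} is, up to the distinction between half-open and closed dyadic boxes, precisely fractal percolation with $N=2$ and retention probability $p = \mathbb{P}[a_{0,B} \geq \alpha]$, where $a_{0,B} \sim \mathcal N(0,\log 2)$. Under the natural coupling, every half-open box kept in the BRW recipe is contained in the corresponding closed box kept in fractal percolation, so $A_\infty \subset A_\infty'$.

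Concretely, to show $\lim_{d \to \infty} p_c(d) = 0$, I would fix $\epsilon > 0$ and choose $\alpha = \alpha(\epsilon)$ large enough that the Gaussian tail bound gives $p := \mathbb{P}[a_{0,B} \geq \alpha] \leq \epsilon$. I would then invoke the stronger BRW result at this value of $\alpha$ to obtain a threshold $C = C(\alpha)$ such that for all $d \geq C$, the set $A_\infty$ almost surely contains an unbounded path. Since $A_\infty \subset A_\infty'$ at this coupled parameter $p$, the fractal percolation set $A_\infty'$ also contains an unbounded path with positive probability, so by the definition of $p_c$ we obtain $p_c(d) \leq p \leq \epsilon$ for all $d \geq C$. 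As $\epsilon$ was arbitrary, this yields $\lim_{d \to \infty} p_c(d) = 0$.

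The main obstacle is not in this corollary but in the input result for $A_\infty$, namely the assertion that in sufficiently high dimensions the BRW threshold set itself percolates; that statement is the substance of Section~\ref{sec:brw}. Once it is available, the only things to check for the present theorem are (i) the Gaussian-tail choice of $\alpha$, which is standard, and (ii) the containment $A_\infty \subset A_\infty'$, which is immediate from the definitions. No further quantitative work is needed, and in particular one does not need to estimate $p_c(d)$ as a function of $d$ — only the qualitative limit.
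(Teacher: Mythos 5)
Your proposal is correct and follows essentially the same route as the paper: invoke the result from Section~\ref{sec:brw} that $A_\infty$ almost surely percolates in high dimensions for any fixed $\alpha$, observe $A_\infty \subset A_\infty'$, and conclude via the Gaussian tail bound that $p_c(d)$ can be driven below any $\epsilon$ by taking $d$ large. Spelling out the $\epsilon$-$\alpha$ bookkeeping is just an explicit rendering of the paper's ``for any fixed $p>0$'' phrasing, so the arguments coincide.
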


\begin{proof}
    For any fixed $p>0$, we will show in Section~\ref{sec:brw} that in sufficiently large dimensions, the set $A_\infty$ almost surely contains an unbounded path. Since $A_\infty' \supset A_\infty$, we obtain the theorem.
\end{proof}

It was shown in~\cite[Theorem 3]{CCD-fractal} that in $d = 2$, for $p \geq p_c$, the retained set $A_\infty'$ has a unique unbounded connected component, while for $p < p_c$, the retained set $A_\infty'$ is almost surely totally disconnected. To our knowledge, the situation in higher dimensions is not well understood. We refer to ~\cite{Chayes-fractal-3d} and Section [b] of~\cite{Chayes-notes} for more discussions. It would also be interesting to determine the correct order of $p_c(d)$ as $d$ tends to infinity. For Bernoulli site percolation on $\mathbb{Z}^d$, each vertex has $2d$ neighbors. Thus, a simple counting argument shows that the critical probability is at least $\frac{1}{2d}$; in fact, it is known that the critical probability is $\frac{1}{2d}(1+o_d(1))$ as $d \to \infty$~\cite{Kesten-high}. In fractal percolation, each $d$-dimensional box of the form $2^{-j} \mathbb{Z}^d + [0,2^{-j}]^d$ intersects $3^d-1$ other such boxes, so the same counting argument gives a lower bound $\frac{1}{3^d-1}$ for $p_c(d)$. We expect that $p_c(d)$ might decay exponentially in $d$, but the approach in this paper seems too coarse to establish this.


\subsubsection{The white noise field}

As explained in Section 4.1.1 of~\cite{lgf-survey}, the whole-plane log-correlated Gaussian field in~\eqref{eq:cov-lgf} can be obtained via the space-time white noise. We also have an analog of Theorem~\ref{thm:lgf-thick} for the thick point set defined using white noise (see Theorem~\ref{thm:WN-thick} below).

We first introduce some notation. Let $B_r(x) := \{ z: |z - x|_2 < r\}$ for $x \in \mathbb{R}^d$ and $r>0$, and let $\Vol(B_r(x))$ be the $d$-dimensional Lebesgue measure of $B_r(x)$. Define the function
$$
\KK(x) = \Vol(B_1(0))^{-\frac{1}{2}} 1_{ |x|_2 < 1 } \quad \mbox{for $x \in \mathbb{R}^d$}.
$$
Then, $\KK$ is a radially symmetric function and satisfies $\int_{\mathbb{R}^d} \KK(x)^2 dx = 1$. Let $W$ denote a space-time white noise on $\mathbb{R}^d \times (0,\infty)$. For integers $n \geq 1$ and $x \in \mathbb{R}^d$, let
\begin{equation} \label{eq:def-WN}
\begin{aligned}
   h_n(x) &:= \int_{\mathbb{R}^d} \int_{2^{-n}}^1 \KK \big(\frac{y-x}{t}\big) t^{-\frac{d+1}{2}} W(dy,dt);\\
   h(x) &:= \int_{\mathbb{R}^d} \int_0^1 \KK \big(\frac{y-x}{t}\big) t^{-\frac{d+1}{2}} W(dy,dt).\\
\end{aligned}
\end{equation}
By the Kolmogorov continuity theorem, there is a modification such that $\{h_n\}$ is a sequence of random continuous functions that approximate $h$. 
As explained in Section 4.1.1 of~\cite{lgf-survey} (see Appendix B of~\cite{afs-metric-ball} for the proof)\footnote{While Appendix B of~\cite{afs-metric-ball} was explained in two dimensions, the same proof works for $d \geq 3$. Their argument was also only stated for smooth functions, but as explained in Lemma~\ref{lem:lgf-approx}, the same result holds for our $\KK$.}, there exists a coupling of $W(dy,dt)$ and $\phi$ such that $h - \phi$ is a random continuous function. For $\alpha>0$, define the thick point set
\begin{equation}\label{eq:WN-thick}
\TWN_\alpha := \{ x \in \mathbb{R}^d : \liminf_{n \rightarrow \infty} \frac{1}{n} h_n(x) \geq \alpha \sqrt{\log 2} \}.
\end{equation}

We have an analog of Theorem~\ref{thm:lgf-thick} for $\TWN_\alpha$.

\begin{theorem}\label{thm:WN-thick}
     For any fixed $\alpha>0$, there exists a constant $C > 0$ depending on $\alpha$ such that for all $d \geq C$, the set $\TWN_\alpha$ contains an unbounded path almost surely.
\end{theorem}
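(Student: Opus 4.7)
My plan is to reduce Theorem~\ref{thm:WN-thick} to Theorem~\ref{thm:lgf-thick} via the coupling between the white noise field $h$ and the log-correlated Gaussian field $\phi$. As noted just before~\eqref{eq:WN-thick} (with the footnoted extension furnished by Lemma~\ref{lem:lgf-approx}), there is a coupling of $W$ and $\phi$ under which $F := h - \phi$ is a random continuous function on $\mathbb{R}^d$. I aim to show that under this coupling $\T_\alpha \subset \TWN_\alpha$ almost surely, so that the unbounded path in $\T_\alpha$ produced by Theorem~\ref{thm:lgf-thick} automatically lies in $\TWN_\alpha$.

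The heart of the argument is the approximation comparison: for every compact $K \subset \mathbb{R}^d$,
\[
\frac{1}{n} \sup_{x \in K} \bigl| \phi_n(x) - h_n(x) \bigr| \longrightarrow 0 \qquad \text{almost surely.}
\]
Given this, the $\liminf$ conditions in~\eqref{eq:lgf-thick} and~\eqref{eq:WN-thick} become pointwise equivalent at every $x \in K$. I split the difference as $\phi_n - h_n = (\phi_n - h*\rho_n) + (h*\rho_n - h_n)$. The first term equals $-F*\rho_n$ and is locally uniformly bounded in $n$ by the continuity of $F$. The second term $D_n(x) := (h*\rho_n)(x) - h_n(x)$ is a centered Gaussian field whose covariance can be read off from the kernel $(\rho_n * \KK_t) - \KK_t \mathbf{1}_{t \geq 2^{-n}}$ via the white noise representation~\eqref{eq:def-WN}, where $\KK_t(y) := \KK(y/t)$.

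I expect to show that $\mathrm{Var}(D_n(x)) = O(1)$ uniformly in $n$ and $x$ by splitting the scale integral at $t = 2^{-n}$. For $t \geq 2^{-n}$, the compact support of $\KK$ implies that $\rho_n * \KK_t - \KK_t$ is concentrated in a thin annulus of width $\sim 2^{-n}$ around $|y| = t$, giving an $L^2$-norm squared of order $2^{-n} t^{d-1}$; integration against $t^{-(d+1)}\,dt$ yields a telescoping bound of $O(1)$. For $t < 2^{-n}$, Young's inequality combined with the fact that $\rho_n * \KK_t$ is supported in a ball of radius $\lesssim 2^{-n}$ gives $\| \rho_n * \KK_t \|_{L^2}^2 \lesssim 2^{nd} t^{2d}$, which again integrates to $O(1)$. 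Analogous estimates on $\mathrm{Var}(D_n(x) - D_n(x'))$ will control the modulus of continuity in $x$; a standard chaining and Borel--Cantelli argument over a net in $K$ and a dyadic sequence of $n$ then yields $\sup_{x \in K} |D_n(x)| = O(\sqrt{\log n})$ almost surely, which is $o(n)$.

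The main technical obstacle is the boundary regime $t \sim 2^{-n}$, where the sharp scale truncation used to define $h_n$ disagrees with the smooth mollification implicit in $h * \rho_n$. The variance estimates above rely crucially on the compact support of $\KK$, which makes the annular geometry of $\rho_n * \KK_t - \KK_t$ tractable and is also the reason that Lemma~\ref{lem:lgf-approx} applies to $\KK$ in the first place. Once the uniform variance bound on $D_n$ is in hand, the remainder is standard Gaussian concentration, and no input is needed beyond the coupling supplied by~\cite{afs-metric-ball} and the already established Theorem~\ref{thm:lgf-thick}.
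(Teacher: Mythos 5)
Your proposal runs in the wrong direction and is circular. You want to derive Theorem~\ref{thm:WN-thick} from Theorem~\ref{thm:lgf-thick} via the coupling and the identity $\T_\alpha = \TWN_\alpha$. But in this paper Theorem~\ref{thm:lgf-thick} has no independent proof: it is obtained as a \emph{consequence} of Theorem~\ref{thm:WN-thick} together with Lemma~\ref{lem:approx-thick} (see the discussion after~\eqref{eq:WN-thick}, and the one-line proof of Theorem~\ref{thm:lgf-thick} in Section~\ref{sec:approx}). So invoking Theorem~\ref{thm:lgf-thick} as an input is assuming what you need to prove.

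The real content of the theorem is the white noise analysis in Section~\ref{sec:WN}, specifically Proposition~\ref{prop:WN-thick}: one constructs a family of paths $\mathscr{P}^{n,M}$ on $\frac{1}{\mathfrak r} 8^{-n}\mathbb{Z}^d$ (refining a transient oriented-walk base family), defines good paths via the white-noise averages $\mathscr{W}_j(x,P_j)$ in Condition~\eqref{condition-good-1} plus the auxiliary Condition~\eqref{condition-good-2}, and runs a weighted second-moment argument. The key technical inputs are Lemma~\ref{lem:gaussian-to-intersection} (translating Gaussian covariances along paths into discrete intersection counts), Lemma~\ref{lem:gaussian-correlation}, the entropic-repulsion Lemma~\ref{lem:repulsion} via Lemma~\ref{lem:3.17}, and the combinatorial domination Lemma~\ref{lem:intersection-law-WN}. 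None of this is touched in your proposal. The variance computations you sketch for $D_n = h*\rho_n - h_n$ are essentially Lemma~\ref{lem:sec4-1} and are correct in spirit, but they serve to pass \emph{from} $\TWN_\alpha$ \emph{to} $\T_\alpha$, not to prove the existence of an unbounded path in the first place. If you wanted your reduction to work, you would need a direct proof of Theorem~\ref{thm:lgf-thick} using only the mollifications $\phi_n = \phi * \rho_n$, which would face exactly the correlation-control difficulties that the white-noise representation (with its exact independence across scales and spatial locality of $\KK$) is chosen to circumvent.
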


In fact, we first prove Theorem~\ref{thm:WN-thick} and then show that, under the coupling of $W(dy,dt)$ and $\phi$, we have $\T_\alpha = \TWN_\alpha$ almost surely (Lemma~\ref{lem:approx-thick}). Theorem~\ref{thm:lgf-thick} then follows as a consequence of Theorem~\ref{thm:WN-thick}. Theorem~\ref{thm:crossing} also follows from the analogous result for $\TWN_\alpha$.

We make no attempt to optimize the value of $C$ obtained by our approach. Our methods can in principle lead to an explicit bound for $C$. However, we expect that any such bound will be very large and far from optimal. For example, our proof of Theorem~\ref{thm:BRW-thick} shows that the theorem is true with $C = C(\alpha) = \mathbb{P}[N(0, \log 2) \geq \alpha]^{-2000}$, where $N(0, \log 2)$ denotes a Gaussian random variable with mean 0 and variance $\log 2$. Similar comments apply for our other main theorem statements.


\subsection{The exponential metric of the log-correlated Gaussian field}

In this subsection, we discuss an implication for the exponential metric of the log-correlated Gaussian field (see Theorem~\ref{thm:exponential-metric} below). This metric is formally obtained by reweighting the Euclidean metric tensor by $e^{\xi \phi}$ for a parameter $\xi > 0$. Since $\phi$ is not defined pointwise, a normalization procedure is required to define this metric. We use $\{ h_n\}_{n \geq 1}$ defined in~\eqref{eq:def-WN} to approximate $\phi$.\footnote{Since $h$ and $\phi$ differ only by a random continuous function, if the metric $e^{\xi h}$ can be defined, the metric $e^{\xi \phi}$ can be obtained by reweighting $e^{\xi h}$ with the continuous function $e^{\xi (\phi - h)}$. Furthermore, it is expected that different methods of approximation will not change the resulting metric.} For integers $n \geq 1$, let $D_n$ be the exponential metric associated with $h_n$. Namely,
\begin{equation}\label{eq:def-exponential-metric}
D_n(z,w) := \inf_{P: z \rightarrow w} \int_0^1 e^{\xi h_n(P(t))} |P'(t)|dt \quad \mbox{for all } z,w \in \mathbb{R}^d,
\end{equation}
where the infimum is taken over all the piecewise continuously differentiable paths $P:[0,1] \rightarrow \mathbb{R}^d$ connecting $z$ and $w$. The exponential metric of the log-correlated field is obtained by taking the limit of $\{D_n(\cdot,\cdot)\}_{n\geq 1}$ after appropriate normalization. The topology of the convergence and the behavior of the limiting metric may depend on $\xi$. 

An important quantity characterizing the behavior of the exponential metric is the distance exponent, which depends on $\xi$. The point-to-point distance exponent $Q = Q(\xi)$ satisfies
\begin{equation}\label{eq:point-to-point}
    {\rm Med}(D_n(x, y)) = 2^{-(1-\xi Q)n + o(n)} \quad \mbox{as $n \rightarrow \infty$},
\end{equation}
where $x,y$ are any two fixed points in $\mathbb{R}^d$, and ${\rm Med}(X)$ denotes the median of the random variable $X$. It is known that $Q(\xi)$ exists and is a continuous, non-increasing function of $\xi$ (see~\cite[Proposition 1.1]{dgz-exponential-metric}, and also \cite{dg-supercritical-lfpp, dg-lqg-dim}).\footnote{The result in~\cite{dgz-exponential-metric} was proved under the assumption that $\KK$ is smooth, but their main results (Proposition 1.1 and Theorem 1.2) extend easily to our setting of $\KK$. For $d = 2$, many papers such as~\cite{dg-supercritical-lfpp, dg-lqg-dim} considered another approximation of the Gaussian free field via convolution with the heat kernel (as explained in~\cite[Section 3.1]{cg-support-thm}, this corresponds to taking $\KK(x) = \sqrt{\frac{2}{\pi}} e^{-|x|_2^2}$). It is straightforward to see that this approximation yields the same point-to-point distance exponent $Q(\xi)$ using e.g.\ the estimates in Lemma~\ref{lem:sec4-1}. Moreover, these two approximations also yield the same set-to-set distance exponent (if it exists) as defined below.} Moreover, it is straightforward to see that $Q(\xi)\geq 0$ for all $\xi >0$, as $h_n(x)$ is of order $O(\sqrt{n})$ with high probability around the point $x$. We introduce the threshold
$$
\xi_c := {\rm sup} \{\xi : Q(\xi) > \sqrt{2d} \} \in (0,\infty).
$$
As explained in ~\cite[Remark 1.3]{dgz-exponential-metric}, when $Q(\xi) < \sqrt{2d}$, the limiting metric (if it exists) should have singular points, i.e., points that have infinite distance to any other point. Here, the value $\sqrt{2d}$ arises from the fact that the maximum of $h_n$ in a unit box is approximately $ (\sqrt{2d} \log 2 + o(1))n$ with high probability~\cite{Mad15}. Furthermore, when $Q(\xi) > \sqrt{2d}$, it was shown in~\cite{dddf-lfpp, dgz-exponential-metric} that every possible subsequential limit of $D_n$, appropriately renormalized, is continuous and induces the Euclidean topology, see below for details. 

In light of the point-to-point distance exponent~\eqref{eq:point-to-point}, one can also consider the set-to-set distance exponent $\widetilde Q = \widetilde Q(\xi)$ which satisfies
\begin{equation}\label{eq:set-to-set}
    {\rm Med}(D_n(K_1, K_2)) = 2^{-(1-\xi \widetilde Q)n + o(n)} \quad \mbox{as $n \rightarrow \infty$},
\end{equation}
where $K_1$ and $K_2$ are any two fixed disjoint compact subsets of $\mathbb{R}^d$ with non-empty interiors. For $d = 2$, the exponent $\widetilde Q(\xi)$ exists and always equals $Q(\xi)$~\cite{dg-supercritical-lfpp, dg-lqg-dim}. For $d \geq 3$ and $\xi \in (0,\xi_c)$, as a consequence of~\cite[Theorem 1.2]{dgz-exponential-metric}, we know that $\widetilde Q(\xi)$ exists and equals $Q(\xi)$. However, for $d \geq 3$ and $\xi \geq \xi_c$, the existence of $\widetilde Q(\xi)$ has not been proved yet.

In two dimensions, there has been extensive research on the exponential metric and its properties. In particular, it corresponds to the so-called Liouville quantum gravity (LQG) metric, which is one of the central topics in random planar geometry. We briefly recall its construction, which was completed in~\cite{dddf-lfpp, gm-uniqueness, dg-supercritical-lfpp, dg-uniqueness}. Although the results in these papers were proved for a different approximation of the Gaussian free field via convolution with the heat kernel, the same results are expected to hold for the approximation in~\eqref{eq:def-exponential-metric}. In two dimensions, it is known that $Q(\xi) = \widetilde Q(\xi) > 0$ for all $\xi>0$~\cite{dg-supercritical-lfpp, lfpp-pos}, and the following hold:
\begin{enumerate}
    \item In the subcritical region $0 < \xi < \xi_c$ (or equivalently, $Q(\xi) = \widetilde Q(\xi) > 2$), after appropriate normalization, the metrics converge with respect to the topology of uniform convergence on compact subsets of $\mathbb{R}^2 \times \mathbb{R}^2$, and the exponential metric is continuous and induces the Euclidean topology~\cite{dddf-lfpp, gm-uniqueness}.

    \item In the supercritical region $\xi > \xi_c$ (or equivalently, $Q(\xi) = \widetilde Q(\xi) \in (0, 2)$), after appropriate normalization, the metrics converge with respect to the topology on lower semi-continuous functions on $\mathbb{R}^2 \times \mathbb{R}^2$, and the exponential metric is a metric on $\mathbb{R}^2$ though it is allowed to take on infinite values~\cite{dg-supercritical-lfpp, dg-uniqueness}. Indeed, there is an uncountable, totally disconnected, zero Lebesgue measure set of singular points $z$ that satisfy $D(z,w) = \infty$ for every $w \neq z$. Roughly speaking, the singular points correspond to the points of thickness greater than $Q(\xi)$; see \cite[Proposition 1.11]{pfeffer-supercritical-lqg}.

    \item In the critical case $\xi = \xi_c$ (or equivalently, $Q(\xi) = \widetilde Q(\xi) = 2$), after appropriate normalization, the metrics converge with respect to the topology on lower semi-continuous functions on $\mathbb{R}^2 \times \mathbb{R}^2$ (in fact, we expect that the convergence holds w.r.t.\ the uniform topology) and the exponential metric is continuous and induces the Euclidean topology~\cite{dg-supercritical-lfpp, dg-uniqueness, dg-critical-lqg}.
\end{enumerate}
We refer to~\cite{ddg-metric-survey} for more discussions on the two-dimensional case and its relation to other objects such as random planar maps.

In a recent work~\cite{dgz-exponential-metric}, we initiated the study of the exponential metric in dimensions at least three. In particular, we proved that in the subcritical region $\xi \in (0, \xi_c)$ (or equivalently, $Q(\xi) = \widetilde Q(\xi) > \sqrt{2d}$), after appropriate normalization, $\{D_n(\cdot, \cdot)\}_{n \geq 1}$ is tight with respect to the topology of uniform convergence on compact subsets of $\mathbb{R}^d \times \mathbb{R}^d$. Moreover, any subsequential limit is a continuous metric that induces the Euclidean topology. Although the uniqueness of the subsequential limit is still open, we expect that it should follow from similar arguments to~\cite{gm-uniqueness, dg-uniqueness}. We also expect that in the region $Q(\xi) = \widetilde Q(\xi) \in (0, \sqrt{2d})$, after appropriate normalization, $\{D_n(\cdot, \cdot)\}_{n \geq 1}$ converges to a limiting metric that is allowed to take on infinite values and has properties similar to the supercritical LQG metric in the two-dimensional case.

Next, we discuss the implication of Theorem~\ref{thm:WN-thick} for the exponential metric. Based on an intermediate result from the proof of Theorem~\ref{thm:WN-thick} (Proposition~\ref{prop:WN-thick-variant}), we establish the following. In the proof, instead of $\alpha$-thick points, we consider the percolation of $(-\alpha)$-thick points, which allows us to connect two sets with a path of extremely low $h_n$ values.

\begin{theorem}\label{thm:exponential-metric}
For any fixed $A>0$, there exist a constant $C>0$ and a function $L: \mathbb{N} \to (0, \infty)$ with $\lim_{x \to \infty} L(x) = 0$ such that for all $d \geq C$, $\xi \geq L(d)$, and any two disjoint compact sets $K_1, K_2 \subset \mathbb{R}^d$ with non-empty interiors:
$$
\mathbb{P}\big[D_n(K_1, K_2) \leq 2^{- A \xi n}\big] \geq 1 - o_n(1) \quad \mbox{as } n \rightarrow \infty,
$$
where the $o_n(1)$ term may depend on $A, d, \xi, K_1, K_2$. 
\end{theorem}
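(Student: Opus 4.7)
The plan is to use the Gaussian symmetry $h \stackrel{d}{=} -h$ together with Proposition~\ref{prop:WN-thick-variant} to construct a piecewise-linear curve from $K_1$ to $K_2$ along which $h_n$ is very negative, making the integrand $e^{\xi h_n}$ in~\eqref{eq:def-exponential-metric} exponentially small in $n$.

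First, by Gaussian symmetry the ``thin-point'' set $\{x : \liminf_{n \to \infty} (-h_n(x))/n \geq \alpha \sqrt{\log 2}\}$ satisfies the same crossing property as $\TWN_\alpha$. Reading Proposition~\ref{prop:WN-thick-variant} (the quantitative intermediate step toward Theorem~\ref{thm:WN-thick}) through this symmetry should yield, for any $\alpha > 0$, a dimension threshold $C_0(\alpha)$ such that for all $d \geq C_0(\alpha)$, with probability $1 - o_n(1)$ there exists a piecewise-linear curve $P_n : [0,1] \to \mathbb{R}^d$ from $K_1$ to $K_2$ satisfying $h_n(P_n(t)) \leq -\alpha \sqrt{\log 2}\, n$ for all $t \in [0,1]$, and with Euclidean length bounded by $C_1(d) \cdot 2^{\epsilon(d) n}$. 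Here $C_1(d)$ is a geometric constant depending on $K_1, K_2, d$, and $\epsilon(d)$ measures how tortuous the underlying fractal-percolation crossing chain can be.

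Given the target $A$, I would choose $\alpha = \alpha(d)$ as large as permitted by the threshold $d \geq C_0(\alpha(d))$; in high dimensions one expects $\alpha(d)$ of order $\sqrt{d}$, while $\epsilon(d)$ grows much more slowly. On the event that $P_n$ exists,
\begin{equation*}
    D_n(K_1, K_2) \;\leq\; \int_0^1 e^{\xi h_n(P_n(t))} \, |P_n'(t)|\, dt \;\leq\; C_1(d) \cdot 2^{(\epsilon(d) - \xi \alpha(d))\, n}.
\end{equation*}
This is at most $2^{-A\xi n}$ as soon as $\xi(\alpha(d) - A) \geq \epsilon(d)$, i.e.\ $\xi \geq L(d) := \epsilon(d)/(\alpha(d) - A)$, and $n$ is large enough that $C_1(d)$ is absorbed into the $o_n(1)$ error. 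Provided $\alpha(d) \to \infty$ faster than $\epsilon(d)$, we obtain $L(d) \to 0$, as required.

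The main obstacle is quantifying $\epsilon(d)$ sharply enough that $\epsilon(d)/\alpha(d) \to 0$. Concretely, one must show that the crossing produced in the proof of Theorem~\ref{thm:WN-thick} admits a piecewise-linear realization whose length is comparable to the Euclidean distance between $K_1$ and $K_2$---equivalently, that the fractal-percolation cluster used in Section~\ref{sec:brw} has chemical distance not much larger than its Euclidean diameter in high dimension. Making this quantitative, and transferring the bound through the coupling between the branching random walk and the white noise field, is the technically delicate part of the argument.
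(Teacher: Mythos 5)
Your overall strategy matches the paper's proof exactly: use the symmetry $h \stackrel{d}{=} -h$, invoke Proposition~\ref{prop:WN-thick-variant} to produce a discrete path from $K_1$ to $K_2$ along which $h_n \leq -\tfrac{\alpha}{2} n$, and then bound $D_n(K_1,K_2)$ by integrating $e^{\xi h_n}$ along the piecewise-linear realization of that path. The thresholds $\xi \geq L(d)$ then come from comparing the decay $e^{-\xi \alpha n/2}$ against the growth of the Euclidean length of the path.

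However, the ``main obstacle'' you flag at the end --- that the path-length exponent $\epsilon(d)$ might grow with $d$ and must be shown to be dominated by $\alpha(d)$ --- is in fact a non-issue, and you have left unaddressed the one computation that makes the argument close. The refinement construction of Section~\ref{subsec:path-WN} produces a path on $\tfrac{1}{\mathfrak r} 2^{-3n}\mathbb{Z}^d$ with \emph{discrete} length at most $4M\mathfrak r \cdot 11^n$ (each edge is refined into a self-avoiding path of $11$ vertices on a lattice that is $8$ times finer, at every scale). Multiplying by the lattice spacing $\tfrac{1}{\mathfrak r}2^{-3n}$, the factor $\mathfrak r$ cancels and the Euclidean length is at most $4M(11/8)^n$. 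So the exponent you call $\epsilon(d)$ is the \emph{universal} constant $\log_2(11/8)$, independent of $d$. Consequently the bound $D_n(K_1,K_2) \leq 4M(11/8)^n e^{-\xi \alpha n/2}$ is at most $2^{-A\xi n}$ once $\xi > \frac{\log(11/8)}{\alpha/2 - A\log 2}$, and taking $\alpha = AH(d)$ with $H(d) \to \infty$ slowly (permitted because Proposition~\ref{prop:WN-thick-variant} holds for all $d$ large enough depending on $\alpha$) gives $L(d) \to 0$. There is no ``chemical distance vs.\ Euclidean diameter'' estimate to establish: the construction is explicit enough that the length exponent is fixed once and for all, and you do not need (nor does the paper attempt) to make $\alpha(d)$ as large as $\sqrt{d}$ --- any $\alpha(d) \to \infty$ suffices.
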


By definition, it always holds that $Q(\xi) \geq \widetilde Q(\xi)$. As explained above, we always have $Q(\xi) \geq 0$. The condition $Q(\xi) = \widetilde Q(\xi)$ suggests that the set-to-set distance is comparable to the distance between typical points (as is the case in $d=2$~\cite{dddf-lfpp, dg-supercritical-lfpp} and in the subcritical region $\xi \in (0,\xi_c)$ for $d \geq 3$~\cite{dgz-exponential-metric}), and thus the limiting metric could be well defined between typical points. However, if $Q(\xi) > \widetilde Q(\xi)$, then the set-to-set distance is much smaller than the distance between typical points, which in turn implies that the limiting metric could be well defined only between atypical points---a case that has not been investigated (see Remark~\ref{rmk:super-super} below for discussion).

As a consequence of Theorem~\ref{thm:exponential-metric} and~\eqref{eq:set-to-set}, it follows that in sufficiently high dimensions, for all sufficiently large $\xi$, if the set-to-set distance exponent $\widetilde Q(\xi)$ exists, then $\widetilde Q(\xi) < 0$ and thus we are in the regime  $Q(\xi) > \widetilde Q(\xi)$.  This also contrasts with the two-dimensional case where $\widetilde Q(\xi)$ is always positive and equals $Q(\xi)$~\cite{lfpp-pos, dg-supercritical-lfpp}. We also expect that similar arguments could show that $Q(\xi) = 0$ when both $d$ and $\xi$ are sufficiently large, although additional technical work is required. This partly answers Problem 7.2 of~\cite{dgz-exponential-metric} and suggests that a new phase may appear for the exponential metric.

\begin{remark}[Super-supercritical exponential metric]\label{rmk:super-super}
Theorem~\ref{thm:exponential-metric} suggests that there might be a new phase for the exponential metric in sufficiently high dimensions where $\widetilde Q(\xi) < 0$ and $Q(\xi) = 0$. In this phase, it is natural to normalize $D_n(\cdot, \cdot)$ in~\eqref{eq:def-exponential-metric} by the median of the set-to-set distance. (If we normalize by the median of the point-to-point distance, then the limiting metric would have distance 0 between any two disjoint boxes, and would thus always equal 0.) In this case, we conjecture that if a limit exists, it should satisfy the following properties:
\begin{itemize}
    \item The distances between any two disjoint compact subsets with non-empty interiors are positive and finite.
    
    \item For $\alpha>0$, similar to~\eqref{eq:lgf-thick}, we can define the set of $(-\alpha)$-thick points, which satisfy the condition that $\limsup_{n\rightarrow \infty} \frac{1}{n} h_n(x) \leq -\alpha \sqrt{\log 2}$. We expect that there exists a constant $\alpha>0$ depending on $\xi$ such that all points in the complement of the set of $(-\alpha)$-thick points are singular points, i.e., the only non-singular points are the $(-\alpha)$-thick points.
     
    \item Not every pair of non-singular points has finite distance between them, so the metric space is not connected, and is essentially an internal metric on the set of $(-\alpha)$-thick points.
\end{itemize}
\end{remark}

\subsection{Overview of the proof}

To prove Theorems~\ref{thm:lgf-thick}--\ref{thm:exponential-metric}, it suffices to find a continuous path in $\mathbb{R}^d$ where the value of the corresponding field is exceptionally high. We will use the first and second moment estimates to show that in sufficiently high dimensions, with high probability, such a path exists on $8^{-n} \mathbb{Z}^d$ (or $\frac{1}{\lfloor \sqrt{d} \rfloor} 8^{-n} \mathbb{Z}^d$) for each $n \geq 1$. By taking a subsequential limit, we then obtain the desired continuous path.

In Section~\ref{sec:brw}, we first study the branching random walk (BRW) and prove Theorem~\ref{thm:BRW-thick}. Due to the straightforward structure of the BRW, this proof is simpler than that of Theorem~\ref{thm:WN-thick}, but it already captures the main idea. We will construct a set of paths on the rescaled lattice $8^{-n} \mathbb{Z}^d$ such that two paths chosen uniformly from this set typically have few intersections. The construction is by first selecting a set of oriented paths on $\mathbb{Z}^d$ and then inductively refining it on lattices $8^{-j} \mathbb{Z}^d$ for each $j \geq 1$. (The number 8 is fairly arbitrary --- it is just a dyadic number chosen to be large enough so that the refinements of disjoint paths will be far away from each other.) A path is called good if all increments of the BRW (recall~\eqref{eq:def-brw}) are lower-bounded by $\alpha$ for all vertices on the path, and thus, all vertices on a good path satisfy $\mathcal{R}_m(x) \geq \alpha m $ for all $1 \leq m \leq n$. By comparing the first and second moments of the number of good paths, we will show that in sufficiently high dimensions, there is a high probability of having a good path. Then, by taking a subsequential limit of the good paths, we obtain a continuous path connecting to infinity, where every point is an $\alpha$-thick point.

In Section~\ref{sec:WN}, we study the white noise field and prove Theorems~\ref{thm:WN-thick} and~\ref{thm:exponential-metric}. The proof strategy is similar to that of Theorem~\ref{thm:BRW-thick} but it is more technically involved due to the correlations in the white noise field. There are two main differences: first, we construct the path on $\frac{1}{\lfloor \sqrt{d} \rfloor} 8^{-n} \mathbb{Z}^d$, where $\frac{1}{\lfloor \sqrt{d} \rfloor}$ is the correlation length of the white noise field in high dimensions (Lemma~\ref{lem:est-correlation}). Second, in addition to the condition that $h_m(x) \geq \alpha m$ for all $1 \leq m \leq n$, we will introduce an auxiliary condition to make the second moment estimate more tractable. We refer to the beginning of Section~\ref{sec:WN} and Section~\ref{subsec:good-WN} for a more detailed proof outline.

In Section~\ref{sec:approx}, we show that different approximations of the log-correlated Gaussian field yield the same thick point sets, and prove Theorems~\ref{thm:lgf-thick} and~\ref{thm:crossing}. Finally, in Section~\ref{sec:open}, we list several open problems.

\subsection{Basic notations}\label{subsec:notation}

\subsubsection*{Numbers}

We write $\mathbb{N} = \{1,2,\ldots \}$. Unless otherwise specified, all logarithms in this paper are taken with respect to the base $e$. For $a \in \mathbb{R}$, we use $\lfloor a \rfloor$ to denote the largest integer not greater than $a$.

Throughout the paper, let $d \geq 2$ be the dimension of the space and let the integers
\begin{equation}\label{eq:def-dimenion-constant}
\mathfrak q = \lfloor \frac{d}{10} \rfloor \quad \mbox{and} \quad \mathfrak r = \lfloor \sqrt d \rfloor.
\end{equation}
Constants such as $c, c', C, C'$ may change from place to place, while constants with subscripts like $c_1,C_1$ remain fixed throughout the article. The dependence of constants on additional variables including the dimension $d$ will be indicated at their first occurrence. 

\subsubsection*{Subsets of the space}

We consider the space $\mathbb{R}^d$ with $d \geq 2$. For $z \in \mathbb{R}^d$, we write $z = (z_1,\ldots,z_d)$ for its coordinates. For $1 \leq i \leq d$, let $\mathbf e_i$ be the $i$-th standard basis vector. We use the notation $|\cdot|_1$, $|\cdot|_2$, and $|\cdot|_\infty$ to represent the $l^1$-, $l^2$-, and $l^\infty$-norms, respectively. We denote the corresponding distances by $\mathfrak d_1$, $\mathfrak d_2$, and $\mathfrak d_\infty$. Note that $\mathfrak d_\infty \leq \mathfrak d_2 \leq \mathfrak d_1$. For a set $A \subset \mathbb{R}^d$ and $r>0$, let
\begin{equation*}
    B_r(A)= \{z \in \mathbb{R}^d: \mathfrak d_2(z,A) <r \}.
\end{equation*}
We use $\Vol(A)$ to denote the volume of $A$ with respect to the Lebesgue measure. 

For a subset $A \subset \mathbb{Z}^d$, we use $|A|$ to denote its cardinality. In this paper, we will consider paths on $\mathbb{Z}^d$ or on the rescaled lattice. Unless otherwise specified, all paths are assumed to be self-avoiding and nearest-neighbor. For a self-avoiding discrete path $P$, its length refers to the number of vertices it contains, i.e., $|P|$. We use $x \in P$ to represent a vertex in $P$ and use $e \in P$ to refer to an edge joining a pair of neighboring vertices in $P$. We may also use the notation $x \in e$ to indicate that the vertex $x$ is an endpoint of the edge $e$. For an edge $e$ in $\mathbb{Z}^d$, we use $e \cap P \neq \emptyset$ to indicate that $e$ intersects $P$, i.e., some endpoint of $e$ is a vertex in $P$. We use $\overline{P}$ to denote the continuous path obtained by connecting neighboring vertices in $P$. 

\medskip
\noindent\textbf{Acknowledgements.} 
We thank the anonymous referees for their careful reading and many helpful comments. J.D. is partially supported by NSFC Key Program Project No.12231002 and an Explorer Prize. E.G. was partially supported by NSF grant DMS-2245832. Z.Z.\ is partially supported by NSF grant DMS-1953848.

\section{Branching random walk}
\label{sec:brw}

In this section, we study the branching random walk and prove Theorem~\ref{thm:BRW-thick}. 

The proof strategy is as follows. We first construct a set of discrete paths on the rescaled lattice that go from $\{z : |z|_1 = 1 + \frac27 \}$ to far
away such that when the dimension is sufficiently high, two paths chosen uniformly from this set typically have few intersections. Specifically, for integers $n \geq 0$ and $M \geq 10$, we define $\mathcal{P}^{n,M}$ in Section~\ref{subsec:BRW-paths} consisting of self-avoiding paths on $8^{-n}\mathbb{Z}^d$ that connect $\{z : |z|_1 = 1 + \frac27\}$ to $\{z : |z|_1 = M - \frac27 \}$. For an integer $k \geq 0$, we call a path $P \in \mathcal{P}^{n,M}$ \textbf{$k$-good} if 
\begin{equation}\label{eq:brw-good}
a_{m, \mathcal{B}_m(x)} \geq \alpha \quad \mbox{for all $m \in [k, n] \cap \mathbb{Z}$ and $x \in P$}.
\end{equation}
Recall the definition of $\{a_{j,B}\}$ and $\mathcal{R}_n$ from Section~\ref{sec:brw}. This condition ensures that $\mathcal{R}_m(x) - \mathcal{R}_k(x) \geq \alpha (m-k)$ for all $m \in [k, n] \cap \mathbb{Z}$. Next, we count the number of good paths in $\mathcal{P}^{n,M}$. By comparing the first and second moments of the number of good paths, we prove the following proposition.

\begin{proposition}\label{prop:BRW-thick-point}
    For any fixed $\alpha>0$, the following holds for all sufficiently large $d$. There exists a function $\epsilon : \mathbb{N} \rightarrow (0,1)$ depending on $\alpha$ and $d$ with $\lim_{k \rightarrow \infty} \epsilon(k) = 0$ such that for all integers $n \geq k \geq 1$ and $M \geq 10$, with probability at least $1-\epsilon(k)$, there exists a $k$-good path in $\mathcal{P}^{n,M}$.
\end{proposition}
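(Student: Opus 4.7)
The plan is to apply the second moment method to the random variable $N := |\{P \in \mathcal{P}^{n,M} : P \text{ is $k$-good}\}|$. Setting $p := \mathbb{P}[a_{0,B} \geq \alpha] \in (0,1)$, and using that the $\{a_{m,B}\}$ are independent across all pairs $(m, B)$, we have
\[
\mathbb{P}[P \text{ is $k$-good}] = \prod_{m=k}^{n} p^{N_m(P)}, \qquad N_m(P) := |\{B \in \mathcal{B}_m : B \cap P \neq \emptyset\}|.
\]
The first step is to bound the first moment. I would use the hierarchical construction of $\mathcal{P}^{n,M}$ in Section~\ref{subsec:BRW-paths} to give uniform upper bounds on $N_m(P)$ and a lower bound on $|\mathcal{P}^{n,M}|$. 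Since each path has physical length of order $M$, one expects $N_m(P) \leq CM \cdot 2^m$ for all $m \leq n$, so the per-path $k$-good probability is at least $p^{CM \cdot 2^{n+1}}$. On the other hand, refining at each of the $n$ scales multiplies the path count by a factor growing with $d$, leading to $|\mathcal{P}^{n,M}| \geq \mu_d^{cM \cdot 8^n}$ with $\mu_d \to \infty$ as $d \to \infty$. Taking $d$ large depending on $\alpha$ then makes $\mathbb{E}[N]$ exponentially large in $M \cdot 8^n$.

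The crucial estimate is the second moment. Writing $J_m(P_1, P_2) := |\{B \in \mathcal{B}_m : B \cap P_1 \neq \emptyset \text{ and } B \cap P_2 \neq \emptyset\}|$ for the number of boxes visited by both paths at scale $m$, independence gives
\[
\mathbb{P}[P_1, P_2 \text{ both $k$-good}] = \mathbb{P}[P_1 \text{ good}] \cdot \mathbb{P}[P_2 \text{ good}] \cdot \prod_{m=k}^{n} p^{-J_m(P_1,P_2)}.
\]
The rationale for using the dyadic factor $8$ (rather than $2$) between scales in the construction of $\mathcal{P}^{n,M}$ is to leave enough room that refinements of paths which have diverged at scale $j$ remain separated at all finer scales: this should force $J_m(P_1, P_2) = O(1)$ for $m > j$ whenever $P_1, P_2$ first differ at scale $j$. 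Decomposing $\mathbb{E}[N^2]/(\mathbb{E}[N])^2$ by the branching scale $j$ of the pair, the fraction of pairs diverging at scale $j$ decays geometrically in $j$, and this should beat the correlation penalty $p^{-O(n-j)}$ when $d$ is large enough. The resulting bound $\mathbb{E}[N^2] \leq C(\mathbb{E}[N])^2$, combined with Paley--Zygmund, would give the constant lower bound $\mathbb{P}[N > 0] \geq 1/C$.

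To upgrade the constant lower bound to $1 - \epsilon(k)$ with $\epsilon(k) \to 0$, I would exploit independence across disjoint sub-regions. The event $\{N \geq 1\}$ depends only on $\{a_{m,B} : m \geq k\}$, and its analogue localized to a scale-$k'$ box $B_0 \in \mathcal{B}_{k'}$ (for any $k' \leq k$) depends only on variables supported inside $B_0$; thus these localized events are independent across disjoint scale-$k'$ boxes. Each local event succeeds with probability at least $1/C$ by the previous step, and if a positive fraction of the $\Theta(2^{k'd})$ such boxes succeed, a standard concatenation argument at the coarse scale $k'$ produces a global $k$-good path in $\mathcal{P}^{n,M}$. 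The failure probability is then at most $(1 - 1/C)^{c \cdot 2^{k'd}}$, tending to zero as $k' \to \infty$, which supplies the needed $\epsilon(k)$. The main obstacle is the second moment bound itself: the $O(1)$ estimate on $J_m(P_1,P_2)$ above the divergence scale must be extracted from the explicit refinement procedure, and the series in $j$ must be shown to converge uniformly in $k, n, M$.
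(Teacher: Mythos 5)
Your overall second-moment strategy is the right one, and you have correctly identified that the dyadic factor $8$ exists to separate refinements. However, there are two genuine gaps.

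First, the claim that $J_m(P_1,P_2)=O(1)$ for $m>j$ whenever $P_1,P_2$ first differ at scale $j$ is false. If $P_{1,j-1}=P_{2,j-1}$ but $P_{1,j}\neq P_{2,j}$, the two scale-$j$ refinements of the same path in $\mathcal{P}^{j-1,M}$ can still coincide on almost all of their $\mathcal{L}_j$ vertices (they might, say, choose different edge-refinements for only a single edge of $P_{j-1}$). The overlap then persists to finer scales, and $J_m$ stays huge. Because $\mathcal{P}^{n,M}$ is not a tree in the combinatorial sense---two paths with the same refinement choices for most edges share most of their vertices at all scales---a ``decompose by branching time'' argument with a geometric tail does not apply. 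The actual mechanism in the paper is stochastic: $Y_i:=|P_{1,i}\cap P_{2,i}|$ is conditionally stochastically dominated by $11(Z_1+\cdots+Z_{2Y_{i-1}})$ with $Z_l\sim\mathrm{Bernoulli}(50/(\mathfrak{q}-10))$ (Lemma~\ref{lem:intersect-BRW}), and one controls the second moment via a recursive moment-generating-function bound (Lemma~\ref{lem:sequence}), together with the purely geometric inequality $\sum_{j\ge k}|\mathcal{B}_j(P)\cap\mathcal{B}_j(Q)|\le 100\sum_{j\ge\lfloor k/4\rfloor}|P_j\cap Q_j|$ to pass from boxes to path-vertex intersections. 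Extracting the ``$O(1)$'' you want from the construction is not a matter of effort: it is simply not true, and without replacing it by the Markov-chain MGF analysis the second moment bound does not close.

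Second, the localization/concatenation step for upgrading $\mathbb{P}[N>0]\ge 1/C$ to $1-\epsilon(k)$ is not justified and is in fact unnecessary. Even granting independence across scale-$k'$ boxes, a positive density of boxes containing a local good crossing does not produce a single global $k$-good path in $\mathcal{P}^{n,M}$: the local paths must cross their boxes from face to face, the boxes with good crossings must themselves percolate (which needs a threshold argument), and the good sub-paths in neighboring boxes must match up at the common boundary so that the union is an actual path in $\mathcal{P}^{n,M}$. The paper sidesteps all of this because the second moment bound itself depends on $k$: replacing your unweighted $N$ by the weighted count $\mathcal{N}=\sum_P \mathbb{P}[P\text{ is }k\text{-good}]^{-1}\mathbbm{1}\{P\text{ is }k\text{-good}\}$ (which has $\mathbb{E}\mathcal{N}=|\mathcal{P}^{n,M}|$ exactly, removing any need to control per-path probability variation), one obtains $\mathbb{E}\mathcal{N}^2/(\mathbb{E}\mathcal{N})^2\le \frac{1-c_1}{1-c_1(1+2^{-\lfloor k/4\rfloor})}$, which tends to $1$ as $k\to\infty$. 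Intuitively, since the good condition only starts at scale $k$, by the time the penalty kicks in the intersection chain $Y_j$ has already decayed, so the MGF factor $1+2^{-k/4}$ applied to the geometric $Y_0$ is negligible. Thus $1-\epsilon(k)$ comes directly from Cauchy--Schwarz with no concatenation needed.
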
 
\noindent Finally, by taking a subsequential limit of the good paths in $\mathcal{P}^{n,M}$, we obtain a continuous path connecting to infinity and all points on it are $\alpha$-thick points, thereby proving Theorem~\ref{thm:BRW-thick}.

The structure of this section is as follows. In Section~\ref{subsec:BRW-paths}, we construct the set of paths $\mathcal{P}^{n,M}$ and collect its properties. In Section~\ref{subsec:BRW-estimate}, we use first and second moment estimates to prove Proposition~\ref{prop:BRW-thick-point}. In Section~\ref{subsec:sec2-final-proof}, we prove Theorem~\ref{thm:BRW-thick}.

\subsection{Construction of paths}\label{subsec:BRW-paths}

In this subsection, for integers $n \geq 0$ and $M \geq 10$, we construct a set of self-avoiding paths $\mathcal{P}^{n,M}$ on $8^{-n} \mathbb{Z}^d$. Each path in $\mathcal{P}^{n,M}$ connects $\{z : |z|_1 = 1 + \frac27\}$ to $\{z : |z|_1 = M - \frac27\}$ (Lemma~\ref{lem:basic-property-mathcalP}), and two paths chosen uniformly from $\mathcal{P}^{n,M}$ will typically have few intersections (Lemma~\ref{lem:intersect-BRW}). We also collect some other properties of these paths that will be used in the proof of Proposition~\ref{prop:BRW-thick-point}.

The construction involves two steps: first we let $\mathcal{P}^{0,M}$ consist of oriented paths on $\mathbb{Z}^d$ connecting $\{z : |z|_1 = 1\}$ to $\{z : |z|_1 = M\}$, and then we inductively refine $\mathcal{P}^{0,M}$ on $8^{-n} \mathbb{Z}^d$ for each $n \geq 1$. Recall that for $1 \leq i \leq d$, $ \mathbf e_i$ is the $i$-th standard basis vector.

\begin{definition}[$\mathcal{P}^{0, M}$]\label{def:P0-brw}
    For an integer $M \geq 10$, let $\mathcal{P}^{0, M}$ consist of the paths of the form $( \mathbf e_{i_1},  \mathbf e_{i_1} +  \mathbf e_{i_2}, \ldots, \sum_{k=1}^m  \mathbf e_{i_k})$, where $(i_1, \ldots, i_M)$ is any sequence from $\{1, \ldots, d\}^M$. 
\end{definition}

Then, $\mathcal{P}^{0, M}$ consists of oriented paths on $\mathbb{Z}^d$ from $\{z : |z|_1 = 1\}$ to $\{z : |z|_1 = M\}$ with length $M$. The cardinality $|\mathcal{P}^{0, M}| = d^M$. The following lemma is deduced from the fact that random walk is transient in $d \geq 3$.

\begin{lemma}\label{lem:high-d-intersect}
    For any $d \geq 3$, there exists $c_1 = c_1(d) \in (0,1)$ such that for all $M \geq 10$ and $P, Q$ independently chosen uniformly from $\mathcal{P}^{0, M}$, the random variable $|P \cap Q|$ is stochastically dominated by a geometric random variable $G$ with success probability $c_1$, i.e., $\mathbb{P}[G \geq k] = c_1^k$ for all integers $k \geq 0$.
\end{lemma}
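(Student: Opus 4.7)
The key structural observation is a synchronization property: every vertex in a path $P = (X_1, \ldots, X_M) \in \mathcal{P}^{0,M}$ satisfies $|X_m|_1 = m$, since each step is a positive coordinate direction. Consequently, if $P$ and $Q = (Y_1, \ldots, Y_M)$ share a vertex, that vertex must occupy the same index $m$ in both paths. Hence
$$
|P \cap Q| = \sum_{m=1}^{M} \mathbf{1}\{X_m = Y_m\}.
$$

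Writing $X_m = \sum_{k=1}^m \mathbf{e}_{I_k}$ and $Y_m = \sum_{k=1}^m \mathbf{e}_{J_k}$ with $\{I_k, J_k\}_{k \geq 1}$ i.i.d.\ uniform on $\{1, \ldots, d\}$, I would consider the difference walk $Z_m := X_m - Y_m$. This is a random walk on $\mathbb{Z}^d$ starting at the origin, with i.i.d.\ increments $\mathbf{e}_I - \mathbf{e}_J$. Vertex intersections of $P$ and $Q$ correspond exactly to returns of $Z$ to $0$ at positive times, so
$$
|P \cap Q| \;\leq\; N \;:=\; |\{m \geq 1 : Z_m = 0\}|.
$$
Iterating the strong Markov property at successive return times of $Z$ to $0$ gives $\mathbb{P}[N \geq k] = p^k$ for every $k \geq 0$, where $p := \mathbb{P}[Z_m = 0 \text{ for some } m \geq 1]$. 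Therefore the entire lemma reduces to showing transience of $Z$, i.e.\ $p < 1$, in which case one takes $c_1 = p$.

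To establish transience, I would observe that the increments of $Z$ are orthogonal to $(1, \ldots, 1)$, so $Z$ lives on the $(d-1)$-dimensional sublattice $\{z \in \mathbb{Z}^d : \sum_i z_i = 0\}$. On this sublattice the step distribution has zero mean, finite variance, and (an easy direct check) a nondegenerate covariance. The Chung--Fuchs theorem then gives transience as soon as the effective dimension $d-1$ is at least $3$. Equivalently, one can deduce $p<1$ quantitatively from the local central limit theorem estimate $\mathbb{P}[Z_m = 0] \lesssim m^{-(d-1)/2}$ together with the identity $\mathbb{E}[N] = p/(1-p) = \sum_{m \geq 1} \mathbb{P}[Z_m = 0]$: the right side is finite for $d$ large, forcing $p<1$.

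The main obstacle here is conceptual rather than technical: one must recognize that the rigid ordering of $\mathcal{P}^{0,M}$ (via the $\ell^1$-norm) collapses the two-path intersection problem to a one-walk return problem on a lattice of dimension $d-1$. Once this reduction is identified, the remainder is standard random-walk theory. The only genuinely delicate point is confirming the nondegeneracy of the covariance on the sublattice and, in lower dimensions (borderline $d-1=2$), that one does indeed have transience on the appropriate sublattice; for the regime of large $d$ that is relevant to the rest of the paper, transience is robust and the constant $c_1(d)$ can even be made to tend to $0$ as $d \to \infty$ via the local CLT bound above, a fact that will be convenient later when the dimension is taken large.
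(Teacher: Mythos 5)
Your proposal takes essentially the same route as the paper: reduce $|P\cap Q|$ to the number of positive-time returns to the origin of the difference walk $Z_m = X_m - Y_m$ (with increments $\mathbf{e}_I - \mathbf{e}_J$), apply the strong Markov property to get geometric tails, and appeal to transience. The paper phrases the reduction slightly differently (by first dominating $|P\cap Q|$ by $|X_1 \cap X_2 \setminus\{0\}|$ for two infinite oriented walks before differencing), but the heart of the argument is identical.

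Where you are actually \emph{more} careful than the paper: you note explicitly that $Z$ lives on the $(d-1)$-dimensional sublattice $\{z : \sum_i z_i = 0\}$, so the relevant effective dimension for the Chung--Fuchs criterion is $d-1$, not $d$. The paper simply cites a transience theorem for $d\geq 3$, but that theorem presupposes irreducibility on $\mathbb{Z}^d$, which fails here since the walk is confined to a hyperplane. Your caution at the ``borderline $d-1=2$'' case is warranted: for $d=3$ the difference walk is a genuinely $2$-dimensional mean-zero, finite-variance walk and is therefore \emph{recurrent}, so the lemma as stated (with its uniformity in $M$) actually fails at $d=3$, and the paper's proof shares this gap. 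This is harmless for the paper, which only invokes the lemma for $d$ large, but it means the hypothesis should really read $d\geq 4$. One tiny further remark: your identity $\mathbb{E}[N] = p/(1-p)$ already \emph{presumes} $p<1$ when rearranged from the geometric law, so it does not give an independent proof of $p<1$; the clean route is simply $\mathbb{E}[N] = \sum_{m\geq 1}\mathbb{P}[Z_m=0] < \infty$, which forces $N<\infty$ a.s.\ and hence $p<1$.
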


\begin{proof}
    Consider two independent oriented random walks $X_1$ and $X_2$ starting from 0 (we will slightly abuse the notation below by denoting $X_i$ also as the range of the walk). That is, for $i \in \{1,2\}$ and each $n \geq 0$, $X_i(n+1) - X_i(n)$ is independently chosen from $\{\mathbf e_1, \ldots, \mathbf e_d\}$. It suffices to show that $|X_1 \cap X_2 \setminus \{0\}|$ is stochastically dominated by a geometric random variable as $|P \cap Q|$ is stochastically dominated by $|X_1 \cap X_2 \setminus \{0\}|$.
    
    Since $|X_1(n)|_1 = |X_2(n)|_1 = n$, we have $|X_1 \cap X_2 \setminus \{0\}| = |\{n \geq 1 : X_1(n) = X_2(n)\}|$. Now we show that $\mathbb{P}[X_1(n) \neq X_2(n)\mbox{ for all $n\geq 1$}] > 0$. Note that $X_1(n) - X_2(n)$ can be viewed as a random walk where each step has the law $\mathbf e_i - \mathbf e_j$ with $i,j$ chosen uniformly from $\{1,\ldots, d\}^2$. This random walk is transient in $d \geq 3$, see e.g., \cite[Theorem 4.1.1]{lawler-limic-walks}. Therefore, $\mathbb{P}[X_1(n) \neq X_2(n)\mbox{ for all $n\geq 1$}] > 0$. Combined with the Markov property, this yields that $|X_1 \cap X_2 \setminus \{0\}|$ is stochastically dominated by a geometric random variable.
\end{proof}

Next, we inductively refine the paths in $\mathcal{P}^{0,M}$ on $ 8^{-n} \mathbb{Z}^d$ for each $n \geq 1$.
To this end, we need a geometric lemma to construct a collection of disjoint paths in a tube. A subset $T \subset \mathbb{Z}^d$ is called a \textbf{tube} if there exists $x \in 8 \mathbb{Z}^d$ and $y = x + 8 \epsilon \mathbf e_i$ for some $\epsilon \in \{ -1, + 1\}$ and $1 \leq i \leq d$ such that $T = \{z \in \mathbb{Z}^d: \mathfrak d_1(z, \ell) \leq 2\}$, where $\ell$ is the straight line connecting $x$ and $y$ (see Figure~\ref{fig:subpath-tube}). We will denote the tube as $T_{x,y}$ to emphasize its dependence on $x$ and $y$. In particular, $T_{x,y} = T_{y,x}$. The notation of a tube and the following lemma
naturally extend to any rescaled lattice.

\begin{figure}[h]
\centering
\includegraphics[scale=0.8]{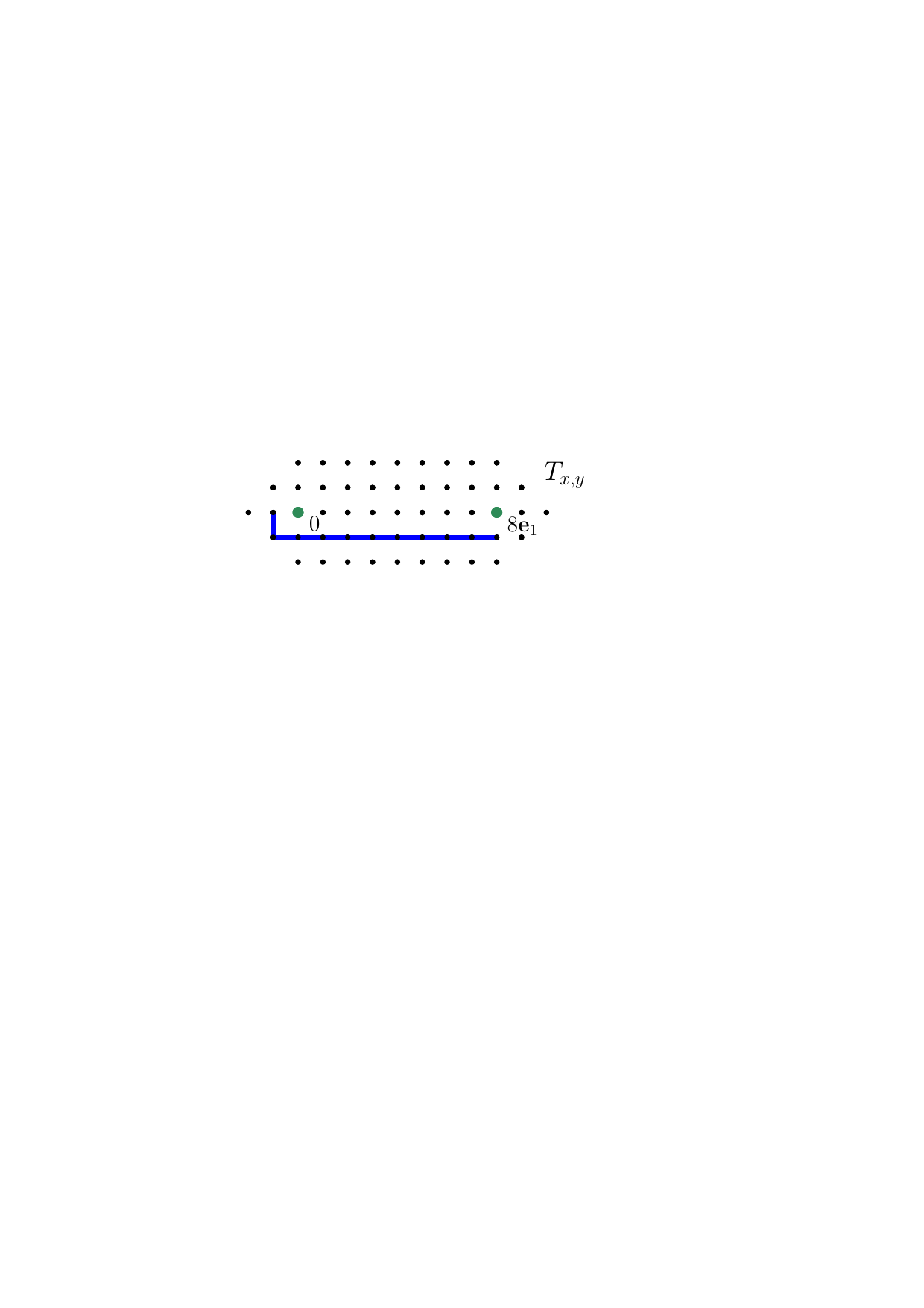}
\caption{A self-avoiding path of length 11 in a tube $T_{x,y}$ with $x = 0$ and $y = 8 \mathbf e_1$ (the two green points). Its starting point has $|\cdot|_1$-distance 1 to $x$, and its ending point has $|\cdot|_1$-distance 1 to $y$.}
\label{fig:subpath-tube}
\end{figure}

\begin{lemma}\label{lem:path-tube}
Recall that $\mathfrak q = \lfloor \frac{d}{10} \rfloor$ and consider a tube $T_{x,y}$ on $\mathbb{Z}^d$. For any vertex $z \in T_{x,y}$ with $|x-z|_1 = 1$, there exists a set of $\mathfrak q$ self-avoiding paths $\mathcal{T}_{x,y}^z$ in $T_{x,y}$ such that each path has length 11, starts from $z$, and ends at a vertex with $|\cdot|_1$-distance 1 to $y$; see Figure~\ref{fig:subpath-tube}. Moreover, different paths in $\mathcal{T}_{x,y}^z$ intersect only at their starting point $z$. 

If we do not fix the starting point $z$, there exists a set of $\mathfrak q$ disjoint self-avoiding paths $\mathcal{T}_{x,y}$ such that each path has length 11, starts at a vertex with $|\cdot|_1$-distance 1 to $x$, and ends at a vertex with $|\cdot|_1$-distance 1 to $y$.

We further require that 
\begin{equation}\label{eq:path-condition-brw}
\begin{aligned}
&\mbox{all paths in $\mathcal{T}_{x,y}$ (resp.\ $\mathcal{T}_{x,y}^z$) start in $B_x$ (resp.\ $B_x \cup B_z$),}\\
&\mbox{stay in $B_x \cup B_y$ (resp.\ $B_x \cup B_y \cup B_z$), and end in $B_y$},
\end{aligned}
\end{equation}
where $B_\bullet := \bullet + [0,8)^d$ for $\bullet \in \{x,y,z\}$. 
\end{lemma}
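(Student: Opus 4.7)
The plan is to give an explicit construction, exploiting the fact that the tube admits $d-1 \gg \mathfrak{q}$ transverse ``lanes''. By translation and permutation of coordinates (and by relabeling $y$ as $-y$ if needed), I may assume $x = 0$ and $y = 8\mathbf{e}_1$, so $T_{x,y}$ consists of the lattice points at $l^1$-distance at most $2$ from the segment $\{t\mathbf{e}_1 : t \in [0,8]\}$. For $z_1 \in [0,8]$, the transverse cross-section is an $l^1$-ball of radius $2$ in the coordinates $(z_2, \ldots, z_d)$ and in particular contains the single-coordinate lanes indexed by $j \in \{2, \ldots, d\}$. The idea is to route one path per lane.

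For the disjoint-paths statement (no fixed starting point), I will take
\[
P_j := \big(\mathbf{e}_j,\; 2\mathbf{e}_j,\; 2\mathbf{e}_j + \mathbf{e}_1,\; 2\mathbf{e}_j + 2\mathbf{e}_1,\; \ldots,\; 2\mathbf{e}_j + 8\mathbf{e}_1,\; \mathbf{e}_j + 8\mathbf{e}_1\big), \quad j = 2, \ldots, \mathfrak{q}+1.
\]
Each $P_j$ is self-avoiding with $11$ vertices, starts at the neighbor $\mathbf{e}_j$ of $x$, ends at the neighbor $y + \mathbf{e}_j$ of $y$, and has maximal transverse $l^1$-distance exactly $2$. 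Distinct $P_j$'s are disjoint because only the $j$-th coordinate is ever nonzero outside the $\mathbf{e}_1$ direction, and every vertex has coordinates in $\{0, \ldots, 8\}$, so it lies in $B_x \cup B_y$.

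For the fixed-starting-point statement, write the given neighbor as $z = \sigma \mathbf{e}_{j_0}$ with $\sigma \in \{\pm 1\}$. When $j_0 \geq 2$ (say $\sigma = +1$; the opposite sign is mirror-symmetric), I will take
\[
P_j^z := \big(\mathbf{e}_{j_0},\; \mathbf{e}_{j_0} + \mathbf{e}_j,\; \mathbf{e}_j,\; \mathbf{e}_j + \mathbf{e}_1,\; \ldots,\; \mathbf{e}_j + 8\mathbf{e}_1\big)
\]
for $\mathfrak{q}$ choices of $j \in \{2, \ldots, d\} \setminus \{j_0\}$; these paths share only the vertex $z$. When $z = -\mathbf{e}_1$, the geodesic $(-\mathbf{e}_1,\, -\mathbf{e}_1 + \mathbf{e}_j,\, \mathbf{e}_j,\, \mathbf{e}_j + \mathbf{e}_1,\, \ldots,\, \mathbf{e}_j + 8\mathbf{e}_1)$ has the required length $11$. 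The delicate case is $z = \mathbf{e}_1$, where the geodesic to a neighbor of $y$ has only $9$ vertices, two short of $11$; I will insert a two-vertex detour inside the same $j$-th lane:
\[
P_j^z := \big(\mathbf{e}_1,\; \mathbf{e}_1 + \mathbf{e}_j,\; \mathbf{e}_1 + 2\mathbf{e}_j,\; 2\mathbf{e}_1 + 2\mathbf{e}_j,\; 2\mathbf{e}_1 + \mathbf{e}_j,\; 3\mathbf{e}_1 + \mathbf{e}_j,\; \ldots,\; 8\mathbf{e}_1 + \mathbf{e}_j\big).
\]
Keeping the detour inside the $j$-th lane preserves the disjointness of distinct $P_j^z$'s.

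What remains is routine bookkeeping: in each case one verifies that (i) each path has exactly $11$ vertices and is self-avoiding, (ii) every vertex is within transverse $l^1$-distance $2$ of $\ell$ (hence in $T_{x,y}$), (iii) distinct paths meet only at $z$ because every vertex besides $z$ has nonzero $j$-th coordinate while $k$-th coordinates for $k \notin \{1, j_0, j\}$ all vanish, and (iv) all coordinates lie in $\{-1, 0, \ldots, 8\}$, with excursions outside $B_x$ occurring only at vertices whose $j_0$-th coordinate equals $-1$, which then lie in $B_z$. The only genuine obstacle is the length mismatch in the $z = \mathbf{e}_1$ sub-case, which the self-contained lane detour resolves; the inequality $\mathfrak{q} \leq d-1$ is extremely loose, so there is never a shortage of lanes to choose from.
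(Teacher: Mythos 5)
Your construction is correct and follows essentially the same route as the paper: an explicit case-by-case construction that assigns each of the $\mathfrak q$ paths to its own transverse lane indexed by a coordinate $j$, with a short lateral hop or detour to absorb the parity/length deficit. The only difference is bookkeeping: the paper normalizes $z=\pm\mathbf e_1$ and enumerates the directions of $y$, padding length by switching lanes ($\mathbf e_{2t}\to\mathbf e_{2t+1}$) at the terminal end, whereas you normalize $y=8\mathbf e_1$, enumerate the position of $z$, and pad by a self-contained two-step detour inside the same lane at the start — both yield the disjointness and containment properties in~\eqref{eq:path-condition-brw} in the same way.
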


\begin{proof}
    Without loss of generality, assume that $x = 0$. First, we construct $\mathcal{T}_{x,y}$. By the symmetry of coordinates, it suffices to consider the following two cases:
    \begin{enumerate}
        \item $y = 8 \mathbf e_1$: We take $\mathcal{T}_{x,y} = \{ ( \mathbf e_{2t}, \mathbf e_1 + \mathbf e_{2t}, 2\mathbf e_1 + \mathbf e_{2t}, \ldots, 8\mathbf e_1 + \mathbf e_{2t}, 8\mathbf e_1 + \mathbf e_{2t} + \mathbf e_{2t+1}, 8\mathbf e_1 + \mathbf e_{2t+1} ): 1 \leq t \leq \mathfrak q \}$.

        \item $y = -8  \mathbf e_1$: We take $\mathcal{T}_{x,y} = \{ (  \mathbf e_{2t},  -  \mathbf e_1 +  \mathbf e_{2t}, - 2 \mathbf e_1 +  \mathbf e_{2t}, \ldots, - 8 \mathbf e_1 +  \mathbf e_{2t}, - 8 \mathbf e_1 +  \mathbf e_{2t} +  \mathbf e_{2t+1}, - 8 \mathbf e_1 +  \mathbf e_{2t+1}): 1 \leq t \leq \mathfrak q \}$.
    \end{enumerate}
    We can check that $\mathcal{T}_{x,y}$ satisfies~\eqref{eq:path-condition-brw}. 

    Next, we construct $\mathcal{T}_{x,y}^z$. By definition, we have $z = \pm  \mathbf e_i$ for some $1 \leq i \leq d$. Without loss of generality, assume that $i=1$. When $z =  \mathbf e_1$ (in particular, $B_z = B_x$), there are four cases for $y$:
    \begin{enumerate}
            \item $y = 8  \mathbf e_1$: Take $\mathcal{T}_{x,y}^z = \{ (  \mathbf e_1,  \mathbf e_1 +  \mathbf e_{2t}, 2 \mathbf e_1 +  \mathbf e_{2t}, \ldots, 8 \mathbf e_1 +  \mathbf e_{2t}, 8 \mathbf e_1 +  \mathbf e_{2t} +  \mathbf e_{2t+1}, 8 \mathbf e_1 +  \mathbf e_{2t+1}): 1 \leq t \leq \mathfrak q \}$.
            \item $y = -8  \mathbf e_1$: Take $\mathcal{T}_{x,y}^z = \{ (  \mathbf e_1,  \mathbf e_1 +  \mathbf e_t,  \mathbf e_t, - \mathbf e_1 +  \mathbf e_t, \ldots, -8 \mathbf e_1 +  \mathbf e_t): 2 \leq t \leq \mathfrak q + 1 \}$.
            
            \item $y = 8  \mathbf e_j$ with $j \neq 1$: Without loss of generality, assume that $j = 2$. We take $\mathcal{T}_{x,y}^z = \{ (  \mathbf e_1,  \mathbf e_1 +  \mathbf e_t,  \mathbf e_1 +  \mathbf e_2 +  \mathbf e_t,  \mathbf e_1 + 2 \mathbf e_2 +  \mathbf e_t, \ldots,  \mathbf e_1 + 8 \mathbf e_2 +  \mathbf e_t,  8 \mathbf e_2 +  \mathbf e_t ): 3 \leq t \leq \mathfrak q + 2 \}$.

            \item $y = -8  \mathbf e_j$ with $j \neq 1$: Without loss of generality, assume that $j = 2$. We take $\mathcal{T}_{x,y}^z = \{ (  \mathbf e_1,  \mathbf e_1 +  \mathbf e_t,  \mathbf e_1 -  \mathbf e_2 +  \mathbf e_t,  \mathbf e_1 - 2 \mathbf e_2 +  \mathbf e_t, \ldots,  \mathbf e_1 - 8 \mathbf e_2 +  \mathbf e_t,  - 8 \mathbf e_2 +  \mathbf e_t ): 3 \leq t \leq \mathfrak q + 2 \}$.
    
        \end{enumerate}
    When $z = -  \mathbf e_1$ (in particular, $B_z \neq B_x$), we again consider the four cases for $y$:
    \begin{enumerate}
            \item $y = 8  \mathbf e_1$: Take $\mathcal{T}_{x,y}^z = \{ ( -  \mathbf e_1, -  \mathbf e_1 +  \mathbf e_t,  \mathbf e_t,  \mathbf e_1 +  \mathbf e_t, \ldots, 8 \mathbf e_1 +  \mathbf e_t): 2 \leq t \leq \mathfrak q + 1\}$.
            \item $y = -8  \mathbf e_1$: $\mathcal{T}_{x,y}^z = \{ ( -  \mathbf e_1, -  \mathbf e_1 +  \mathbf e_{2t}, - 2 \mathbf e_1 +  \mathbf e_{2t}, \ldots, - 8 \mathbf e_1 +  \mathbf e_{2t}, - 8 \mathbf e_1 +  \mathbf e_{2t} +  \mathbf e_{2t+1}, - 8 \mathbf e_1 +  \mathbf e_{2t+1}): 1 \leq t \leq \mathfrak q \}$.
            
            \item $y = 8  \mathbf e_j$ with $j \neq 1$: Without loss of generality, assume that $j = 2$. We take $\mathcal{T}_{x,y}^z = \{ ( -  \mathbf e_1, -  \mathbf e_1 +  \mathbf e_t,  \mathbf e_t,  \mathbf e_2 +  \mathbf e_t, 2  \mathbf e_2 +  \mathbf e_t, \ldots, 8 \mathbf e_2 +  \mathbf e_t ): 3 \leq t \leq \mathfrak q + 2 \}$.

            \item $y = -8  \mathbf e_j$ with $j \neq 1$: Without loss of generality, assume that $j = 2$. We take $\mathcal{T}_{x,y}^z = \{ ( -  \mathbf e_1, -  \mathbf e_1 +  \mathbf e_t,  \mathbf e_t, - \mathbf e_2 +  \mathbf e_t, - 2 \mathbf e_2 +  \mathbf e_t, \ldots, - 8 \mathbf e_2 +  \mathbf e_t ): 3 \leq t \leq \mathfrak q + 2 \}$.
    
    \end{enumerate}
    We can check that all these $\mathcal{T}_{x,y}^z$ satisfy~\eqref{eq:path-condition-brw}. 
\end{proof}

Now we give the definition of the refinements of a self-avoiding path on $ 8^{-j} \mathbb{Z}^d$. We collect some basic properties in Lemma~\ref{lem:property-refine}.

\begin{definition}[Refinements]\label{def:refine}
    Let $j \geq 0$ be an integer. For two neighboring vertices $x$ and $y$ in $ 8^{-j} \mathbb{Z}^d$, let $T_{x,y}$ be the tube defined with respect to them on $  8^{-j-1} \mathbb{Z}^d$. The refinements of the edge $(x,y)$ are self-avoiding paths in $T_{x,y}$ with length $11$ whose starting and ending points have $|\cdot|_1$-distance $ 8^{-j-1}$ to $x$ and $y$, respectively; see Figure~\ref{fig:subpath-tube}.
    For a self-avoiding path $P = (x_1, \ldots, x_L)$ on $ 8^{-j} \mathbb{Z}^d$, a path on $ 8^{-j-1} \mathbb{Z}^d$ is called its refinement if 
    \begin{enumerate}
        \item It consists chronologically of the refinements of $(x_i,x_{i+1})$ for $1 \leq i \leq L - 1$;
        \item For all $1 \leq i \leq L-2$, the end point of the refinement of $(x_i, x_{i+1})$ coincides with the start point of the refinement $(x_{i+1}, x_{i+2})$, and moreover, these two refinements do not intersect at other vertices.
    \end{enumerate}
    See Figure~\ref{fig:refinement-tube} for an illustration.

\begin{figure}[h]
\centering
\includegraphics[scale=0.8]{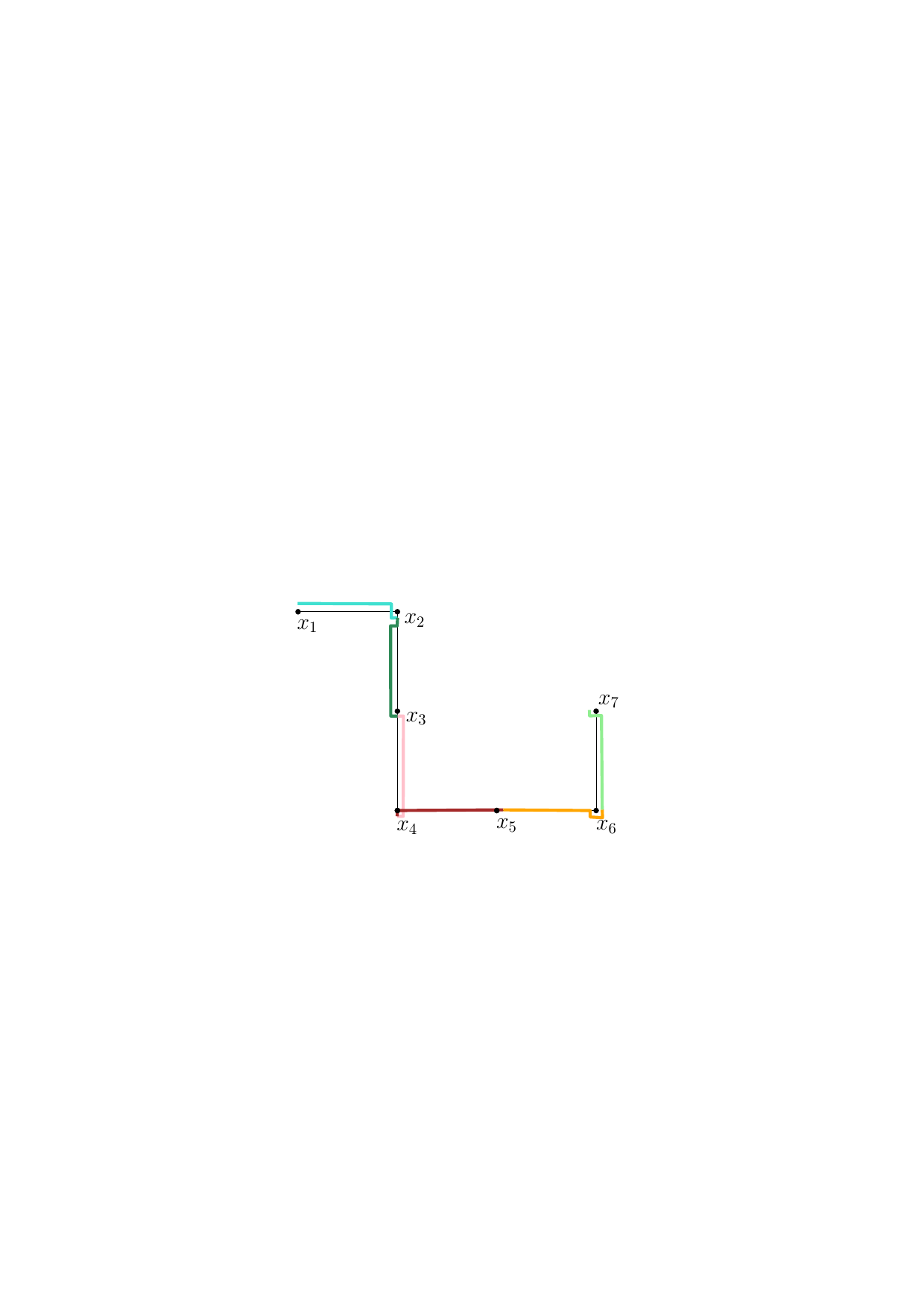}
\caption{The colored paths are the refinements of $(x_i, x_{i+1})$ for $1 \leq i \leq 6$. In this figure, the refinements of neighboring edges intersect only at their endpoints and do not intersect at other vertices (though they may appear to intersect at other vertices due to the two-dimensional illustration).}
\label{fig:refinement-tube}
\end{figure}
\end{definition}

The following lemma is a direct consequence of the definition. Recall that for a discrete path $P$ on $ 8^{-j} \mathbb{Z}^d$, $\overline P$ is the continuous path obtained by connecting the neighboring vertices in $P$ with straight lines.

\begin{lemma}\label{lem:property-refine}
    The following hold for all $j \geq 0$:
    \begin{enumerate}
        \item Each refinement of a self-avoiding path on $ 8^{-j} \mathbb{Z}^d$ is a self-avoiding path on $ 8^{-j-1} \mathbb{Z}^d$. \label{lem3.8-claim1}
        
        \item If $P$ has length $L$, then its refinement has length $10(L-1)+1$. \label{lem3.8-claim2}

        \item If a path $P$ on $ 8^{-j-1} \mathbb{Z}^d$ is a refinement of a path $Q$ on $ 8^{-j} \mathbb{Z}^d$, then $Q$ is unique and given by concatenating all the vertices in $ 8^{-j}\mathbb{Z}^d$ that are $8^{-j-1}$ close to $P$ in terms of $|\cdot|_1$-distance. \label{lem3.8-claim3}

        \item If $P$ is a refinement of a path $Q$ on $ 8^{-j}\mathbb{Z}^d$, then we have $\overline P \subset \{z : \mathfrak d_1(z, \overline Q) \leq 2 \cdot 8^{-j-1}\} $ and $\overline Q \subset \{z : \mathfrak d_1(z, \overline P) \leq 2 \cdot 8^{-j-1}\} $.\label{lem3.8-claim4}
    \end{enumerate} 
\end{lemma}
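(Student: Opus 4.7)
The plan is to verify the four assertions in turn, each as a direct consequence of Definition~\ref{def:refine}, the explicit construction of $\mathcal{T}_{x,y}$ in Lemma~\ref{lem:path-tube}, and the containment property~\eqref{eq:path-condition-brw}. Throughout, let $Q = (x_1, \ldots, x_L)$ denote the path on $8^{-j}\mathbb{Z}^d$, let $P$ be a refinement of $Q$, and write $P_i$ for the sub-refinement of the edge $(x_i, x_{i+1})$. For claim (\ref{lem3.8-claim1}), each $P_i$ is itself self-avoiding by Lemma~\ref{lem:path-tube}, and consecutive sub-refinements $P_i, P_{i+1}$ intersect only at their common endpoint by Definition~\ref{def:refine}(2). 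For indices with $|i-k|\geq 2$, the self-avoidance of $Q$ gives $\{x_i, x_{i+1}\} \cap \{x_k, x_{k+1}\} = \emptyset$, and since the boxes $\{B_x : x \in 8^{-j}\mathbb{Z}^d\}$ partition $\mathbb{R}^d$, the containment~\eqref{eq:path-condition-brw} forces $P_i \cap P_k \subset (B_{x_i} \cup B_{x_{i+1}}) \cap (B_{x_k} \cup B_{x_{k+1}}) = \emptyset$. Claim (\ref{lem3.8-claim2}) is then a one-line count: $P$ is the concatenation of $L-1$ sub-refinements of length $11$ glued along single shared vertices, so $|P| = 11(L-1) - (L-2) = 10(L-1)+1$.

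For claim (\ref{lem3.8-claim3}), I will show that the set of vertices of $8^{-j}\mathbb{Z}^d$ at $|\cdot|_1$-distance at most $2 \cdot 8^{-j-1}$ from $P$ is exactly $\{x_1,\ldots,x_L\}$, with order recovered from the chronology of $P$ (since $P_i$ starts near $x_i$ and ends near $x_{i+1}$); this gives uniqueness. The easy inclusion is that each $x_i$ sits at $|\cdot|_1$-distance $8^{-j-1}$ from the starting vertex of $P_i$ (or the terminal vertex of $P_{i-1}$). The reverse inclusion is a short tube estimate: any $w \in 8^{-j}\mathbb{Z}^d$ distinct from $x_i, x_{i+1}$ satisfies $|w - p|_1 \geq 8^{-j}$ for every $p \in [x_i, x_{i+1}]$, while each $z \in P_i \subset T_{x_i, x_{i+1}}$ lies within $2\cdot 8^{-j-1}$ of some such $p$, so by the triangle inequality $|w - z|_1 \geq 8^{-j} - 2\cdot 8^{-j-1} = 6 \cdot 8^{-j-1} > 2 \cdot 8^{-j-1}$.

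For claim (\ref{lem3.8-claim4}), the inclusion $\overline{P} \subset \{z : \mathfrak d_1(z, \overline{Q}) \leq 2 \cdot 8^{-j-1}\}$ is immediate: each $\overline{P_i}$ is contained in $T_{x_i,x_{i+1}}$, which is convex (being a sublevel set of the convex function $z \mapsto \mathfrak d_1(z, [x_i,x_{i+1}])$) and contained in the claimed neighborhood of $[x_i,x_{i+1}] \subset \overline{Q}$. The reverse inclusion is the only step that is not purely formal; I expect to handle it by a direct inspection of the finitely many explicit cases in the proof of Lemma~\ref{lem:path-tube}, showing in each case that $P_i$ visits a vertex at $|\cdot|_1$-offset $1$ from every integer position along the axis of $[x_i,x_{i+1}]$, so that every point on $[x_i, x_{i+1}]$ lies at $|\cdot|_1$-distance at most $\tfrac{3}{2} \cdot 8^{-j-1}$ from some edge of $\overline{P_i}$. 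This short case-check is the main (and only mildly non-trivial) obstacle; the rest of the proof is immediate from Definition~\ref{def:refine} and Lemma~\ref{lem:path-tube}.
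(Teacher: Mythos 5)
Claims~(\ref{lem3.8-claim2}) and~(\ref{lem3.8-claim3}) look fine. However, your arguments for Claims~(\ref{lem3.8-claim1}) and~(\ref{lem3.8-claim4}) have a common gap: Lemma~\ref{lem:property-refine} is a statement about \emph{general} refinements in the sense of Definition~\ref{def:refine} (any self-avoiding path of length $11$ inside the tube $T_{x_i,x_{i+1}}$ with the prescribed endpoints), whereas you invoke the box-containment property~\eqref{eq:path-condition-brw} for Claim~(\ref{lem3.8-claim1}) and propose to case-check the explicit paths from Lemma~\ref{lem:path-tube} for Claim~(\ref{lem3.8-claim4}). Both of these are properties only of the chosen sub-collections $\mathcal{T}_{x,y}$, $\mathcal{T}_{x,y}^z$, not of arbitrary refinements. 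In particular, the tube $T_{x,y}$ strictly contains lattice points outside $B_x \cup B_y$ (for instance, points with negative axial coordinate just behind $x$), so the containment $P_i \subset B_{x_i}\cup B_{x_{i+1}}$ is not available for a general refinement, and your box-partition argument does not go through.

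What fixes Claim~(\ref{lem3.8-claim1}) is that Definition~\ref{def:refine} alone gives $P_i \subset T_{x_i,x_{i+1}}$, and the tubes attached to non-adjacent edges of $Q$ are already disjoint: if $|i-k|\geq 2$, then $\{x_i,x_{i+1}\}\cap\{x_k,x_{k+1}\}=\emptyset$ by self-avoidance of $Q$, and two edges of $8^{-j}\mathbb{Z}^d$ sharing no vertex have $|\cdot|_1$-distance at least $8^{-j}$ between their closed segments; since each tube has $|\cdot|_1$-radius $2\cdot 8^{-j-1}=8^{-j}/4$, the triangle inequality yields $T_{x_i,x_{i+1}}\cap T_{x_k,x_{k+1}}=\emptyset$. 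For Claim~(\ref{lem3.8-claim4}), the reverse inclusion holds for an arbitrary refinement by an intermediate-value argument that needs no case-checking: along $\overline{P_i}$, the coordinate in the axis direction of the edge $(x_i,x_{i+1})$ is continuous and moves from within $8^{-j-1}$ of $x_i$'s value to within $8^{-j-1}$ of $x_{i+1}$'s value, so it hits every intermediate level; combined with $\overline{P_i}\subset\{z:\mathfrak d_1(z,[x_i,x_{i+1}])\leq 2\cdot 8^{-j-1}\}$ (which follows from the tube constraint on the vertices and convexity of the $|\cdot|_1$-neighborhood), this shows every point of $[x_i,x_{i+1}]$ is within $2\cdot 8^{-j-1}$ of $\overline{P_i}$.
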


\begin{remark}
    As mentioned in Section~\ref{subsec:notation}, we will only consider self-avoiding paths in this paper unless otherwise specified. One important fact we will frequently use is that a self-avoiding path can cross each vertex at most once, which allows us to obtain many bounds independent of $d$.
\end{remark}

Next, we inductively refine the paths in $\mathcal{P}^{0,M}$. For each $j \geq 0$, the set $\mathcal{P}^{j + 1, M}$ will consist of self-avoiding paths on $8^{-j - 1} \mathbb{Z}^d$ that are some refinements of the paths in $\mathcal{P}^{j, M}$. By Lemma~\ref{lem:property-refine}, all of the paths in $\mathcal{P}^{j, M}$ have the same length $\mathcal L_j$ defined as follows:
\begin{equation}\label{eq:def-mathcal-l}
\mathcal L_0 =  M \quad \mbox{and} \quad \mathcal L_{i+1} = 10(\mathcal L_i - 1) + 1 \quad \mbox{for all } i \geq 0.
\end{equation}

\begin{definition}[$\mathcal{P}^{n, M}$]\label{def:Pn-brw}
    Let an integer $M \geq 10$. We assume that $d$ is sufficiently large such that $\mathfrak q \geq 100$. Recall $\mathcal{P}^{0, M}$ from Definition~\ref{def:P0-brw}. Suppose that $\mathcal{P}^{j, M}$ has been defined for some integer $j \geq 0$. For each path $P = (x_1, \ldots, x_{\mathcal L_j}) \in \mathcal{P}^{j, M}$ on $8^{-j} \mathbb{Z}^d$, the set $\mathcal{P}^{j + 1,M}$ contains the following $(\mathfrak q - 10)^{\mathcal L_j - 1}$ refinements of $P$:
    \begin{enumerate}
    \item Recall $\mathcal{T}_{x_1,x_2}$ from Lemma~\ref{lem:path-tube}. We deterministically choose a subset of cardinality $(\mathfrak q - 10)$ from $\mathcal{T}_{x_1,x_2}$ and then choose a refinement of $(x_1,x_2)$ from this subset;

    \item Suppose the refinements of $(x_1,x_2),(x_2,x_3),\ldots, (x_{i-1}, x_i)$ have been defined for $i < \mathcal L_j $. Let $z$ be the last point of the refinement of $(x_{i-1}, x_i)$. Since at most 10 paths in $\mathcal{T}_{x_i, x_{i+1}}^z$ intersect the refinement of $(x_{i-1}, x_i)$ at a point other than $z$,\footnote{This is due to the fact that $\mathcal{T}_{x_i, x_{i+1}}^z$ contains self-avoiding paths in $T_{x_i, x_{i+1}}$ intersecting only at their starting point $z$ (Lemma~\ref{lem:path-tube}). Furthermore, a fixed refinement of $(x_{i-1}, x_i)$ occupies at most 10 vertices in $T_{x_i, x_{i+1}}$ other than $z$.} we can deterministically choose a subset of $\mathcal{T}_{x_i, x_{i+1}}^z$ with cardinality $(\mathfrak q - 10)$ such that these paths do not intersect the refinement of $(x_{i-1}, x_i)$ except at $z$, and then choose a refinement of $(x_i,x_{i + 1})$ from this subset;
    
    \item By concatenating the refinements of $(x_i, x_{i+1})$ for $1 \leq i \leq \mathcal L_j - 1$, we obtain $(\mathfrak q - 10)^{\mathcal L_j - 1}$ refinements of $P$ on $ 8^{-j - 1} \mathbb{Z}^d$, each with length $\mathcal L_{j+1}$.

\end{enumerate}
By induction, this defines $\mathcal{P}^{n,M}$ for all $n \geq 0$. In addition, for different $M_1 > M_2 \geq 10$, if two paths $P \in  \mathcal{P}^{j,M_1}$ and $Q \in \mathcal{P}^{j,M_2}$ are the same up to length $|Q| = \mathcal L_j(M_2)$ (the number in~\eqref{eq:def-mathcal-l} starting from $\mathcal L_0 = M_2$), we require that their refinements in $\mathcal{P}^{j+1,M_1}$ and $\mathcal{P}^{j+1,M_2}$ are also the same up to length $\mathcal L_{j+1}(M_2)$.
\end{definition}

\begin{remark}\label{rmk:refine}
    In Section~\ref{sec:WN}, we will consider the same definition of refinements on the rescaled lattice $\frac{1}{\lfloor \sqrt{d} \rfloor} 8^{-j} \mathbb{Z}^d$ for $j \geq 0$. The choice of refinements will be the same except that we start with a set of paths on $\frac{1}{\lfloor \sqrt{d} \rfloor} \mathbb{Z}^d$. All the results in Section~\ref{subsec:BRW-paths} can be extended to that case, which we will recall in Section~\ref{subsec:path-WN}.
\end{remark}

By Definition~\ref{def:Pn-brw}, we have $|\mathcal{P}^{j + 1,M}| = (\mathfrak q - 10)^{\mathcal L_j - 1} \times |\mathcal{P}^{j,M}|$. For each path $P \in \mathcal{P}^{n,M}$, by Claim~\ref{lem3.8-claim3} in Lemma~\ref{lem:property-refine}, there exists a unique sequence of paths $P_j \in \mathcal{P}^{j,M}$ for $0 \leq j \leq n$ such that $P_n = P$ and $P_{j+1}$ is a refinement of $P_j$ for all $0 \leq j \leq n-1$. We will often use $P_j$ to denote this sequence of paths, and call $P_j$ the corresponding path of $P$ in $\mathcal{P}^{j,M}$. 

The set of paths $\mathcal{P}^{n,M}$ also satisfies a restriction property with respect to $M$.

\begin{lemma}\label{lem:restrict-M-brw}
    For all $M_1 \geq M_2 \geq 10$ and $n \geq 0$, if we restrict the paths in $\mathcal{P}^{n,M_1}$ before length $\mathcal L_n(M_2)$, then we obtain $\mathcal{P}^{n,M_2}$.
\end{lemma}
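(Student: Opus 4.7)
The plan is to prove the lemma by induction on $n$, as it follows almost directly from the coherence clause built into Definition~\ref{def:Pn-brw}. At each level I must verify two directions: that restricting a path in $\mathcal{P}^{n,M_1}$ to its first $\mathcal L_n(M_2)$ vertices yields a path in $\mathcal{P}^{n,M_2}$, and conversely that every path in $\mathcal{P}^{n,M_2}$ arises as such a restriction.

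For the base case $n=0$, a path in $\mathcal{P}^{0,M_1}$ is encoded by a sequence $(i_1,\ldots,i_{M_1}) \in \{1,\ldots,d\}^{M_1}$ via $(\mathbf e_{i_1}, \mathbf e_{i_1}+\mathbf e_{i_2}, \ldots)$. Restricting to the first $\mathcal L_0(M_2)=M_2$ vertices amounts to truncating this sequence to its first $M_2$ entries, and every element of $\mathcal{P}^{0,M_2}$ is realized this way by extending the index sequence arbitrarily. For the inductive step, suppose the statement holds at level $n$ and take $P \in \mathcal{P}^{n+1,M_1}$. Claim~\ref{lem3.8-claim3} of Lemma~\ref{lem:property-refine} identifies a unique $P_n \in \mathcal{P}^{n,M_1}$ of which $P$ is a refinement. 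By the inductive hypothesis, restricting $P_n$ to its first $\mathcal L_n(M_2)$ vertices produces some $Q_n \in \mathcal{P}^{n,M_2}$. The final sentence of Definition~\ref{def:Pn-brw} asserts precisely that the refinement procedure for $P_n$ and for $Q_n$ agrees on the initial segment of length $\mathcal L_{n+1}(M_2)$, hence the restriction of $P$ to its first $\mathcal L_{n+1}(M_2)$ vertices is the refinement of $Q_n$ lying in $\mathcal{P}^{n+1,M_2}$. The converse is symmetric: given $Q \in \mathcal{P}^{n+1,M_2}$ refining $Q_n \in \mathcal{P}^{n,M_2}$, I use the inductive hypothesis to extend $Q_n$ to some $P_n \in \mathcal{P}^{n,M_1}$, and then apply the same coherence clause (completing the refinement beyond length $\mathcal L_{n+1}(M_2)$ by any allowable choice at each step) to lift $Q$ to a refinement $P \in \mathcal{P}^{n+1,M_1}$.

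No step presents a serious obstacle; the content of the lemma is essentially a restatement of the consistency stipulation already imposed in Definition~\ref{def:Pn-brw}. The only item worth highlighting is that the deterministic subset selections from $\mathcal{T}_{x,y}$ and $\mathcal{T}_{x,y}^z$ in Lemma~\ref{lem:path-tube} can be made in a manner depending only on the local edge $(x_i,x_{i+1})$ and the endpoint $z$ of the preceding refinement, and never on the global parameter $M$. Once this coherence of local choices is made explicit, the induction closes immediately.
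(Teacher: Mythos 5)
Your proof is correct and follows the same route as the paper's: induction on $n$, using Definition~\ref{def:P0-brw} for the base case and the coherence clause at the end of Definition~\ref{def:Pn-brw} for the inductive step. Your closing remark about arranging the deterministic subset selections in Lemma~\ref{lem:path-tube} to be purely local is a reasonable well-posedness observation, but it is not needed because Definition~\ref{def:Pn-brw} simply imposes the consistency requirement directly.
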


\begin{proof}
    The case $n = 0$ follows from Definition~\ref{def:P0-brw}. The case $n \geq 1$ follows from the case $n = 0$ and Definition~\ref{def:Pn-brw} with the following induction argument. Suppose that $\mathcal{P}^{j,M_1}$ and $\mathcal{P}^{j,M_2}$ have been defined and satisfy the restriction property. Recall from Definition~\ref{def:Pn-brw} that when defining $\mathcal{P}^{j+1,M_1}$ and $\mathcal{P}^{j+1,M_2}$, we required that for two paths $P \in  \mathcal{P}^{j,M_1}$ and $Q \in \mathcal{P}^{j,M_2}$ that are the same up to length $|Q| = \mathcal L_j(M_2)$, their refinements in $\mathcal{P}^{j+1,M_1}$ and $\mathcal{P}^{j+1,M_2}$ are also the same up to length $\mathcal L_{j+1}(M_2)$. Therefore, if we restrict the paths in $\mathcal{P}^{j+1,M_1}$ before length $\mathcal L_{j+1}(M_2)$, we obtain $\mathcal{P}^{j+1,M_2}$. By induction, this proves the lemma.
\end{proof}

Since each path in $\mathcal{P}^{j,M}$ has the same number of refinements in $\mathcal{P}^{j+1,M}$, we see that choosing a path uniformly from $\mathcal{P}^{n,M}$ is equivalent to first choosing a path uniformly from $\mathcal{P}^{0,M}$, and then uniformly choosing one of its refinements as defined in Definition~\ref{def:Pn-brw} at each scale. We use this fact to prove the following lemma, which implies that when $d$ is sufficiently large, two paths chosen uniformly from $\mathcal{P}^{n,M}$ have few intersections.

\begin{lemma}\label{lem:intersect-BRW}
    The following holds for all $n \geq 0$ and $M \geq 10$. For two paths $P, Q$ chosen uniformly from $\mathcal{P}^{n, M}$ and $0 \leq j \leq n$, let $P_j$ and $Q_j$ be their corresponding paths in $\mathcal{P}^{j,M}$ and let $Y_j = |P_j \cap Q_j|$. Then for all $1 \leq i \leq n$, given $(Y_0, Y_1, \ldots Y_{i-1})$, we have
    \begin{equation*}
    Y_i\mbox{ is stochastically dominated by } 11(Z_1 + Z_2 + \ldots + Z_{2 Y_{i-1}}),\end{equation*}
    where $(Z_l)_{l \geq 1}$ are i.i.d.\ Bernoulli random variables with probability $\frac{50}{\mathfrak q - 10}$ of being 1. 
\end{lemma}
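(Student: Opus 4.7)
The plan is to bound $Y_i$ by processing the ``relevant'' edges of $Q_{i-1}$ one at a time, exploiting the localized tube structure from Definition~\ref{def:Pn-brw} and Lemma~\ref{lem:path-tube}.

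First I would carry out the geometric reduction: for edges $e \in P_{i-1}$ and $f \in Q_{i-1}$, the tubes $T_e$ and $T_f$ are disjoint unless $e$ and $f$ share at least one endpoint, since the tubes have $|\cdot|_1$-radius $2$ on the finer lattice $8^{-i}\mathbb Z^d$ while non-adjacent edges of $8^{-(i-1)}\mathbb Z^d$ are at distance $8$ on that scale. Because each refinement lies inside its tube by Claim~\ref{lem3.8-claim4}, every vertex of $P_i \cap Q_i$ lies in some $T_e \cap T_f$ whose shared endpoint must be a common vertex of $P_{i-1} \cap Q_{i-1}$. Only the edges of $Q_{i-1}$ incident to a common vertex can therefore contribute to $Y_i$; enumerate them as $f_1, \ldots, f_L$ in the order they appear along $Q_{i-1}$. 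Since each common vertex has at most two adjacent $Q_{i-1}$-edges, $L \leq 2 Y_{i-1}$.

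Next I would set up a per-edge decomposition. Writing $z_{l-1}$ for the starting vertex of $\text{ref}(f_l)$ and $z_l$ for its endpoint, I split
\begin{equation*}
\big|P_i \cap \text{ref}(f_l)\big| \leq \mathbb{1}[z_{l-1} \in P_i] + 10 \cdot \mathbb{1}_{B_l}, \quad B_l := \big\{P_i \cap (\text{ref}(f_l) \setminus \{z_{l-1}\}) \neq \emptyset \big\},
\end{equation*}
separating the starting-vertex contribution from the ``bulk.'' Processing the edges sequentially under the filtration $\mathcal F_{l-1}$ generated by $P_i, P_{i-1}, Q_{i-1}$ and $\text{ref}(f_1), \ldots, \text{ref}(f_{l-1})$, the refinement of $f_l$ is uniform on $\mathfrak q - 10$ paths from $\mathcal T_{f_l}^{z_{l-1}}$, pairwise disjoint except at $z_{l-1}$. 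A direct count using the parallel-shift refinements from Lemma~\ref{lem:path-tube} shows that $T_{f_l}$ overlaps only with refinements of at most four adjacent $P_{i-1}$-edges, each contributing $O(1)$ vertices inside $T_{f_l}$, giving a universal bound $|P_i \cap T_{f_l}| \leq C$ independent of $d$. Combined with the near-disjointness of the candidate refinements, this yields $\mathbb{P}[B_l \mid \mathcal F_{l-1}] \leq C/(\mathfrak q - 10)$, and a similar count for endpoints gives $\mathbb{P}[z_l \in P_i \mid \mathcal F_{l-1}] \leq C'/(\mathfrak q - 10)$, with $C + C'$ absorbed into $50$.

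Since both $\mathbb 1[z_l \in P_i]$ and $\mathbb 1_{B_l}$ are functions of $\text{ref}(f_l)$, the joint indicator $\gamma_l := \mathbb 1[\{z_l \in P_i\} \cup B_l]$ satisfies $\mathbb{P}[\gamma_l \mid \mathcal F_{l-1}] \leq 50/(\mathfrak q - 10)$ by the union bound, and standard sequential stochastic-domination then yields i.i.d.\ Bernoulli$(50/(\mathfrak q - 10))$ variables $(Z_l)$ with $\gamma_l \leq Z_l$ a.s. Using $a + 10b \leq 11 \mathbb 1[a \vee b]$ for $a, b \in \{0, 1\}$, together with the observation that $z_0 \notin P_i$ (the edge preceding $f_1$ in $Q_{i-1}$ is non-relevant, hence its refinement is disjoint from every $P$-tube), we obtain
\begin{equation*}
Y_i \leq \sum_{l=1}^L \big(\mathbb 1[z_{l-1} \in P_i] + 10 \mathbb 1_{B_l}\big) \leq 11 \sum_{l=1}^L \gamma_l \leq 11 \sum_{l=1}^{2 Y_{i-1}} Z_l
\end{equation*}
after padding with independent copies of $Z_l$. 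The main technical point is the universal geometric bound $|P_i \cap T_{f_l}| \leq C$ uniform in $d$, which follows from inspecting the near-axial refinements of Lemma~\ref{lem:path-tube}; the remaining probabilistic steps are routine.
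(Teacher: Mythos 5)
Your approach is the same as the paper's up to swapping the roles of $P$ and $Q$ (the paper conditions on $Q_i$ and processes the relevant edges of $P_{i-1}$ one at a time; you condition on $P_i$ and process the relevant edges of $Q_{i-1}$), and the geometric reduction, the per-edge decomposition, and the sequential domination are all on the right track. The gap is in the final accounting step. You justify $z_0 \notin P_i$ by noting that ``the edge preceding $f_1$ in $Q_{i-1}$ is non-relevant, hence its refinement is disjoint from every $P$-tube.'' This presupposes that such a preceding edge exists. If $f_1$ is the very first edge of $Q_{i-1}$ (which happens whenever the first or second vertex of $Q_{i-1}$ lies on $P_{i-1}$), then $z_0$ is the starting vertex of $Q_i$ and can perfectly well lie on $P_i$ (for instance when $P_{i-1}$ and $Q_{i-1}$ share a first edge and the two first refinements are chosen identically). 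In that case the telescoping step that discards $\mathbbm{1}[z_0\in P_i]$ fails and you are left with an uncontrolled extra ``$+1$'' in the bound, which is not covered by $11\sum_{l=1}^{2Y_{i-1}}Z_l$. The paper treats exactly this edge case explicitly: when $k_l=1$ in the paper's notation, the indicator is replaced by ``the refinement of the first edge meets the other path anywhere,'' not merely away from the starting point. This is harmless precisely because the refinements of the \emph{first} edge are drawn from $\mathcal{T}_{x_1,x_2}$, whose members are pairwise \emph{fully} disjoint (unlike $\mathcal{T}^z_{x_i,x_{i+1}}$, whose members share $z$), so the probability that the chosen first refinement meets $P_i$ at all is still at most $50/(\mathfrak q-10)$. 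Your write-up does not isolate this case, so as stated the claimed inequality $Y_i\le 11\sum_l\gamma_l$ is not fully established.

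A smaller point on constants: since $z_l$ is the endpoint of the refinement of $f_l$, it is one of the ten non-starting vertices, so $\{z_l\in P_i\}\subset B_l$ and hence $\gamma_l=\mathbbm{1}_{B_l}$; no union bound is needed. The single estimate $\mathbb P[B_l\mid\mathcal F_{l-1}]\le 4\cdot 11/(\mathfrak q-10)\le 50/(\mathfrak q-10)$ (at most four $P_{i-1}$-edges share an endpoint with $f_l$, each contributing at most eleven refined vertices to $T_{f_l}$) already suffices; adding two separate constants $C$ and $C'$ as you propose could overshoot $50$.
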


\begin{proof}
    Since $(P,Q)$ is uniformly chosen from $\mathcal{P}^{n, M} \times \mathcal{P}^{n, M}$, given $(P_0, P_1, \ldots, P_{i-1})$ and $(Q_0, Q_1, \ldots, Q_{i-1})$, the paths $P_i$ and $Q_i$ are independently chosen from the refinements of $P_{i-1}$ and $Q_{i-1}$ as defined in Definition~\ref{def:Pn-brw}, respectively. Let $P_{i-1} = (x_1, x_2, \ldots, x_{\mathcal L_{i-1}})$ and suppose $Q_i$ has been chosen. For each $1 \leq k \leq \mathcal L_{i-1} - 1$ such that $(x_k, x_{k+1})$ does not intersect $Q_{i-1}$, any refinement of $(x_k, x_{k+1})$ will not intersect $Q_i$. Let $\{ k_l \}_{1 \leq l \leq m}$ be those $k$ such that $(x_k, x_{k+1})$ intersects $Q_{i-1}$, arranged in an increasing order. Since each edge $(x_{k_l}, x_{k_l +1})$ contains an endpoint in $P_{i-1} \cap Q_{i-1}$ and each vertex is crossed at most once by $P_{i-1}$, we know that $m \leq 2 |P_{i-1} \cap Q_{i-1}| = 2Y_{i-1}$.
    For $1 \leq l \leq m$, let $Z_l'$ be the indicator function that the refinement of $(x_{k_l}, x_{k_l + 1})$ in $P_i$ intersects $Q_i$ at a vertex other than the starting point. When $k_l=1$ (in particular, $l = 1$), we instead let $Z_l'$ be the indicator function that the refinement of $(x_{k_l}, x_{k_l + 1}) = (x_1, x_2)$ in $P_i$ intersects $Q_i$. Then, we have $Y_i = |P_i \cap Q_i| \leq 11 \sum_{l=1}^{m} Z_l'$. 
    
    Next, we show that for each $1 \leq l \leq m$, conditioned on the refinements of $\{(x_p, x_{p+1})\}_{1 \leq p \leq k_l-1}$ in $P_i$, \begin{equation}\label{eq:lem2.10-claim}
        \begin{aligned}
            &\mbox{the refinement of $(x_{k_l}, x_{k_l+1})$ has probability at least $(1 - \frac{50}{\mathfrak q - 10})$ of not intersecting $Q_i$}\\
            &\mbox{except at the starting point, i.e., $\mathbb{P}[Z_l' = 0|Y_0,Y_1,\ldots, Y_{i-1}, Z_1', Z_2',\ldots, Z_{l-1}'] \geq 1 - \frac{50}{\mathfrak q - 10}$}.
        \end{aligned}
    \end{equation}When $k_l = 1$ (in particular, $l=1$), this property follows from the fact that the paths in $\mathcal{T}_{x_1, x_2}$ are disjoint (Lemma~\ref{lem:path-tube}). Now we consider the case $k_l \geq 2$. Let $T_{x_{k_l}, x_{k_l+1}}$ be the tube defined with respect to $x_{k_l}$ and $x_{k_l+1}$ on $8^{-i-1} \mathbb{Z}^d$, and let $z$ be the end point of the refinement of $(x_{k_l - 1}, x_{k_l})$ in $P_i$, which is also the starting point of the refinement of $(x_{k_l}, x_{k_l+1})$. We claim that $Q_i$ occupies at most $4 \times 11 \leq 50$ vertices in $T_{x_{k_l}, x_{k_l+1}}$. This is because these vertices must belong to the refinements of the edges in $Q_{i-1}$ that intersect $(x_{k_l}, x_{k_l+1})$, and moreover, there are at most $4$ such edges and each edge is refined into 11 vertices. Combining this claim with the fact that the paths in $\mathcal{T}_{x_i, x_{i+1}}^z$ are disjoint except at $z$ (Lemma~\ref{lem:path-tube}), we see that at most $50$ paths in $\mathcal{T}_{x_{k_l}, x_{k_l+1}}^z$ intersect $Q_i$ at a point other than $z$, which proves~\eqref{eq:lem2.10-claim} (recall from Definition~\ref{def:Pn-brw} that the refinement of $(x_{k_l}, x_{k_l+1})$ is chosen from a subset of $\mathsf q -10$ subpaths in $\mathcal{T}_{x_{k_l}, x_{k_l+1}}^z$). Therefore, conditional on $(Y_0,Y_1,\ldots, Y_{i-1})$, the sequence $(Z_1', \ldots, Z_m')$ is stochastically dominated by a sequence of i.i.d.\ Bernoulli random variables with probability $\frac{50}{\mathfrak q - 10}$ of being 1. Combined with the inequalities $m \leq 2Y_{i-1}$ and $Y_i \leq 11 \sum_{l=1}^{m} Z_l'$, this yields the lemma. \qedhere
\end{proof}

The following lemma presents some basic properties of refinements. We refer to the refinement of a refinement as a refinement at scale $2$, etc.
\begin{lemma}\label{lem:basic-property-mathcalP}
    The following hold for all $j \geq 0$ and $d \geq 2$:
    \begin{enumerate}

        \item \label{property-1-cal} For an edge $(x,y)$ in $  8^{-j} \mathbb{Z}^d$, let $P$ be a refinement of $(x,y)$ at any scale. Then we have $$\overline P \subset \{ z : \mathfrak d_1(z, \overline{(x,y)} )  \leq \frac{2}{7} 8^{-j} \} \quad \mbox{and} \quad \overline{(x,y)} \subset \{ z : \mathfrak d_1(z, \overline{P}) \leq \frac{2}{7} 8^{-j} \}.$$

        \item \label{property-0-cal} All of the paths in $\mathcal{P}^{j, M}$ connect $\{z : |z|_1 = 1 + \frac{2}{7} \}$ and $\{z : |z|_1 = M - \frac{2}{7} \} $.

        \item \label{property-2-cal} For all paths $P \in \mathcal{P}^{j,M}$ and all its refinements at any scale $Q$ in $\cup_{k \geq j} \mathcal{P}^{k,M} $, we have $Q \subset P + [0,8^{-j})^d$. Moreover, for all vertices $x \in P$, we have $Q \cap (x + [0,8^{-j})^d) \neq \emptyset$.
 
    \end{enumerate}
    
\end{lemma}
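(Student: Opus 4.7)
The plan is to prove the three claims in sequence, all resting on iterating the single-step distance bound of Claim~\ref{lem3.8-claim4} of Lemma~\ref{lem:property-refine} together with the explicit tube structure~\eqref{eq:path-condition-brw} of Lemma~\ref{lem:path-tube}. For Claim~\ref{property-1-cal}, I would iterate Claim~\ref{lem3.8-claim4}: if $P$ is a refinement of $(x,y) \in 8^{-j}\mathbb{Z}^d$ at $k$ further scales, then it arises from a chain of refinements on $8^{-j-1}\mathbb{Z}^d, \ldots, 8^{-j-k}\mathbb{Z}^d$, each step moving the associated continuous path by $|\cdot|_1$-distance at most $2 \cdot 8^{-j-i}$. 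Summing the geometric series $\sum_{i \geq 1} 2\cdot 8^{-j-i} = \frac{2}{7}\cdot 8^{-j}$ yields both inclusions, the reverse bound coming from the symmetric half of Claim~\ref{lem3.8-claim4}.

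For Claim~\ref{property-0-cal}, the corresponding scale-$0$ path $P_0 \in \mathcal{P}^{0,M}$ begins at $\mathbf{e}_{i_1}$ (with $|\cdot|_1 = 1$) and ends at $\sum_{k=1}^M \mathbf{e}_{i_k}$ (with $|\cdot|_1 = M$). Applying Claim~\ref{property-1-cal} at $j = 0$ to the first and last edges of $P_0$, the continuous refinement $\overline P$ contains points within $|\cdot|_1$-distance $\frac{2}{7}$ of these two vertices, i.e., with $|\cdot|_1$-values $\leq 1 + \frac{2}{7}$ and $\geq M - \frac{2}{7}$ respectively. Since $M \geq 10$ and $|\cdot|_1$ is continuous along the connected continuous path $\overline P$, the intermediate value theorem produces points of $\overline P$ on each of the two level sets $\{|z|_1 = 1 + \frac{2}{7}\}$ and $\{|z|_1 = M - \frac{2}{7}\}$.

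For Claim~\ref{property-2-cal}, I would induct on the number $k$ of refinement steps above $j$. The base case $k = 1$ follows directly from~\eqref{eq:path-condition-brw} applied on the rescaled lattice: each edge $(x_i, x_{i+1})$ of $P$ is refined inside $B_{x_i} \cup B_{x_{i+1}}$ where $B_x := x + [0, 8^{-j})^d$, and the shared endpoint at each $x_i$ lies in $B_{x_i}$. For the inductive step, the key arithmetic fact is that any $v \in 8^{-j-k+1}\mathbb{Z}^d$ lying in $B_{x_i}$ has the form $v = x_i + 8^{-j-k+1} w$ with $w \in \{0, \ldots, 8^{k-1} - 1\}^d$, which forces $v + [0, 8^{-j-k+1})^d \subset B_{x_i}$. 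Hence~\eqref{eq:path-condition-brw} applied at scale $j+k-1$ keeps every further-refined edge inside $B_{x_i} \cup B_{x_{i'}} \subset P + [0, 8^{-j})^d$, and iterating the same observation shows that a refinement vertex persists inside each $B_{x_i}$ at every scale. The only delicate point is verifying this arithmetic containment; once it is in hand, the induction runs mechanically.
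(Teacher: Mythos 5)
Your proof is correct and follows essentially the same route as the paper for Claims~\ref{property-1-cal} and~\ref{property-0-cal}: iterate the single-step displacement bound from Claim~\ref{lem3.8-claim4} of Lemma~\ref{lem:property-refine} and sum $\sum_{i\geq j} 2\cdot 8^{-i-1} = \frac{2}{7}8^{-j}$, and then apply this to the endpoints of the scale-$0$ path (your intermediate-value-theorem step is just an explicit spelling-out of the paper's terse ``follows from Claim 1''). For the containment in Claim~\ref{property-2-cal} you use exactly the paper's ingredients: the second requirement in~\eqref{eq:path-condition-brw} together with the arithmetic fact that a sub-box $v + [0,8^{-j-k+1})^d$ at a finer scale nests inside the coarse box $x_i + [0,8^{-j})^d$.

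Where you diverge slightly is the non-emptiness half of Claim~\ref{property-2-cal}. The paper treats the two extreme vertices $x_1, x_{\mathcal L_j}$ by induction (via the first/third conditions of~\eqref{eq:path-condition-brw} plus the nesting of boxes) and then handles the interior vertices $x_i$ topologically: $Q$ is a nearest-neighbor path contained in $\bigcup_i B_{x_i}$ that cannot jump from $B_{x_{i-1}}$ directly to $B_{x_{i+1}}$, so it must visit $B_{x_i}$. You instead run the persistence argument uniformly for every $i$: the shared endpoint of consecutive edge-refinements lies in $B_{x_i}$ at scale $1$, and by the nesting fact together with the first/third conditions of~\eqref{eq:path-condition-brw} applied at each subsequent scale, a descendant of this vertex survives inside $B_{x_i}$ forever. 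This is a valid and in fact cleaner variant that avoids any topological reasoning. The only caveat is that your phrase ``iterating the same observation'' understates what is being iterated: the arithmetic nesting alone does not give persistence; one must also invoke, at each scale, the first/third conditions of~\eqref{eq:path-condition-brw} to know that the refinement of an edge ending at $v$ has its endpoint in $v + [0,8^{-j-k})^d$. You have all the ingredients but should state this explicitly rather than framing the arithmetic containment as ``the only delicate point.''
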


\begin{proof}
    Claim~\ref{property-1-cal} directly follows from Claim~\ref{lem3.8-claim4} in Lemma~\ref{lem:property-refine} and the fact that $\sum_{i=j}^\infty 2 \cdot 8^{-i-1} = \frac{2}{7} 8^{-j}$. Claim~\ref{property-0-cal} follows from Claim~\ref{property-1-scr} and the fact that the paths in $\mathcal{P}^{0,M}$ connect $\{z:|z|_1 = 1\}$ and $\{z : |z|_1 = M\}$. 
    
    Next, we prove Claim~\ref{property-2-cal}. The fact that $Q \subset P + [0,8^{-j})^d$ follows from the requirement~\eqref{eq:path-condition-brw} in Lemma~\ref{lem:path-tube}. In fact, by the second condition of~\eqref{eq:path-condition-brw}, for all paths $P \in \mathcal{P}^{j,M}$ and all of its refinements $P' \in \mathcal{P}^{j+1, M}$, we have $P' \subset P + [0,8^{-j})^d$. Since $P'$ is a path on $8^{-j-1}\mathbb{Z}^d$, it follows that $P'+[0, 8^{-j-1})^d \subset P + [0,8^{-j})^d$. By induction, we then conclude that for any $k > j$ and any $Q \in \mathcal{P}^{k,M}$ which is a refinement of $P$, we have $Q + [0, 8^{-k})^d \subset P + [0,8^{-j})^d$ and thus $Q \subset P + [0,8^{-j})^d$.
    
    Now, we show that $Q \cap (x + [0,8^{-j})^d) \neq \emptyset$ for all $x \in P$. Let $P = (x_1, x_2, \ldots, x_{\mathcal{L}_j})$. From the first and third conditions in~\eqref{eq:path-condition-brw} and an induction argument, we can derive that $Q \cap (x_1 + [0,8^{-j})^d) \neq \emptyset$ and $Q \cap (x_{\mathcal{L}_j} + [0,8^{-j})^d) \neq \emptyset$, respectively. For $2 \leq i \leq \mathcal{L}_j-1$, the fact that $Q \cap (x_i + [0,8^{-j})^d) \neq \emptyset$ follows from the fact that $Q$ is a nearest-neighbor path which cannot move directly from $x_{i-1} + [0,8^{-j})^d$ to $x_{i+1} + [0,8^{-j})^d$.
\end{proof}

In the proof of Theorem~\ref{thm:BRW-thick}, we will take the subsequential limit of the paths in $\mathcal{P}^{n,n}$ with respect to the local Hausdorff distance, defined as follows:
\begin{equation}\label{eq:loc-hausdorff}
d_{H, {\rm loc}}(A,B) = \sum_{n=1}^\infty \frac{1}{2^n} {\rm min}\{1, d_H(A \cap B_{2^n}(0), B \cap B_{2^n} (0)) \} \quad \mbox{for two sets $A,B \subset \mathbb{R}^d$},
\end{equation}
where $d_H$ is the Euclidean Hausdorff distance. When defining the Hausdorff distance, we view the discrete paths as sets of points in $\mathbb{R}^d$. Next, we show that the subsequential limit $\widetilde P$ is always a continuous path in $\mathbb{R}^d$, i.e., there exists a continuous map $\phi:[0,\infty) \to \mathbb{R}^d$ such that $\widetilde P = \phi([0,\infty))$. Recall the definition of $\mathcal{B}_j$ before~\eqref{eq:def-brw}.

\begin{lemma}\label{lem:Hausdorff-limit-brw}
     Let $\{n_k\}_{k \geq 1}$ be an increasing sequence tending to infinity, and let $\{P_k\}_{k \geq 1}$ be a sequence of paths in $\mathcal{P}^{n_k,n_k}$ that converge to a set $\widetilde P$ with respect to the local Hausdorff distance. Then, $\widetilde P$ is a continuous path in $\mathbb{R}^d$ that connects $\{z:|z|_1 = 2\}$ to infinity. Moreover, for each $x \in \widetilde P$ and $j \geq 1$, there exists a sequence of points $x_k \in P_k$ such that $\mathcal{B}_j(x) =\mathcal{B}_j(x_k)$ for all sufficiently large $k$.
\end{lemma}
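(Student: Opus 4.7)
The plan is to use a diagonal extraction across scales together with the refinement structure to construct a continuous parametrization of the limit. Since each $P_k^{(j)} \cap B_R(0)$ takes values in a finite set depending only on $j, R, d$, a standard diagonal argument allows us to pass to a subsequence (which still converges to $\widetilde P$ in local Hausdorff) along which, for every $j \ge 0$ and every $R > 0$, the path $P_k^{(j)} \cap B_R(0)$ is eventually constant; denote the resulting self-avoiding infinite path on $8^{-j}\mathbb{Z}^d$ by $P_\infty^{(j)}$. The refinement relation is preserved in the limit, so $P_\infty^{(j+1)}$ is a refinement of $P_\infty^{(j)}$. Applying Claim~\ref{property-1-cal} of Lemma~\ref{lem:basic-property-mathcalP} edgewise and passing to the Hausdorff limit gives the key two-sided approximation
\[
	\overline{P_\infty^{(j)}} \subset \bigl\{z : \mathfrak d_1(z, \widetilde P) \le \tfrac{2}{7} 8^{-j}\bigr\}, \qquad \widetilde P \subset \bigl\{z : \mathfrak d_1(z, \overline{P_\infty^{(j)}}) \le \tfrac{2}{7} 8^{-j}\bigr\}
\]
on every bounded region.

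Next I would construct the continuous parametrization. For each vertex $v \in P_\infty^{(j)}$, define inductively its representative $v^{(l)} \in P_\infty^{(l)}$ for $l \ge j$ as the shared endpoint of the refinements of the two scale-$l$ edges incident to $v^{(l-1)}$ (with the obvious convention at the initial endpoint). The requirement~\eqref{eq:path-condition-brw} used in the construction in Definition~\ref{def:Pn-brw} forces $|v^{(l+1)} - v^{(l)}|_1 = 8^{-l-1}$, so $(v^{(l)})_{l \ge j}$ is Cauchy and converges to a point $\bar v \in \mathbb{R}^d$ with $|\bar v - v|_1 \le \tfrac{1}{7}\cdot 8^{-j}$. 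Parametrize each $P_\infty^{(j)}$ by dyadic fractions at scale $j$ --- the $k$-th vertex of the iterated refinement of the $i$-th scale-$0$ edge receives parameter $(i-1)+k/10^j$ --- and set $\bar\psi(t)$ equal to the limit point $\bar v$ of the corresponding vertex. For adjacent dyadic parameters at scale $j_0$ the values of $\bar\psi$ differ by at most $\tfrac{9}{7}\cdot 8^{-j_0}$ by the triangle inequality, so $\bar\psi$ extends uniquely to a uniformly (in fact H\"older) continuous map $\psi : [1, \infty) \to \mathbb{R}^d$. The approximation displayed above, together with the $8^{-j}$-density of the scale-$j$ vertices in $\overline{P_\infty^{(j)}}$, then forces $\psi([1,\infty)) = \widetilde P$. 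Since $|x_i^{(0)}|_1 = i \to \infty$ while $|\psi(1)|_1 \le 1 + \tfrac{1}{7} < 2$, continuity of $\psi$ forces it to cross $\{|z|_1 = 2\}$ and tend to infinity.

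For the final assertion, fix $x = \psi(t) \in \widetilde P$ and $j \ge 1$. Choose $m$ so large that $\tfrac{1}{7}\cdot 8^{-m}$ is smaller than the distance from $x$ to the boundary of $\mathcal{B}_j(x)$; if $x$ lies on that boundary, first perturb $t$ to a nearby dyadic parameter $t'$ with $\bar\psi(t')$ in the open interior of $\mathcal{B}_j(x)$ (this is possible because the continuous path $\psi$ must enter $\mathcal{B}_j(x)$ on at least one side of $x$, and the corner case where $x$ is the initial endpoint of $\psi$ is dispatched using the explicit form of $P_\infty^{(0)}$ from Definition~\ref{def:P0-brw}). The scale-$m$ representative of the chosen dyadic parameter then lies in $\mathcal{B}_j(x)$, and by the stabilization in the first paragraph is eventually a vertex of $P_k^{(m)}$; applying Claim~\ref{lem3.8-claim3} of Lemma~\ref{lem:property-refine} together with Claim~\ref{property-2-cal} of Lemma~\ref{lem:basic-property-mathcalP} produces a vertex $x_k \in P_k$ in the corresponding scale-$m$ box, which lies inside $\mathcal{B}_j(x)$. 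The principal technical obstacle is precisely this boundary case: without the continuous-path structure established above one cannot rule out the pathology where all vertices of $P_k$ that converge to $x$ approach from outside $\mathcal{B}_j(x)$.
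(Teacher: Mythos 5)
Your construction of the continuous parametrization $\psi$ via scale-$m$ representatives and their limits $\bar v$ is a nice, more explicit alternative to the paper's parametrization, and the first two conclusions (continuity of $\widetilde P$, connecting $\{|z|_1=2\}$ to infinity) follow correctly from it. Two issues arise, one minor and one a genuine gap.

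The minor issue: you extract a subsequence via a diagonal argument to obtain the stabilized paths $P_\infty^{(j)}$. But the lemma's last assertion is about the original sequence $(P_k)$, and an argument carried out along a subsequence only proves it along that subsequence. In fact the subsequence extraction is unnecessary: if two different paths $A\neq B\in\mathcal P^{j,M}$ appear infinitely often as the corresponding scale-$j$ path of $P_k$, then by Claim~\ref{property-1-cal} of Lemma~\ref{lem:basic-property-mathcalP} the $P_k$ would accumulate in two sets that are a definite $|\cdot|_1$-Hausdorff distance (at least $\frac{3}{7}8^{-j}$) apart, contradicting local Hausdorff convergence. So the scale-$j$ paths stabilize automatically; you should use this rather than a diagonal subsequence.

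The genuine gap is in the final paragraph. When $x$ lies on the (lower) boundary of $\mathcal B_j(x)$ you propose perturbing $t$ to a nearby dyadic $t'$ with $\bar\psi(t')$ in the open interior, justifying this by the claim that ``the continuous path $\psi$ must enter $\mathcal B_j(x)$ on at least one side of $x$.'' This is not a property of continuous paths in general --- a path can touch a boundary point and retreat --- and you give no structural argument that rules it out here. Your own closing sentence flags this as the principal obstacle, but then does not actually resolve it. The clean resolution (and the one in the paper) bypasses the distance-threshold reasoning entirely: for $l$ with $3l>j$, one shows directly that $x\in P_\infty^{(l)}+[0,8^{-l})^d$ --- because each $P_k$ (restricted to a large ball) lies, by Claims~\ref{property-1-cal} and~\ref{property-2-cal}, in the intersection $\bigl(P_\infty^{(l)}+[0,8^{-l})^d\bigr)\cap\{z:\mathfrak d_1(z,\overline{P_\infty^{(l)}})\le\frac27 8^{-l}\}$, and this intersection is closed so it contains the Hausdorff limit point $x$. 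Hence $x\in y+[0,8^{-l})^d$ for some $y\in P_\infty^{(l)}$, that half-open box is contained in a single $\mathcal B_j$-box (namely $\mathcal B_j(x)$), and Claim~\ref{property-2-cal} then gives a vertex of $P_k$ in $y+[0,8^{-l})^d$ for all large $k$. This way the boundary case simply never needs to be treated separately; the half-open box structure of Claim~\ref{property-2-cal} is doing the work that your distance estimate plus perturbation cannot.
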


\begin{proof}
    The key observation is that for any fixed $j,M \geq 10$, 
    \begin{equation}\label{eq:lem2.11-assert}
        \mbox{the corresponding paths of $P_k$ in $\mathcal{P}^{j,M}$ are the same for all sufficiently large $k$.}
    \end{equation}
    This is due to the fact that for two different paths $P$ and $Q$ in $\mathcal{P}^{j,M}$, Claim~\ref{property-1-cal} from Lemma~\ref{lem:basic-property-mathcalP} ensures their refinements at any scale stay in $\{z : \mathfrak d_1(z, \overline{P}) \leq \frac{2}{7} 8^{-j} \}$ and $\{z : \mathfrak d_1(z, \overline{Q}) \leq \frac{2}{7} 8^{-j} \}$, respectively. Moreover, the $|\cdot|_1$-Hausdorff instance between $\overline{P}$ and $\overline{Q}$ is at least $8^{-j}$ (under the assumption that $P,Q$ are different), and thus, all their refinements at any scale are uniformly bounded away from each other in terms of the $|\cdot|_1$-Hausdorff distance, which implies~\eqref{eq:lem2.11-assert} (via proof of contradiction).

    We parametrize all the paths in $\mathcal{P}^{n,M}$ by multiplying their length by $10^{-n}$. For instance, the paths in $\mathcal{P}^{0,M}$ are parametrized by times $1,2,\ldots, M$. By this definition, when we refine a path $P = (x_1 ,\ldots, x_{\mathcal L_n})$ on $8^{-n-1} \mathbb{Z}^d$, the refinement of the edge $(x_i, x_{i+1})$ takes the times $t_i, t_i + 10^{-n-1}, t_i + 2 \cdot 10^{-n-1}, \ldots, t_i + 10 \cdot 10^{-n-1}$ where $t_i = 10^{-n} i$ is the time for $x_i$ in $P$. We claim that under this parametrization, $\{P_k\}_{k \geq 1}$ converges under the local uniform topology. This is because by~\eqref{eq:lem2.11-assert}, for any fixed $j,M \geq 10$, the paths in $\{P_k\}_{k \geq 1}$ take approximately the same places at the times $10^{-j} \mathbb{Z} \cap [1,M]$, and the neighboring subpaths are close to these points by Claim~\ref{property-1-cal} in Lemma~\ref{lem:basic-property-mathcalP}. Therefore, $\{P_k\}_{k \geq 1}$ converges under the local uniform topology, and thus, $\widetilde P$ is a continuous path. By Claim~\ref{property-0-cal} in Lemma~\ref{lem:basic-property-mathcalP}, we see that $\widetilde P$ connects $\{z:|z|_1 = 2\}$ to infinity.

    The last property in Lemma~\ref{lem:Hausdorff-limit-brw} follows from~\eqref{eq:lem2.11-assert} and Claims~\ref{property-1-cal} and~\ref{property-2-cal} in Lemma~\ref{lem:basic-property-mathcalP}, as we now elaborate. Fix $x \in \widetilde P$ and $j \geq 1$. Let $l = \lfloor j/3 \rfloor +1$ and let $\widetilde P_l$ be the path in~\eqref{eq:lem2.11-assert} with $l$ in place of $j$ and $M$ being a sufficiently large number satisfying $M \geq |x|_1 + 100$. Then by Claims~\ref{property-1-cal} and~\ref{property-2-cal} in Lemma~\ref{lem:basic-property-mathcalP}, for all sufficiently large $k$, the part of the path $P_k$ before reaching $\{z : |z|_1 = |x|_1 + 10\}$ is contained in ${\widetilde P}_l + [0, 8^{-l})^d$, and moreover, it is also contained $\{z : \mathfrak d_1(z, \overline{{\widetilde P}_l}) \leq \frac{2}{7} 8^{-l}\}$. Therefore, we have $x \in {\widetilde P}_l + [0, 8^{-l})^d $, and hence, there exists $y \in {\widetilde P}_l$ such that $x \in (y + [0,8^{-l})^d)$. By Claim~\ref{property-2-cal} in Lemma~\ref{lem:basic-property-mathcalP}, we see that $P_k \cap (y + [0,8^{-l})^d) \neq \emptyset$ for all sufficiently large $k$, which implies the last property, as for points in $y + [0,8^{-l})^d$, their corresponding $\mathcal{B}_j$-boxes are the same.\qedhere
\end{proof}

\subsection{First and second moment estimates}\label{subsec:BRW-estimate}

Fix $\alpha>0$. Recall from~\eqref{eq:brw-good} that for an integer $k \geq 0$, a path in $\mathcal{P}^{n, M}$ is called $k$-good if $a_{m, \mathcal{B}_m(x)} \geq \alpha $ for all $m \in [k,n] \cap \mathbb{Z}$ and vertices $x$ in the path. Consider the weighted sum
$$
\mathcal{N} = \mathcal{N}(\alpha, k, n, M) := \sum_{P \in \mathcal{P}^{n, M}} \frac{1}{\mathbb{P}[P \mbox{ is $k$-good}]}\mathbbm{1} \{P \mbox{ is $k$-good}\} .
$$
To prove Proposition~\ref{prop:BRW-thick-point}, we need an elementary result about sequences.

\begin{lemma}\label{lem:sequence}
    For any $a>1$, there exists a constant $C > 0$, which depends on $a$, such that the following hold for all $b \geq C$:
    \begin{enumerate}[(1)]
        \item \label{lem-sequence-1} Define the sequence $\{a_i\}_{i \geq 1}$ by $a_1 = a$ and $a_{i+1} = a( 1 + \frac{1}{b} a_i^{11})^2$ for all $i \geq 1$. Then, $\sup_{i \geq 1} a_i \leq 2a$.

        \item \label{lem-sequence-2} Define the sequence $\{b_i\}_{i \geq 1}$ by $b_1 = a$ and $b_{i+1} = (1 + \frac{1}{b} (b_i^{11} - 1))^2$ for all $i \geq 1$. Then, $b_i \leq 1 + 2^{-i}$ for all $i \geq 2$.
    \end{enumerate}

\end{lemma}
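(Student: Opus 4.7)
The plan is to prove both parts by straightforward induction on $i$, with the constant $C$ taken large enough (depending on $a$) to make a single inductive step go through. In both cases the iteration is a composition of ``multiply by something close to $1$'' and ``square'', so the key is to control how far from $1$ that something is.

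For Part~\ref{lem-sequence-1}, I would use the invariant $a_i \leq 2a$. The base case $a_1 = a$ is immediate. For the inductive step, if $a_i \leq 2a$ then
\[
a_{i+1} \;=\; a\Bigl(1 + \tfrac{1}{b} a_i^{11}\Bigr)^2 \;\leq\; a\Bigl(1 + \tfrac{(2a)^{11}}{b}\Bigr)^2,
\]
which is at most $2a$ once $(1 + (2a)^{11}/b)^2 \leq 2$, i.e., as soon as $b \geq (2a)^{11}/(\sqrt{2}-1)$. Taking $C$ at least this large finishes Part~\ref{lem-sequence-1}.

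For Part~\ref{lem-sequence-2}, I would induct on $i \geq 2$ with invariant $b_i \leq 1 + 2^{-i}$. The base case $i = 2$ holds by choosing $C$ large enough (depending on $a$) that $b_2 = (1 + (a^{11}-1)/b)^2 \leq 5/4$, which is possible by continuity at $b = \infty$. For the step, assume $b_i \leq 1 + 2^{-i} \leq 2$. Using the binomial-theorem bound $(1+x)^{11} - 1 \leq (2^{11}-1)\,x$ for $x \in [0,1]$, one gets $b_i^{11} - 1 \leq (2^{11}-1)\cdot 2^{-i}$. Combining this with $(1+u)^2 \leq 1 + 3u$ for $u \in [0,1]$ yields
\[
b_{i+1} \;\leq\; \Bigl(1 + \tfrac{(2^{11}-1)\,2^{-i}}{b}\Bigr)^2 \;\leq\; 1 + \tfrac{3(2^{11}-1)}{b}\cdot 2^{-i},
\]
which is at most $1 + 2^{-i-1}$ once $b \geq 6(2^{11}-1)$. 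Taking $C$ to be the maximum of the thresholds above closes the induction.

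The expected main obstacle is purely bookkeeping rather than analytic: one must verify that a single choice of $C$, depending only on $a$, simultaneously handles the base case in Part~\ref{lem-sequence-2} (where the bound depends on $a$ through $a^{11}$) and provides the uniform-in-$i$ contraction by a factor of two in all subsequent steps. Since each condition imposes only a finite lower bound on $b$, taking the maximum suffices, so I do not anticipate any genuine difficulty beyond these elementary Taylor-type estimates.
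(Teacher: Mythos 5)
Your proof is correct. For Part~(2) it is essentially the same argument as the paper's: bound $(1+x)^{11}-1$ linearly in $x$ (the paper uses the cruder constant $10000$ in place of your sharp $2^{11}-1$), then expand the square and close the induction. For Part~(1) you take a slightly cleaner route: the paper locates a fixed point $x_* \in (a,2a)$ of the map $x \mapsto a(1+\tfrac1b x^{11})^2$ and proves $a_i \le x_*$ by induction, whereas you induct directly on the invariant $a_i \le 2a$, which removes the need for the intermediate-value step. Both reduce to the same endpoint inequality $a(1 + (2a)^{11}/b)^2 \le 2a$, so the difference is purely presentational; your version is a bit more economical.
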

\begin{proof}
    Fix $a>1$. We first prove Claim~\eqref{lem-sequence-1}. For all sufficiently large $b$, we have $a - a(1 + \frac{1}{b} a^{11})^2 < 0 < 2a - a(1 + \frac{1}{b} (2a)^{11})^2$, and hence the equation $x = a (1 + \frac{1}{b} x^{11})^2$ has at least one solution in $(a, 2a)$. Let $x_*$ be such a solution. If $a_i \leq x_*$, then $a_{i+1} \leq a( 1 + \frac{1}{b} x_*^{11})^2 = x_*$. Since $a_1 = a < x_*$, by induction, we obtain that $a_i \leq x_* \leq 2a$ for all $i \geq 1$. This proves Claim~\eqref{lem-sequence-1}. 
    
    Next, we prove Claim~\eqref{lem-sequence-2}. For all sufficiently large $b$, we have $b_2 = (1 + \frac{1}{b}(a^{11}-1))^2 \leq 1 +2^{-2}$. If $b_i \leq 1+ 2^{-i}$ and $b \geq 10^{10}$, then we have $b_{i+1} \leq (1 + \frac{1}{b}((1+2^{-i})^{11}-1))^2 \leq (1 + \frac{10000}{b} 2^{-i})^2 \leq 1 + 2^{-i-1}$. By induction, we obtain Claim~\eqref{lem-sequence-2} for all sufficiently large $b$. \qedhere

\end{proof}

Next, we prove Proposition~\ref{prop:BRW-thick-point} by estimating the first and second moments of $\mathcal{N}$.

\begin{proof}[Proof of Proposition~\ref{prop:BRW-thick-point}]
    It suffices to show that there exists a constant $C>0$ depending on $\alpha$ such that for all $d \geq C$, integers $n \geq k \geq 1$, and $M \geq 10$:
    \begin{equation}\label{eq:prop2.1-1}
    \mathbb{E} \mathcal{N}^2 \leq \frac{1-c_1}{1 - c_1(1+2^{-\lfloor \frac{k}{4} \rfloor})} (\mathbb{E} \mathcal{N})^2 \quad \mbox{with the constant $c_1$ from Lemma~\ref{lem:high-d-intersect}.}
    \end{equation}
    Proposition~\ref{prop:BRW-thick-point} then follows from the Cauchy–Schwarz inequality: $\mathbb{P}[\mathcal{N}> 0] \geq \frac{(\mathbb{E} \mathcal{N})^2}{\mathbb{E} \mathcal{N}^2}$.
    
    Let $\mathfrak p = \mathbb{P}[N(0, \log 2) \geq \alpha]$, where $N(0, \log 2)$ denotes the Gaussian random variable with mean 0 and variance $\log 2$. Let $\mathbf E$ denote the expectation with respect to two paths $P$ and $Q$ uniformly chosen from $\mathcal{P}^{n,M}$. For $0 \leq i \leq n$, let $P_i$ and $Q_i$ be the corresponding paths of $P$ and $Q$ in $\mathcal{P}^{i, M}$, respectively. Since $\mathbb{E} \mathcal{N} = |\mathcal{P}^{n, M}|$ and
    $$
    \mathbb{E} \mathcal{N}^2 = \sum_{P, Q \in \mathcal{P}^{n, M}} \frac{\mathbb{P}[\mbox{$P,Q$ are $k$-good}]}{\mathbb{P}[\mbox{$P$ is $k$-good}] \cdot \mathbb{P}[\mbox{$Q$ is $k$-good}]},
    $$
    we have
    $$
    \frac{\mathbb{E} \mathcal{N}^2 }{(\mathbb{E}\mathcal{N})^2} = \mathbf E \Big[\frac{\mathbb{P}[\mbox{$P,Q$ are $k$-good}]}{\mathbb{P}[\mbox{$P$ is $k$-good}] \cdot \mathbb{P}[\mbox{$Q$ is $k$-good}]} \Big],
    $$
    where $P,Q$ are uniformly chosen from $\mathcal{P}^{n,M}$, and $\mathbb{P}$ is defined with respect to the branching random walk which is independent of $P,Q$. For all paths $P$ and $j \geq 0$, let $\mathcal{B}_j(P) = \{ x \in 2^{-j} \mathbb{Z}^d: x + [0,2^{-j})^d \cap P \neq \emptyset \}$.\footnote{Here we slightly abuse the notation, as in Section~\ref{subsec:BRW}, $\mathcal{B}_j(x)$ refers to the $2^{-j}$-box that contains $x$.} By the independence of $\{ a_{j, B} \}_{j \geq 0, B \in \mathcal{B}_j}$ (recall~\eqref{eq:def-brw}), we obtain
    \begin{equation}\label{eq:second-moment-brw}
    \frac{\mathbb{E} \mathcal{N}^2 }{(\mathbb{E}\mathcal{N})^2} = \mathbf E \Big[\frac{\mathfrak p^{\sum_{j=k}^n | \mathcal{B}_j(P) \cup \mathcal{B}_j(Q)|}}{\mathfrak p^{\sum_{j=k}^n | \mathcal{B}_j(P) |} \cdot \mathfrak p^{\sum_{j=k}^n | \mathcal{B}_j(Q) |}} \Big]= \mathbf E \Big[\mathfrak p^{-\sum_{j=k}^n | \mathcal{B}_j(P) \cap \mathcal{B}_j(Q)|} \Big].
    \end{equation}
    
    Next, we upper-bound the right-hand side of~\eqref{eq:second-moment-brw}. We first show that for all $P, Q \in \mathcal{P}^{n,M}$,
    \begin{equation}\label{eq:control-intersect-brw}
    \sum_{j=k}^n | \mathcal{B}_j(P) \cap \mathcal{B}_j(Q)| \leq 100 \sum_{j = \lfloor \frac{k}{4} \rfloor}^n |P_j \cap Q_j|.
    \end{equation}
    For $k \leq j \leq n$, let $l = \lfloor (j-1)/3 \rfloor + 1$ which satisfies $j \leq 3l$. By Claim~\ref{property-2-cal} in Lemma~\ref{lem:basic-property-mathcalP}, we have $\mathcal{B}_{3l}(P_l) = \mathcal{B}_{3l}(P)$, which implies that $\mathcal{B}_j(P_l) = \mathcal{B}_j(P)$. Therefore, we have $|\mathcal{B}_j(P) \cap \mathcal{B}_j(Q)| = |\mathcal{B}_j(P_l) \cap \mathcal{B}_j(Q_l)|$, yielding that
    \begin{equation}\label{eq:proof-brw-1}
    \sum_{j = k}^n | \mathcal{B}_j(P) \cap \mathcal{B}_j(Q)| \leq \sum_{l = \lfloor \frac{k-1}{3} \rfloor +1}^n \sum_{j = 3l-2}^{3l} |\mathcal{B}_j(P_l) \cap \mathcal{B}_j(Q_l)|.
    \end{equation}
    We claim that $\sum_{j = 3l-2}^{3l} |\mathcal{B}_j(P_l) \cap \mathcal{B}_j(Q_l)| \leq 100|P_{l-1} \cap Q_{l-1}|$. For each $2^{-j}$-box with $3l-2 \leq j \leq 3l$ that intersects both $P_l$ and $Q_l$, we consider the vertex $x$ in $P_l$ that lies in this box. We also consider the edge $e$ in $P_{l-1}$ that $x$ refines, i.e., $x$ belongs to the refinement of $e$ in $P_l$; see Figure~\ref{fig:intersect}. There can be at most two choices for $e$ and when $e$ is not unique we deterministically choose one of them arbitrarily. Now we show that $e \cap Q_{l-1} \neq \emptyset$. Suppose that $e \cap Q_{l-1} = \emptyset$, then $\mathfrak d_\infty(\overline{e}, \overline{Q_{l-1}}) \geq 8^{-l + 1}$. Moreover, by Claim~\ref{property-1-cal} in Lemma~\ref{lem:basic-property-mathcalP}, we know that 
    $$x \in \{z : \mathfrak d_\infty(z, \overline{e}) \leq \frac27 8^{-l + 1} \} \quad \mbox{and} \quad \overline{Q_l} \subset \{z : \mathfrak d_\infty(z, \overline{Q_{l-1}}) \leq \frac27 8^{-l + 1} \}.$$
    Therefore, it is impossible for a $2^{-j}$-box to intersect both $x$ and $Q_l$, which contradicts with the assumption. Thus, we have $e \cap Q_{l-1} \neq \emptyset$. 
    
    We map all the $2^{-j}$-boxes that intersect both $P_l$ and $Q_l$ to this pair of $(x,e)$. Notice that there are at most $2 |P_{l-1} \cap Q_{l-1}|$ edges in $P_{l-1}$ that intersect $Q_{l-1}$ (this is due to the fact that each of these edges has at least one endpoint in $P_{l-1} \cap Q_{l-1}$, and since $P_{l-1}$ is self-avoiding, each vertex in $P_{l-1} \cap Q_{l-1}$ can be the endpoint of at most two edges in $P_{l-1}$), and each edge is refined to 11 vertices in $P_l$. Therefore, the number of choices for $e$ is at most $2 |P_{l-1} \cap Q_{l-1}|$, and given $e$, the number of choices for $x$ and $j$ is at most $3 \times 11$. Combined with the above argument, this yields that
    \begin{equation}\label{eq:proof-brw-2}
    \sum_{j = 3l-2}^{3l} |\mathcal{B}_j(P_l) \cap \mathcal{B}_j(Q_l)| \leq 2 |P_{l-1} \cap Q_{l-1}| \times 3 \times 11 \leq 100 |P_{l-1} \cap Q_{l-1}|.
    \end{equation}
    This proves the claim. Combining~\eqref{eq:proof-brw-1} and~\eqref{eq:proof-brw-2} with the fact that $\lfloor \frac{k-1}{3} \rfloor \geq \lfloor \frac{k}{4} \rfloor$, we obtain~\eqref{eq:control-intersect-brw}. 

\begin{figure}[h]
\centering
\includegraphics[scale=0.6]{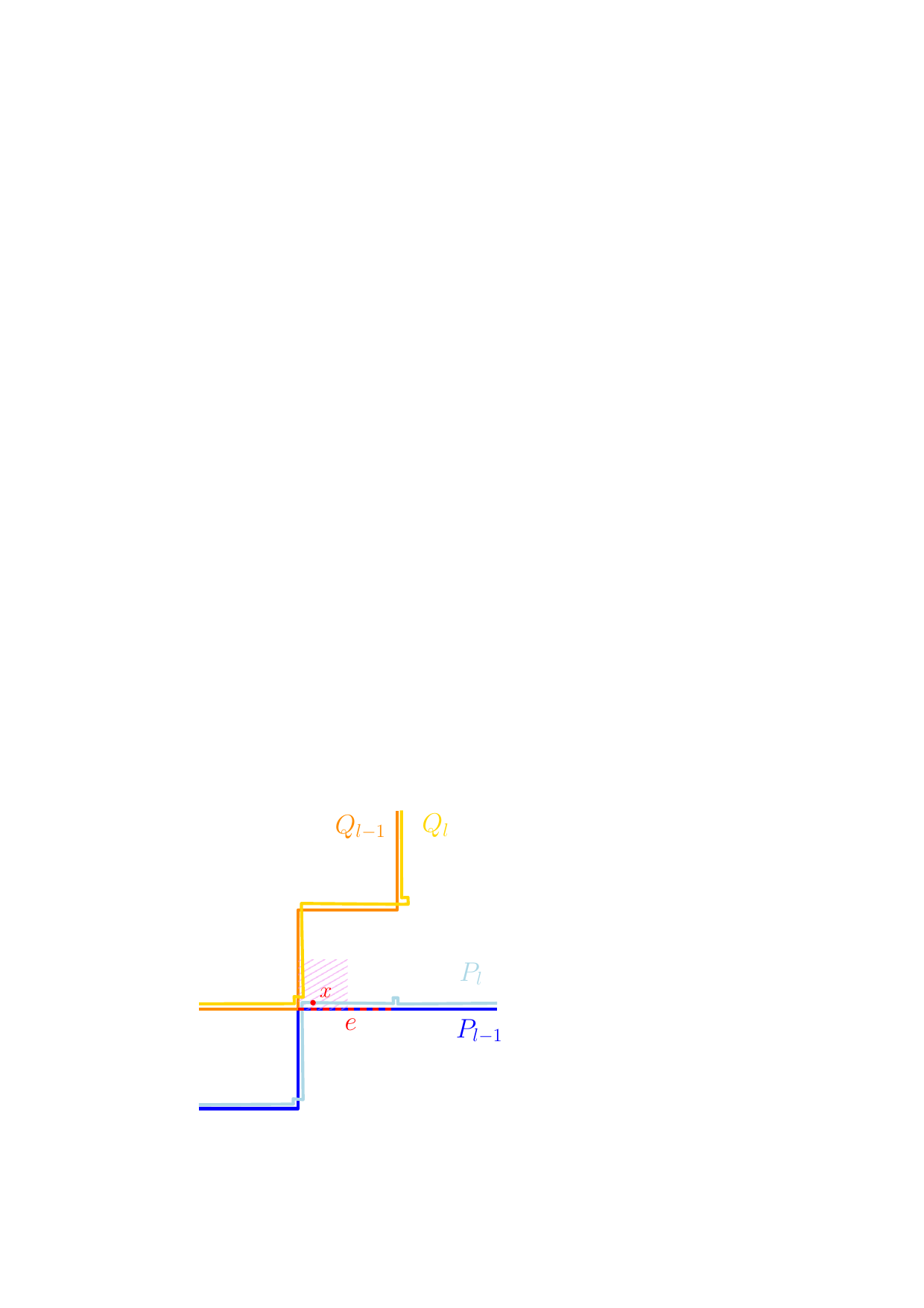}
\caption{The colored paths $P_{l-1}$ and $Q_{l-1}$ are defined on $8^{-l+1} \mathbb{Z}^d$, and the colored paths $P_l$ and $Q_l$ are defined on $8^{-l} \mathbb{Z}^d$. The shaded region corresponds to a $2^{-j}$-box with $j = 3l-2$ that intersects both $P_l$ and $Q_l$. We can map it to a pair $(x,e)$ such that $e$ is an edge in $P_{l-1}$ that intersects $Q_{l-1}$, and $x$ is a vertex in $P_l$ that belongs to the refinement of $e$. The red point corresponds to $x$, and the dashed red edge corresponds to $e$.}
\label{fig:intersect} 
\end{figure}
    
    Let $Y_j = |P_j \cap Q_j|$ for $0 \leq j \leq n$. Combining~\eqref{eq:second-moment-brw} and~\eqref{eq:control-intersect-brw}, we have
    \begin{equation}\label{eq:second-moment-brw-2}
         \frac{\mathbb{E} \mathcal{N}^2 }{(\mathbb{E}\mathcal{N})^2} \leq \mathbf E \Big[\mathfrak p^{-100 \sum_{j = \lfloor \frac{k}{4} \rfloor}^n Y_j} \Big].
    \end{equation}
    Next, we upper-bound the right-hand side of~\eqref{eq:second-moment-brw-2} using Lemmas~\ref{lem:intersect-BRW} and~\ref{lem:sequence}. Recall from Lemma~\ref{lem:intersect-BRW} that for all $1 \leq i \leq n$, given $(Y_0, Y_1, \ldots Y_{i-1})$, 
    $$
    Y_i\mbox{ is stochastically dominated by } 11(Z_1 + Z_2 + \ldots + Z_{2Y_{i-1}}),$$
    where $\{Z_l\}_{l \geq 1}$ are i.i.d.\ Bernoulli random variables with probability $\frac{50}{\mathfrak q - 10}$ of being 1. Therefore, for all $a>1$, given $(Y_0, Y_1, \ldots Y_{i-1})$, we have
    \begin{equation}\label{eq:proof-prop2.1-dominate}
    \mathbf E [a^{Y_i}|Y_0, Y_1, \ldots Y_{i-1}] \leq \mathbf E [a^{11(Z_1 + Z_2 + \ldots + Z_{2Y_{i-1}})}] = (1 + \frac{50}{\mathfrak q - 10}(a^{11}-1))^{2 Y_{i-1}}.
    \end{equation}
    Let $a_1 = \mathfrak p^{-100}$ and $a_{i+1} = \mathfrak p^{-100}(1 + \frac{50}{\mathfrak q - 10} a_i^{11})^2$ for all $i \geq 1$. By Claim~\eqref{lem-sequence-1} in Lemma~\ref{lem:sequence} with $a = \mathfrak{p}^{-100}$, for all sufficiently large $d$, we have $ a_i \leq 2 \mathfrak{p}^{-100}$ for all $i \geq 1$. Applying~\eqref{eq:proof-prop2.1-dominate} inductively to the right-hand side of~\eqref{eq:second-moment-brw-2}, we obtain that for all sufficiently large $d$,
    \begin{align*}
    \frac{\mathbb{E} \mathcal{N}^2}{(\mathbb{E} \mathcal{N})^2} &=  \mathbf E \Big[ \mathbf E \Big[\mathfrak p^{- 100\sum_{i=\lfloor \frac{k}{4} \rfloor }^{n-1} Y_i} \times a_1^{Y_n}\big{|} Y_0, Y_1, \ldots, Y_{n-1}\Big]\Big] \\
    &\leq \mathbf E \Big[  \mathfrak p^{- 100\sum_{i=\lfloor \frac{k}{4} \rfloor}^{n-2} Y_i} a_2^{Y_{n-1}}\Big]\leq \ldots \leq \mathbf E \Big[ a_{n-\lfloor \frac{k}{4} \rfloor-1}^{Y_{\lfloor \frac{k}{4} \rfloor}}\Big] \leq \mathbf E \big[ \mathfrak (2 \mathfrak p^{-100})^{Y_{\lfloor \frac{k}{4} \rfloor}}\big].
    \end{align*}
    Now let $b_1 = 2 \mathfrak p^{-100}$ and $b_{i+1} = (1 + \frac{50}{\mathfrak q - 10} (b_i^{11} - 1))^2$ for all $i \geq 1$. Applying Claim~\eqref{lem-sequence-2} in Lemma~\ref{lem:sequence} with $a = 2 \mathfrak p^{-100}$, we see that when $d$ is sufficiently large, we have $b_i \leq 1 + 2^{-i}$ for all $i \geq 2$. Applying~\eqref{eq:proof-prop2.1-dominate} inductively to the right-hand side of the above inequality, we obtain
    $$\frac{\mathbb{E} \mathcal{N}^2}{(\mathbb{E} \mathcal{N})^2} \leq \mathbf E \Big[\mathbf E \Big[ b_1^{Y_{\lfloor \frac{k}{4} \rfloor}}|Y_0, Y_1, \ldots, Y_{\lfloor \frac{k}{4} \rfloor-1}\Big]\Big]\leq \mathbf E \Big[ b_2^{Y_{\lfloor \frac{k}{4} \rfloor-1}} \Big]  \leq \ldots \leq \mathbf E \Big[b_{\lfloor \frac{k}{4} \rfloor+1}^{Y_0} \Big] \leq \mathbf E \Big[\mathfrak (1 + 2^{-\lfloor \frac{k}{4} \rfloor})^{Y_0}\Big].$$
    By Lemma~\ref{lem:high-d-intersect}, $Y_0$ is stochastically dominated by a geometric random variable with success probability $c_1$. By simple calculations, we obtain~\eqref{eq:prop2.1-1} and finish the proof of Proposition~\ref{prop:BRW-thick-point}.
\end{proof}

\subsection{Proof of Theorem~\ref{thm:BRW-thick}}\label{subsec:sec2-final-proof}

In this subsection, we use Proposition~\ref{prop:BRW-thick-point} and Lemma~\ref{lem:Hausdorff-limit-brw} to prove Theorem~\ref{thm:BRW-thick}. Fix $\alpha>0$ and consider sufficiently large $d$ that satisfies Proposition~\ref{prop:BRW-thick-point}. Fix an integer $k \geq 1$, which will eventually tend to infinity. For integers $n \geq k$ and $M \geq 10$, let $\mathcal{K}(n,M)$ be the event that there exists a $k$-good path in $\mathcal{P}^{n,M}$. Since $\mathcal{P}^{n,M}$ satisfies a restriction property with respect to $M$ (see Lemma~\ref{lem:restrict-M-brw}), the event $\mathcal{K}(n,M)$ is decreasing with respect $M$. Using Claim~\ref{property-2-cal} in Lemma~\ref{lem:basic-property-mathcalP}, we also have that $\mathcal{K}(n,M)$ is decreasing with respect $n$. Therefore, by Proposition~\ref{prop:BRW-thick-point}, we have
\begin{equation}\label{eq:thm1.2-prove-1}
\mathbb{P}\Big[\bigcap_{n \geq k, M \geq 10} \mathcal{K}(n,M)\Big] \geq 1 - \epsilon(k), \quad \mbox{with $\lim_{k \rightarrow \infty} \epsilon(k) = 0$.}
\end{equation}
We claim that on the event $\cap_{n \geq k} \cap_{M \geq 10} \mathcal{K}(n,M)$, the set $\TBRW_\alpha$ contains an unbounded path. Suppose that the event $\cap_{n \geq k} \cap_{M \geq 10} \mathcal{K}(n,M)$ occurs. Then, we can choose a $k$-good path $P_n \in \mathcal{P}^{n,n}$ for all $n \geq \max \{k, 10\}$. There exists a subsequence $\{n_m\}_{m \geq 1}$ such that $\{P_{n_m}\}_{m \geq 1}$ converges to a limiting set $\widetilde P$ with respect to the local Hausdorff distance. Indeed, this follows from compactness. Recall that for any $L \geq 1$, the space of closed subsets of $[-L,L]^d$ is compact w.r.t.\ the Hausdorff distance. Therefore, for any sequence, we can extract a subsequence along which $\{P_n \cap [-L,L]^2\}$ converges w.r.t.\ the Hausdorff distance. Using a diagonal argument, we can then select a subsequence that converges w.r.t.\ the local Hausdorff distance. 

By Lemma~\ref{lem:Hausdorff-limit-brw}, we see that $\widetilde P$ is a continuous path that connects $\{z : |z|_1 = 2 \}$ to infinity. Moreover, by the condition of $k$-good from~\eqref{eq:brw-good} and the last property in Lemma~\ref{lem:Hausdorff-limit-brw}, we have
$$
a_{j, \mathcal{B}_j(x)} \geq \alpha \quad \mbox{for all $j \geq k$ and $x \in \widetilde P$}.
$$
Therefore, by~\eqref{eq:def-brw}, $\liminf_{j \rightarrow \infty} \frac{1}{j} \mathcal{R}_j(x) \geq \alpha$ for all $x \in \widetilde P$, and hence, $\widetilde P \subset \TBRW_\alpha$. This proves the claim. Taking $k$ to infinity and using~\eqref{eq:thm1.2-prove-1} yields Theorem~\ref{thm:BRW-thick}.

\section{The white noise field}\label{sec:WN}

In this section, we study the white noise field $\{h_n\}_{n \geq 1}$ defined in~\eqref{eq:def-WN} and prove Theorems~\ref{thm:WN-thick} and \ref{thm:exponential-metric}. 

The proof strategy is similar to that of Theorem~\ref{thm:BRW-thick} using first and second moment estimates. However, unlike the branching random walk case, the increments $\{h_n - h_{n-1}\}_{n \geq 1}$ (where we set $h_0 = 0$) are not constant on a box. Instead, they have fluctuations that are costly to control in high dimensions. So, there will be differences in the choice of paths and the definition of good paths that we count. The first difference is in the choice of paths. Recall from~\eqref{eq:def-dimenion-constant} that $\mathfrak r = \lfloor \sqrt{d} \rfloor$. In Section~\ref{subsec:path-WN}, we will construct a set of paths $\mathscr{P}^{n,M}$ on $\frac{1}{\mathfrak r} 8^{-n} \mathbb{Z}^d$ that connect $\{z : |z|_1 = 1 + \frac{2}{7 \mathfrak r} \}$ to $\{z : |z|_1 = M - \frac{2}{7 \mathfrak r} \}$ for $n \geq 0$ and $M \geq 10$. Here, $\frac{1}{\mathfrak r}$ is the correlation length of the white noise field in high dimensions (Lemma~\ref{lem:est-correlation}), and we take 8 as a dyadic number large enough to allow the refinements of disjoint paths to be far away from each other. 

The second difference is in the definition of good paths. If we directly use the condition~\eqref{eq:prop-WN-1} below, estimating the second moment of the number of good paths would become challenging. To overcome this, we add an auxiliary condition so that the second moment estimate involving this condition is easier and this auxiliary condition ensures the occurrence of~\eqref{eq:prop-WN-1} with high probability. We refer to Section~\ref{subsec:good-WN} for the precise definition of good paths and the method to estimate the first and second moments of the (weighted) sum of good paths. The main result of this section is the following proposition. Theorems~\ref{thm:WN-thick} and~\ref{thm:exponential-metric} follow directly from this proposition; see Section~\ref{subsec:moment-WN}.
\begin{proposition}\label{prop:WN-thick}
    For any fixed $\alpha>0$, the following holds for all sufficiently large $d$. There exists a function $\epsilon : \mathbb{N} \rightarrow (0,1)$ depending on $\alpha$ and $d$ with $\lim_{k \rightarrow \infty} \epsilon(k) = 0$ such that for all integers $k \geq 1$, $n \geq 6k$ and all $M \geq 10$, with probability at least $1-\epsilon(k)$, there exists a path $P$ in $\mathscr{P}^{n,M}$ satisfying 
    \begin{equation}\label{eq:prop-WN-1}
    h_j(x) - h_{3k}(x) \geq \alpha (j-3k) \quad \mbox{for all $j \in [6k, n] \cap \mathbb{Z}$ and $x \in \cup_{j \leq i \leq n} \overline{P_i}$}.
    \end{equation}
    Here $P_j$ is the corresponding path of $P$ in $\mathscr{P}^{j,M}$ (see Definition~\ref{def:Pn-WN}, and in particular $P_n = P$) and
    $\overline P$ is the continuous path obtained by connecting neighboring vertices in $P$.
\end{proposition}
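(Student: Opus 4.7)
The plan is to adapt the first-and-second-moment framework from the proof of Proposition~\ref{prop:BRW-thick-point}. Given the family $\mathscr{P}^{n,M}$ on the rescaled lattice $\frac{1}{\mathfrak{r}} 8^{-n}\mathbb{Z}^d$ (constructed in Section~\ref{subsec:path-WN}) and a suitable notion of ``good path'' (designed in Section~\ref{subsec:good-WN}), I would introduce the weighted counting variable
\begin{equation*}
\mathcal{N} := \sum_{P \in \mathscr{P}^{n,M}} \frac{1}{\mathbb{P}[P \text{ is good}]} \mathbf{1}\{P \text{ is good}\}
\end{equation*}
and apply the Paley--Zygmund inequality $\mathbb{P}[\mathcal{N}>0] \geq (\mathbb{E}\mathcal{N})^2/\mathbb{E}[\mathcal{N}^2]$, combined with an auxiliary argument that goodness implies \eqref{eq:prop-WN-1} with probability $1 - o_k(1)$.

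The new difficulty compared with the BRW is that the increment $h_j(x) - h_{j-1}(x)$ is not constant on a scale-$2^{-j}$ box, and condition \eqref{eq:prop-WN-1} must hold at every point of the continuous refined path, not only at vertices at a fixed scale. My plan is to split the requirement into a \emph{coarse} piece and a \emph{fluctuation} piece. For each scale $j$ and each box visited by the path, I would attach a local functional built from $W(dy,dt)$ restricted to $t \in (2^{-j}, 2^{-j+1}]$ and $y$ within the correlation length $1/\mathfrak{r}$ of the box (cf.\ Lemma~\ref{lem:est-correlation}); these functionals depend on essentially disjoint portions of the noise across different scale-$j$ boxes and capture the main contribution to $h_j(x) - h_{j-1}(x)$ uniformly in $x$ over the box. ``Goodness'' would demand that each such functional exceed $(\alpha+\delta)\sqrt{\log 2}$ for a small buffer $\delta>0$, while a Dudley-type chaining estimate would control the supremum of $|h_j - h_{j-1} - \text{coarse approximation}|$ over the iterated refinements by less than $\delta\sqrt{\log 2}$ simultaneously over all $j$, with overwhelming probability. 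The buffer from $3k$ to $6k$ in the quantifier of \eqref{eq:prop-WN-1} is what absorbs the summed fluctuations at early scales.

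With goodness so defined, the moment computation follows Section~\ref{subsec:BRW-estimate} closely. The first moment is $\mathbb{E}\mathcal{N} = |\mathscr{P}^{n,M}|$ by construction, and the (approximate) factorization across disjoint boxes yields
\begin{equation*}
\frac{\mathbb{E}[\mathcal{N}^2]}{(\mathbb{E}\mathcal{N})^2} \leq \mathbf{E}\bigl[\mathfrak{p}^{-C\sum_{j \geq 3k} Y_j}\bigr],
\end{equation*}
where $\mathfrak{p}$ is the per-box coarse success probability and $Y_j$ counts scale-$j$ box collisions of two uniformly chosen paths in $\mathscr{P}^{n,M}$. The variables $Y_j$ are controlled by the analogs (to be established in Section~\ref{subsec:path-WN}) of Lemmas~\ref{lem:high-d-intersect} and~\ref{lem:intersect-BRW} on $\frac{1}{\mathfrak{r}}\mathbb{Z}^d$, which again reduce to a transience-based geometric decay of initial path intersections together with the refinement intersection bound. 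The sequence estimates of Lemma~\ref{lem:sequence} then close the recursion once $d$ (hence $\mathfrak{q}$) is taken large enough depending on $\alpha$.

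The main obstacle is the choice of the coarse functional and the associated chaining bound. The functional must be \emph{local enough} that joint probabilities across different scale-$j$ boxes factor cleanly, \emph{sharp enough} that $\mathfrak{p}$ matches the Gaussian tail $\mathbb{P}[N(0,\log 2) \geq \alpha+\delta]$ up to a constant negligible compared to $\mathfrak{q}^{-1}$ (so that the required dimension remains finite for each fixed $\alpha$), and \emph{faithful enough} that the chaining bound on the fluctuation controls $h_j - h_{j-1}$ uniformly over every point of $\cup_{j \leq i \leq n}\overline{P_i}$, not merely at lattice vertices. Carrying out this chaining along the random refined path -- with a union bound that is simultaneously uniform in $j \in [6k,n]$ and compatible with the refinement structure of $\mathscr{P}^{n,M}$ -- is where the bulk of the technical work lies; once it is in place, the moment estimates are a direct transcription of Section~\ref{subsec:BRW-estimate}.
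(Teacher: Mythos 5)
Your high-level outline — weighted counting, Paley--Zygmund, good paths with an auxiliary local condition, second-moment estimates closed via the refinement intersection bounds and the sequence recursion — matches the paper's framework. But the central mechanism you propose for converting the auxiliary condition into \eqref{eq:prop-WN-1} is not viable, and the paper's fix is an entirely different idea that you are missing.

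You want the auxiliary (``coarse'') functional to ``capture the main contribution to $h_j(x)-h_{j-1}(x)$ uniformly in $x$ over the box,'' require it to exceed $(\alpha+\delta)\sqrt{\log 2}$, and then close the gap with a Dudley chaining bound showing that the residual $|h_j - h_{j-1} - (\text{coarse})|$ is uniformly $\le \delta\sqrt{\log 2}$. This cannot work. To get factorization across boxes the coarse functional must depend on the noise in a narrow slab near the path, but $h_j(y)-h_{j-1}(y)$ for a generic $y$ near the path is built from white noise over a ball of radius $\sim 2^{-j}$ around $y$, most of which the local functional does not see. Consequently the residual at any \emph{single} fixed $(y,j)$ is a mean-zero Gaussian whose standard deviation is a fixed positive fraction of $\sqrt{\log 2}$, not an $o(1)$ quantity. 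Chaining gives concentration around the mean, but it cannot make a macroscopic Gaussian small; and taking a supremum over all $j\in[6k,n]$ and all $y$ along the refinements only makes the residual larger. No choice of small $\delta$ closes this gap, and taking $\delta$ of order $1$ destroys the needed match between $\mathfrak p$ and $\mathbb{P}[N(0,\log 2)\ge\alpha]$.

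The paper resolves this with \emph{entropic repulsion} rather than a fluctuation bound. The auxiliary condition in Definition~\ref{def:good-WN} asks that the white-noise average $\mathscr{W}_j(x,P_j)$ exceed $\beta\,\mathbb{E}[\mathscr{W}_j(x,P_j)^2]$ for a \emph{large} constant $\beta=\beta(\alpha)$, not that it barely exceed $\alpha$. Crucially, one only needs $\mathscr{W}_i(x_u,P_i)$ to have a \emph{uniformly positive} correlation with the relevant piece $Z_i$ of $h_j-h_{3k}$ (see \eqref{eq:lem3.17-1}), not to ``capture the main contribution.'' Lemma~\ref{lem:repulsion} then shows that conditioning on $\mathscr{W}_i$ being $\beta$-exceptional pushes $Z_i$ to take large values, with a pushup that grows with $\beta$; choosing $\beta$ large enough (depending only on $\alpha$, so the per-box probability cost is a fixed constant and the sequence Lemma~\ref{lem:sequence} still applies once $\mathfrak q$ is large) dominates both the required linear growth $\alpha(j-3k)$ and the fluctuation term bounded in Lemma~\ref{lem:3.16}. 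This is the step your proposal has no substitute for.

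Two further technical points you glossed over are genuinely needed. First, the second-moment correlation does not factor exactly: you need the Gaussian correlation inequality (Lemma~\ref{lem:gaussian-correlation}) together with the geometric reduction (Lemma~\ref{lem:gaussian-to-intersection}) to convert the overlap of white-noise functionals into the intersection counts $Y_j$. Second, the ``good'' event involves both the auxiliary condition and Condition~\eqref{condition-good-2}; decoupling the two in the second-moment ratio requires the conditional-independence structure and Liggett's domination lemma (Lemma~\ref{lem:domination}), which your sketch omits.
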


\begin{remark}\label{rmk:path-choose}
    We require~\eqref{eq:prop-WN-1} holds for all $x \in \cup_{j \leq i \leq n} \overline{P_i}$, not just for $x \in \overline{P_n} = \overline{P}$, to make the event decreasing with respect to $n$. In the proof of Theorem~\ref{thm:exponential-metric}, we will use a variant of Proposition~\ref{prop:WN-thick} (see Proposition~\ref{prop:WN-thick-variant}), where we can find a path with large $h_n$ values connecting two far-away boxes. Similarly, we can also find a path with large $h_n$ values crossing any box, which can be used to prove Theorem~\ref{thm:crossing}.
\end{remark}

The rest of this section is organized as follows. In Section~\ref{subsec:prelim}, we provide some preliminary results. In Section~\ref{subsec:path-WN}, we construct a collection of paths and collect their properties. In Section~\ref{subsec:good-WN}, we define good paths. In Section~\ref{subsec:conditional}, we show that the auxiliary conditions imply the condition~\eqref{eq:prop-WN-1} in a box with high probability. Finally, in Sections~\ref{subsec:moment-WN} and~\ref{subsec:proof-WN}, we perform the first and second moment estimates and prove Proposition~\ref{prop:WN-thick}, thereby proving Theorems~\ref{thm:WN-thick} and \ref{thm:exponential-metric}.

\subsection{Preliminary estimates}
\label{subsec:prelim}

We record three elementary results (Lemmas~\ref{lem:est-correlation}--\ref{lem:repulsion}) that will be used in the proof of Proposition~\ref{prop:WN-thick}.

\begin{lemma}\label{lem:est-correlation}

\begin{enumerate}
    \item \label{est-correlation-1} For any fixed $t>0$, there exists a constant $c_2 = c_2(t)$ such that for all $d \geq c_2^{-1}$ and $|x-y|_2 \leq \frac{t}{\sqrt{d}}$, we have $\Vol (B_1(x) \cap B_1(y)) \geq c_2 \Vol(B_1(0))$.
    
    \item \label{est-correlation-2} There exists a universal constant $c_3 > 0$ such that for all $d \geq 2$ and $x, y \in \mathbb{R}^d$, we have $\Vol (B_1(x) \cap B_1(y)) \leq \frac{1}{c_3} \exp(- c_3 d |x-y|_2^2) \Vol(B_1(0))$.
    
\end{enumerate}
\end{lemma}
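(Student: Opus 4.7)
The plan is to reduce both estimates to the same one-dimensional integral via the spherical cap formula, and then analyze that integral by Gaussian comparison. Without loss of generality assume $x = 0$ and $y = r \mathbf{e}_1$ where $r = |x-y|_2$, so the intersection $B_1(0) \cap B_1(r\mathbf{e}_1)$ is invariant under reflection across the hyperplane $\{z_1 = r/2\}$. This gives
\[
\Vol(B_1(0) \cap B_1(r\mathbf{e}_1)) = 2\Vol\bigl(\{z \in B_1(0): z_1 \geq r/2\}\bigr) = 2 \Vol(B_1^{(d-1)}(0)) \int_{r/2}^1 (1-t^2)^{(d-1)/2}\,dt,
\]
where $B_1^{(d-1)}(0)$ is the unit ball in $\mathbb{R}^{d-1}$. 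Since $\Vol(B_1(0)) = 2 \Vol(B_1^{(d-1)}(0)) \int_0^1 (1-t^2)^{(d-1)/2}\,dt$, the ratio we must estimate is
\[
\frac{\Vol(B_1(0) \cap B_1(r\mathbf{e}_1))}{\Vol(B_1(0))} = \frac{\int_{r/2}^1 (1-t^2)^{(d-1)/2}\,dt}{\int_0^1 (1-t^2)^{(d-1)/2}\,dt}.
\]

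For Claim~\ref{est-correlation-1}, I change variables $s = t\sqrt{d-1}$. The integrand becomes $\bigl(1 - s^2/(d-1)\bigr)^{(d-1)/2}$, which converges uniformly on compacts to the Gaussian density $e^{-s^2/2}$. Both numerator and denominator can be compared to Gaussian integrals: the numerator is $\geq \frac{1}{\sqrt{d-1}} \int_{r\sqrt{d-1}/2}^{c\sqrt{d-1}} (1-s^2/(d-1))^{(d-1)/2}\,ds$ for some small $c$, which for $r \leq t/\sqrt{d}$ is bounded below uniformly in $d$ by a positive constant times $\int_{t/2}^{c'} e^{-s^2/2}\,ds$ for $d$ large, and the denominator is bounded above by $\frac{1}{\sqrt{d-1}}\int_0^\infty e^{-s^2/2}\,ds$ (using $(1-t^2)^{(d-1)/2} \leq e^{-(d-1)t^2/2}$). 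Taking $c_2(t) = \frac{1}{2} \cdot \frac{\int_{t/2}^\infty e^{-s^2/2}\,ds}{\int_0^\infty e^{-s^2/2}\,ds}$ (times a harmless constant) yields the claim for $d$ large enough.

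For Claim~\ref{est-correlation-2}, I use the pointwise inequality $(1-t^2)^{(d-1)/2} \leq e^{-(d-1)t^2/2}$ in the numerator together with the Mills-ratio bound $\int_a^\infty e^{-u^2/2}\,du \leq a^{-1} e^{-a^2/2}$, giving
\[
\int_{r/2}^1 (1-t^2)^{(d-1)/2}\,dt \leq \frac{2}{r(d-1)} \, e^{-(d-1)r^2/8}.
\]
For the denominator, restricting to $t \in [0, 1/\sqrt{d-1}]$ and using $(1-\tfrac{1}{d-1})^{(d-1)/2} \geq e^{-1}$ gives $\int_0^1 (1-t^2)^{(d-1)/2}\,dt \geq \tfrac{1}{e\sqrt{d-1}}$. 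The ratio is therefore at most $\frac{C}{r\sqrt{d-1}} e^{-(d-1)r^2/8}$. This already implies the claim when $r \geq 1/\sqrt{d-1}$. For the range $r < 1/\sqrt{d-1}$ the exponent $e^{-d r^2/C}$ is bounded below by a universal constant, so the trivial bound $\Vol(B_1(0) \cap B_1(r \mathbf{e}_1)) \leq \Vol(B_1(0))$ suffices; combining the two cases gives Claim~\ref{est-correlation-2} with a universal constant $c_3 > 0$.

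There is no real obstacle here: everything reduces to standard high-dimensional concentration of the uniform measure on the sphere near the equator. The only technical care is in the Gaussian comparison so that the constants in Claim~\ref{est-correlation-1} depend only on $t$ (not on $d$), and in patching the two regimes $r \lessgtr 1/\sqrt{d-1}$ in Claim~\ref{est-correlation-2} to obtain a single clean exponential bound.
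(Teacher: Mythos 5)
Your proof is correct and takes essentially the same approach as the paper: both reduce the intersection volume to the one-dimensional spherical cap integral $\int_{r/2}^1 (1-u^2)^{(d-1)/2}\,du$ and estimate it by comparing the integrand to the Gaussian $e^{-(d-1)u^2/2}$. The paper dispatches the $\sqrt{d}$-prefactor via Stirling's formula for $\Vol(B_1^{(d-1)}(0))/\Vol(B_1(0))$ and avoids your $r \gtrless 1/\sqrt{d-1}$ case split by using the uniform bound $\int_a^\infty e^{-z^2}\,dz \leq Ce^{-a^2}$ valid for all $a>0$, but these are cosmetic differences.
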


\begin{proof}
    Without loss of generality, assume that $x = 0$ and $y = u \mathbf e_1$ for some $u \geq 0$. By definition, for all $0 \leq u \leq 2$
    \begin{equation}\label{eq:lem3.2-1}
    \begin{aligned}
        \Vol(B_1(0) \cap B_1(u\mathbf e_1)) &= \Vol ( \{ z: -1+u \leq z_1 \leq u/2, z_2^2  + \ldots + z_d^2 \leq 1-(u-z_1)^2 \})\\
        &\quad + \Vol ( \{ z: u/2 \leq z_1 \leq 1, z_2^2  + \ldots + z_d^2 \leq 1-z_1^2 \}) \\
        &= 2 \int_{u/2}^1 \widetilde{{\rm vol}} (B_1(0)) (1-r^2)^{\frac{d-1}{2}} dr,
    \end{aligned}
    \end{equation}
    where $\widetilde{{\rm vol}} (B_1(0))$ is the $(d-1)$-dimensional volume of a $(d-1)$-dimensional unit ball. We first prove Claim~\ref{est-correlation-1}. Fix $t>0$ and assume that $d > 4 t^2 $ (or equivalently, $\frac{t}{\sqrt{d}} < \frac{1}{2}$). Since $\Vol (B_1(0)) = \frac{\pi^{d/2}}{\Gamma(1 + d/2)}$ and $\widetilde{{\rm vol}} (B_1(0)) = \frac{\pi^{(d-1)/2}}{\Gamma(1/2+d/2)}$, by Stirling's formula, there exists a universal constant $c>0$ such that 
    \begin{equation}\label{eq:lem3.2-0}
        c \sqrt{d} \leq \frac{\widetilde{{\rm vol}} (B_1(0))}{\Vol(B_1(0))} \leq \frac{1}{c} \sqrt{d} \quad \mbox{for all $d \geq 2$}.
    \end{equation}Therefore, by~\eqref{eq:lem3.2-1}, for all $0 \leq u \leq \frac{t}{\sqrt{d}}$, we have
    \begin{align*}
        \Vol(B_1(0) \cap B_1(u\mathbf e_1)) & \geq 2 c \sqrt{d} \cdot \Vol(B_1(0)) \int_{\frac{t}{2 \sqrt{d}}}^{\frac{t}{\sqrt{d}}} (1-r^2)^{\frac{d-1}{2}} dr.
    \end{align*}
    We conclude Claim~\ref{est-correlation-1} by noting that $(1-r^2)^{\frac{d-1}{2}}$ is bounded from below by a positive constant that depends only on $t$ for all $d > 4t^2$ and $\frac{t}{2 \sqrt{d}} \leq r \leq \frac{t}{\sqrt{d}}$.

    Next, we prove Claim~\ref{est-correlation-2}. For all $a \in (0,1)$ and $b>0$, we have $(1-a)^b \leq e^{-ab}$. Combined with~\eqref{eq:lem3.2-1} and~\eqref{eq:lem3.2-0}, it yields that
    $$
    \Vol(B_1(0) \cap B_1(u\mathbf e_1)) \leq \frac{2}{c}  \sqrt{d} \cdot \Vol(B_1(0)) 
    \int_{u/2}^1 e^{-\frac{d-1}{2} r^2} dr. 
    $$
    Taking $r' = \sqrt{\frac{d-1}{2}} r$ and then using the inequality $\int_a^\infty e^{-z^2} dz \leq C e^{-a^2}$ for some universal constant $C>0$ and any $a>0$, we further have
    $$
    \Vol(B_1(0) \cap B_1(u\mathbf e_1)) \leq \frac{2}{c} \frac{\sqrt{d}}{\sqrt{\frac{d-1}{2}}} \cdot \Vol(B_1(0)) 
    \int_{\frac{u}{2} \sqrt{\frac{d-1}{2}}}^\infty e^{-r'^2} dr' \leq \frac{2}{c} \frac{\sqrt{d}}{\sqrt{\frac{d-1}{2}}} \cdot \Vol(B_1(0)) \cdot C e^{-\frac{u^2}{4} \frac{d-1}{2}}. 
    $$
    This concludes Claim~\ref{est-correlation-2} by choosing $c_3$ sufficiently small.
\end{proof}

Next, we prove a Gaussian correlation inequality. For $a \in \mathbb{R}$ and $b>0$, we use $N(a,b)$ to denote the normal distribution with mean $a$ and variance $b$. We will frequently use the following estimate for the asymptotic decay rate of the normal distribution from \cite[Theorem 1.2.6]{durrett}: There exists a universal constant $c_4 > 0$ such that
\begin{equation}
    \label{eq:gaussian-tail}
    c_4 x^{-1} e^{-x^2/2} \leq \mathbb{P}[ N(0,1) \geq x] \leq c_4^{-1} x^{-1} e^{-x^2/2} \quad \mbox{for all } x \geq \frac{1}{2}.
\end{equation}

\begin{lemma}\label{lem:gaussian-correlation}
    There exists a universal constant $C_1 > 0$ such that for all mean-zero joint normal variables $(X,Y)$ with $0 < \mathbb{E} [X^2], \mathbb{E} [Y^2] \leq 1$, $\mathbb{E}[XY] \geq 0$, and for all $t \geq 1$, we have
    \begin{equation}\label{eq:lem3.2}
    \frac{\mathbb{P}[ X \geq t, Y \geq t]}{\mathbb{P}[ X \geq t] \mathbb{P}[ Y \geq t]} \leq \exp\Big( C_1 t^2 \frac{\mathbb{E} [XY]}{\mathbb{E} [X^2] \mathbb{E} [Y^2] } \Big).
    \end{equation}
    Note that the equality holds when $\mathbb{E}[XY] = 0$.
\end{lemma}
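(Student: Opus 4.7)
The plan is to reduce the inequality to a statement about standard bivariate Gaussians and then handle small and large correlations separately. Set $\sigma_X^2 = \mathbb{E}[X^2]$, $\sigma_Y^2 = \mathbb{E}[Y^2]$, and $\rho = \mathbb{E}[XY]/(\sigma_X\sigma_Y) \in [0,1]$, and normalize by $\tilde X = X/\sigma_X$, $\tilde Y = Y/\sigma_Y$, $a = t/\sigma_X$, $b = t/\sigma_Y$. Since $\sigma_X,\sigma_Y \leq 1$, both $a,b \geq t \geq 1$, and the target exponent becomes $C_1 ab\rho$, matching $C_1 t^2 \mathbb{E}[XY]/(\mathbb{E}[X^2]\mathbb{E}[Y^2])$. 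Writing $\bar\Phi(z) := \mathbb{P}[N(0,1) \geq z]$ and $F(\rho) := \mathbb{P}[\tilde X \geq a, \tilde Y \geq b]$, the goal becomes $F(\rho) \leq \bar\Phi(a)\bar\Phi(b) \exp(C_1 ab\rho)$, and the lower bound in~\eqref{eq:gaussian-tail} gives $\bar\Phi(a)\bar\Phi(b) \geq c_4^2 (ab)^{-1} e^{-(a^2+b^2)/2}$.

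For $\rho \in [0,1/2]$ I will apply Plackett's identity $\partial_\rho F(\rho) = \varphi_\rho(a,b)$, where $\varphi_\rho$ is the standard bivariate normal density with correlation $\rho$. The key algebraic computation is
\begin{equation*}
-\frac{a^2 - 2sab + b^2}{2(1-s^2)} + \frac{a^2+b^2}{2} = \frac{s\bigl(2ab - s(a^2+b^2)\bigr)}{2(1-s^2)} \leq \frac{sab}{1-s^2} \leq 2sab \qquad (s \in [0,1/2]),
\end{equation*}
which yields $\varphi_s(a,b) \leq (\pi\sqrt{3})^{-1} e^{-(a^2+b^2)/2} e^{2sab}$. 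Integrating from $0$ to $\rho$ and dividing by the lower bound on $F(0) = \bar\Phi(a)\bar\Phi(b)$ produces $F(\rho)/F(0) \leq 1 + K(e^{2\rho ab}-1)$ for an explicit universal $K$. The elementary inequality $1 + K(e^x-1) \leq e^{(K+1)x}$ for $x,K \geq 0$ (proved by equality at $x=0$ and comparing derivatives) then delivers $F(\rho)/F(0) \leq \exp(2(K+1)\rho ab)$.

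For $\rho \in (1/2,1]$ a single-marginal bound already suffices. Assume without loss of generality $a \leq b$; then $F(\rho) \leq \bar\Phi(b)$, and the Gaussian tail lower bound $\bar\Phi(a) \geq c_4 a^{-1} e^{-a^2/2}$ gives
\begin{equation*}
\frac{F(\rho)}{F(0)} \leq \frac{\bar\Phi(b)}{\bar\Phi(a)\bar\Phi(b)} = \frac{1}{\bar\Phi(a)} \leq \frac{a}{c_4} e^{a^2/2}.
\end{equation*}
In this regime $\rho ab \geq ab/2 \geq a^2/2 \geq 1/2$, while $\log a \leq a^2/2$ for $a \geq 1$ and the constant $|\log c_4|$ is absorbed using $\rho ab \geq 1/2$, so $\log(a/c_4) + a^2/2 \leq C_1 \rho ab$ for a universal $C_1$. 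Taking the maximum of the two constants completes the proof.

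The main obstacle is that the bivariate density $\varphi_s(a,b)$ admits no useful uniform upper bound as $s \to 1$ when $a \neq b$, so a single Plackett integration over $[0,\rho]$ cannot cover all correlations. The two-case split exploits the observation that when $\rho \geq 1/2$ the right-hand side $e^{C_1 ab \rho}$ is already large enough that the crude univariate bound $F(\rho) \leq \min(\bar\Phi(a),\bar\Phi(b))$ is sufficient, whereas for $\rho \leq 1/2$ one needs the first-order sharpness of Plackett's identity because the target factor $e^{C_1 ab\rho}$ may be only moderately larger than $1$.
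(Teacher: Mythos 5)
Your proof is correct, and it takes a genuinely different route from the paper's argument, so the comparison is worth recording. Both proofs split into a small-correlation regime and a large-correlation regime and handle the large-correlation regime with a crude single-marginal bound $\mathbb{P}[X\geq t, Y\geq t] \leq \min(\mathbb{P}[X\geq t], \mathbb{P}[Y\geq t])$ absorbed by the exponential. The difference is in the small-correlation regime and in where the split is placed. You normalize to a standard bivariate Gaussian, put the split at $\rho = 1/2$, and for $\rho \in [0,1/2]$ invoke Plackett's identity $\partial_\rho \mathbb{P}[\tilde X \geq a, \tilde Y \geq b] = \varphi_\rho(a,b)$; the algebraic rearrangement of the exponent in $\varphi_\rho$ together with the elementary bound $1 + K(e^x-1) \leq e^{(K+1)x}$ then finishes. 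The paper instead assumes w.l.o.g.\ $\mathbb{E}[Y^2] \geq \mathbb{E}[X^2]$, places the split at $\mathbb{E}[XY] = \frac{1}{K}\mathbb{E}[X^2]$, and in the small-correlation case slices $\{X \geq t\}$ into horizontal slabs $\{nt \leq X \leq (n+1)t\}$ and conditions on the orthogonal residual $Z = Y - \frac{\mathbb{E}[XY]}{\mathbb{E}[X^2]}X$ inside each slab before resumming. Plackett's identity linearizes the dependence on $\rho$ at a stroke and makes the small-correlation estimate a one-line integral, so your argument is appreciably shorter and cleaner; the paper's slicing argument is more elementary in the sense that it uses nothing beyond~\eqref{eq:gaussian-tail} and the orthogonal decomposition, at the cost of a longer bookkeeping. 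All the auxiliary steps you use (the range restriction $s\leq 1/2$ that keeps the density prefactor and exponent under control, $\log a \leq a^2/2$ for $a \geq 1$, $a^2 \leq 2\rho ab$ when $a \leq b$ and $\rho \geq 1/2$, and $ab \geq t^2 \geq 1$ so that $\rho ab \geq 1/2$ can absorb the additive constant $|\log c_4|$) check out.
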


\begin{proof}
    Without loss of generality, we assume that $\mathbb{E} [Y^2] \geq \mathbb{E} [X^2]$. Let $K > 10$ be a large universal constant to be chosen. The proof of~\eqref{eq:lem3.2} will be divided into two cases: (1). $\mathbb{E}[XY] \geq \frac{1}{K} \mathbb{E}[X^2]$; and (2). $\mathbb{E}[XY] < \frac{1}{K} \mathbb{E}[X^2]$. We first prove the case where $\mathbb{E}[XY] \geq \frac{1}{K} \mathbb{E}[X^2]$. Applying~\eqref{eq:gaussian-tail} with $x = \frac{t}{\sqrt{\mathbb{E}[Y^2]}}$, we obtain
    $$
    \frac{\mathbb{P}[ X \geq t, Y \geq t]}{\mathbb{P}[ X \geq t] \mathbb{P}[ Y \geq t]} \leq \frac{1}{\mathbb{P}[Y \geq t]} \leq c_4^{-1} \frac{t}{\sqrt{\mathbb{E}[Y^2]}} \exp \big( \frac{t^2}{2 \mathbb{E} [Y^2]} \big).
    $$
    Using $\frac{t^2}{\mathbb{E} [Y^2]} \geq 1$, we have
    \begin{align*}
    &\quad c_4^{-1} \frac{t}{\sqrt{\mathbb{E}[Y^2]}} \exp \big( \frac{t^2}{2 \mathbb{E} [Y^2]} \big) \leq c_4^{-1} \frac{t^2}{\mathbb{E}[Y^2]} \exp \big( \frac{t^2}{2 \mathbb{E} [Y^2]} \big) \\
    &\leq \exp\big( (c_4^{-1} + \frac12) \frac{t^2}{\mathbb{E} [Y^2]} \big) \leq \exp\big( K(c_4^{-1} + \frac12) t^2 \frac{\mathbb{E} [XY]}{\mathbb{E} [X^2] \mathbb{E} [Y^2] } \big),
    \end{align*}
    where the second inequality uses $z \leq e^z$ for $z \geq 0$, and the third inequality uses the assumption $\mathbb{E}[XY] \geq \frac{1}{K} \mathbb{E}[X^2]$. This implies~\eqref{eq:lem3.2} by choosing a sufficiently large $C_1$ which depends only on $K$.

    Next, we prove the case where $\mathbb{E}[XY] < \frac{1}{K} \mathbb{E}[X^2]$. Let $m = \lfloor \frac{\mathbb{E}[X^2]}{2 \mathbb{E}[XY]} \rfloor - 1$. Then, we have
    \begin{equation}\label{eq:lem3.2-2}
    \frac{\mathbb{P}[ X \geq t, Y \geq t]}{\mathbb{P}[ X \geq t] \mathbb{P}[ Y \geq t]} \leq \underbrace{\sum_{n=1}^m \frac{\mathbb{P}[ (n+1) t \geq X \geq n t, Y \geq t]}{\mathbb{P}[ X \geq t] \mathbb{P}[ Y \geq t]}}_{I_1} + \underbrace{\frac{\mathbb{P}[ X \geq (m+1) t ]}{\mathbb{P}[ X \geq t] \mathbb{P}[ Y \geq t]}}_{I_2}.
    \end{equation}
    Denote the two terms on the right-hand side of~\eqref{eq:lem3.2-2} by $I_1$ and $I_2$. Next we upper-bound them separately. Let $Z = Y - \frac{\mathbb{E}[XY]}{\mathbb{E}[X^2]} X$. Then $X$ and $Z$ are independent and $\mathbb{E}[Z^2] = \mathbb{E}[Y^2] - \frac{\mathbb{E}[XY]^2}{\mathbb{E}[X^2]} \leq \mathbb{E}[Y^2]$. For all $1 \leq n \leq m$, conditioned on the event $(n+1) t \geq X \geq nt$, the event $Y \geq t$ implies that $Z \geq t - \frac{\mathbb{E}[XY]}{\mathbb{E}[X^2]} (n+1) t$. Therefore,
    \begin{align*}
        I_1 &= \sum_{n=1}^m \frac{\mathbb{P}[Y \geq t |  (n+1) t \geq X \geq n t]}{\mathbb{P}[ Y \geq t]} \times \frac{\mathbb{P}[(n+1) t \geq X \geq n t]}{\mathbb{P}[ X \geq t] } \\
        &\leq \sum_{n=1}^m \frac{\mathbb{P}[Z \geq t - \frac{\mathbb{E}[XY]}{\mathbb{E}[X^2]} (n+1) t]}{\mathbb{P}[ Y \geq t]} \times \frac{\mathbb{P}[(n+1) t \geq X \geq n t]}{\mathbb{P}[ X \geq t] }.
    \end{align*}
    Using~\eqref{eq:gaussian-tail}, for all $\frac{1}{2} \leq x_1 \leq x_2$, we have
    \begin{align*}
    \frac{\mathbb{P}[ N(0,1) \geq x_1]}{\mathbb{P}[ N(0,1) \geq x_2]} &= 1 + \frac{\int_{x_1}^{x_2} e^{-y^2/2} dy}{\int_{x_2}^\infty e^{-y^2/2} dy} \leq 1 + \frac{(x_2 - x_1) e^{-x_1^2/2}}{ \sqrt{2 \pi} c_4 x_2^{-1} e^{-x_2^2/2}} \\
    &\leq 1 + c_4^{-1} x_2 (x_2-x_1) e^{x_2(x_2 - x_1)} \leq e^{ (c_4^{-1} + 1) x_2(x_2 - x_1)}.
    \end{align*}
    In the last inequality, we used the relation $e^{y+z} \geq e^y (1+z) \geq 1 + ze^y$ with $(y, z) = (x_2 (x_2-x_1),  c_4^{-1} x_2 (x_2-x_1))$.
    Using $\mathbb{P}[Z \geq t] \leq \mathbb{P}[Y \geq t]$ for all $t \geq 0$ and applying the preceding inequality with $x_1 = \frac{1}{\sqrt{\mathbb{E}[Y^2]}}(t - \frac{\mathbb{E}[XY]}{\mathbb{E}[X^2]} (n+1) t)$ and $x_2 = \frac{1}{\sqrt{\mathbb{E}[Y^2]}} t$, we further have
    $$
        I_1 \leq \sum_{n=1}^m e^{(c_4^{-1}+1) (n+1)  \frac{t^2 \mathbb{E}[XY]}{\mathbb{E}[X^2] \mathbb{E}[Y^2]} } \times \frac{\mathbb{P}[(n+1) t \geq X \geq n t]}{\mathbb{P}[ X \geq t] }.
    $$
    Using $\mathbb{P}[(n+1) t \geq X \geq n t] = \mathbb{P}[X \geq nt] - \mathbb{P}[X \geq (n+1)t]$ and rearranging the summation, we get
    \begin{align*}
        I_1 \leq e^{2 (c_4^{-1}+1) \frac{t^2 \mathbb{E}[XY]}{\mathbb{E}[X^2] \mathbb{E}[Y^2]}} + (e^{(c_4^{-1}+1) \frac{t^2 \mathbb{E}[XY]}{\mathbb{E}[X^2] \mathbb{E}[Y^2]}} - 1)  \underbrace{\sum_{n = 2}^m e^{ (c_4^{-1}+1) n \frac{t^2 \mathbb{E}[XY]}{\mathbb{E}[X^2] \mathbb{E}[Y^2]}} \frac{\mathbb{P}[X \geq n t]}{\mathbb{P}[ X \geq t] }}_{I_3}.
    \end{align*}
    Using~\eqref{eq:gaussian-tail}, we have $\frac{\mathbb{P}[X \geq n t]}{\mathbb{P}[ X \geq t] } \leq c_4^{-2} n^{-1} \exp( - \frac{(n^2-1)t^2}{2 \mathbb{E}[X^2]})$. Using $\mathbb{E}[XY] \leq \frac{1}{K} \mathbb{E}[Y^2]$ and $\frac{t^2}{\mathbb{E}[X^2]} \geq 1$, we can fix a sufficiently large $K$ such that the sum $I_3$ is upper-bounded by a universal constant. This implies that $I_1 \leq \exp(C\frac{t^2 \mathbb{E}[XY]}{\mathbb{E}[X^2] \mathbb{E}[Y^2]}) + C(\exp(C\frac{t^2 \mathbb{E}[XY]}{\mathbb{E}[X^2] \mathbb{E}[Y^2]})-1) \leq \exp(C'\frac{t^2 \mathbb{E}[XY]}{\mathbb{E}[X^2] \mathbb{E}[Y^2]})$ for some universal constants $C$ and $C'$. The term $I_2$ can be upper-bounded using~\eqref{eq:gaussian-tail} as follows:
    $$
    I_2 \leq c_4^{-3} \frac{t}{\lfloor \frac{\mathbb{E}[X^2]}{2 \mathbb{E}[XY]} \rfloor \sqrt{\mathbb{E}[Y^2]}} e^{\frac{t^2}{2 \mathbb{E}[X^2]} + \frac{t^2}{2 \mathbb{E}[Y^2]} - \frac{\lfloor \frac{\mathbb{E}[X^2]}{2 \mathbb{E}[XY]} \rfloor^2 t^2}{2 \mathbb{E}[X^2]}}.
    $$
    Using $\mathbb{E}[XY] \leq \frac{1}{10} \mathbb{E}[X^2]$ and $ \mathbb{E}[X^2] \leq \mathbb{E}[Y^2]$, we see that the term in the exponential is negative. Moreover, since $\frac{t^2}{\mathbb{E} [Y^2]} \geq 1$, we have $I_2 \leq C''\frac{t^2 \mathbb{E}[XY]}{\mathbb{E}[X^2] \mathbb{E}[Y^2]}$ for some universal constant $C''$. Combining~\eqref{eq:lem3.2-2} with the upper bounds for $I_1$ and $I_2$, and choosing $C_1$ sufficiently large, we obtain~\eqref{eq:lem3.2}. \qedhere

\end{proof}

Finally, we present an entropic repulsion type result about Gaussian random variables. In the special case where $X_1,X_2,\ldots,X_n$ are i.i.d.\ standard Gaussian random variables, the following lemma implies that conditioned on $\sum_{i=1}^n X_i \geq n \theta$ for fixed $\theta>0$, the law of $X_1$ asymptotically stochastically dominates $N(\theta,1)$ as $n \to \infty$ (in fact it converges to $N(\theta,1)$). For our purposes, we need a uniform control on the rate of this stochastic dominance, as incorporated in the following lemma.

\begin{lemma}\label{lem:repulsion}
    Fix $\theta, \delta > 0$. For any $\sigma^2, m > 0$, let $X$ and $Y$ be two independent mean-zero Gaussian random variables with $\mathbb{E} X^2 = \sigma^2$ and $\mathbb{E} Y^2 = m \sigma^2$. Then, we have
    $$
    \sup_{\delta \leq \sigma^2 \leq 1, t \leq \delta^{-1}} \frac{p[ X = t | X + Y \geq (m+1) \sigma^2 \theta]}{p [N(\sigma^2 \theta, \sigma^2) = t]} \leq 1 + o_m(1) \quad \mbox{as }m \rightarrow \infty,
    $$
    where $p(\cdot)$ denotes the probability density of a random variable, $N(\sigma^2 \theta, \sigma^2)$ is a normal random variable with mean $\sigma^2 \theta$ and variance $\sigma^2$, and the $o_m(1)$ term depends only on $\theta, \delta, m$.
\end{lemma}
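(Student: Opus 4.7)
The plan is a direct computation: write out the conditional density by Bayes' formula, apply the Gaussian tail asymptotic \eqref{eq:gaussian-tail} to the two tail probabilities that appear, and check that the result matches $p[N(\sigma^2\theta,\sigma^2)=t]$ up to a factor $1+o_m(1)$ uniformly in the allowed range of $(\sigma^2,t)$.

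Set $a:=(m+1)\sigma^2\theta$. Since $X$ and $Y$ are independent,
\[
p[X=t \mid X+Y\geq a] \;=\; \frac{p_X(t)\,\mathbb{P}[Y\geq a-t]}{\mathbb{P}[X+Y\geq a]},
\]
where $p_X(t)=(2\pi\sigma^2)^{-1/2}e^{-t^2/(2\sigma^2)}$ and $X+Y\sim N(0,(m+1)\sigma^2)$. The normalized thresholds for the two tail probabilities are $(a-t)/(\sqrt{m}\,\sigma)$ and $a/\sqrt{(m+1)\sigma^2}=\theta\sigma\sqrt{m+1}$. Under the constraints $\delta\leq \sigma^2\leq 1$ and $|t|\leq \delta^{-1}$, both quantities tend to $+\infty$ uniformly as $m\to\infty$, so I can invoke \eqref{eq:gaussian-tail} together with its sharp Mill's ratio refinement $\mathbb{P}[N(0,1)\geq x]=\tfrac{1}{x\sqrt{2\pi}}e^{-x^2/2}(1+O(x^{-2}))$, uniformly for $x$ bounded below by $1$.

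Plugging in these asymptotics, the polynomial prefactors combine to $(2\pi\sigma^2)^{-1/2}\cdot\frac{\sqrt{m}\,a}{\sqrt{m+1}\,(a-t)}$, which equals $(2\pi\sigma^2)^{-1/2}(1+o_m(1))$ uniformly in the allowed parameters because $a-t=(m+1)\sigma^2\theta(1+O(1/m))$. For the exponential factor, a straightforward algebraic simplification (expanding $(a-t)^2=a^2-2at+t^2$ and collecting the three contributions $-t^2/(2\sigma^2)$, $-(a-t)^2/(2m\sigma^2)$, $+a^2/(2(m+1)\sigma^2)$) yields
\[
\exp\!\Bigl(-\tfrac{m+1}{2m\sigma^2}(t-\sigma^2\theta)^2\Bigr).
\]
Comparing with $p[N(\sigma^2\theta,\sigma^2)=t]=(2\pi\sigma^2)^{-1/2}\exp(-(t-\sigma^2\theta)^2/(2\sigma^2))$, the ratio becomes
\[
(1+o_m(1))\,\exp\!\Bigl(-\tfrac{(t-\sigma^2\theta)^2}{2m\sigma^2}\Bigr) \;\leq\; 1+o_m(1),
\]
since the exponential factor is $\leq 1$. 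Uniformity over $\sigma^2\in[\delta,1]$ and $|t|\leq\delta^{-1}$ is automatic because $(t-\sigma^2\theta)^2$ is bounded in terms of $\delta$ and $\theta$, and the relative errors in Mill's ratio depend only on lower bounds for the normalized thresholds.

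The proof is therefore essentially bookkeeping; the only thing requiring attention is to check that all $o_m(1)$ terms are genuinely uniform over the specified parameter range, which follows because $\delta$ bounds both $\sigma^2$ from below and $|t|$ from above, so the quantities $\sigma\theta\sqrt{m+1}$ and $(a-t)/(\sqrt{m}\sigma)$ are both at least $c(\theta,\delta)\sqrt{m}$ for all large $m$. No nontrivial obstacle is expected.
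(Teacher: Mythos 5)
Your proposal is correct and takes essentially the same route as the paper: write the conditional density via Bayes' rule (the paper works directly with the joint density $p[X=t, X+Y\geq a]$, which is the same thing by independence), invoke the Mill's-ratio Gaussian tail asymptotic, match the polynomial prefactors to $(2\pi\sigma^2)^{-1/2}(1+o_m(1))$, and observe that the exponential factor $\exp\!\bigl(-\tfrac{m+1}{2m\sigma^2}(t-\sigma^2\theta)^2\bigr)$ is at most $\exp\!\bigl(-\tfrac{1}{2\sigma^2}(t-\sigma^2\theta)^2\bigr)$. One small slip: the constraint is $t\leq\delta^{-1}$ (one-sided), not $|t|\leq\delta^{-1}$, so $(t-\sigma^2\theta)^2$ is \emph{not} bounded over the allowed range; fortunately your argument does not really need it, since the exponential factor is $\leq 1$ for every $t$ and the prefactor ratio $\sqrt{m}\,a/(\sqrt{m+1}(a-t))\leq 1+o_m(1)$ uniformly for $t\leq\delta^{-1}$ (it is $\leq 1$ for $t\leq 0$), and the Mill's-ratio error depends only on lower bounds for the normalized thresholds, exactly as your final sentence says.
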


\begin{proof}
    This lemma follows from the asymptotic expansion of $\mathbb{P}[N(0,1) \geq z]$ and simple calculations. By~\cite[Theorem 1.2.6]{durrett}, we have $\mathbb{P}[N(0,1) \geq z] = \frac{1 + o_z(1)}{\sqrt{2 \pi} z } e^{-z^2/2}$ as $z$ tend to infinity. Therefore, for all $\delta \leq \sigma^2 \leq 1$,
    $$
    \mathbb{P}[X+Y \geq (m+1) \sigma^2 \theta ] = \mathbb{P}[N(0,1) \geq \sqrt{m+1} \sigma \theta ] = \frac{1 + o_m(1)}{\sqrt{2 \pi m} \sigma \theta } e^{-\frac{(m+1) \sigma^2 \theta^2}{2}}.
    $$
    In addition, for all $\delta \leq \sigma^2 \leq 1$ and $t \leq \delta^{-1}$, we have
    \begin{align*}
    &\quad p[ X = t, X + Y \geq (m+1) \sigma^2 \theta] = \frac{1}{\sqrt{2\pi} \sigma }e^{-\frac{t^2}{2 \sigma^2}} \mathbb{P}[ Y \geq (m+1) \sigma^2 \theta - t] \\
    &= \frac{1 + o_m(1)}{2 \pi \sigma } e^{-\frac{t^2}{2 \sigma^2}} \frac{\sqrt{m} \sigma}{(m+1) \sigma^2 \theta - t}e^{-\frac{((m+1)  \sigma^2 \theta  - t)^2}{2m \sigma^2}}  \leq \frac{1 + o_m(1)}{2 \pi \sqrt{m} \sigma^2 \theta } e^{-\frac{(m+1) \sigma^2 \theta^2}{2}} \times e^{-\frac{t^2}{2\sigma^2} + \theta t - \frac{\sigma^2 \theta^2}{2} }.
    \end{align*}
    Note that all these $o_m(1)$ terms depend only on $\theta, \delta, m$ and are independent of $t$ and $\sigma$. Dividing the above two inequalities yields the result.
\end{proof}

\subsection{Construction of paths}\label{subsec:path-WN}

Recall that $\mathfrak r = \lfloor \sqrt{d} \rfloor$. In this subsection, for integers $n \geq 0$ and $M \geq 10$, we construct a set of self-avoiding paths $\mathscr{P}^{n,M}$ on $\frac{1}{\mathfrak r}2^{-3n} \mathbb{Z}^d$. Each path in $\mathscr{P}^{n,M}$ connects $\{z : |z|_1 = 1 + \frac{2}{7 \mathfrak r} \}$ to $\{z : |z|_1 = M - \frac{2}{7 \mathfrak r} \}$ (Lemma~\ref{lem:basic-property-mathscrP}), and two paths chosen uniformly from $\mathscr{P}^{n,M}$ will typically have few intersections (Lemma~\ref{lem:intersection-law-WN}). We also collect some properties of these paths that will be used in the proof of Proposition~\ref{prop:WN-thick}.

Recall from Definition~\ref{def:P0-brw} that $\mathcal{P}^{0,M}$ consists of oriented paths on $\mathbb{Z}^d$. The construction of $\mathscr{P}^{n,M}$ involves two steps: first we let $\mathscr{P}^{0,M}$ consist of oriented paths obtained by interpolating the paths in $\mathcal{P}^{0,M}$ (see the definition below), and then we inductively refine $\mathscr{P}^{0,M}$ on $\frac{1}{\mathfrak r} 8^{-n} \mathbb{Z}^d$ for each $n \geq 1$.

\begin{definition}[$\mathscr{P}^{0, M}$]\label{def:P0}
    For an integer $M \geq 10$, let $\mathscr{P}^{0,M}$ consist of the paths on $\frac{1}{\mathfrak r} \mathbb{Z}^d$ obtained by interpolating the paths in $\mathcal{P}^{0,M}$. For instance, for the edge $(\mathbf e_1,\mathbf e_1 + \mathbf e_2)$, we interpolate it as $(\mathbf e_1, \mathbf e_1 + \frac{1}{\mathfrak r} \mathbf e_2, \mathbf e_1 + \frac{2}{\mathfrak r} \mathbf e_2, \ldots, \mathbf e_1 + \mathbf e_2)$. 
\end{definition}

Then, $\mathscr{P}^{0, M}$ consists of nearest-neighbor oriented paths on $\frac{1}{\mathfrak r} \mathbb{Z}^d$ from $\{z : |z|_1 = 1\}$ to $\{z : |z|_1 = M\}$ with length $\mathfrak r (M-1) + 1$, and the cardinality $|\mathscr{P}^{0, M}| = |\mathcal{P}^{0,M}| = d^M$. 

Next, we inductively refine the paths in $\mathscr{P}^{0,M}$ on $\frac{1}{\mathfrak r} 8^{-n} \mathbb{Z}^d$ for each $n \geq 1$. Recall the definition of tubes and refinements from Section~\ref{subsec:BRW-paths} which can be directly extended to the rescaled lattice $\frac{1}{\mathfrak r} 8^{-n} \mathbb{Z}^d$. As mentioned in Remark~\ref{rmk:refine}, the choices of the refinements are the same except that we start with $\mathscr{P}^{0,M}$ which is a set of paths on $\frac{1}{\mathfrak r} \mathbb{Z}^d$. All the properties proved in Section~\ref{subsec:BRW-paths} can be extended to this case, which we will recall later.

As in Definition~\ref{def:Pn-brw}, for each $j \geq 0$, the set $\mathscr{P}^{j + 1, M}$ will consist of self-avoiding paths on $\frac{1}{\mathfrak r}8^{-j - 1} \mathbb{Z}^d$ that are some refinements of the paths in $\mathscr{P}^{j, M}$ as defined below. By Lemma~\ref{lem:property-refine}, all of the paths in $\mathscr{P}^{j, M}$ have the same length $\mathfrak L_j$ defined as follows:
\begin{equation}\label{eq:def-mathfrak-l}
\mathfrak L_0 = \mathfrak r (M - 1) + 1 \quad \mbox{and} \quad \mathfrak L_{i+1} = 10(\mathfrak L_i - 1) + 1 \quad \mbox{for all } i \geq 0.
\end{equation}

\begin{definition}[$\mathscr{P}^{n, M}$]\label{def:Pn-WN}
    Let an integer $M \geq 10$. We assume that $d$ is sufficiently large such that $\mathfrak q \geq 100$. Recall $\mathscr{P}^{0, M}$ from Definition~\ref{def:P0}. Suppose that $\mathscr{P}^{j, M}$ has been defined for some integer $j \geq 0$. For each path $P = (x_1, \ldots, x_{\mathfrak L_j}) \in \mathscr{P}^{j, M}$ on $\frac{1}{\mathfrak r}8^{-j} \mathbb{Z}^d$, the set $\mathscr{P}^{j + 1,M}$ contains the following $(\mathfrak q - 10)^{\mathfrak L_j - 1}$ refinements of $P$:
    \begin{enumerate}
    \item Recall $\mathcal{T}_{x_1,x_2}$ from Lemma~\ref{lem:path-tube}. We deterministically choose a subset of cardinality $(\mathfrak q - 10)$ from $\mathcal{T}_{x_1,x_2}$ and then choose a refinement of $(x_1,x_2)$ from this subset;

    \item Suppose the refinements of $(x_1,x_2),(x_2,x_3),\ldots, (x_{i-1}, x_i)$ have been defined for $i < \mathfrak L_j $. Let $z$ be the last point of the refinement of $(x_{i-1}, x_i)$. Since at most 10 paths in $\mathcal{T}_{x_i, x_{i+1}}^z$ intersect the refinement of $(x_{i-1}, x_i)$ at a point other than $z$, we can deterministically choose a subset of $\mathcal{T}_{x_i, x_{i+1}}^z$ with cardinality $(\mathfrak q - 10)$ such that these paths do not intersect the refinement of $(x_{i-1}, x_i)$ except at $z$, and then choose a refinement of $(x_i,x_{i + 1})$ from this subset;
    
    \item By concatenating the refinements of $(x_i, x_{i+1})$ for $1 \leq i \leq \mathfrak L_j - 1$, we obtain $(\mathfrak q - 10)^{\mathfrak L_j - 1}$ refinements of $P$ on $\frac{1}{\mathfrak r} 8^{-j - 1} \mathbb{Z}^d$, each with length $\mathfrak L_{j+1}$.

\end{enumerate}
By induction, this defines $\mathscr{P}^{n,M}$ for all $n \geq 0$. In addition, for different $M_1 > M_2 \geq 10$, if two paths $P \in  \mathscr{P}^{j,M_1}$ and $Q \in \mathscr{P}^{j,M_2}$ are the same up to length $|Q| = \mathfrak L_j(M_2)$ (the number in~\eqref{eq:def-mathfrak-l} starting from $\mathfrak L_0 = \mathfrak r (M_2-1) + 1$), we require that their refinements in $\mathscr{P}^{j+1,M_1}$ and $\mathscr{P}^{j+1,M_2}$ are also the same up to length $\mathfrak L_{j+1}(M_2)$.
\end{definition}

By Definition~\ref{def:Pn-WN}, we have $|\mathscr{P}^{j + 1,M}| = (\mathfrak q - 10)^{\mathfrak L_j - 1} \times |\mathscr{P}^{j,M}|$. For each path $P \in \mathscr{P}^{n,M}$, by Claim~\ref{lem3.8-claim3} in Lemma~\ref{lem:property-refine}, there exists a unique sequence of paths $P_j \in \mathscr{P}^{j,M}$ for $0 \leq j \leq n$ such that $P_n = P$ and $P_{j+1}$ is a refinement of $P_j$ for all $0 \leq j \leq n-1$. We will often use $P_j$ to denote this sequence of paths, and call $P_j$ the corresponding path of $P$ in $\mathscr{P}^{j,M}$. 

Similar to Lemma~\ref{lem:restrict-M-brw}, the set of paths $\mathscr{P}^{n,M}$ also satisfies a restriction property with respect to $M$.

\begin{lemma}\label{lem:restrict-M-WN}
    For all $M_1 \geq M_2 \geq 10$ and $n \geq 0$, if we restrict the paths in $\mathscr{P}^{n,M_1}$ before length $\mathfrak L_n(M_2)$ (the number in~\eqref{eq:def-mathfrak-l} starting from $\mathfrak L_0 = \mathfrak r(M-1) + 1$), then we obtain $\mathscr{P}^{n,M_2}$.
\end{lemma}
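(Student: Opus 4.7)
The plan is to prove this by induction on $n$, mirroring the argument for Lemma~\ref{lem:restrict-M-brw}. For the base case $n = 0$, each path in $\mathscr{P}^{0, M_1}$ is by Definition~\ref{def:P0} obtained by interpolating the coordinate steps of some path in $\mathcal{P}^{0, M_1}$. Interpolation turns a length-$k$ path on $\mathbb{Z}^d$ into a length-$(\mathfrak r(k-1)+1)$ path on $\tfrac{1}{\mathfrak r}\mathbb{Z}^d$, and it commutes with prefix-truncation in the obvious way: truncating the interpolated path to its first $\mathfrak r(M_2-1)+1 = \mathfrak L_0(M_2)$ vertices is the same as first truncating the original $\mathcal{P}^{0, M_1}$ path to its first $M_2$ vertices and then interpolating. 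Combining this with Lemma~\ref{lem:restrict-M-brw} applied at scale $0$ gives a bijection between truncated paths in $\mathscr{P}^{0, M_1}$ and paths in $\mathscr{P}^{0, M_2}$.

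For the inductive step, assume the statement holds at level $j$. The key fact is the last clause of Definition~\ref{def:Pn-WN}: if two parent paths $P^{(1)} \in \mathscr{P}^{j, M_1}$ and $P^{(2)} \in \mathscr{P}^{j, M_2}$ coincide on their first $\mathfrak L_j(M_2)$ vertices, then the refinements of $P^{(1)}$ in $\mathscr{P}^{j+1, M_1}$ and the refinements of $P^{(2)}$ in $\mathscr{P}^{j+1, M_2}$ can be put in correspondence so that paired refinements agree on their first $\mathfrak L_{j+1}(M_2) = 10(\mathfrak L_j(M_2) - 1) + 1$ vertices. So, given any $R \in \mathscr{P}^{j+1, M_1}$ with parent $P^{(1)} \in \mathscr{P}^{j, M_1}$, the inductive hypothesis furnishes $P^{(2)} \in \mathscr{P}^{j, M_2}$ which coincides with the first $\mathfrak L_j(M_2)$ vertices of $P^{(1)}$, and the consistency clause identifies the first $\mathfrak L_{j+1}(M_2)$ vertices of $R$ as an element of $\mathscr{P}^{j+1, M_2}$. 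Conversely, every element of $\mathscr{P}^{j+1, M_2}$ extends, via the inductive hypothesis applied to its parent and the consistency clause applied to the refinement choices on the ``new'' edges at level $j+1$, to a path in $\mathscr{P}^{j+1, M_1}$ whose prefix-truncation yields the original path.

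There is no real technical obstacle: the statement is a direct unpacking of the self-consistency requirement written into Definition~\ref{def:Pn-WN}. The only bookkeeping one needs to keep straight is that refinement at scale $j$ sends a length-$\mathfrak L_j(M_2)$ prefix to a length-$\mathfrak L_{j+1}(M_2)$ prefix, which is precisely the recursion in~\eqref{eq:def-mathfrak-l}, and that the correspondence between refinements of $P^{(1)}$ and $P^{(2)}$ on their common initial segment is well-defined because both sides make the \emph{same} deterministic choices of the subset of $\mathcal{T}_{x_i, x_{i+1}}^z$ of cardinality $\mathfrak q - 10$ at each edge, as required by the definition.
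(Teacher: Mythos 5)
Your proof is correct and takes the same approach the paper does: the base case by unwinding the interpolation in Definition~\ref{def:P0}, and the inductive step by invoking the consistency clause appended to Definition~\ref{def:Pn-WN}. (The paper compresses this to a one-line pointer to Lemma~\ref{lem:restrict-M-brw}; you have simply written out what that pointer stands for.) One small wording caveat: the truncation map from $\mathscr{P}^{j+1,M_1}$ to $\mathscr{P}^{j+1,M_2}$ is many-to-one, not a bijection of paths, since $\mathfrak L_j(M_1) > \mathfrak L_j(M_2)$ means each truncated prefix is shared by many refinements; what you actually need (and what you in fact argue) is that the map is well-defined into $\mathscr{P}^{j+1,M_2}$ and surjective onto it, so ``put in correspondence'' and ``bijection'' should be read as a bijection between the \emph{set of prefixes} and $\mathscr{P}^{j+1,M_2}$, not between the two sets of full paths.
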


\begin{proof}
    The case $n = 0$ follows from Definition~\ref{def:P0}. The case $n \geq 1$ follows from the case $n = 0$ and Definition~\ref{def:Pn-WN} with an induction argument. The proof is similar to that of Lemma~\ref{lem:restrict-M-brw}, and thus we omit further details.
\end{proof}

The following lemma can be proved in the same way as Lemma~\ref{lem:intersect-BRW}, and thus we omit the proof.

\begin{lemma}\label{lem:intersection-law-WN}
    The following holds for all $n \geq 0$ and $M \geq 10$. For two paths $P, Q$ chosen uniformly from $\mathscr{P}^{n, M}$ and $0 \leq j \leq n$, let $P_j$ and $Q_j$ be their corresponding paths in $\mathscr{P}^{j,M}$ and let $Y_j = |P_j \cap Q_j|$. Then for all $1 \leq i \leq n$, given $(Y_0, Y_1, \ldots Y_{i-1})$, we have
    \begin{equation*}
    Y_i\mbox{ is stochastically dominated by } 11(Z_1 + Z_2 + \ldots + Z_{2 Y_{i-1}}),\end{equation*}
    where $(Z_l)_{l \geq 1}$ are i.i.d.\ Bernoulli random variables with probability $\frac{50}{\mathfrak q - 10}$ of being 1. 
\end{lemma}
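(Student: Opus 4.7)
The plan is to mimic the proof of Lemma~\ref{lem:intersect-BRW} essentially line by line, since Definitions~\ref{def:Pn-brw} and~\ref{def:Pn-WN} produce refinements via the same combinatorial recipe — namely the tubes $\mathcal{T}_{x,y}$ and $\mathcal{T}^z_{x,y}$ built in Lemma~\ref{lem:path-tube}, with $(\mathfrak q - 10)$ of them preselected at every step — on lattices that differ only by the scalar factor $\frac{1}{\mathfrak r}$. All the relevant quantities in Lemma~\ref{lem:path-tube} (the number $\mathfrak q$ of disjoint subpaths, the length $11$ of each refinement, the containment constraints~\eqref{eq:path-condition-brw}) are dimensionless counts, so this rescaling leaves the argument untouched.

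First, since each $P \in \mathscr{P}^{j,M}$ has the same number $(\mathfrak q-10)^{\mathfrak L_j-1}$ of refinements in $\mathscr{P}^{j+1,M}$, uniform sampling from $\mathscr{P}^{n,M}$ factors through independent uniform choices of refinements at each scale. Thus, conditional on $(P_0,\ldots,P_{i-1},Q_0,\ldots,Q_{i-1})$, the refinements $P_i$ and $Q_i$ are independently uniform over the refinements of $P_{i-1}$ and $Q_{i-1}$ respectively.

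Next, writing $P_{i-1} = (x_1,\ldots,x_{\mathfrak L_{i-1}})$, let $\{k_l\}_{1 \le l \le m}$ enumerate those indices for which the edge $(x_{k_l},x_{k_l+1})$ meets $Q_{i-1}$; since $P_{i-1}$ is self-avoiding and each such edge carries an endpoint in $P_{i-1}\cap Q_{i-1}$, we have $m \le 2 Y_{i-1}$. By the containment~\eqref{eq:path-condition-brw}, any edge of $P_{i-1}$ disjoint from $Q_{i-1}$ has its refinement contained in $B_{x_{k}} \cup B_{x_{k+1}}$, which is disjoint from $Q_i$; so contributions to $Y_i$ can come only from the $m$ intersecting edges. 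Since each refinement has length $11$, we obtain $Y_i \le 11 \sum_{l=1}^m Z_l'$, where $Z_l'$ is the indicator that the refinement of $(x_{k_l},x_{k_l+1})$ in $P_i$ meets $Q_i$ away from its starting vertex $z$ (for $k_l = 1$, $Z_l'$ is the indicator that the refinement meets $Q_i$ at all).

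Finally, condition on $Q_i$ and on the earlier refinements $(Z_1',\ldots,Z_{l-1}')$ inside $P_i$. At most four edges of $Q_{i-1}$ cross the tube $T_{x_{k_l},x_{k_l+1}}$ on the refined lattice, and each refines into $11$ vertices of $Q_i$; so $Q_i$ uses at most $44 \le 50$ vertices of this tube. The refinement of $(x_{k_l},x_{k_l+1})$ in $P_i$ is drawn uniformly from a preselected $(\mathfrak q-10)$-element subset of $\mathcal{T}^z_{x_{k_l},x_{k_l+1}}$ (or $\mathcal{T}_{x_1,x_2}$ when $k_l = 1$), whose members are pairwise disjoint off the base vertex. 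Hence at most $50$ of them can meet $Q_i$ outside $z$, giving $\mathbb{P}[Z_l'=0 \mid \text{history}] \ge 1 - 50/(\mathfrak q-10)$. Summing gives the stated stochastic domination. The only point that deserves a sanity check is that the $\frac{1}{\mathfrak r}$-rescaling does not alter any of the counts $4$, $11$, $50$, $\mathfrak q-10$ appearing above — this is clear from the construction since all tubes and subpaths live on the next-scale lattice and their shapes are preserved by the global rescaling.
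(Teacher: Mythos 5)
Your proof is correct and takes exactly the route the paper intends: the paper itself omits this proof, stating that Lemma~\ref{lem:intersection-law-WN} follows from the same argument as Lemma~\ref{lem:intersect-BRW}, which is precisely what you reproduce, together with the correct observation that the $\frac{1}{\mathfrak r}$ rescaling preserves all the combinatorial counts ($4$ crossing edges, $11$ vertices per refinement, $\mathfrak q - 10$ preselected subpaths).
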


The following lemma is proved in the same way as Lemma~\ref{lem:basic-property-mathcalP}. We refer to the refinement of a refinement as a refinement at scale $2$, etc.
\begin{lemma}\label{lem:basic-property-mathscrP}
    The following hold for all $j \geq 0$ and $d \geq 2$:
    \begin{enumerate}

        \item \label{property-1-scr} For an edge $(x,y)$ in $ \frac{1}{\mathfrak r} 8^{-j} \mathbb{Z}^d$, let $P$ be a refinement of $(x,y)$ at any scale. Then we have $$\overline P \subset \{ z : \mathfrak d_1(z, \overline{(x,y)} )  \leq \frac{2}{7 \mathfrak r} 8^{-j} \} \quad \mbox{and} \quad \overline{(x,y)} \subset \{ z : \mathfrak d_1(z, \overline{P}) \leq \frac{2}{7 \mathfrak r} 8^{-j} \}.$$

        \item \label{property-0-src} All of the paths in $\mathscr{P}^{j, M}$ connect $\{z : |z|_1 = 1 + \frac{2}{7 \mathfrak r} \}$ and $\{z : |z|_1 = M - \frac{2}{7 \mathfrak r} \} $.
 
    \end{enumerate}
    
\end{lemma}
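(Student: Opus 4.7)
The plan is to transcribe the proof of Lemma~\ref{lem:basic-property-mathcalP}, inserting the scaling factor $\frac{1}{\mathfrak r}$ throughout, since the paths now live on $\frac{1}{\mathfrak r} 8^{-j}\mathbb{Z}^d$ rather than $8^{-j}\mathbb{Z}^d$. All the constituent tools — the refinement-displacement bound from Claim~\ref{lem3.8-claim4} of Lemma~\ref{lem:property-refine} and the inductive construction of $\mathscr{P}^{j,M}$ via Definition~\ref{def:Pn-WN} — transfer verbatim to the rescaled lattice, as noted in the paragraphs preceding Definition~\ref{def:Pn-WN}.

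For Claim~\ref{property-1-scr}, I will apply the rescaled version of Claim~\ref{lem3.8-claim4} of Lemma~\ref{lem:property-refine} iteratively along the chain of successive refinements connecting scale $j$ to the scale at which $P$ lives. A single refinement step at scale $i$ displaces the continuous path by at most $\frac{2}{\mathfrak r} 8^{-i-1}$ in $\mathfrak d_1$-distance (the extra factor $\frac{1}{\mathfrak r}$ coming from the finer lattice). Summing the telescoping bound over $i \geq j$ yields the geometric series $\sum_{i=j}^{\infty} \frac{2}{\mathfrak r} 8^{-i-1} = \frac{2}{7\mathfrak r} 8^{-j}$, which controls the one-sided $\mathfrak d_1$-distance in both directions between $\overline P$ and $\overline{(x,y)}$ and gives the two containments.

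For Claim~\ref{property-0-src}, I will first note that by Definition~\ref{def:P0}, every path in $\mathscr{P}^{0,M}$ starts at a vertex with $|z|_1 = 1$ and ends at a vertex with $|z|_1 = M$. Given any $P \in \mathscr{P}^{j,M}$ with corresponding coarser path $P_0 \in \mathscr{P}^{0,M}$, applying Claim~\ref{property-1-scr} edge by edge yields $\overline P \subset \{z : \mathfrak d_1(z, \overline{P_0}) \leq \frac{2}{7\mathfrak r}\}$. In particular, the $|\cdot|_1$-norms of the two endpoints of $\overline P$ lie within $\frac{2}{7\mathfrak r}$ of $1$ and of $M$ respectively (by the reverse triangle inequality). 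Since $M \geq 10$ guarantees $1 + \frac{2}{7\mathfrak r} < M - \frac{2}{7\mathfrak r}$, the intermediate value theorem applied to the continuous function $z \mapsto |z|_1$ along the continuous curve $\overline P$ forces $\overline P$ to cross both cross-sections $\{|z|_1 = 1 + \frac{2}{7\mathfrak r}\}$ and $\{|z|_1 = M - \frac{2}{7\mathfrak r}\}$, which is what is meant by ``connect'' in the statement.

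I do not anticipate any serious technical obstacle: the rescaling is essentially cosmetic, and the only bookkeeping to watch is the appearance of the $\frac{1}{\mathfrak r}$ factor through the geometric series and in the statement of the rescaled refinement-displacement bound.
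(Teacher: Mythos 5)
Your proof of Claim~\ref{property-1-scr} matches the paper exactly: iterate the rescaled displacement bound from Claim~\ref{lem3.8-claim4} of Lemma~\ref{lem:property-refine} and sum the geometric series $\sum_{i \geq j} \frac{2}{\mathfrak r} 8^{-i-1} = \frac{2}{7\mathfrak r}8^{-j}$; the paper's one-line proof is the same computation.

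For Claim~\ref{property-0-src} your overall strategy (endpoints of $\mathscr{P}^{0,M}$, the displacement bound, then IVT along $\overline P$) is also the same as the paper's terse proof, but one intermediate step is mis-justified. The forward containment $\overline P \subset \{z : \mathfrak d_1(z, \overline{P_0}) \leq \frac{2}{7\mathfrak r}\}$ together with the reverse triangle inequality only shows that each point of $\overline P$ has $|\cdot|_1$-norm within $\frac{2}{7\mathfrak r}$ of the norm of \emph{some} point of $\overline{P_0}$ — and $\overline{P_0}$ contains points of every norm in $[1,M]$, so this gives no control on which end of $\overline P$ is near which end of $\overline{P_0}$. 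The fact you want is true, but it should be deduced from the \emph{other} containment in Claim~\ref{property-1-scr}, namely $\overline{P_0} \subset \{z : \mathfrak d_1(z, \overline P) \leq \frac{2}{7\mathfrak r}\}$: applying it to the two endpoints of $\overline{P_0}$ (which have norms exactly $1$ and $M$) produces points of $\overline P$ with norms $\leq 1 + \frac{2}{7\mathfrak r}$ and $\geq M - \frac{2}{7\mathfrak r}$, after which your intermediate-value argument finishes. (Alternatively, track the endpoint explicitly through Lemma~\ref{lem:path-tube}, whose requirement~\eqref{eq:path-condition-brw} places each successive starting vertex within $\frac{1}{\mathfrak r}8^{-i-1}$ of the previous one.) This is a local fix and does not affect the correctness of your approach.
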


\begin{proof}
    Claim~\ref{property-1-scr} directly follows from Claim~\ref{lem3.8-claim4} in Lemma~\ref{lem:property-refine} and the fact that $\sum_{i=j}^\infty \frac{2}{\mathfrak r} 8^{-i-1} = \frac{2}{7 \mathfrak r} 8^{-j}$. Claim~\ref{property-0-src} follows from Claim~\ref{property-1-scr} and the fact that paths in $\mathscr{P}^{0,M}$ connect $\{z:|z|_1 = 1\}$ and $\{z : |z|_1 = M\}$.
\end{proof}

The following lemma can be proved in the same way as Lemma~\ref{lem:Hausdorff-limit-brw}, and thus we omit the proof. Recall that we say $\widetilde P$ is a continuous path if there exists a continuous map $\phi: [0,\infty) \to \mathbb{R}^d$ such that $\widetilde P = \phi([0,\infty))$.

\begin{lemma}\label{lem:Hausdorff-limit-WN}
     Let $\{n_k\}_{k \geq 1}$ be an increasing sequence tending to infinity, and let $\{P_k\}_{k \geq 1}$ be a sequence of paths in $\mathscr{P}^{n_k,n_k}$ that converge to a set $\widetilde P$ with respect to the local Hausdorff distance. Then, $\widetilde P$ is a continuous path in $\mathbb{R}^d$ that connects $\{z:|z|_1 = 2\}$ to infinity.
\end{lemma}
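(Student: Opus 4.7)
The plan is to imitate the proof of Lemma~\ref{lem:Hausdorff-limit-brw} almost verbatim; the only change is that everything lives on the lattice $\frac{1}{\mathfrak r} 8^{-j}\mathbb{Z}^d$ rather than $8^{-j}\mathbb{Z}^d$, so distances are rescaled by the factor $1/\mathfrak r$, but nothing in the argument actually depends on $\mathfrak r$ qualitatively. I note that, unlike in the BRW statement, there is no extra ``box-localization'' claim to prove here (that was specific to the branching structure of $\mathcal{R}_n$), so the argument is slightly shorter.

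First I would establish the key stabilization fact: for any fixed $j,M \geq 10$, the corresponding paths $(P_k)_j \in \mathscr{P}^{j,M}$ of $P_k$ are the same for all sufficiently large $k$. Indeed, Claim~\ref{property-1-scr} of Lemma~\ref{lem:basic-property-mathscrP} gives that every refinement of a path $P \in \mathscr{P}^{j,M}$ at any further scale is contained in the $\frac{2}{7\mathfrak r} 8^{-j}$-neighborhood of $\overline{P}$ in $|\cdot|_1$-distance. Two distinct paths $P,Q \in \mathscr{P}^{j,M}$ have $|\cdot|_1$-Hausdorff distance at least $\frac{1}{\mathfrak r} 8^{-j}$ (the spacing of the underlying lattice), and $2 \cdot \frac{2}{7\mathfrak r} 8^{-j} < \frac{1}{\mathfrak r} 8^{-j}$, so the refinements of distinct $P,Q$ stay uniformly bounded away from each other. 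Since $P_k \to \widetilde P$ in local Hausdorff distance, the $j$-th ancestor of $P_k$ is forced to be eventually constant; call the stable value $\widetilde P_j$.

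Next I would parametrize to upgrade Hausdorff convergence to uniform convergence. For $P \in \mathscr{P}^{n,M}$, parametrize by assigning the $i$-th vertex the time $(i-1)/(10^n \mathfrak r)$; then refining the edge between times $t$ and $t+10^{-n}/\mathfrak r$ into ten new edges keeps the same pair of endpoint times and inserts nine intermediate ones. Combined with the stabilization from the previous paragraph and Claim~\ref{property-1-scr} of Lemma~\ref{lem:basic-property-mathscrP}, this shows that for any fixed $j,M$ and all large $k$, the paths $P_k$, restricted to the time interval $[0, M-1]$, agree with $\widetilde P_j$ at the times in $10^{-j}\mathbb{Z}/\mathfrak r$ and differ from $\widetilde P_j$ by at most $\frac{2}{7\mathfrak r} 8^{-j}$ elsewhere. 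Hence $\{P_k\}$ is a Cauchy sequence in the local uniform topology and its limit is a continuous map $\phi:[0,\infty) \to \mathbb{R}^d$ with image equal to $\widetilde P$, proving $\widetilde P$ is a continuous path.

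Finally, for the boundary condition, I invoke Claim~\ref{property-0-src} of Lemma~\ref{lem:basic-property-mathscrP}: the starting point of each $P_k$ lies on $\{|z|_1 = 1 + \tfrac{2}{7\mathfrak r}\}$ and its endpoint lies on $\{|z|_1 = n_k - \tfrac{2}{7\mathfrak r}\}$. Thus $|\phi(0)|_1 = 1 + \tfrac{2}{7\mathfrak r} < 2$, while as $t \to \infty$ one has $|\phi(t)|_1 \to \infty$ because for each fixed $M$, the stabilized prefix $\widetilde P_0$ (of $(P_k)_0 \in \mathscr{P}^{0,M}$) reaches $\{|z|_1 = M - \tfrac{2}{7\mathfrak r}\}$. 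By continuity of $\phi$ and the intermediate value theorem applied to $t \mapsto |\phi(t)|_1$, the path $\widetilde P$ meets $\{|z|_1 = 2\}$ and escapes to infinity, which is the required connection.

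The argument has no genuinely hard step; the main care needed is bookkeeping the $\frac{1}{\mathfrak r}$-rescaling in distances and lengths, but every inequality that appears in the BRW proof (in particular $\sum_{i=j}^\infty \frac{2}{\mathfrak r} 8^{-i-1} = \frac{2}{7\mathfrak r} 8^{-j}$, and $\frac{1}{\mathfrak r} 8^{-j} > 2\cdot \frac{2}{7\mathfrak r} 8^{-j}$) passes through unchanged. Because the two lemmas to be invoked, Lemmas~\ref{lem:property-refine} and~\ref{lem:basic-property-mathscrP}, have already been proved in the white-noise setting, no new estimates are required and the proof can simply be stated as ``identical to that of Lemma~\ref{lem:Hausdorff-limit-brw}, with $8^{-j}$ replaced by $\frac{1}{\mathfrak r} 8^{-j}$ throughout.''
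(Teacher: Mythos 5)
Your proposal is correct and matches what the paper does: the paper explicitly states that Lemma~\ref{lem:Hausdorff-limit-WN} ``can be proved in the same way as Lemma~\ref{lem:Hausdorff-limit-brw}'' and omits the details, which is exactly the route you take — stabilization of the $j$-th ancestor via Claim~\ref{property-1-scr} of Lemma~\ref{lem:basic-property-mathscrP}, upgrading to local uniform convergence by parametrization, and invoking Claim~\ref{property-0-src} for the boundary condition — with the $8^{-j}$ spacings replaced by $\tfrac{1}{\mathfrak r}8^{-j}$. Your observation that the box-localization clause of the BRW version is not needed here is also accurate.
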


We need the following lemma to estimate Gaussian correlations in the proof of Proposition~\ref{prop:WN-thick}. In Section~\ref{subsec:good-WN}, we will consider the average of white noise over the time interval $[2^{-3j}, 2^{-3j+1}]$ for paths in $\mathscr{P}^{j,M}$ (see~\eqref{eq:def-white-noise-average} there), as we aim to control the field $h_{3j}$ along these paths. This requires us to estimate the left-hand side of~\eqref{eq:lem3.5-1} below since for $x,y \in \mathbb{R}^d$
\begin{align*}
& {\rm Cov}\bigg(\int_{2^{-3j}}^{2^{-3j+1}} \int_{ B_t(x) } \Vol(B_t(0))^{-\frac{1}{2}} t^{-\frac{1}{2}} W(dy,dt), \int_{2^{-3j}}^{2^{-3j+1}} \int_{ B_t(y) } \Vol(B_t(0))^{-\frac{1}{2}} t^{-\frac{1}{2}} W(dy,dt) \bigg) \\
&\qquad = \int_{2^{-3j}}^{2^{-3j + 1}} \frac{\Vol(B_t(x) \cap B_t(y))}{\Vol(B_t(0))} t^{-1} dt \,.
\end{align*}
In fact, we could also consider the average of white noise over the time interval $[2^{-3j}, 2^{-3j+3}]$, and~\eqref{eq:lem3.5-1} would still hold for $[2^{-3j}, 2^{-3j+3}]$, albeit with a larger constant $C_2$. However, focusing on the time interval $[2^{-3j}, 2^{-3j+1}]$ is sufficient for our purposes (specifically, to ensure that Lemma~\ref{lem:3.17} below holds).

\begin{lemma}\label{lem:gaussian-to-intersection}
    There exists a universal constant $C_2 > 0$ such that for all $d \geq 2$, $M \geq 10$, $ n \geq k \geq C_2 \log d$ and for all paths $P, Q \in \mathscr{P}^{n, M}$, we have
        \begin{equation}
        \label{eq:lem3.5-1}
        \sum_{j=k}^{n} \int_{2^{-3j}}^{2^{-3j + 1}} \frac{\Vol(B_t(P_j) \cap B_t(Q_j))}{\Vol(B_t(0))} t^{-1} dt \leq C_2 \sum_{j= \lfloor k / 2 \rfloor }^{n} |P_j \cap Q_j|,
        \end{equation}
    where $P_j$ and $Q_j$ are the corresponding paths of $P$ and $Q$ in $\mathscr{P}^{j, M}$, respectively.
\end{lemma}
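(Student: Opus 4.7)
The plan is to combine the Gaussian decay bound from Claim~2 of Lemma~\ref{lem:est-correlation} with a union bound, pass to lattice coordinates, and then charge each nontrivially contributing pair to a coarser-scale intersection via the refinement tube structure of Lemma~\ref{lem:basic-property-mathscrP}.

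First, for $|x-y|_2 \leq 2t$, Claim~2 of Lemma~\ref{lem:est-correlation} yields
\[
\frac{\Vol(B_t(x) \cap B_t(y))}{\Vol(B_t(0))} \leq \frac{1}{c_3} \exp\!\Big(-c_3 d |x-y|_2^2/t^2\Big).
\]
Taking a union bound over pairs $(x,y) \in P_j \times Q_j$ with $|x-y|_2 \leq 2t$, rescaling to lattice coordinates $\hat x := \mathfrak r 8^j x \in \mathbb{Z}^d$, and using that $t \in [2^{-3j}, 2^{-3j+1}]$ and $d/\mathfrak r^2 \to 1$ as $d \to \infty$, the exponent becomes at least $c|\hat x - \hat y|_2^2$ for a universal $c > 0$ and all large $d$. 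Integrating over $t$ contributes only a factor $\log 2$ (since the integrand is approximately $t$-independent on the dyadic interval), giving
\[
\int_{2^{-3j}}^{2^{-3j+1}} \frac{\Vol(B_t(P_j) \cap B_t(Q_j))}{\Vol(B_t(0))}\,\frac{dt}{t} \leq C \sum_{\hat x \in \hat P_j,\, \hat y \in \hat Q_j} e^{-c|\hat x - \hat y|_2^2}.
\]

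The heart of the argument is to bound the rescaled Gaussian sum by intersection counts $|P_{j'}\cap Q_{j'}|$ at coarser scales $j' \leq j$. I will charge each pair $(\hat x, \hat y)$ with non-negligible weight to an intersection $v \in P_{j'}\cap Q_{j'}$ at the largest scale $j' = j - s$ for which the refinement ancestors of $\hat x$ and $\hat y$ are forced to coincide. By Claim~\ref{property-1-scr} of Lemma~\ref{lem:basic-property-mathscrP} applied iteratively, in scale-$j$ lattice units $\hat x$ lies within $|\cdot|_1$-distance $\tfrac{2}{7}\cdot 8^{s}$ of any ancestor edge of $\hat P_{j'}$ (and similarly for $\hat y$), so a triangle inequality forces the two ancestor edges to share a common lattice vertex $v$ whenever $|\hat x - \hat y|_2 \lesssim 8^{s}$. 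Given a charging, the pairs sent to a fixed intersection $v$ lie in the product of two refinement tubes through $v$, each containing at most $10^{s}+1$ vertices at scale $j$ (Lemma~\ref{lem:property-refine}). Combined with the Gaussian factor $e^{-c|\hat x - \hat y|_2^2}$, where the distance is bounded below in terms of the displacement inside the tube product, the per-intersection contribution sums to a constant uniformly in $d$ and $s$.

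Summing over $j \in [k,n]$ and the charging depth $s \geq 0$, and using that the charging is effective only for $s = O(\log d)$, yields the claimed inequality with an explicit summation range $j' \in [\lfloor k/2\rfloor, n]$, provided $k \geq C_2 \log d$ (so that $j - s \geq \lfloor k/2\rfloor$ for every $j \geq k$ and every relevant depth $s$). The main obstacle is the per-intersection estimate in the second paragraph: balls of radius $O(\sqrt d)$ in $\mathbb{Z}^d$ contain exponentially many lattice points, so a naive Gaussian sum over nearby lattice points is not bounded uniformly in $d$. The proof must exploit the thin refinement-tube geometry (each scale-$j'$ edge refines to a self-avoiding path of only $10^{s}+1$ vertices, in a tube of width $\asymp 8^s$ in scale-$j$ units) together with the quadratic decay of the Gaussian to absorb all combinatorial blow-ups and keep the final constant $C_2$ universal in $d$.
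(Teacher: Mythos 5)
Your approach is correct and lands on the same skeleton as the paper's argument: start from the Gaussian decay of ball overlaps (Claim~2 of Lemma~\ref{lem:est-correlation}), rescale to lattice units of $\frac{1}{\mathfrak r}8^{-j}\mathbb{Z}^d$, and charge each pair $(\hat x,\hat y)$ with nonvanishing contribution to a coarser-scale intersection, with the charging depth bounded by $O(\log d)$ (so the coarser scale stays above $\lfloor k/2\rfloor$ once $k \geq C_2\log d$). The bookkeeping is genuinely a little different from the paper's. You charge to the \emph{smallest} $s$ for which the triangle inequality and the tube radius $\tfrac27 8^s$ (from Claim~\ref{property-1-scr} of Lemma~\ref{lem:basic-property-mathscrP}) force the scale-$(j-s)$ ancestor edges to share a vertex; the paper instead charges $\hat x$ to the largest $s$ for which $\hat x$'s ancestor edge is still disjoint from $Q_{j-s}$, then bounds the total Gaussian mass attributable to a single disjoint edge over all finer scales via a dyadic shell decomposition, using the separation estimate (its Step~2, Claim~(i)) and a polynomial-in-$r$ count of path vertices in $\ell^2$-balls (its Step~2, Claim~(ii)). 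Your scheme avoids the shell decomposition and Claim~(ii) altogether, replacing them with the simpler observation that the pairs charged to $v$ at depth $s$ live in a product of two one-dimensional refinement tubes through $v$, each of cardinality at most $2(10^s+1)$.

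One sentence does need repair, and it is the one carrying the burden: ``the distance is bounded below in terms of the displacement inside the tube product.'' That is false as stated. The two refinement tubes through $v$ can run parallel, so $\hat x$ and $\hat y$ can both be far from $v$ inside their tubes and yet be $\ell^2$-distance $O(1)$ from each other; tube geometry alone gives no lower bound on $|\hat x - \hat y|_2$. What you need instead — and what your own charging scheme already supplies — is that a pair charged to depth exactly $s\geq 1$ satisfies $|\hat x - \hat y|_2 \geq \tfrac37 8^{s-1}$ by minimality (at depth $s-1$ the triangle inequality did \emph{not} force the ancestors to share a vertex). Once you state it this way, the per-intersection sum becomes $\sum_{s\geq 0} 4(10^s+1)^2 e^{-c'(8^{s-1})^2}$ with $c' = c(\tfrac37)^2$, which is a universal constant because the exponent is doubly exponential in $s$. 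You should also be careful to run the triangle inequality in $\ell^1$ (or equivalently require $|\hat x - \hat y|_2 < \tfrac37 8^s$, using $\ell^2\le\ell^1$ and the fact that two distinct nearest-neighbor edges with no common endpoint have $\ell^2$-distance $\geq 1$), since the quoted bound $\tfrac27 8^s$ from Lemma~\ref{lem:basic-property-mathscrP} is an $\ell^1$-bound. With these two fixes the argument is complete.
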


\begin{proof}
    Denote the left-hand side of~\eqref{eq:lem3.5-1} by $J$. The constants $C$ in this proof may change from line to line but are universal. The proof consists of four steps.
    
    \textbf{Step 1: Upper-bound $J$ using Claim~\ref{est-correlation-2} in Lemma~\ref{lem:est-correlation}.} For $k \leq j \leq n$, let 
    \begin{equation}\label{eq:def-mathcal-A}
    \mathcal{A}_j := \{ \mbox{vertex }x \in P_j: \mbox{there exists an edge } e \in P_j \mbox{ such that } x \in e \mbox{ and }e \cap Q_j = \emptyset \}.
    \end{equation}
    Then we have 
    \begin{equation}\label{eq:lem3.12-edge}
        |P_j \setminus \mathcal{A}_j| \leq 2 | \{ e : e \in P_j, e \cap Q_j \neq \emptyset\}| \leq 4 |P_j \cap Q_j|.
    \end{equation}(The first inequality follows from the fact that for each $x \in P_j \setminus \mathcal{A}_j$, there exists an edge $e \in P_j$ such that $x$ is an endpoint of $e$ and $e \cap Q_j \neq \emptyset$, and moreover, each edge has two endpoints. The second inequality follows from the fact that each edge in $\{ e : e \in P_j, e \cap Q_j \neq \emptyset\}$ has at least one endpoint in $P_j \cap Q_j$, and since $P_j$ is self-avoiding, each vertex in $P_j \cap Q_j$ can be the endpoint of at most two edges in $P_j$.) Combining~\eqref{eq:lem3.12-edge} with the inequality
    $$
    \Vol (B_t(P_j) \cap B_t(Q_j)) \leq \sum_{x \in \mathcal{A}_j, y \in Q_j} \Vol (B_t(x) \cap B_t(y)) + |P_j \setminus \mathcal{A}_j| \cdot \Vol(B_t(0)),
    $$
    we obtain
    \begin{align*}
        J \leq  \sum_{j=k}^{n} \sum_{x \in \mathcal{A}_j, y \in Q_j} \int_{2^{-3j}}^{2^{-3j + 1}} \frac{\Vol(B_t(x) \cap B_t(y))}{\Vol(B_t(0))} t^{-1} dt + 4 \log 2 \cdot \sum_{j=k}^{n}   |P_j \cap Q_j|.
    \end{align*}
    Applying Claim~\ref{est-correlation-2} in Lemma~\ref{lem:est-correlation} and using $\Vol(B_t(x) \cap B_t(y)) = 0$ if $|x-y|_2 \geq 2t$, we further have
    \begin{equation}
    \label{eq:lem3.5-2}
    \begin{aligned}
        J &\leq \sum_{j=k}^{n} \sum_{x \in \mathcal{A}_j, y \in Q_j} \int_{2^{-3j}}^{2^{-3j + 1}} 1_{\{ |x-y|_2 < 2t \}} \frac{1}{c_3} e^{-c_3 d |x-y|_2^2/ t^2}  t^{-1} dt + 4 \log 2 \cdot \sum_{j=k}^{n}   |P_j \cap Q_j| \\
        &\leq \frac{\log 2}{c_3} \cdot \sum_{j=k}^{n} \sum_{x \in \mathcal{A}_j, y \in Q_j} 1_{\{ |x-y|_2 < 4 \cdot 8^{-j} \}} e^{-\frac{c_3}{4} (\sqrt{d} 8^j |x-y|_2)^2} + 4 \log 2 \cdot \sum_{j=k}^{n}   |P_j \cap Q_j|.
    \end{aligned}
    \end{equation}
    In the second inequality, we used $1_{\{ |x-y|_2 < 2t \}} e^{-c_3 d |x-y|_2^2/ t^2} \leq 1_{\{ |x-y|_2 < 4 \cdot 8^{-j} \}} e^{-\frac{c_3}{4} (\sqrt{d} 8^j |x-y|_2)^2}$ for all $2^{-3j} \leq t \leq 2^{-3j + 1}$.

    \textbf{Step 2: Some properties about $\mathscr{P}^{j, M}$.} In this step, we prove the following:
    \begin{enumerate}[(i)]
    \item \label{lem3.5-step2-p1} 
    For all $j \geq 0$ and any edge $e = (x,y)$ in $\frac{1}{\mathfrak r} 8^{-j} \mathbb{Z}^d$, if $P \in \mathscr{P}^{j, M}$ and $x, y \not \in P$, then all the refinements of $(x,y)$ at any scale have $|\cdot|_2$-distance at least $\frac{3}{7 \mathfrak r} 8^{-j}$ from all the refinements of $P$ at any scale.
    
    \item \label{lem3.5-step2-p2} There exists a universal constant $C>0$ such that for all $j \geq 10 \log d$, $0 \leq r \leq 8 \mathfrak r$, $x \in \mathbb{R}^d$ and for all paths $P \in \mathscr{P}^{j, M}$, we have
    $$
    | P \cap \overline{B_{\frac{r}{\mathfrak r} 8^{-j}}(x)}| \leq C + r^C,
    $$
    where $\overline{B_{\frac{r}{\mathfrak r} 8^{-j}}(x)}$ is the closure of the ball.
    \end{enumerate}
    By Claim~\ref{property-1-scr} in Lemma~\ref{lem:basic-property-mathscrP} and the fact that $\mathfrak d_2(\overline e, \overline P) \geq \frac{1}{\mathfrak r} 8^{-j}$, the $|\cdot|_2$-distance between any two refinements of $e$ and $P$ is at least $\frac{1}{\mathfrak r} 8^{-j} - 2 \times \frac{2}{7 \mathfrak r} 8^{-j} = \frac{3}{7 \mathfrak r} 8^{-j}$, which yields Claim~\eqref{lem3.5-step2-p1}.
    
    Next, we prove Claim~\eqref{lem3.5-step2-p2}. See Figure~\ref{fig:edge-number} for an illustration. For $P \in \mathscr{P}^{j, M}$, let $P_1, P_2, \ldots, P_j$ be its corresponding paths in $\mathscr{P}^{1, M}, \mathscr{P}^{2, M}, \ldots, \mathscr{P}^{j, M}$. Let $s \leq j - 1$ be an integer to be chosen. For $z \in P \cap \overline{B_{\frac{r}{\mathfrak r} 8^{-j}}(x)}$, let $e_z$ be the edge in $P_{j-s}$ such that $z$ belongs to a refinement of $e_z$ at scale $s$. There are at most two choices for $e_z$ and when $e_z$ is not unique we deterministically choose one of them arbitrarily. By Claim~\ref{property-1-scr} in Lemma~\ref{lem:basic-property-mathscrP}, we have $\mathfrak d_2(z, \overline{e_z}) \leq \mathfrak d_1(z, \overline{e_z}) \leq \frac{2}{7 \mathfrak r} 8^{-j+s}$. Then, $\mathfrak d_2(x, \overline{e_z}) \leq |x-z|_2 + \mathfrak d_2(z, \overline{e_z}) \leq \frac{r}{\mathfrak r} 8^{-j} + \frac{2}{7 \mathfrak r} 8^{-j+s}$. We choose $s$ to be the smallest non-negative integer such that $r < \frac{3}{14} 8^s$. Since $j \geq 10 \log d$ and $r \leq 8 \mathfrak r$, we have $s \leq j-1$. In this case, $\mathfrak d_\infty(x, \overline{e_z}) \leq \mathfrak d_2(x, \overline{e_z}) < \frac{3}{14 \mathfrak r} 8^{-j + s} + \frac{2}{7 \mathfrak r} 8^{-j+s} = \frac{1}{2 \mathfrak r}8^{-j+s}$. Indeed, there are at most two edges in $P_{j-s}$ that satisfy the condition $\mathfrak d_\infty(x, \overline{e_z}) < \frac{1}{2 \mathfrak r}8^{-j+s}$, as this condition is equivalent to that there exists a unique $u \in \frac{1}{\mathfrak r}8^{-j+s} \mathbb{Z}^d$ such that $\mathfrak d_\infty(x,u) < \frac{1}{2 \mathfrak r}8^{-j+s}$ and $e_z$ is an edge with one endpoint being $u$. Moreover, since $P_{j-s}$ is a self-avoiding path, at most two edges in $P_{j-s}$ can have $u$ as an endpoint. Therefore, for all $z \in P \cap \overline{B_{\frac{r}{\mathfrak r} 8^{-j}}(x)}$, $e_z$ has at most two choices. Since any refinement of a fixed $e_z$ contains at most $11^s$ vertices at scale $s$, we conclude that $| P \cap \overline{B_{\frac{r}{\mathfrak r} 8^{-j}}(x)}| \leq 2 \times 11^s$. This proves Claim~\eqref{lem3.5-step2-p2}.

    \begin{figure}[h]
\centering
\includegraphics[scale=0.8]{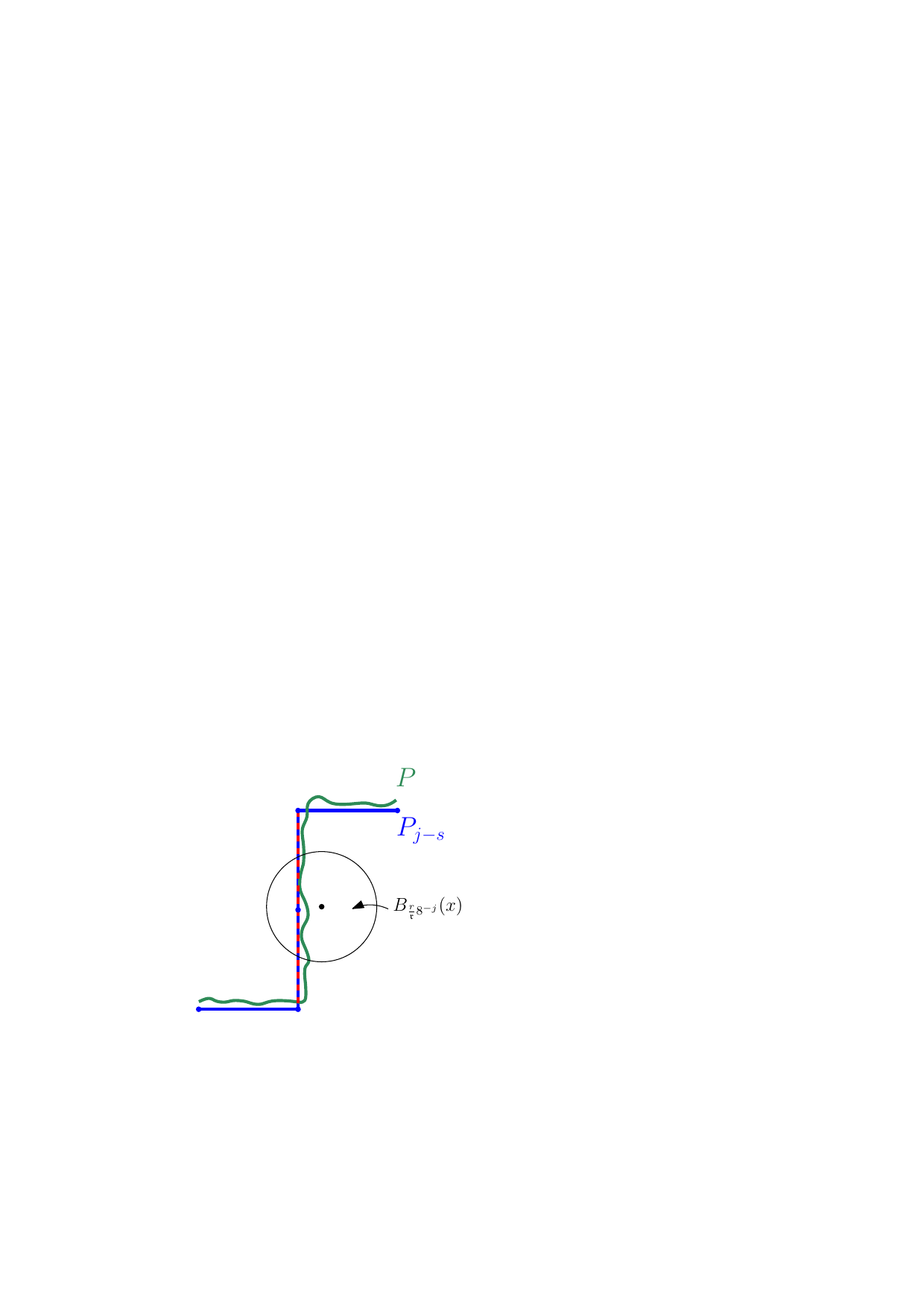}
\caption{The green path $P$ is defined on $\frac{1}{\mathfrak r} 8^{-j} \mathbb{Z}^d$, and the blue path $P_{j-s}$ is defined on $\frac{1}{\mathfrak r} 8^{-j+s} \mathbb{Z}^d$. If we choose the integer $s$ such that $r < \frac{3}{14} 8^s$, then all the vertices in $P \cap \overline{B_{\frac{r}{\mathfrak r} 8^{-j}}(x)}$ belong to the refinements of at most two edges in $P_{j-s}$ at scale $s$, as shown by the dashed red edges in the figure. This gives an upper bound on $|P \cap \overline{B_{\frac{r}{\mathfrak r} 8^{-j}}(x)}|$.}
\label{fig:edge-number}
\end{figure}

    \textbf{Step 3. Estimate the contribution of one edge.} In this step, we show that there exists a universal constant $C>0$ such that the following holds for all $j \geq 10 \log d$, all edges $e_j$ in $\frac{1}{\mathfrak r} 8^{-j} \mathbb{Z}^d$, and for all paths $Q_j \in \mathscr{P}^{j, M}$ with $e_j \cap Q_j = \emptyset$. For $i \geq 1$, let the path $Q_{j+i}$ be an arbitrary refinement of $Q_j$ at scale $i$, and let the path $e_{j+i}$ be an arbitrary refinement of $e_j$ at scale $i$. Then, we claim that
    \begin{equation}
        \label{eq:lem3.5-one-edge}
        \sum_{s=j}^\infty \sum_{x \in e_s, y \in Q_s} 1_{\{ |x-y|_2 < 4 \cdot 8^{-s} \}} e^{-\frac{c_3}{4} (\sqrt{d} 8^s |x-y|_2)^2} \leq C,
    \end{equation}
    where $x$ and $y$ range over all vertices in the paths $e_s$ and $Q_s$, respectively. The claim as in~\eqref{eq:lem3.5-one-edge} follows from Claims~\eqref{lem3.5-step2-p1} and \eqref{lem3.5-step2-p2}, as we now elaborate. For all $x \in \mathbb{R}^d$, we can cover $B_{4 \cdot 8^{-s}}(x)$ by $\overline{B_{\frac{1}{\mathfrak r} 8^{-s}}(x)}$ and $ \overline{B_{\frac{2^p}{\mathfrak r} 8^{-s}}(x)} \setminus B_{\frac{2^{p-1}}{\mathfrak r} 8^{-s}}(x)$ for $1 \leq p \leq \lfloor \log_2 \mathfrak r \rfloor + 3$. Therefore, for all $s \geq j$ and $x \in e_s$, we have
    $$
    \sum_{y \in Q_s} 1_{\{ |x-y|_2 < 4 \cdot 8^{-s} \}} e^{-\frac{c_3}{4} (\sqrt{d} 8^s |x-y|_2)^2} \leq |Q_s \cap \overline{B_{\frac{1}{\mathfrak r} 8^{-s}}(x)}| + \sum_{p=1}^{\lfloor \log_2 \mathfrak r \rfloor + 3} |Q_s \cap \overline{B_{\frac{2^p}{\mathfrak r} 8^{-s}}(x)}| \times e^{-\frac{c_3}{4} (\sqrt{d} 8^s \cdot \frac{2^{p-1}}{\mathfrak r} 8^{-s})^2}.
    $$
    By Claim~\eqref{lem3.5-step2-p1}, we have $\mathfrak d_2 (\overline{e_s}, \overline{Q_s}) \geq \frac{3}{7 \mathfrak r} 8^{-j}$, and hence $|Q_s \cap \overline{B_{\frac{2^p}{\mathfrak r} 8^{-s}}(x)}| = 0$ for all $p \leq 3(s-j)-2$. Combined with Claim~\eqref{lem3.5-step2-p2} and $\mathfrak r = \lfloor \sqrt{d} \rfloor$, this yields that
    $$
    \sum_{y \in Q_s} 1_{\{ |x-y|_2 < 4 \cdot 8^{-s} \}} e^{-\frac{c_3}{4} (\sqrt{d} 8^s |x-y|_2)^2} \leq \sum_{p=3(s-j)-1}^{\lfloor \log_2 \mathfrak r \rfloor + 3} (C + 2^{Cp}) \times e^{-2^{2p}/C}.
    $$
    Since the length of $e_s$ is at most $2 \times 11^{s - j}$, we further have (write $s' = s-j$ below)
    \begin{align*}
        &\quad \sum_{s=j}^\infty \sum_{x \in e_s, y \in Q_s} 1_{\{ |x-y|_2 < 4 \cdot 8^{-s} \}} e^{-\frac{c_3}{4} (\sqrt{d} 8^s |x-y|_2)^2} \leq \sum_{s = j}^\infty 2 \times 11^{s - j} \sum_{p=3(s-j)-1}^{\infty} (C + 2^{Cp}) \times e^{-2^{2p}/C}\\
        &= \sum_{p=-1}^\infty (C + 2^{Cp}) \times e^{-2^{2p}/C} \sum_{s'=0}^{\lfloor (p+1)/3 \rfloor } 2 \times 11^{s'} \leq \sum_{p=-1}^\infty (C + 2^{Cp}) \times e^{-2^{2p}/C} \times (C + e^{Cp}) \leq C.
    \end{align*}

    \textbf{Step 4: Proof of~\eqref{eq:lem3.5-1}.} Finally, we prove~\eqref{eq:lem3.5-1} using~\eqref{eq:lem3.5-2} and \eqref{eq:lem3.5-one-edge}. Assume that $k \geq 100 \log d$ and $j \geq k$. Consider the summand term $\sum_{y \in Q_j} 1_{\{ |x-y|_2 < 4 \cdot 8^{-j} \}} e^{-\frac{c_3}{4} (\sqrt{d} 8^j |x-y|_2)^2}$ on the right-hand side of~\eqref{eq:lem3.5-2}. For each vertex $x \in \mathcal{A}_j$ with $\mathfrak d_2 (x, \overline{Q_j}) \geq 4 \cdot 8^{-j}$, the summand term is 0. Next we consider $x \in \mathcal{A}_j$ such that $\mathfrak d_2 (x, \overline{Q_j}) < 4 \cdot 8^{-j}$. For all $0 \leq i \leq j-1$, let $\widetilde e_i$ be the edge in $P_i$ whose refinement at scale $(j-i)$ contains $x$, and let $\widetilde e_j$ be the edge in $P_j$ that contains $x$. If there are multiple choices (at most two), we deterministically choose one of them. We can also require that $\widetilde e_j \cap Q_j = \emptyset$ by~\eqref{eq:def-mathcal-A} and that $\widetilde e_{i+1}$ belongs to a refinement of $\widetilde e_i$. Let $s \leq j-1$ be an integer to be chosen. By Claim~\eqref{lem3.5-step2-p1}, if $\widetilde e_{j-s} \cap Q_{j-s} = \emptyset$, then $\mathfrak d_2 (\overline{\widetilde e_j}, \overline{Q_j}) \geq \frac{3}{7 \mathfrak r} 8^{-j + s}$. However, $\mathfrak d_2 (\overline{\widetilde e_j}, \overline{Q_j}) \leq \mathfrak d_2 (x, \overline{Q_j}) < 4  \cdot 8^{-j}$. Therefore, when $\frac{3}{7 \mathfrak r} 8^{-j + s} > 4  \cdot 8^{-j}$ (or equivalently, $8^s > \frac{28}{3} \mathfrak r$), we have $\widetilde e_{j-s} \cap Q_{j-s} \neq \emptyset$. Let $s_x$ be the largest integer $s$ such that $\widetilde e_{j-s} \cap Q_{j-s} = \emptyset$. 
    Then we have $0 \leq s_x \leq 10 \log d$. Let $\mathcal{E}$ denote the set of all different choices of $\widetilde e_{j-s_x}$ for all $j \in [k, n] \cap \mathbb{Z}$ and $x \in \mathcal{A}_j$ with $\mathfrak d_2 (x, \overline{Q_j}) < 4 \cdot 8^{-j}$. By~\eqref{eq:lem3.5-one-edge}, we have
    \begin{equation}\label{eq:lem3.5-3}
    \sum_{j=k}^{n} \sum_{x \in \mathcal{A}_j, y \in Q_j} 1_{\{ |x-y|_2 < 4 \cdot 8^{-j} \}} e^{-\frac{c_3}{4} (\sqrt{d} 8^j |x-y|_2)^2} \leq C | \mathcal{E}| \quad \mbox{with the constant }C \mbox{ from }\eqref{eq:lem3.5-one-edge}.
    \end{equation}
    Since all the edges in $\mathcal{E}$ are contained in a refinement of an edge in $P_l$ that intersects $Q_l$ for some $l \in [k - \lfloor 10 \log d \rfloor - 1, n] \cap \mathbb{Z}$, and each edge is refined into 10 edges, we have $$|\mathcal{E}| \leq 10 \sum_{l = k - \lfloor 10 \log d \rfloor - 1}^n |\{e: e\in P_l, e\cap Q_l \neq \emptyset\}| \leq 20 \sum_{l = k - \lfloor 10 \log d \rfloor - 1}^n |P_l \cap Q_l|,$$ (see~\eqref{eq:lem3.12-edge} for the proof of the second inequality). This, together with~\eqref{eq:lem3.5-2} and~\eqref{eq:lem3.5-3}, yields~\eqref{eq:lem3.5-1}. \qedhere
    
\end{proof}

\subsection{Good paths}\label{subsec:good-WN}

Now we give the definition of good paths in $\mathscr{P}^{n,M}$ which depends on several parameters $\alpha, \beta, d, n, k, M$. As mentioned at the beginning of Section~\ref{sec:WN}, we need to consider an auxiliary condition.

\begin{definition}\label{def:good-WN}
    Fix $\alpha>0$. Let $\beta>0$ be a large constant to be chosen which depends only on $\alpha$ and is independent of $d$. Let integers $k \geq 10$, $n \geq 6k$ and $M \geq 10$. For a path $P$ in $\mathscr{P}^{n, M}$ and $0 \leq j \leq n$, let $P_j$ be its corresponding path in $\mathscr{P}^{j, M}$. We also let $\widetilde P_0$ be its corresponding path in $\mathcal{P}^{0,M}$. We call the path $P$ \textbf{good} if the following two conditions hold for all vertices $x \in \widetilde P_0$:
    \begin{enumerate}[(a)]
        \item \label{condition-good-1} For $j \in [k, n] \cap \mathbb{Z}$, let the average of white noise along $P_j$ be
        \begin{equation}
        \label{eq:def-white-noise-average}
        \mathscr{W}_j(x, P_j) := \int_{2^{-3j}}^{2^{-3j+1}} \int_{ B_t(P_j) \cap (x + [-\frac{1}{2}, \frac{1}{2}]^d) } \Vol(B_t(0))^{-\frac{1}{2}} t^{-\frac{1}{2}} W(dy,dt).
        \end{equation}
        Then we have $\mathscr{W}_j (x, P_j) \geq \beta \mathbb{E}[\mathscr{W}_j (x, P_j)^2]$ for all $j \in [k, n] \cap \mathbb{Z}$. Denote this event by $\mathcal{G}_1(x,P)$.

        \item \label{condition-good-2} For all $j \in [6k, n] \cap \mathbb{Z}$ and $y \in B_{\frac{2}{\mathfrak r} 8^{-j}}(P_j) \cap (x + [-\frac{1}{2}, \frac{1}{2}]^d)$, we have 
        \begin{equation}\label{eq:def-good-2}
        h_j(y) - h_{3k}(y) \geq \alpha (j-3k).
        \end{equation}
        Denote this event by $\mathcal{G}_2(x,P)$.
    \end{enumerate}
\end{definition}

\begin{remark}\label{rmk:3.16}
    \begin{enumerate}
        \item We now show that Condition~\eqref{condition-good-2} implies ~\eqref{eq:prop-WN-1} in Proposition~\ref{prop:WN-thick} for the same $n, k, M$. For all $j \in [6k, n] \cap \mathbb{Z}$ and $j \leq i \leq n$, by Claim~\ref{property-1-scr} in Lemma~\ref{lem:basic-property-mathscrP}, we have $\overline{P_i} \subset B_{\frac{2}{7 \mathfrak r} 8^{-j}}(\overline{P_j})$. By definition, $\overline{P_j} \subset B_{\frac{1}{\mathfrak r} 8^{-j}}(P_j)$. Therefore, we have $\cup_{j \leq i \leq n} \overline{P_i} \subset B_{\frac{2}{\mathfrak r} 8^{-j}}(P_j)$. Notice that $\overline{P_j} \subset B_{\frac{2}{7 \mathfrak r}}(\overline{P_0}) = B_{\frac{2}{7 \mathfrak r}}(\overline{\widetilde{P}_0})$, which implies that $\cup_{x \in \widetilde P_0} (x + [-\frac12,\frac12]^d)$ covers $B_{\frac{2}{\mathfrak r} 8^{-j}}(P_j)$. Hence, Condition~\eqref{condition-good-2} implies~\eqref{eq:prop-WN-1}.\label{rmk3.16-claim-1}

        \item By definition, the event $\mathcal{G}_1(x,P)$ only depends on the white noise in $(x + [-\frac12, \frac12]^d)$. Let $x_-, x_+$ be the neighboring vertices of $x$ in $\widetilde P_0$. Then, the event $\mathcal{G}_2(x, P)$ only depends on the white noise in $\cup_{y \in \{x, x_-, x_+ \}} (y + [-\frac12, \frac12]^d)$. These properties will be crucial in the second moment estimates. \label{rmk3.16-claim-2}
    \end{enumerate}
\end{remark}
In order to prove Proposition~\ref{prop:WN-thick}, we study the weighted sum
\begin{equation}\label{eq:def-N}
\mathscr{N} := \sum_{P \in \mathscr{P}^{n, M}} \frac{1}{\mathbb{P}[ P \mbox{ is }\mbox{good}]} \mathbbm{1}\{P \mbox{ is }\mbox{good}\}.
\end{equation}
By Cauchy–Schwarz inequality $\mathbb{P}[\mathscr{N} > 0] \geq \frac{(\mathbb{E}\mathscr{N})^2}{\mathbb{E} \mathscr{N}^2}$, it suffices to show that the ratio is close to 1. By simple calculations, we have 
$$
\frac{\mathbb{E} \mathscr{N}^2 }{(\mathbb{E} \mathscr{N})^2 } = \widetilde{\mathbf E} \Big[\frac{\mathbb{P}[\mbox{$P,Q$ are good}]}{\mathbb{P}[\mbox{$P$ is good}] \cdot \mathbb{P}[\mbox{$Q$ is good}]}\Big],
$$
where $\widetilde{\mathbf E}$ denotes the expectation with respect to two paths $P$ and $Q$ uniformly chosen from $\mathscr{P}^{n, M}$, and $\mathbb{P}$ is defined with respect to the space-time white noise, which is independent of $P,Q$. If $P,Q$ are far away from each other, the right-hand side is close to 1 as the two events are approximately independent. The main difficulty is to estimate the correlation in the case where they are close. In particular, the probability of Condition~\eqref{condition-good-2} is hard to evaluate explicitly. Our strategy is to first show that by choosing $\beta$ sufficiently large (depending only on $\alpha$), Condition~\eqref{condition-good-2} can be implied by Condition~\eqref{condition-good-1} with high probability. Then, it suffices to estimate the correlation of Condition~\eqref{condition-good-1}. Using Lemmas~\ref{lem:gaussian-correlation} and~\ref{lem:gaussian-to-intersection}, we can upper-bound the Gaussian correlation using intersection numbers of $P$ and $Q$ at different scales, which can be controlled using Lemma~\ref{lem:intersection-law-WN}. 

\subsection{Condition~\eqref{condition-good-1} implies Condition~\eqref{condition-good-2} with high probability}\label{subsec:conditional}

In this subsection, we prove the following lemma.

\begin{lemma}\label{lem:condition-WN}
    For fixed $\alpha>0$, the following holds for all sufficiently large $\beta$. For all $d \geq \beta$, there exists a constant $C$ depending on $\alpha, \beta, d$ such that for all $k \geq C$, $n \geq 6k$, $M \geq 10$, and for all paths $P\in \mathscr{P}^{n,M}$ and $x \in \widetilde P_0$, we have
    \begin{equation}\label{eq:lem3.14-1}
    \mathbb{P}[\mathcal{G}_2(x,P) | \cap_{w \in \widetilde P_0} \mathcal{G}_1(w, P)] \geq 1 - e^{-k}.
    \end{equation}
\end{lemma}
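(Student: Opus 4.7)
The plan is to use the Gaussian structure to show that conditioning on $\bigcap_{w \in \widetilde{P}_0} \mathcal{G}_1(w, P)$ shifts the conditional mean of $h_j(y) - h_{3k}(y)$ strictly above $\alpha(j - 3k)$ when $\beta$ is taken sufficiently large depending on $\alpha$, and then to combine this shift with a Gaussian tail bound and a net-based union bound in $y$ and $j$. The key structural observation is that the variables $\{\mathscr{W}_i(w, P_i) : w \in \widetilde{P}_0,\ i \in [k, n] \cap \mathbb{Z}\}$ are mutually independent, since they integrate white noise over space-time regions that are pairwise disjoint: different $i$'s give disjoint dyadic time windows $[2^{-3i}, 2^{-3i+1}]$, and different $w$'s give interior-disjoint spatial boxes $w + [-\tfrac12, \tfrac12]^d$. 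Consequently every conditional computation factorizes cleanly.

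For the mean-shift computation, I first decompose
\[
h_j(y) - h_{3k}(y) = \sum_{i \in I_j} A^{(j,y)}_i + B^{(j,y)},
\]
where $A^{(j,y)}_i$ is the integral over $t \in [2^{-3i}, 2^{-3i+1}]$, $I_j := \{i \in \mathbb{Z} : k < i \leq \lfloor j/3 \rfloor\}$ enumerates the scales whose time window lies inside $[2^{-j}, 2^{-3k}]$, and $B^{(j,y)}$ is the independent remainder on the complementary scales. Since $A^{(j,y)}_i$ is correlated only with $\{\mathscr{W}_i(w, P_i) : w \in \widetilde{P}_0\}$, and each factor, conditional on $\mathcal{G}_1(w, P)$, is supported on $[\beta\,\mathbb{E}[\mathscr{W}_i(w,P_i)^2], \infty)$, a one-step Gaussian projection argument yields
\[
\mathbb{E}\bigl[A^{(j,y)}_i \,\big|\, {\textstyle\bigcap_{w}} \mathcal{G}_1(w, P)\bigr] \;\geq\; \beta \sum_{w \in \widetilde{P}_0} \mathrm{Cov}\bigl(A^{(j,y)}_i,\ \mathscr{W}_i(w, P_i)\bigr) \;\geq\; \beta\,\mathrm{Cov}\bigl(A^{(j,y)}_i,\ \mathscr{W}_i(x, P_i)\bigr),
\]
using the nonnegativity of each covariance. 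The refinement structure (Claim~\ref{property-1-scr} of Lemma~\ref{lem:basic-property-mathscrP}) supplies, for every $y \in B_{\frac{2}{\mathfrak r} 8^{-j}}(P_j) \cap (x + [-\tfrac12, \tfrac12]^d)$, a vertex $z \in P_i$ with $|y - z|_2 = O(8^{-i}/\sqrt d)$; Lemma~\ref{lem:est-correlation}(1) then gives $\Vol(B_t(y) \cap B_t(z)) \geq c_2 \Vol(B_t(0))$ throughout $t \in [2^{-3i}, 2^{-3i+1}]$, and direct integration produces $\mathrm{Cov}(A^{(j,y)}_i, \mathscr{W}_i(x, P_i)) \geq c_2 \log 2$. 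Summing over $i \in I_j$, whose cardinality is at least $(j-3k)/3 - 1$ when $j \geq 6k$, the conditional mean of $h_j(y) - h_{3k}(y)$ is at least $\tfrac{\beta c_2 \log 2}{3}(j-3k) - O(1)$, which beats $(\alpha + \delta)(j-3k)$ once $\beta \geq 3(\alpha + \delta)/(c_2 \log 2)$.

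Since truncating a Gaussian to a half-space does not increase variance, the conditional variance is at most the unconditional variance $(j - 3k)\log 2$, and for each fixed $y$ a Gaussian tail bound produces $\mathbb{P}[h_j(y) - h_{3k}(y) < \alpha(j-3k) \mid \mathcal{G}_1] \leq \exp(-\delta^2(j - 3k)/(2 \log 2))$. For uniformity, I discretize $B_{\frac{2}{\mathfrak r} 8^{-j}}(P_j) \cap (x + [-\tfrac12,\tfrac12]^d)$ to an $\eta$-net with $\eta \sim 8^{-j}/\sqrt d$, whose cardinality is $O(\mathfrak L_j) = O(\sqrt d \cdot 10^j)$ (the set is a tube of radius $O(\eta)$ around $P_j$ and the overlap between neighboring $\eta$-balls is $O(1)$ at this scale since $\eta$ is comparable to the inter-vertex spacing), and I bound the oscillation of $h_j - h_{3k}$ on scale $\eta$ by $O(d^{1/4} 2^{-j})$ via a direct variance computation of the increment, which is negligible compared with $\alpha(j - 3k)$ for $k$ large. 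Union bound over the net and over $j \in [6k, n]$ then gives total probability at most $\sqrt d \cdot \exp\bigl(6k \log 10 - 3k\delta^2/(2 \log 2)\bigr) \leq e^{-k}$ once $\delta^2$ exceeds an absolute constant (of order $2 \log 2 \cdot(6 \log 10 + 1)/3$) and $k$ exceeds a threshold depending on $\alpha$ and $d$. The hard part is precisely this balancing: the $\sim 10^j$ net-point count per scale forces $\delta$, and hence $\beta$, to be a constant bounded below in terms of $\alpha$ alone; moreover, the covariance lower bound must be checked uniformly in $y$ including near the boundary of $x + [-\tfrac12,\tfrac12]^d$, where $B_t(y) \cap B_t(z)$ may partially spill into adjacent cubes, to be handled either via Lemma~\ref{lem:est-correlation}(2) (far-field decay) or by collecting the shifts from neighboring $w \in \widetilde{P}_0$.
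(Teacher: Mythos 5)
Your proposal is correct in its overall logic, but it takes a genuinely different route from the paper. The paper decomposes $h_j(z) - h_{3k}(z)$, for $z$ ranging over a small ball around a fixed $y \in P_j$, into a common mode $Z$ (the white-noise integral over the intersection $A_t = \bigcap_z B_t(z)$ on the dyadic triple windows) plus a local fluctuation $Y(z)$; it then controls $Z$ conditionally via entropic repulsion (Lemma~\ref{lem:repulsion}) and exponential Markov, and bounds $\sup_z |Y(z) - Y(y)|$ uniformly via Dudley and Borel--TIS (Lemmas~\ref{lem:3.17} and~\ref{lem:3.16}). You instead project $h_j(y) - h_{3k}(y)$ directly onto the orthogonal system $\{\mathscr{W}_i(w, P_i)\}$, observe that conditioning on $\bigcap_w \mathcal{G}_1(w,P)$ lower-bounds the projected part deterministically by $\beta \sum_i \text{Cov}$, and then control the independent Gaussian residual with a tail bound; uniformity in $y$ comes from a net plus an increment/chaining estimate. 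Your route avoids the entropic-repulsion machinery entirely, which is a genuine simplification; both approaches reduce to the same covariance lower bound as the core analytic input.

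Three points need tightening. First, the ``Gaussian tail bound'' on the conditional law is loose phrasing: the conditional distribution of $h_j(y) - h_{3k}(y)$ given $\bigcap_w \mathcal{G}_1$ is a convolution of one-sided truncated Gaussians with a Gaussian, not Gaussian. What actually works is stochastic domination: writing $A^{(j,y)}_i = \sum_w c_{w,i}\mathscr{W}_i(w,P_i) + W_i'$ with $c_{w,i} \geq 0$, the event $\bigcap_w \mathcal{G}_1$ forces $\sum_w c_{w,i}\mathscr{W}_i(w,P_i) \geq \beta\sum_w\mathrm{Cov}$ deterministically, so $h_j(y) - h_{3k}(y)$ conditionally dominates $\beta\sum_i\sum_w\mathrm{Cov} + G$ with $G = \sum_i W_i' + B^{(j,y)}$ a centered Gaussian independent of the conditioning, and the tail bound is applied to $G$. (This also shows the ``conditional variance is at most unconditional variance'' step is unnecessary --- it is the variance of $G$ that matters.) Second, and relatedly, you should keep the full sum $\sum_w \mathrm{Cov}(A^{(j,y)}_i, \mathscr{W}_i(w,P_i)) = \int_t \Vol(B_t(y) \cap B_t(P_i))\,\Vol(B_t(0))^{-1} t^{-1}\,dt$ rather than dropping to a single $w = x$: the sum removes the truncation to $x + [-\tfrac12,\tfrac12]^d$, so Lemma~\ref{lem:est-correlation}(1) applies directly and you sidestep the near-boundary issue you flag at the end (the paper instead handles it with a half-space argument, which is more work). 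Third, your oscillation bound $O(d^{1/4} 2^{-j})$ over a ball of radius $\sim 8^{-j}/\sqrt d$ requires the chaining computation of the paper's~\eqref{eq:lem3.16-3} (the increment variance $\lesssim 2^j|z_1 - z_2|$ and Dudley); a single variance computation of one increment does not by itself control the supremum. With these repairs the argument goes through and $\beta$ indeed depends only on $\alpha$, via the universal constants $c_2$ and the fixed $\delta$ forced by the $10^j$ net count, matching the lemma's quantifier structure.
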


We assume that $k \geq 100 d$. It suffices to show that for all $j \in [6k, n] \cap \mathbb{Z}$ and $y \in P_j$, we have
    \begin{equation}\label{eq:lem3.14-2}
    \mathbb{P}\Big[\inf_{z \in B_{\frac{2}{\mathfrak r} 8^{-j}}(y)} h_j(z) - h_{3k}(z) \geq \alpha (j - 3k)| \cap_{w \in \widetilde P_0} \mathcal{G}_1(w, P) \Big] \geq 1 - e^{-100j}.
    \end{equation}
    Recall Condition~\eqref{condition-good-2}. For each $x \in \widetilde P_0$, we claim that there are at most $3 \mathfrak r \times 10^j$ number of vertices $y$ in $P_j$ such that $B_{\frac{2}{\mathfrak r} 8^{-j}}(y)$ intersects $x + [-\frac12, \frac12]^d$. To prove the claim, let $x_-$ and $x_+$ be the neighboring vertices of $x$ in $\widetilde P_0$. By Claim~\ref{property-1-scr} in Lemma~\ref{lem:basic-property-mathscrP}, such $y$ must belong to the refinement at the $j$-th scale of an edge in $P_0$ that interpolates $(x_-, x)$ or $(x, x_+)$. In addition, by Definition~\ref{def:P0} and Claim~\ref{lem3.8-claim2} in Lemma~\ref{lem:property-refine}, there are $2 \mathfrak r$ such edges in $P_0$ and each of them is refined to $10^j + 1$ vertices in $P_j$. Finally, note that $2 \mathfrak r \times (10^j + 1) \leq 3 \mathfrak r \times 10^j$. This proves the claim. Then, \eqref{eq:lem3.14-1}~follows from~\eqref{eq:lem3.14-2} by taking a union bound over the choices of $y$ and $j$.

    In order to prove~\eqref{eq:lem3.14-2}, we separate the white noise field $(h_j(z) - h_{3k}(z))_{z \in B_{\frac{2}{\mathfrak r} 8^{-j}}(y)}$ into two parts: $(Y(z))_{z \in B_{\frac{2}{\mathfrak r} 8^{-j}}(y)}$ and a common part $Z$ as defined below. Fix $y \in P_j$. For $t>0$, let
    \begin{equation}\label{eq:def-Z}
    A_t = \bigcap_{z \in B_{\frac{2}{\mathfrak r} 8^{-j}}(y)} B_t(z) \quad \mbox{and} \quad Z = \sum_{i=k + 1}^{\lfloor j/3 \rfloor} \int_{2^{-3i}}^{2^{-3i+1}} \int_{A_t}\Vol(B_t(0))^{-\frac{1}{2}} t^{-\frac{1}{2}} W(dy,dt).
    \end{equation}
    In particular, $A_t = B_{t - \frac{2}{\mathfrak r} 8^{-j}}(y)$ if $t \geq \frac{2}{\mathfrak r} 8^{-j}$, and $A_t = \emptyset$ if $t < \frac{2}{\mathfrak r} 8^{-j}$. Let $Y(z) = h_j(z) - h_{3k}(z) - Z$ for $z \in B_{\frac{2}{\mathfrak r} 8^{-j}}(y)$. By~\eqref{eq:def-WN} and the identity $h_j(z) - h_{3k}(z) = \sum_{i=3k+1}^j h_i(z) - h_{i-1}(z)$, we see that $Y(z)$ equals
    \begin{align*}
    \sum_{i=k + 1}^{\lfloor j/3 \rfloor} \int_{2^{-3i}}^{2^{-3i+1}} \int_{B_t(z) \setminus A_t}\Vol(B_t(0))^{-\frac{1}{2}} t^{-\frac{1}{2}} W(dy,dt) + \sum_{\substack{3k + 1 \leq i \leq j\\ 3 \nmid i}} \int_{2^{-i}}^{2^{-i+1}} \int_{B_t(z)}\Vol(B_t(0))^{-\frac{1}{2}} t^{-\frac{1}{2}} W(dy,dt).
    \end{align*}
    Here for $i \in \mathbb{Z}$, $3 \nmid i$ means that $i$ is not divisible by 3. From this expression, we see that $(Y(z))$ is a positively correlated Gaussian field. We first use standard Gaussian process estimates to lower-bound $(Y(z))$. Note that the event $\cap_{w \in \widetilde P_0} \mathcal{G}_1(w, P)$ is an increasing event with respect to the white noise, so by the FKG inequality (see e.g.~\cite{pitt-positively-correlated}) the inequality~\eqref{eq:lem3.14-3} below also holds when conditioned on the event $\cap_{w \in \widetilde P_0} \mathcal{G}_1(w, P)$.

    \begin{lemma}\label{lem:3.16}
        There exists a universal constant $K>0$ such that the following holds. For all $d \geq 2$, there exists a constant $C = C(d)$ such that for all $k \geq C$ and $j \geq 6k$, we have
        \begin{equation}\label{eq:lem3.14-3}
        \mathbb{P}\Big[\inf_{z \in B_{\frac{2}{\mathfrak r} 8^{-j}}(y)} Y(z) \geq - K (j - 3k) \Big] \geq 1 - e^{-200j} \quad \mbox{for all $y \in \mathbb{R}^d$}.
        \end{equation}
    \end{lemma}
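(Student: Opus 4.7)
The process $Y$ is a centered Gaussian field on the small ball $B := B_{\frac{2}{\mathfrak r} 8^{-j}}(y)$, so the plan is to apply the Borell--Sudakov--Tsirelson (BTS) concentration inequality. This reduces the task to bounding (i) the maximal variance $\sigma^2 := \sup_{z \in B} \mathbb{E}[Y(z)^2]$, and (ii) the expected supremum $\mathbb{E}[\sup_{z \in B}(-Y(z))]$ by a quantity much smaller than $K(j-3k)$, both uniformly in $d$ and $y$. For (i), reading off the white-noise definition of $Y$ and using $\Vol(B_t(z) \setminus A_t) \leq \Vol(B_t(z)) = \Vol(B_t(0))$, each of the at most $j-3k$ scales contributes at most $\log 2$ to the variance, so $\sigma^2 \leq C_0(j-3k)$ for a universal $C_0$.

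For (ii), I would analyze the intrinsic pseudo-metric $d_Y(z,z') := \sqrt{\mathbb{E}[(Y(z)-Y(z'))^2]}$. Since $Y(z)-Y(z')$ is driven only by white noise supported on the symmetric difference $B_t(z)\triangle B_t(z')$, a quantitative sharpening of Claim~\ref{est-correlation-1} of Lemma~\ref{lem:est-correlation}---readable from the explicit integral~\eqref{eq:lem3.2-1} in its proof---gives $\Vol(B_t(z) \cap B_t(z'))/\Vol(B_t(0)) \geq 1 - C\sqrt{d}\,|z-z'|/t$ when $|z-z'|$ is small relative to $t$. Summing $\min\{C\sqrt{d}\,|z-z'|/t,\,2\}\log 2$ over the relevant scales $3k<i\leq j$ yields $\mathbb{E}[(Y(z)-Y(z'))^2] \leq C\sqrt{d}\,|z-z'|\cdot 2^j$. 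For $z, z' \in B$, $|z-z'| \leq 4\mathfrak{r}^{-1} 8^{-j}$; combined with $\mathfrak{r}\sim\sqrt{d}$, this makes the $d_Y$-diameter of $B$ of order $O(2^{-j})$, independent of $d$. Translating Euclidean covering numbers through $|z-z'|\leq\epsilon^{2}/(C\sqrt{d}\cdot 2^{j})$ gives $N(B, d_Y, \epsilon) \leq (C/(\epsilon^2 4^j)+1)^d$, and Dudley's entropy bound then yields $\mathbb{E}[\sup_{z \in B}|Y(z)-Y(y)|] = O(\sqrt{d}\cdot 2^{-j}) = o_j(1)$.

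Combining (i) and (ii), $\mathbb{E}[\sup_{z \in B}(-Y(z))] \leq \sqrt{\sigma^2}+O(\sqrt{d}\, 2^{-j}) = O(\sqrt{j-3k})$. Applying BTS to $-Y$ gives
\[ \mathbb{P}\Big[\sup_{z \in B}(-Y(z)) \geq K(j-3k)\Big] \leq \exp\Big(-\frac{(K(j-3k)-\mathbb{E}[\sup_{z \in B}(-Y(z))])^2}{2\sigma^2}\Big) \leq \exp\Big(-\frac{K^2(j-3k)}{8C_0}\Big), \]
where the second inequality uses $\mathbb{E}[\sup_{z \in B}(-Y(z))] \leq K(j-3k)/2$, valid for $k \geq C(d)$ sufficiently large. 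Since $j \geq 6k$ implies $j-3k \geq j/2$, the right side is $\leq e^{-200j}$ once $K \geq \sqrt{3200\,C_0}$, a universal constant. The main technical point will be (ii): the $\sqrt{d}$ factor from the symmetric-difference estimate must combine exactly with the $\mathfrak{r}^{-1}\sim d^{-1/2}$ factor in the diameter of $B$ so that the modulus---and hence the constant $K$---end up independent of $d$, as required by the statement.
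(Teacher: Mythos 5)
Your proposal is correct and takes essentially the same route as the paper: compute the variance of $Y(y)$ (which is $\leq (j-3k)\log 2$), bound the Dudley metric via the modulus of continuity of $h_j - h_{3k}$ (reducing to the local geometry of $\Vol(B_t(z)\cap B_t(z'))$, where the $\sqrt{d}$ from the volume estimate indeed cancels against $\mathfrak r^{-1}$ in the diameter of the ball), and conclude via Borel--TIS. The only cosmetic difference is that the paper splits into a pointwise tail bound for $Y(y)$ plus a Borel--TIS bound for $\sup_z(Y(y)-Y(z))$ (whose variance is then $O(1)$), whereas you apply Borel--TIS directly to $\sup_z(-Y(z))$ with variance $O(j-3k)$; both yield a universal $K$ with the $d$-dependence absorbed into the threshold $C(d)$ on $k$.
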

    \begin{proof}
        By~\eqref{eq:def-WN} and the calculations in~\cite[Lemma 2.3]{dgz-exponential-metric}, we have
    $$
    \mathbb{E}[Y(y)^2]  \leq \mathbb{E}[(h_j(y) - h_{3k}(y))^2]  =  (j- 3k ) \log 2.
    $$
    Therefore, we can choose $K$ sufficiently large such that for all $k \geq 1$ and $j \geq 6k$,
    \begin{equation}\label{eq:lem3.16-1}
         \mathbb{P}\Big[ Y(y) \geq - \frac{K}{2} (j - 3k) \Big] \geq 1 - \frac{1}{2} e^{-200j}.
    \end{equation}
    Next, we upper-bound the field $(Y(y) - Y(z))_{z \in B_{\frac{2}{\mathfrak r} 8^{-j}}(y)}$. By definition, we have $\mathbb{E}[(Y(z_1) - Y(z_2))^2] = \mathbb{E}[((h_j(z_1) - h_{3k}(z_1)) - (h_j(z_2) - h_{3k}(z_2)))^2 ]$. For $z_1,z_2 \in B_{\frac{2}{\mathfrak r} 8^{-j}}(y)$, we have
    \begin{align*}
        &\quad \mathbb{E}\left[ (h_j(z_1) - h_{3k}(z_1)) (h_j(z_2) - h_{3k}(z_2)) \right] \\
        &= \mathbb{E} \Big[ \int_{2^{-j}}^{2^{-3k}} \int_{B_t(z_1)} {\rm vol}(B_1(0))^{-\frac{1}{2}} t^{-\frac{d+1}{2}} W(dw,dt) \cdot \int_{2^{-j}}^{2^{-3k}} \int_{B_t(z_2)} {\rm vol}(B_1(0))^{-\frac{1}{2}} t^{-\frac{d+1}{2}} W(dw,dt) \Big] \\
        &= \int_{2^{-j}}^{2^{-3k}} t^{-1} \frac{{\rm vol}(B_t(z_1) \cap B_t(z_2))}{{\rm vol}(B_t(0))} dt.
    \end{align*}
    There exists a constant $C'$ depending only on $d$ such that $\frac{{\rm vol}(B_t(z_1) \cap B_t(z_2))}{{\rm vol}(B_t(0))} \geq 1 - C' \frac{|z_1 - z_2|_2}{t}$. This, combined with the above equation, yields that 
    \begin{equation}\label{eq:lem3.16-2}
    \begin{aligned}
    \mathbb{E}[(Y(z_1) - Y(z_2))^2]& = \mathbb{E}[((h_j(z_1) - h_{3k}(z_1)) - (h_j(z_2) - h_{3k}(z_2)))^2 ]\\
    &= 2 (j-3k)\log 2 - 2 \mathbb{E}\left[ (h_j(z_1) - h_{3k}(z_1)) (h_j(z_2) - h_{3k}(z_2)) \right] \\
    &\leq 2C' \int_{2^{-j}}^{2^{-3k}}  \frac{|z_1 - z_2|_2}{t^2} dt \leq C' 2^{j+1}|z_1 - z_2|_2.
    \end{aligned}
    \end{equation}
    By Dudley's inequality (see e.g.~Equation (2.38) of~\cite{Talagrand-book14}), we have 
    \begin{equation}\label{eq:lem3.16-3}
    \mathbb{E}\Big[\sup_{z \in B_{\frac{2}{\mathfrak r} 8^{-j}}(y)} Y(y) - Y(z)\Big] \leq C'' \quad \mbox{for some constant $C''$ depending only on $d$.}
    \end{equation}
    This is because by Dudley's inequality, $\mathbb{E}[\sup_{z \in B_{\frac{2}{\mathfrak r} 8^{-j}}(y)} Y(y) - Y(z)] \leq A \int_0^\infty \sqrt{\log N(\epsilon)} d\epsilon$ for some universal constant $A>0$, where $N(\epsilon)$ is the minimum number of $\epsilon$-balls that are needed to cover the space $B_{\frac{2}{\mathfrak r} 8^{-j}}(y)$ with the metric $D(z_1,z_2) = \sqrt{C' 2^{j+1}|z_1 - z_2|_2}$. It is easy to see that $N(\epsilon) \leq A' (2^j \epsilon)^{-2d}$ for some constant $A'$ depending only on $d$ if $\epsilon \leq 2^{-j} \sqrt{8C'/\mathfrak r}$, and that $N(\epsilon) = 1$ if $\epsilon > 2^{-j} \sqrt{8C'/\mathfrak r}$. This yields~\eqref{eq:lem3.16-3}. 
    
    Combining~\eqref{eq:lem3.16-3} with the Borel-TIS inequality (see e.g.~\cite[Theorem 2.1.1]{adler-taylor-fields}) and the fact that $\mathbb{E}[(Y(y) - Y(z))^2] \leq 1$ for all sufficiently large $j$, we can enlarge $K$ such that for all sufficiently large $k$ (which depends on $d$) and $j \geq 6k$,
    \begin{equation}\label{eq:lem3.16-4}
        \mathbb{P}\Big[\sup_{z \in B_{\frac{2}{\mathfrak r} 8^{-j}}(y)} Y(y) - Y(z) \leq \frac{K}{2}(j- 3k)\Big] \geq 1 - \frac{1}{2} e^{-200j}.
    \end{equation}
    Combining~\eqref{eq:lem3.16-1} and~\eqref{eq:lem3.16-4} yields~\eqref{eq:lem3.14-3}.
    \end{proof}

    Next, we show that conditioned on the event $\cap_{w \in \widetilde P_0} \mathcal{G}_1(w, P)$, the random variable $Z$ (recall~\eqref{eq:def-Z}) is large with very high probability. The proof essentially follows from the entropic repulsion result in Lemma~\ref{lem:repulsion}.

    \begin{lemma}\label{lem:3.17}
        For any fixed $\lambda>0$, the following holds for all sufficiently large $\beta$. For all $d \geq \beta$, there exists a constant $C>0$, that depends on $\lambda, \beta, d$, such that for all $k \geq C$, $n \geq 6k$, $M\geq 10$, and $j \geq 6k$, and for all paths $P \in \mathscr{P}^{n,M}$, we have
        \begin{equation}\label{eq:lem3.14-4}
            \mathbb{P}\Big[Z \geq \lambda (j - 3k)| \cap_{w \in \widetilde P_0} \mathcal{G}_1(w, P) \Big] \geq 1 - e^{-200j} \quad \mbox{for all $y \in P_j$}.
        \end{equation}
    \end{lemma}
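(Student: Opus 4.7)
The plan is to exploit the independence of white noise across space-time slots, apply Lemma~\ref{lem:repulsion} (entropic repulsion) in each slot to see how the conditioning shifts $Z$, and then sum up. First, I would decompose $Z = \sum_{i=k+1}^{\lfloor j/3 \rfloor} Z_i$ where $Z_i$ is the integral over the time slice $[2^{-3i}, 2^{-3i+1}]$, and further decompose $Z_i = \sum_{w \in \widetilde P_0} Z_i^w$ according to which box $w + [-\tfrac{1}{2}, \tfrac{1}{2}]^d$ contains the spatial integration point. Since white noise over disjoint space-time regions is independent, the pairs $(Z_i^w, \mathscr{W}_i(w, P_i))$ are mutually independent across $(i, w)$; consequently $\bigcap_{w \in \widetilde P_0}\mathcal{G}_1(w, P)$ factors as an intersection of independent events indexed by $(i, w)$, and the conditional law of each $Z_i^w$ depends only on the event $\{\mathscr{W}_i(w, P_i) \geq \beta \,\mathbb{E}[\mathscr{W}_i(w, P_i)^2]\}$ in its own slot.

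For each $(i, w)$ with $i \in [k+1, \lfloor j/3 \rfloor]$ I would write $\mathscr{W}_i(w, P_i) = c_{i,w} Z_i^w + Y_{i,w}$ with $Y_{i,w}$ independent of $Z_i^w$, set $\sigma^2 = c_{i,w}^2 \mathbb{E}[(Z_i^w)^2]$ and $m\sigma^2 = \mathbb{E}[Y_{i,w}^2]$, and observe that the conditioning reads $c_{i,w}Z_i^w + Y_{i,w} \geq (m+1)\sigma^2 \beta$, which is exactly the setup of Lemma~\ref{lem:repulsion} with $\theta = \beta$. The key point is that $m$ is large: since $\mathscr{W}_i(w, P_i)$ integrates white noise over a tube around $P_i$ of volume of order $\sqrt{d}\cdot 2^{3i}\,\Vol B_t(0)$ (by a direct volume estimate using Stirling's formula for $\Vol B_1(0)$), while $Z_i^w$ integrates only over a subset of $B_t(y)$, one obtains $m \gtrsim \sqrt{d}\cdot 2^{3i}$, which is arbitrarily large once $k \geq C(d)$. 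Lemma~\ref{lem:repulsion} then yields that the conditional density of $c_{i,w} Z_i^w$ is within a factor $1 + o_m(1)$ of the density of $N(\sigma^2 \beta, \sigma^2)$ on any bounded range; translating back, the conditional density of $Z_i^w$ is close to that of $N\bigl(\beta\,\mathbb{E}[Z_i^w \mathscr{W}_i(w, P_i)],\,\mathbb{E}[(Z_i^w)^2]\bigr)$.

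The next step is the geometric lower bound $\sum_w \mathbb{E}[Z_i^w \mathscr{W}_i(w, P_i)] \geq c(d) > 0$. This sum equals $\int_{2^{-3i}}^{2^{-3i+1}} |A_t \cap B_t(P_i) \cap \mathcal{U}|\,\Vol(B_t(0))^{-1}\,t^{-1}\,dt$, where $\mathcal{U} = \bigcup_{w \in \widetilde P_0}(w + [-\tfrac{1}{2}, \tfrac{1}{2}]^d)$. By Claim~\ref{property-1-scr} of Lemma~\ref{lem:basic-property-mathscrP}, $y \in P_j$ lies within $|\cdot|_1$-distance $\tfrac{2}{7\mathfrak{r}} 8^{-i}$ of $\overline{P_i}$, so picking a vertex $y' \in P_i$ on the nearest edge gives $|y - y'|_2 \leq 9/(7\mathfrak{r})\cdot 8^{-i} \leq C/\sqrt{d}\cdot t$ for $t \in [2^{-3i}, 2^{-3i+1}]$, and Claim~\ref{est-correlation-1} of Lemma~\ref{lem:est-correlation} gives $\Vol(B_t(y) \cap B_t(y')) \geq c_2 \Vol B_t(0)$. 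Since $A_t = B_{t - (2/\mathfrak{r})8^{-j}}(y)$ differs negligibly from $B_t(y)$ for $i < j/3$, and since $y$ is within $\tfrac{2}{7\mathfrak{r}}$ of $\overline{\widetilde P_0}$ so that small balls around $y$ lie entirely in $\mathcal{U}$, the integrand is bounded below by a positive constant.

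Finally, the independence of $(Z_i^w, \mathscr{W}_i(w, P_i))$ across all $(i, w)$ combined with the density bounds of Lemma~\ref{lem:repulsion} yields a moment-generating-function/Chernoff estimate on $Z = \sum_{i,w} Z_i^w$ (with the tail outside the bounded range $|t| \leq \delta^{-1}$ of Lemma~\ref{lem:repulsion} handled separately, since $\int e^{-\eta t} p(t) dt$ decays super-polynomially on $\{t > \delta^{-1}\}$). Optimizing $\eta > 0$ in the resulting bound gives $Z \geq (c\beta/2)(\lfloor j/3 \rfloor - k) \geq (c\beta/6)(j-3k)$ with probability at least $1 - e^{-200 j}$, provided $\beta$ is large enough (depending on $\lambda$ and $d$) and $k \geq C(\lambda, \beta, d)$ is large enough so that the $o_m(1)$ error of Lemma~\ref{lem:repulsion}, accumulated over the $O(j)$ slots, remains negligible. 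Choosing $\beta \geq 6\lambda/c$ then yields $Z \geq \lambda(j-3k)$. The main obstacle is keeping the entropic repulsion error uniform across $(i, w)$: this requires $\sigma_{i,w}^2$ to lie in a bounded interval $[\delta, 1]$ independent of $i$, which holds because $c_{i,w}$ is of order $1$ by the overlap estimate above and $\mathbb{E}[(Z_i^w)^2] \leq \log 2$ by definition.
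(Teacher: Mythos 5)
Your overall strategy — decompose $Z$ into per-time-slice pieces, exploit the independence of white noise across disjoint space-time boxes so that the conditioning $\cap_{w \in \widetilde P_0} \mathcal{G}_1(w,P)$ factors over slots, apply the entropic-repulsion Lemma~\ref{lem:repulsion} in each slot to shift the conditional law upward by $\beta$ times a covariance, and combine via a Chernoff/MGF bound with $\beta$ chosen large in terms of $\lambda$ — is exactly the paper's approach, and your geometric lower bound on the covariance via Lemma~\ref{lem:est-correlation} and the interior-of-$\mathcal{U}$ observation also tracks the paper.

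The gap is in your final paragraph. You assert $\sigma_{i,w}^2 = c_{i,w}^2\,\mathbb{E}[(Z_i^w)^2]$ lies in a fixed interval $[\delta,1]$ for all slots $(i,w)$, ``because $c_{i,w}$ is of order $1$ by the overlap estimate above and $\mathbb{E}[(Z_i^w)^2]\le\log 2$.'' But your overlap estimate only lower-bounds the \emph{sum} $\sum_w \mathbb{E}[Z_i^w\,\mathscr{W}_i(w,P_i)]$, not each individual term, and $\mathbb{E}[(Z_i^w)^2]$ has no uniform lower bound in $w$: when $A_t$ merely grazes a box $w+[-\tfrac12,\tfrac12]^d$ (which is generic when $y$ is near a face of a cube), both $\mathbb{E}[(Z_i^w)^2]$ and $c_{i,w}$ can be arbitrarily small, so $\sigma_{i,w}^2$ drops below any fixed $\delta$ and Lemma~\ref{lem:repulsion} does not apply to that slot. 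The paper sidesteps this by choosing, for each $i$, a \emph{single} box $x_u$ containing a vertex $u\in P_i$ nearest $y$, for which $\mathbb{E}[Z_i\,\mathscr{W}_i(x_u,P_i)]$ is bounded below; it splits $Z_i = Z_i^{(1)}+Z_i^{(2)}$ along the white-noise region where $Z_i$ and $\mathscr{W}_i(x_u,P_i)$ actually overlap, so that $Z_i^{(1)}$, $Z_i^{(2)}$, $\mathscr{W}_i(x_u,P_i)-Z_i^{(2)}$ are literally independent and $\mathbb{E}[(Z_i^{(2)})^2]=\mathbb{E}[Z_i\,\mathscr{W}_i(x_u,P_i)]$ is pinned between two universal constants; it applies Lemma~\ref{lem:repulsion} only to $Z_i^{(2)}$; and it dumps $\sum_i Z_i^{(1)}$ together with the top slices $i>\lfloor j/3\rfloor-10d$ into a centered Gaussian remainder of variance at most $(\lfloor j/3\rfloor-k)\log 2$, which is swallowed once $\beta$ is large. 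To repair your version you would need the analogous split into one dominant slot per $i$ (handled by Lemma~\ref{lem:repulsion}) and a remainder (handled by the fact that conditioning on $\{\mathscr{W}_i(w,P_i)\ge\cdots\}$ stochastically increases the nonnegatively correlated $Z_i^w$, so its conditional MGF is bounded by the unconditional one). As written, the uniformity claim is false and the Chernoff step does not close. A secondary looseness: ``differs negligibly from $B_t(y)$ for $i<j/3$'' is not enough — the paper restricts to $i\le\lfloor j/3\rfloor-10d$ precisely to make $\Vol(B_t(y)\setminus A_t)/\Vol(B_t(0))$ small uniformly in $d$, while the top slices are pushed into the Gaussian remainder.
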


    \begin{proof}
        Fix $y \in P_j$. Recall from~\eqref{eq:def-Z} the definition of $A_t$ and $Z$, and also recall from Condition~\eqref{condition-good-1} the white noise average $\mathscr{W}_j(x, P_j)$. For $k + 1 \leq i \leq \lfloor j/3 \rfloor$, let
        \begin{equation}\label{eq:decompose-Zi}
            Z_i:=\int_{2^{-3i}}^{2^{-3i+1}} \int_{A_t}\Vol(B_t(0))^{-\frac{1}{2}} t^{-\frac{1}{2}} W(dy,dt).
        \end{equation}
        We will show that the correlation between $Z_i$ and $\mathscr{W}_i(x, P_i)$ is uniformly bounded from below for some $x \in \widetilde P_0$ (see~\eqref{eq:lem3.17-1} below). Then, by Lemma~\ref{lem:repulsion}, conditioned on $\mathscr{W}_i(x, P_i) \geq \beta \mathbb{E}[\mathscr{W}_i(x, P_i)^2]$, the random variable $Z_i$ is pushed to take on a large value, which grows to infinity with $\beta$. By choosing a sufficiently large $\beta$, we can then prove~\eqref{eq:lem3.14-4}.

        Fix $i \in [k + 1, \lfloor j/3 \rfloor - 10 d] \cap \mathbb{Z}$. By Claim~\ref{property-1-scr} in Lemma~\ref{lem:basic-property-mathscrP}, we have $y \in B_{\frac{2}{7\mathfrak r} 8^{-i}}(\overline{P_i})$. Therefore, there exists a vertex $u \in P_i$ such that $|u-y|_2 \leq \frac{2}{7\mathfrak r} 8^{-i} + \frac{1}{\mathfrak r} 8^{-i} < \frac{2}{\mathfrak r} 8^{-i}$. We choose a vertex $x_u \in \widetilde P_0$ such that $u \in (x_u + [-\frac{1}{2}, \frac{1}{2}]^d)$. For simplicity, we write $\widehat B_t(u) = B_t(u) \cap (x_u + [-\frac12, \frac12]^d)$. By definition, we have
        \begin{equation}\label{eq:lem3.17-correlation}
        \mathbb{E}[Z_i \mathscr{W}_i(x_u, P_i)] \geq \int_{2^{-3i}}^{2^{-3i+1}} t^{-1} \frac{{\rm vol}(A_t \cap \widehat B_t(u))}{{\rm vol}(B_t(0))} dt.
        \end{equation}

        Next, we lower-bound the right-hand side of~\eqref{eq:lem3.17-correlation}. We first show that for all sufficiently large $d$,
        \begin{equation}\label{eq:lem3.17-claim-1}
        \frac{{\rm vol}(B_t(y) \cap \widehat B_t(u))}{{\rm vol}(B_t(0))}\mbox{ is bounded from below by a universal constant for all $2^{-3i} \leq t \leq 2^{-3i+1}$.}
        \end{equation}
        When $B_t(u), B_t(y) \subset x_u + [-\frac12, \frac12]^d$, Claim~\eqref{eq:lem3.17-claim-1} follows from Claim~\ref{est-correlation-1} in Lemma~\ref{lem:est-correlation}. Now we explain how to address the case where $B_t(u)$ or $B_t(y)$ is not contained in $x_u + [-\frac12, \frac12]^d$. We know from Claim~\ref{property-1-scr} in Lemma~\ref{lem:basic-property-mathscrP} that $u \in B_{\frac{2}{7 \mathfrak r}} ( \overline{\widetilde P_0})$. In addition, $\overline{\widetilde P_0}$ has $|\cdot|_\infty$-distance $\frac{1}{2}$ from the edges of the hypercube $x_u + [-\frac12, \frac12]^d$, which implies that $u$ is bounded away from the edges of $x_u + [-\frac12, \frac12]^d$. Therefore, we can find a half-space $\mathbb{H}$ with $u$ on its boundary such that ${\rm vol}(B_t(y) \cap \widehat B_t(u)) \geq {\rm vol}(B_t(u) \cap B_t(y) \cap \mathbb{H})$; see Figure~\ref{fig:area}. Next, we lower-bound ${\rm vol}(B_t(u) \cap B_t(y) \cap \mathbb{H})$ for all $|u-y|_2 < \frac{2}{\mathfrak r} 8^{-i}$ and $2^{-3i} \leq t \leq 2^{-3i+1}$. Without loss of generality, assume that $u = (0,0, \ldots, 0)$ and $\mathbb{H} = \{z : z_2 \geq 0\}$. Fix $2^{-3i} \leq t \leq 2^{-3i+1}$. By rotational invariance, we can also assume that $y = (a,b,0,\ldots,0)$ for some $a,b$. Since $|u-y|_2 < \frac{2}{\mathfrak r} 8^{-i} \leq \frac{2}{\mathfrak r}t$, we have $|a|/t , |b|/t < \frac{2}{\mathfrak r}$. We consider the following region $$\{z : 0 \leq z_1 \leq \frac{t}{\sqrt d}, 0 \leq z_2 \leq \frac{t}{\sqrt{d}}, z_3^2 + z_4^2 + \ldots z_d^2 \leq t^2(1 - (\frac{1}{\sqrt{d}} + |a|/t)^2 - (\frac{1}{\sqrt{d}} + |b|/t)^2) \}.$$
        It is easy to see that this region is contained in $B_t(0) \cap B_t(y) \cap \mathbb{H}$. Moreover, for all sufficiently large $d$, its volume is bounded from below by some universal constant times ${\rm vol}(B_t(0))$, as the $(d - 2)$-dimensional volume of a $(d - 2)$-dimensional unit ball is up-to-constant equivalent to $d \cdot {\rm vol}(B_1(0))$ (see~\eqref{eq:lem3.2-0}). This proves Claim~\eqref{eq:lem3.17-claim-1}.

\begin{figure}[h]
\centering
\includegraphics[scale=0.8]{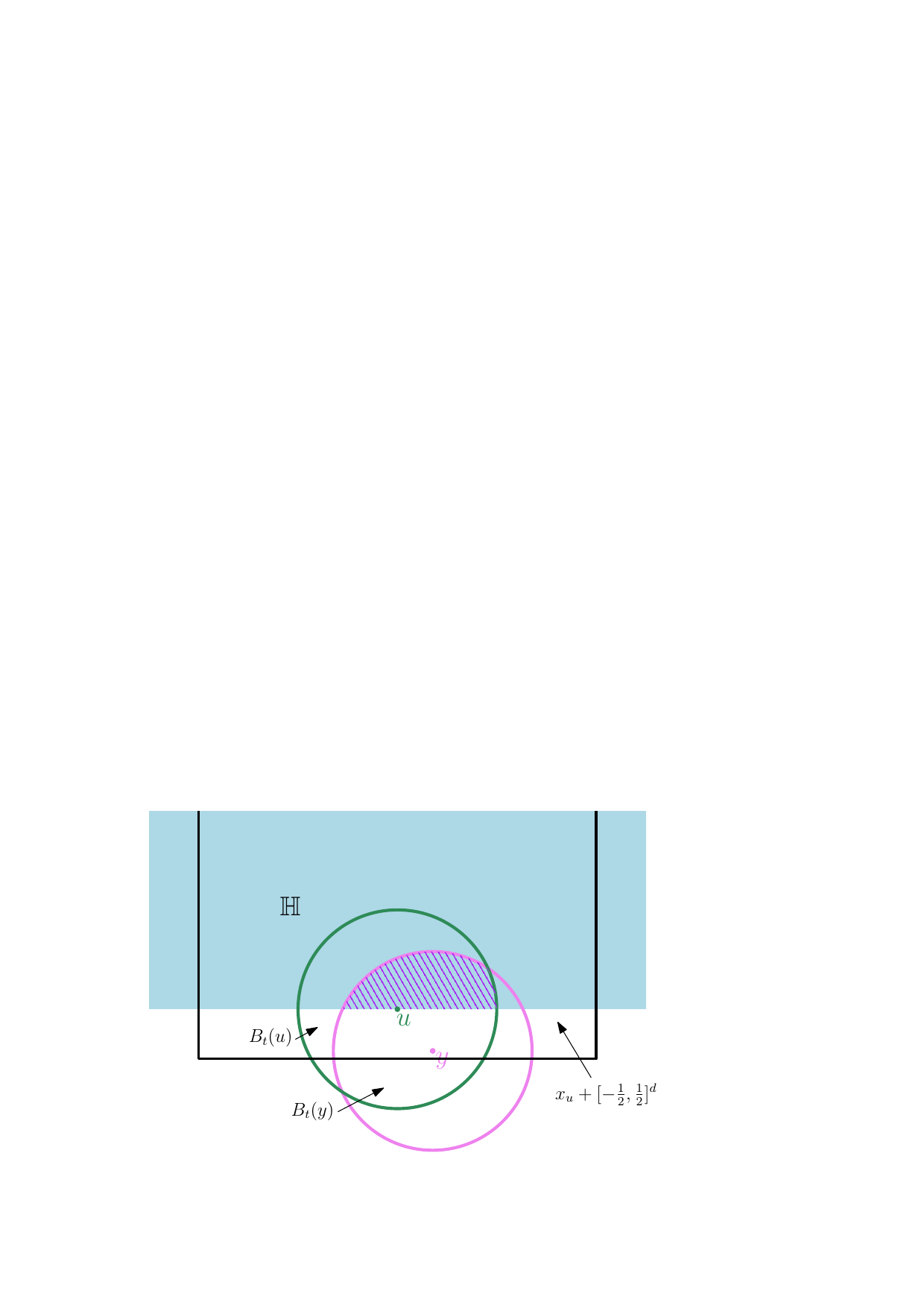}
\caption{There exists a half-space $\mathbb{H}$ such that ${\rm vol}(B_t(y) \cap B_t(u) \cap (x_u + [-\frac12, \frac12]^d)) \geq {\rm vol}(B_t(u) \cap B_t(y) \cap \mathbb{H})$. The shaded domain corresponds to $B_t(u) \cap B_t(y) \cap \mathbb{H}$, whose volume is lower-bounded by some positive universal constant times ${\rm vol}(B_t(0))$.}
\label{fig:area}
\end{figure}

        In addition, for all $2^{-3i} \leq t \leq 2^{-3i+1}$, we have $\frac{{\rm vol}(B_t(y) \setminus A_t)}{{\rm vol}(B_t(0))} = 1 - (1 - \frac{2}{t \mathfrak r} 8^{-j})^d$ (recall~\eqref{eq:def-Z}), which uniformly tends to 0 as $d$ tends to infinity (where we used $i \leq \lfloor j/3 \rfloor - 10 d$). Combining this with~\eqref{eq:lem3.17-correlation} and~\eqref{eq:lem3.17-claim-1}, we obtain that there exists a universal constant $c>0$ such that for all sufficiently large $d$,
        \begin{equation}\label{eq:lem3.17-1}
        \mathbb{E}[Z_i \mathscr{W}_i(x_u, P_i)] \geq \int_{2^{-3i}}^{2^{-3i+1}} t^{-1} c dt = c \log 2 \quad \mbox{for all $i \in [k + 1, \lfloor j/3 \rfloor - 10 d] \cap \mathbb{Z}$}.
        \end{equation}
        
        By the independence of space-time white noise, we can further separate $Z_i$ into two parts $Z_i^{(1)}$ and $Z_i^{(2)}$, where $Z_i^{(1)}$ (resp., $Z_i^{(2)}$) corresponds to integrating over $A_t \setminus (B_t(P_i) \cap (x_u + [-\frac12, \frac12]^d))$ (resp., $A_t \cap B_t(P_i) \cap (x_u + [-\frac12, \frac12]^d)$) in~\eqref{eq:decompose-Zi} rather than over $A_t$. Then, we see that $Z_i^{(1)}, Z_i^{(2)},$ and $\mathscr{W}_i(x_u, P_i) - Z_i^{(2)}$ are independent of each other, and \begin{equation}\label{eq:lem3.17-corre-2}
            \mathbb{E}[(Z_i^{(2)})^2] = \mathbb{E}[(Z_i^{(1)} + Z_i^{(2)})( \mathscr{W}_i(x_u, P_i) - Z_i^{(2)} + Z_i^{(2)})] = \mathbb{E}[Z_i \mathscr{W}_i(x_u, P_i)].
        \end{equation}Now, using~\eqref{eq:lem3.17-1}, we can apply Lemma~\ref{lem:repulsion} with $(X,Y) = (Z_i^{(2)}, \mathscr{W}_i(x_u, P_i) - Z_i^{(2)})$ to show that for all sufficiently large $d$,
        \begin{equation}\label{eq:lem3.17-2}
            \lim_{\beta \rightarrow \infty} \limsup_{k \rightarrow \infty} \sup_{j \geq 6k, k + 1 \leq i \leq \lfloor j/3 \rfloor - 10 d} \mathbb{E} \big[\exp(-Z_i^{(2)}) \big{|} \mathscr{W}_i(x_u, P_i) \geq \beta \mathbb{E}[\mathscr{W}_i(x_u, P_i)^2] \big] = 0.
        \end{equation}
        To be precise, let $\delta>0$ be a small constant that will eventually tend to 0. We apply Lemma~\ref{lem:repulsion} with $\theta = \beta$, this $\delta$, and $(X,Y) = (Z_i^{(2)}, \mathscr{W}_i(x_u, P_i) - Z_i^{(2)})$. Then, $\sigma^2 = \mathbb{E}[(Z_i^{(2)})^2]$, $m \sigma^2 = \mathbb{E}[(\mathscr{W}_i(x_u, P_i) - Z_i^{(2)})^2] =  \mathbb{E}[\mathscr{W}_i(x_u, P_i)^2] - \mathbb{E}[(Z_i^{(2)})^2] $, and the condition $\mathscr{W}_i(x_u, P_i) \geq \beta \mathbb{E}[\mathscr{W}_i(x_u, P_i)^2]$ is equivalent to the condition that $X + Y \geq (m+1) \sigma^2 \theta$. Furthermore, $m$ tends to infinity as $k$ tends to infinity, and by~\eqref{eq:lem3.17-1} and~\eqref{eq:lem3.17-corre-2}, $\sigma^2$ is lower-bounded by a positive universal constant for sufficiently large $d$. Let $\widehat p$ denote the conditional density of $Z_i^{(2)}$ conditioned on the event $\mathscr{W}_i(x_u, P_i) \geq \beta \mathbb{E}[\mathscr{W}_i(x_u, P_i)^2]$. Then, by Lemma~\ref{lem:repulsion}, we have $\widehat p(t) \leq (1+o_k(1)) p[N( \beta \sigma^2, \sigma^2) = t]$ for all $t \leq \delta^{-1}$. Therefore, for all sufficiently large $k$, the conditional expectation of $\exp(-Z_i^{(2)})$ is upper-bounded by $2 \mathbb{E} [e^{-\min \{X, \delta^{-1}\}}]$ where $X$ follows the law $N( \beta \sigma^2, \sigma^2)$. Since $\sigma^2$ is lower-bounded by a universal constant, we can first take $\delta$ to 0 and then send $\beta$ to infinity such that $2 \mathbb{E} [e^{-\min \{X, \delta^{-1} \}}]$ tends to 0. This proves Claim~\eqref{eq:lem3.17-2}.
        
        The inequality~\eqref{eq:lem3.14-4} then simply follows from~\eqref{eq:lem3.17-2}, as we elaborate. By~\eqref{eq:def-Z} and~\eqref{eq:decompose-Zi}, we have
        \begin{equation}\label{eq:decompose-Z}
            Z = \sum_{i= k + 1}^{\lfloor j/3 \rfloor - 10 d} Z_i^{(2)} + \sum_{i = k + 1}^{\lfloor j/3 \rfloor - 10 d} Z_i^{(1)} + \sum_{i=\lfloor j/3 \rfloor - 10 d + 1}^{\lfloor j/3 \rfloor} Z_i.
        \end{equation}Since the last two sums are mean-zero Gaussian random variables with variance at most $(\lfloor j/3 \rfloor - k) \log 2$, we can choose a universal constant $C>0$ such that the last two sums are bounded from below by $-C(j - 3k)$ with probability at least $1 - \frac{1}{2} e^{-200j}$. This also holds when conditioned on the event $\cap_{w \in \widetilde P_0} \mathcal{G}_1(w, P)$, as the later event is increasing. Then we can choose $\beta$ sufficiently large to show that conditioned on the event $\cap_{w \in \widetilde P_0} \mathcal{G}_1(w, P)$, the first sum is at least $(\lambda + C)(j-3k)$ with probability at least $1 - \frac{1}{2} e^{-200j}$. This can be achieved by applying~\eqref{eq:lem3.17-2} and the Markov's inequality $\mathbb{P}[X \leq t] \leq e^t \mathbb{E}[\exp(-X)]$ with $X = \sum_{i= k + 1}^{\lfloor j/3 \rfloor - 10 d} Z_i^{(2)}$, $t = (\lambda + C)(j-3k)$, and $\mathbb{P}$ being the conditional law on the event $\cap_{w \in \widetilde P_0} \mathcal{G}_1(w, P)$. (Note that since $Z_i^{(2)}$ depends only on the white noise in the space $x_u + [-\frac12, \frac12]^d$ and the time $[2^{-3i}, 2^{-3i+1}]$, the law of $Z_i^{(2)}$ conditioned on $\cap_{w \in \widetilde P_0} \mathcal{G}_1(w, P)$ is equivalent to that conditioned on the event $\mathscr{W}_i(x_u, P_i) \geq \beta \mathbb{E}[\mathscr{W}_i(x_u, P_i)^2]$).
    \end{proof}

Now we conclude the proof of~\eqref{eq:lem3.14-2} and Lemma~\ref{lem:condition-WN} by combining Lemmas~\ref{lem:3.16} and \ref{lem:3.17}.

\begin{proof}[Proof of Lemma~\ref{lem:condition-WN}]
    Let $K$ be the constant from Lemma~\ref{lem:3.16}, and apply Lemma~\ref{lem:3.17} with $\lambda = \alpha + K$. Combining Lemmas~\ref{lem:3.16} and \ref{lem:3.17}, we obtain~\eqref{eq:lem3.14-2}, which implies Lemma~\ref{lem:condition-WN}.
\end{proof}

\subsection{First and second moment estimates}\label{subsec:moment-WN}

In this subsection, we study the weighted sum $\mathscr{N}$ defined in~\eqref{eq:def-N} and prove Proposition~\ref{prop:WN-thick}.

We need the following lemma tailored from Proposition 1.2 of~\cite{Liggett-domination}.

\begin{lemma}\label{lem:domination}
    Let $G = (V,E)$ be a graph with maximum degree $\Delta \geq 1$. Let $p \in (0,1)$. Suppose that $\mu$ is a probability measure on $\eta \in \{0,1\}^V$ such that (1). $\mu[\eta(s) = 1] \geq p$ for all $s \in V$; (2). For all disjoint sets $S_1, S_2 \subset V$ that are not neighboring in $G$, i.e., there is no edge in $G$ joining a vertex in $S_1$ to a vertex in $S_2$, the values of $\eta$ on $S_1$ and $S_2$ are independent under the law $\mu$. Let $\theta \in (0,1)$ such that $(1 - \theta) \theta^\Delta \geq 1-p$. Then, we have for all $x \in V$ and $S \subset V$ with $x \not \in S$,
    $$
    \mu[\eta(x) = 1 | \eta(s) = 1 \mbox{ for all $s \in S$}] \geq \theta.
    $$
\end{lemma}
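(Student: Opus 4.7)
My plan is to prove the lemma by induction on $|S|$, following the classical Liggett--Schonmann--Stacey strategy for deducing stochastic domination from finite-range independence plus a uniform lower bound on marginals. The base case $|S|=0$ is immediate: the hypothesis $(1-\theta)\theta^\Delta \geq 1-p$ combined with $\theta^\Delta \leq 1$ forces $p \geq \theta$, and then marginal condition (1) gives $\mu[\eta(x)=1] \geq p \geq \theta$.

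For the inductive step, fix $x$ and $S$ with $x \notin S$, and decompose $S$ according to adjacency to $x$. Let $N(x)$ denote the neighbors of $x$ in $G$, and set $T := S \cap N(x)$ and $R := S \setminus T$, so $|T|\leq \Delta$ and $R$ contains no neighbor of $x$. Write $A := \{\eta \equiv 1 \text{ on } R\}$ and $B := \{\eta \equiv 1 \text{ on } T\}$. Since $\{x\}$ and $R$ are disjoint and non-neighboring, the independence condition (2) gives $\mu[\eta(x)=1 \mid A] = \mu[\eta(x)=1] \geq p$. To bound $\mu[B \mid A]$, I would enumerate $T = \{t_1,\ldots,t_k\}$ arbitrarily and apply the chain rule
\[
\mu[B \mid A] = \prod_{i=1}^{k} \mu\bigl[\eta(t_i)=1 \,\big|\, \eta \equiv 1 \text{ on } R \cup \{t_1,\ldots,t_{i-1}\}\bigr].
\]
Each conditioning set here has size strictly less than $|S|$ and excludes $t_i$, so the inductive hypothesis applies to every factor, giving $\mu[B \mid A] \geq \theta^{|T|} \geq \theta^\Delta$.

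To combine these I would use the elementary bound $\mu[\{\eta(x)=1\} \cap B \mid A] \geq \mu[\eta(x)=1 \mid A] + \mu[B \mid A] - 1$, which yields
\[
\mu[\eta(x)=1 \mid A, B] \;\geq\; 1 - \frac{1 - \mu[\eta(x)=1 \mid A]}{\mu[B \mid A]} \;\geq\; 1 - \frac{1-p}{\theta^\Delta} \;\geq\; 1 - (1-\theta) \;=\; \theta,
\]
where the final inequality is precisely the hypothesis $(1-\theta)\theta^\Delta \geq 1-p$. This closes the induction.

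The main subtlety, and the step I expect to require the most care in the writeup, is ensuring that the chain-rule expansion of $\mu[B \mid A]$ genuinely falls inside the inductive regime: one must check both that each intermediate conditioning set has cardinality strictly smaller than $|S|$ and that the target vertex $t_i$ does not belong to the set being conditioned on, both of which follow automatically from the disjointness of $R$ and $T$ and from ordering $T$ before applying the expansion. A minor technicality is that conditional probabilities must be interpreted only when the conditioning event has positive $\mu$-measure; if $\mu[\eta \equiv 1 \text{ on } S] = 0$ the conclusion is vacuous, so positivity may be assumed throughout.
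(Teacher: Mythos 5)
Your proof is correct. The paper does not give an argument at all: it simply invokes Proposition~1.2(1.4) of Liggett--Schonmann--Stacey (cited as \texttt{Liggett-domination}), with a specific parameter substitution ($\Delta+1$ for $\Delta$, $r=1-\theta$, all $\epsilon_i=1$). You instead reconstruct the underlying mechanism from scratch by strong induction on $|S|$, which is precisely the engine inside the LSS proposition. Both routes yield the exact threshold $(1-\theta)\theta^\Delta \geq 1-p$. The key moves in your argument are all sound: splitting $S = T \sqcup R$ with $T = S\cap N(x)$ and $|T|\leq\Delta$; using the finite-range independence hypothesis (on the disjoint, non-neighboring pair $\{x\}$ and $R$) to replace $\mu[\eta(x)=1\mid A]$ by the unconditional marginal $\geq p$; peeling off $T$ by the chain rule, with each factor falling into the inductive regime because the conditioning sets $R\cup\{t_1,\dots,t_{i-1}\}$ have cardinality at most $|S|-1$ and exclude $t_i$; and the Bonferroni bound $\mu[E\cap F\mid A]\geq\mu[E\mid A]+\mu[F\mid A]-1$ to finish. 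The self-contained version has the advantage of being checkable without the external reference and of making transparent exactly where the hypothesis $(1-\theta)\theta^\Delta\geq 1-p$ is used; the paper's one-line citation is shorter but leans on the reader retrieving and unpacking LSS. One small caveat, which you might want to note explicitly: the induction as written requires $S$ to be finite. This is harmless here, since in the only place the lemma is applied (Step~1 of the proof of Proposition~\ref{prop:WN-thick}) the conditioning set is a subset of the finite path $\widetilde P_0$, but if one wanted the lemma verbatim for arbitrary $S\subset V$ with $V$ infinite, a standard martingale/approximation step to pass from finite to infinite $S$ would need to be appended.
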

\begin{proof}
    This follows from (1.4) in Proposition 1.2 of~\cite{Liggett-domination} by taking $\Delta + 1$ in place of $\Delta$, together with $r = 1-\theta$ and $\epsilon_1 = \epsilon_2 = \ldots =\epsilon_j =1$. 
\end{proof}

\begin{proof}[Proof of Proposition~\ref{prop:WN-thick}]
    Fix $\alpha>0$ and choose the constant $\beta$ as in Lemma~\ref{lem:condition-WN}, which depends only on $\alpha$. Consider $d \geq \beta$, and let $\widetilde C$ be the constant in Lemma~\ref{lem:condition-WN}. We assume that $k\geq \max \{\widetilde C, C_2 \log d\}$, $n \geq 6k$, and $M \geq 10$, such that these parameters satisfy both the conditions in Lemmas~\ref{lem:gaussian-to-intersection} and~\ref{lem:condition-WN}. Since $\mathbb{E} \mathscr{N} = |\mathscr{P}^{n, M}|$ and
    $$
    \mathbb{E} \mathscr{N}^2 = \sum_{P, Q \in \mathscr{P}^{n, M}} \frac{\mathbb{P}[\mbox{$P,Q$ are good}]}{\mathbb{P}[\mbox{$P$ is good}] \cdot \mathbb{P}[\mbox{$Q$ is good}]},
    $$
    we have
    \begin{equation}\label{eq:proof-prop3.1-0}
    \frac{\mathbb{E} \mathscr{N}^2 }{(\mathbb{E}\mathscr{N})^2} = \widetilde{\mathbf E} \Big[\frac{\mathbb{P}[\mbox{$P,Q$ are good}]}{\mathbb{P}[\mbox{$P$ is good}] \cdot \mathbb{P}[\mbox{$Q$ is good}]}\Big],
    \end{equation}
    where $\widetilde{\mathbf E}$ is the expectation with respect to two paths $P$ and $Q$ uniformly chosen from $\mathscr{P}^{n, M}$, and $\mathbb{P}$ is defined with respect to the space-time white noise which is independent of $P,Q$. Next, we show that the right-hand side of~\eqref{eq:proof-prop3.1-0} is close to 1 in three steps.
    
    \textbf{Step 1.} For $P, Q \in \mathscr{P}^{n,M}$, let $\widetilde P_0, \widetilde Q_0$ be their corresponding paths in $\mathcal{P}^{0,M}$, and let $\mathcal{Y} = \widetilde P_0 \cap \widetilde Q_0$, which is a set of vertices in $\mathbb{Z}^d$. We first use Lemma~\ref{lem:condition-WN} to show that for all paths $P,Q \in \mathscr{P}^{n,M}$,
    \begin{equation}\label{eq:proof-prop3.1-1}
    \frac{\mathbb{P}[\mbox{$P,Q$ are good}]}{\mathbb{P}[\mbox{$P$ is good}] \cdot \mathbb{P}[\mbox{$Q$ is good}]} \leq (1 -  2e^{-k})^{-6|\mathcal{Y}|} \frac{\mathbb{P}[ \cap_{x \in \mathcal Y}( \mathcal{G}_1(x, P) \cap \mathcal{G}_1(x, Q))]}{\mathbb{P}[ \cap_{x \in \mathcal Y} \mathcal{G}_1(x, P)] \cdot \mathbb{P}[ \cap_{x \in \mathcal Y} \mathcal{G}_1(x, Q)]}.
    \end{equation}
    Let $\mathcal{Y}_P = \{x \in \widetilde P_0: \mathfrak d_1(x, \mathcal{Y}) \leq 1\}$ and $\mathcal{Y}_Q = \{x \in \widetilde Q_0: \mathfrak d_1(x, \mathcal{Y}) \leq 1\}$. By Claim~\ref{rmk3.16-claim-2} in Remark~\ref{rmk:3.16}, the following three events are independent:
    \begin{align*}
    &\bigcap_{x \in \widetilde P_0 \setminus \mathcal Y} \mathcal{G}_1(x,P) \cap \bigcap_{x \in \widetilde P_0 \setminus \mathcal{Y}_P} \mathcal{G}_2(x,P), \quad \bigcap_{x \in \widetilde Q_0 \setminus \mathcal Y} \mathcal{G}_1(x,Q) \cap \bigcap_{x \in \widetilde Q_0 \setminus \mathcal{Y}_Q} \mathcal{G}_2(x,Q), \\
    &\mbox{and} \quad \bigcap_{x \in \mathcal Y} (\mathcal{G}_1(x,P) \cap \mathcal{G}_1(x,Q)).
    \end{align*}
    Therefore, we have
    \begin{equation}\label{eq:proof-prop3.1-2}
    \begin{aligned}
        \mathbb{P}[P, Q \mbox{ are good}] &\leq \mathbb{P}\Big[\bigcap_{x \in \widetilde P_0 \setminus \mathcal Y} \mathcal{G}_1(x,P) \cap \bigcap_{x \in \widetilde P_0 \setminus \mathcal{Y}_P} \mathcal{G}_2(x,P) \Big] \times \mathbb{P}\Big[\bigcap_{x \in \widetilde Q_0 \setminus \mathcal Y} \mathcal{G}_1(x,Q) \cap \bigcap_{x \in \widetilde Q_0 \setminus \mathcal{Y}_Q} \mathcal{G}_2(x,Q) \Big]\\
        &\quad \times \mathbb{P} \Big[ \bigcap_{x \in \mathcal Y} (\mathcal{G}_1(x,P) \cap \mathcal{G}_1(x,Q)) \Big].
    \end{aligned}
    \end{equation}
    Next, we lower-bound $\mathbb{P}[\mbox{$P$ is good}]$. By definition, we have
    \begin{equation}\label{eq:proof-prop3.1-3}
    \begin{aligned}
    \mathbb{P}[\mbox{$P$ is good}] &= \mathbb{P}\Big[\bigcap_{x \in \mathcal{Y}_P} \mathcal{G}_2(x,P) \Big{|} \bigcap_{w \in \widetilde P_0 \setminus \mathcal{Y}_P} \mathcal{G}_2(w,P) \cap \bigcap_{w \in \widetilde P_0} \mathcal{G}_1(w,P) \Big] \\
    &\quad \times \mathbb{P} \Big[\bigcap_{x \in \widetilde P_0 \setminus \mathcal{Y}_P} \mathcal{G}_2(x,P) \cap \bigcap_{x \in \widetilde P_0} \mathcal{G}_1(x,P) \Big].
    \end{aligned}
    \end{equation}
    We lower-bound the first term on the right-hand side of~\eqref{eq:proof-prop3.1-3} using Lemma~\ref{lem:condition-WN}. Let $\widehat{\mathbb{P}} = \mathbb{P}[ \cdot| \cap_{w \in \widetilde P_0} \mathcal{G}_1(w,P)]$. By Claim~\ref{rmk3.16-claim-2} in Remark~\ref{rmk:3.16}, under the law $\widehat{\mathbb{P}}$, the events in $\{\mathcal{G}_2(x,P)\}_{x \in \widetilde P_0}$ are independent if their pairwise $|\cdot|_1$-distances are at least 3. Moreover, by Lemma~\ref{lem:condition-WN}, we know that each event happens with probability at least $1 - e^{-k}$ under the law $\widehat{\mathbb{P}}$. Then, we can apply Lemma~\ref{lem:domination} to the indicator functions for $\mathcal{G}_2(x,P)$ to conclude that for all $x \in \mathcal{Y}_P$ and $S \subset \mathcal{Y}_P$,
    \begin{equation}\label{eq:prop3.1-step1-0}
    \widehat{\mathbb{P}}\Big[\mathcal{G}_2(x,P) \Big{|} \bigcap_{w \in S} \mathcal{G}_2(w,P) \Big] \geq 1 - 2 e^{-k}.
    \end{equation}
    To be precise, we take the dependence graph $G$ there to be $\widetilde P_0$, where two vertices on $\widetilde P_0$ are considered connected if their $|\cdot|_1$-distance is at most $2$. By Definition~\ref{def:P0-brw}, the maximum degree $\Delta \leq 5$. The indicator functions for $\mathcal{G}_2(x,P)$ under the law $\widehat{\mathbb{P}}$ satisfy the conditions in Lemma~\ref{lem:domination} with $p = 1 - e^{-k}$. Thus, we can take $\theta = 1 - 2 e^{-k}$ in Lemma~\ref{lem:domination}, which yields~\eqref{eq:prop3.1-step1-0}. Let $\mathcal{Y}_P = \{x_1, \ldots, x_{|\mathcal{Y}_P|}\}$. By~\eqref{eq:prop3.1-step1-0}, we have
    \begin{align*}
        &\quad \mathbb{P}\Big[\bigcap_{x \in \mathcal{Y}_P} \mathcal{G}_2(x,P) \Big{|} \bigcap_{w \in \widetilde P_0 \setminus \mathcal{Y}_P} \mathcal{G}_2(w,P) \cap \bigcap_{w \in \widetilde P_0} \mathcal{G}_1(w,P) \Big] = \widehat{\mathbb{P}}\Big[\bigcap_{x \in \mathcal{Y}_P} \mathcal{G}_2(x,P) \Big{|} \bigcap_{w \in \widetilde P_0 \setminus \mathcal{Y}_P} \mathcal{G}_2(w,P) \Big]\\
        &= \prod_{i=1}^{|\mathcal{Y}_P|} \widehat{\mathbb{P}}\Big[ \mathcal{G}_2(x_i,P) \Big{|} \bigcap_{w \in \widetilde P_0 \setminus \mathcal{Y}_P} \mathcal{G}_2(w,P) \cap \bigcap_{1 \leq p \leq i-1} \mathcal{G}_2(x_p,P) \Big] \geq (1 - 2 e^{-k})^{|\mathcal{Y}_P|}.
    \end{align*}
    Combining this with~\eqref{eq:proof-prop3.1-3} and the fact that $\cap_{x \in \widetilde P_0 \setminus \mathcal Y} \mathcal{G}_1(x,P) \cap \cap_{x \in \widetilde P_0 \setminus \mathcal{Y}_P} \mathcal{G}_2(x,P)$ and $\cap_{x \in \mathcal Y} \mathcal{G}_1(x,P)$ are independent, we further have
    \begin{equation}\label{eq:proof-prop3.1-4}
        \mathbb{P}[\mbox{$P$ is good}] \geq (1 - 2 e^{-k} )^{|\mathcal{Y}_P|} \times \mathbb{P} \Big[\bigcap_{x \in \widetilde P_0 \setminus \mathcal Y} \mathcal{G}_1(x,P) \cap \bigcap_{x \in \widetilde P_0 \setminus \mathcal{Y}_P} \mathcal{G}_2(x,P) \Big] \times \mathbb{P} \Big[\bigcap_{x \in \mathcal Y} \mathcal{G}_1(x,P) \Big].
    \end{equation}
    Similarly, we have
    \begin{equation}\label{eq:proof-prop3.1-5}
        \mathbb{P}[\mbox{$Q$ is good}] \geq (1 - 2 e^{-k})^{|\mathcal{Y}_Q|} \times \mathbb{P} \Big[\bigcap_{x \in \widetilde Q_0 \setminus \mathcal Y} \mathcal{G}_1(x,Q) \cap \bigcap_{x \in \widetilde Q_0 \setminus \mathcal{Y}_Q} \mathcal{G}_2(x,Q) \Big] \times \mathbb{P} \Big[\bigcap_{x \in \mathcal Y} \mathcal{G}_1(x,Q) \Big].
    \end{equation}
    Dividing~\eqref{eq:proof-prop3.1-2} by~\eqref{eq:proof-prop3.1-4} and~\eqref{eq:proof-prop3.1-5}, and using the fact that $|\mathcal{Y}_P| \leq 3|\mathcal{Y}|$ and $|\mathcal{Y}_Q| \leq 3 |\mathcal{Y}|$, we conclude~\eqref{eq:proof-prop3.1-1}.

    \textbf{Step 2.} For $P, Q \in \mathscr{P}^{n,M}$, let $P_j, Q_j$ be their corresponding paths in $\mathscr{P}^{j,M}$ for $0 \leq j \leq n$. Based on~\eqref{eq:proof-prop3.1-1}, we further use Lemmas~\ref{lem:gaussian-correlation} and~\ref{lem:gaussian-to-intersection} to show that 
    \begin{equation}\label{eq:proof-prop3.1-6}
        \frac{\mathbb{P}[\mbox{$P,Q$ are good}]}{\mathbb{P}[\mbox{$P$ is good}] \cdot \mathbb{P}[\mbox{$Q$ is good}]} \leq (1 -  2 e^{-k})^{-6|\mathcal{Y}|} \exp \Big(C_1 C_2 \beta^2 \sum_{j = \lfloor k/2 \rfloor}^n |P_j \cap Q_j| \Big).
    \end{equation}
    By~\eqref{eq:proof-prop3.1-1}, Condition~\eqref{condition-good-1}, and the independence of space-time white noise (see Claim~\ref{rmk3.16-claim-2} in Remark~\ref{rmk:3.16}), we have
    \begin{align*}
    &\quad \frac{\mathbb{P}[\mbox{$P,Q$ are good}]}{\mathbb{P}[\mbox{$P$ is good}] \cdot \mathbb{P}[\mbox{$Q$ is good}]} \\
    &\leq (1 -  2e^{-k})^{-6|\mathcal{Y}|} \prod_{x \in \mathcal Y} \prod_{j = k}^n  \frac{\mathbb{P}\big[\mathscr{W}_j (x, P_j) \geq \beta \mathbb{E}[\mathscr{W}_j (x, P_j)^2], \mathscr{W}_j (x, Q_j) \geq \beta \mathbb{E}[\mathscr{W}_j (x, Q_j)^2] \big]}{\mathbb{P}\big[\mathscr{W}_j (x, P_j) \geq \beta \mathbb{E}[\mathscr{W}_j (x, P_j)^2]\big] \mathbb{P}\big[\mathscr{W}_j (x, Q_j) \geq \beta \mathbb{E}[\mathscr{W}_j (x, Q_j)^2] \big]}.
    \end{align*}
    Applying Lemma~\ref{lem:gaussian-correlation} with $X = \frac{\mathscr{W}_j (x, P_j)}{\mathbb{E}[\mathscr{W}_j (x, P_j)^2]}$ and $Y = \frac{\mathscr{W}_j (x, Q_j)}{\mathbb{E}[\mathscr{W}_j (x, Q_j)^2]}$, we further have
    \begin{equation}\label{eq:prop3.1-step2-1}
    \frac{\mathbb{P}[\mbox{$P,Q$ are good}]}{\mathbb{P}[\mbox{$P$ is good}] \cdot \mathbb{P}[\mbox{$Q$ is good}]} \leq (1 -  2e^{-k})^{-6|\mathcal{Y}|} \prod_{x \in \mathcal Y} \prod_{j = k}^n  \exp \Big( C_1 \beta^2 \mathbb{E}[\mathscr{W}_j (x, P_j) \mathscr{W}_j (x, Q_j)] \Big).
    \end{equation}
    By simple calculations, we obtain that for all $j \in [k, n] \cap \mathbb{Z}$,
    \begin{equation}\label{eq:prop3.1-step2-2}
    \begin{aligned}
    \sum_{x \in \mathcal Y}\mathbb{E}[\mathscr{W}_j (x, P_j) \mathscr{W}_j (x, Q_j)] &= \sum_{x \in \mathcal Y} \int_{2^{-3j}}^{2^{-3j + 1}} \frac{\Vol(B_t(P_j) \cap B_t(Q_j) \cap (x + [-\frac12, \frac12]^d))}{\Vol(B_t(0))} t^{-1} dt \\
    &\leq \int_{2^{-3j}}^{2^{-3j + 1}} \frac{\Vol(B_t(P_j) \cap B_t(Q_j))}{\Vol(B_t(0))} t^{-1} dt.
    \end{aligned}
    \end{equation}
    Combining~\eqref{eq:prop3.1-step2-1} and~\eqref{eq:prop3.1-step2-2} and applying Lemma~\ref{lem:gaussian-to-intersection}, we obtain~\eqref{eq:proof-prop3.1-6}.
    
    \textbf{Step 3.} Let $Y_j = |P_j \cap Q_j|$. By~\eqref{eq:proof-prop3.1-0} and \eqref{eq:proof-prop3.1-6}, we have
    \begin{equation}\label{eq:prop3.1-step3-1}
        \frac{\mathbb{E} \mathscr{N}^2 }{(\mathbb{E}\mathscr{N})^2} \leq  \widetilde{\mathbf E} \Big[\frac{\exp \big(C_1 C_2 \beta^2 \sum_{j = \lfloor k/2 \rfloor}^n Y_j \big)}{(1 -  2e^{-k})^{6|\mathcal{Y}|}} \Big].
    \end{equation}
    The final step is to use Lemmas~\ref{lem:intersection-law-WN} and~\ref{lem:high-d-intersect} to show that, when $d$ is sufficiently large, the right-hand side of~\eqref{eq:prop3.1-step3-1} is upper-bounded by $1 + 2^{-k/10}$ for all sufficiently large $k$ (which depends only on $d$). The proof is similar to that in Section~\ref{subsec:BRW-estimate}.
    
    Recall from Lemma~\ref{lem:intersection-law-WN} that for all $1 \leq i \leq n$, given $(Y_0, Y_1, \ldots, Y_{i-1}, |\mathcal{Y}|)$,\footnote{It is easy to see from the proof of Lemma~\ref{lem:intersect-BRW} that the stochastic dominance statement holds even when conditioned on $P_{i-1}$ and $Q_{i-1}$ which determine the random variables $Y_0, Y_1,\ldots, Y_{i-1}$, and $|\mathcal{Y}|$.}
    \begin{equation*}
    Y_i\mbox{ is stochastically dominated by } 11(Z_1 + Z_2 + \ldots + Z_{2 Y_{i-1}})
    \end{equation*}
    where $\{Z_l\}_{l \geq 1}$ are i.i.d.\ Bernoulli random variables with probability $\frac{50}{\mathfrak q - 10}$ to be equal to 1. Therefore, for all $a >1$, given $(Y_0, Y_1, \ldots, Y_{i-1}, |\mathcal{Y}|)$, we have
    \begin{equation}\label{eq:prop3.1-step3-2}
    \widetilde{\mathbf E}[a^{Y_{i}}| Y_0,Y_1,\ldots, Y_{i-1}, |\mathcal{Y}|] \leq \widetilde{\mathbf E}[a^{11(Z_1 + Z_2 + \ldots + Z_{2Y_{i-1}})}] = \big( 1 + \frac{50}{\mathfrak q - 10}(a^{11} - 1) \big)^{2Y_{i-1}}.
    \end{equation}
    Consider the sequence defined by $a_1 = e^{C_1 C_2 \beta^2}$ and $a_{l+1} = e^{C_1 C_2 \beta^2} ( 1 + \frac{50}{\mathfrak q - 10}(a_l^{11} - 1) )^2$ for all $l \geq 1$. Applying Claim~\eqref{lem-sequence-1} in Lemma~\ref{lem:sequence}, we have that for all sufficiently large $d$, $\sup_{l \geq 1} a_l \leq 2  e^{C_1 C_2 \beta^2}$. Then, applying~\eqref{eq:prop3.1-step3-2} to~\eqref{eq:prop3.1-step3-1} inductively, we obtain that for all sufficiently large $d$,
    \begin{equation}\label{eq:prop3.1-step3-3}
    \begin{aligned}
        \frac{\mathbb{E} \mathscr{N}^2 }{(\mathbb{E}\mathscr{N})^2} &=  \widetilde{\mathbf E}\Big[\widetilde{\mathbf E} \Big[\frac{\exp \big(C_1 C_2 \beta^2 \sum_{j = \lfloor k/2 \rfloor}^{n-1} Y_j \big)}{(1 -  2e^{-k})^{6|\mathcal{Y}|}} \times a_1^{Y_n} \Big{|} Y_0,Y_1,\ldots, Y_{n-1}, |\mathcal{Y}| \Big] \Big]\\
        &\leq \widetilde{\mathbf E}\Big[\frac{\exp \big(C_1 C_2 \beta^2 \sum_{j = \lfloor k/2 \rfloor}^{n-2} Y_j \big)}{(1 -  2e^{-k})^{6|\mathcal{Y}|}} \times a_2^{Y_{n-1}} \Big] \leq \ldots \leq \widetilde{\mathbf E}\Big[\frac{1}{(1 -  2e^{-k})^{6|\mathcal{Y}|}} \times a_{n+1 - \lfloor k/2 \rfloor}^{Y_{\lfloor k/2 \rfloor}} \Big]\\
        &\leq \widetilde{\mathbf E}\Big[\frac{1}{(1 -  2e^{-k})^{6|\mathcal{Y}|}} \times \big(2 e^{C_1 C_2 \beta^2}\big)^{Y_{\lfloor k/2 \rfloor}} \Big].
    \end{aligned}
    \end{equation}
    Next, we consider the sequence $b_1 = 2 e^{C_1 C_2 \beta^2}$ and $b_{l+1} = ( 1 + \frac{50}{\mathfrak q - 10}(b_l^{11} - 1) )^2$ for all $l \geq 1$. Applying Claim~\eqref{lem-sequence-2} in Lemma~\ref{lem:sequence}, we have that for all sufficiently large $d$, $b_l \leq 1 + 2^{-l}$ for all $l \geq 2$. Applying~\eqref{eq:prop3.1-step3-2} to~\eqref{eq:prop3.1-step3-3} inductively, we obtain that for all sufficiently large $d$,
    \begin{equation}\label{eq:prop3.1-step3-4}
    \begin{aligned}
        \frac{\mathbb{E} \mathscr{N}^2}{(\mathbb{E}\mathscr{N})^2} &\leq \widetilde{\mathbf E}\Big[\widetilde{\mathbf E}\Big[\frac{1}{(1 -  2e^{-k})^{6|\mathcal{Y}|}} \times b_1^{Y_{\lfloor k/2 \rfloor}} |Y_0,Y_1,\ldots, Y_{\lfloor k/2 \rfloor - 1}, |\mathcal{Y}|\Big]\\
        &\leq \widetilde{\mathbf E}\Big[\frac{1}{(1 -  2e^{-k})^{6|\mathcal{Y}|}} \times b_2^{Y_{\lfloor k/2 \rfloor} - 1} \Big] \leq \ldots \leq \widetilde{\mathbf E}\Big[\frac{1}{(1 -  2e^{-k})^{6|\mathcal{Y}|}} \times b_{\lfloor k/2 \rfloor + 1}^{Y_0} \Big]\\
        &\leq \widetilde{\mathbf E}\Big[\frac{1}{(1 -  2e^{-k})^{6|\mathcal{Y}|}} \times (1 + 2^{-k/2})^{Y_0} \Big].
    \end{aligned}
    \end{equation}
    By Lemma~\ref{lem:high-d-intersect}, for any $a>1$, we have $\widetilde{\mathbf E}[a^{|\mathcal{Y}|}] \leq \widetilde{\mathbf E}[a^X] = \frac{1 - c_1}{1 - c_1 a}$, where $X$ is a geometric random variable with success probability $c_1$. Combining this with the fact that $Y_0 \leq 2 \mathfrak r |\mathcal{Y}|$, we have for all sufficiently large $k$ (which may depend on $d$),
    $$
    \frac{\mathbb{E} \mathscr{N}^2}{(\mathbb{E}\mathscr{N})^2} \leq \widetilde{\mathbf E}\Big[\big((1 -  2e^{-k})^{-6}(1 + 2^{-k/2})^{2 \mathfrak r} \big)^{|\mathcal{Y}|} \Big] \leq \widetilde{\mathbf E}\Big[\big( 1 + 2^{-k/5} \big)^{|\mathcal{Y}|} \Big] \leq \frac{1 - c_1}{1- c_1(1+2^{-k/5})} \leq 1 + 2^{-k/10}.
    $$
    Combining this with the Cauchy–Schwarz inequality $\mathbb{P}[\mathscr{N} > 0] \geq \frac{(\mathbb{E}\mathscr{N})^2}{\mathbb{E} \mathscr{N}^2}$, we complete the proof of Proposition~\ref{prop:WN-thick} by recalling from Claim~\ref{rmk3.16-claim-1} of Remark~\ref{rmk:3.16} that a good path in $\mathscr{P}^{n,M}$ satisfies~\eqref{eq:prop-WN-1} in Proposition~\ref{prop:WN-thick} with the same $n,k,M$. \qedhere
\end{proof}

\subsection{Proof of Theorems~\ref{thm:WN-thick} and~\ref{thm:exponential-metric}}\label{subsec:proof-WN}

In this subsection, we prove Theorems~\ref{thm:WN-thick} and~\ref{thm:exponential-metric} using Proposition~\ref{prop:WN-thick}.

\begin{proof}[Proof of Theorem~\ref{thm:WN-thick}]
   Fix $\alpha>0$ and consider sufficiently large $d$ that satisfies Proposition~\ref{prop:WN-thick}. Fix an integer $k \geq 1$, which will eventually tend to infinity. For integers $n \geq 6k$ and $M \geq 10$, let $\mathscr{K}(n,M)$ be the event that there exists a path in $\mathscr{P}^{n,M}$ that satisfies~\eqref{eq:prop-WN-1} with the same $n,k,M$. Since the set $\mathscr{P}^{n,M}$ satisfies a restriction property with respect to $M$ (see Lemma~\ref{lem:restrict-M-WN}) and refines the paths in $\mathscr{P}^{n-1,M}$, the event $\mathscr{K}(n,M)$ is decreasing with respect to $n,M$. By Proposition~\ref{prop:WN-thick}, we have
   \begin{equation}\label{eq:proof-thm1.3-1}
       \mathbb{P}\Big[\bigcap_{n \geq 6k, M \geq 10} \mathscr{K}(n,M)\Big] \geq 1 - \epsilon(k), \quad \mbox{with $\lim_{k \rightarrow \infty} \epsilon(k) = 0$}.
   \end{equation}
   We claim that on the event $\cap_{n \geq 6k, M \geq 10} \mathscr{K}(n,M)$, the set $\TWN_\alpha$ contains an unbounded path. Suppose that the event $\cap_{n \geq 6k, M \geq 10} \mathscr{K}(n,M)$ occurs. Then, we can choose a path $P_n \in \mathscr{P}^{n,n}$ satisfying~\eqref{eq:prop-WN-1} with the same $n,k$ for all $n \geq \max \{6 k, 10\}$. By compactness (see Section~\ref{subsec:sec2-final-proof}), there exists a subsequence $\{n_m\}_{m \geq 1}$ such that $\{P_{n_m}\}_{m \geq 1}$ converges to a limiting set $\widetilde P$ with respect to the local Hausdorff distance. By Lemma~\ref{lem:Hausdorff-limit-WN}, $\widetilde P$ is a continuous path connecting $\{z : |z|_1 = 2\}$ to infinity. Moreover, by~\eqref{eq:prop-WN-1}, we have
   \begin{equation}\label{eq:proof-thm1.3-2}
   h_j(x) - h_{3k}(x) \geq \alpha (j-3k) \quad \mbox{for all $j \geq 6k$ and $x \in \widetilde P$}.
   \end{equation}
   This is because for all $j \geq 6k$ and $x \in \widetilde P$, we can choose a sequence of points $x_m \in P_{n_m}$ such that $\lim_{m \rightarrow \infty} x_m = x$. By~\eqref{eq:prop-WN-1}, $h_j(x_m) - h_{3k}(x_m) \geq \alpha ( j - 3k)$ for all sufficiently large $m$. Taking $m$ to infinity yields~\eqref{eq:proof-thm1.3-2}. Therefore, all the points on $\widetilde P$ satisfy that $\liminf_{j \rightarrow \infty} \frac{1}{j} h_j(x) \geq \alpha \geq \sqrt{\log 2} \alpha$, and hence belong to $\TWN_{\alpha}$. This proves the claim. Taking $k$ to infinity and using ~\eqref{eq:proof-thm1.3-1} yields Theorem~\ref{thm:WN-thick}.
\end{proof}

Next, we prove Theorem~\ref{thm:exponential-metric}. To this end, we need the following variant of Proposition~\ref{prop:WN-thick}. We will focus on the pair of sets $\{z : |z|_1 \leq 2 \}$ and $\{z : |z - 2M \mathbf e_1 |_1 \leq 2\}$. However, the analog holds for any two disjoint compact sets $K_1, K_2$ with non-empty interiors by rotating or rescaling the discrete paths that we consider. Furthermore, for fixed $\alpha>0$, the condition on the dimension $d$ in the following Proposition~\ref{prop:WN-thick-variant} is in fact the same as the condition in Proposition~\ref{prop:WN-thick}.

\begin{proposition}\label{prop:WN-thick-variant}
    For any fixed $\alpha>0$, the following holds for all sufficiently large $d$. There exists a function $\epsilon : \mathbb{N} \rightarrow (0,1)$ depending on $\alpha$ and $d$ with $\lim_{k \rightarrow \infty} \epsilon(k) = 0$ such that for all integers $k \geq 1$, $n \geq 6k$ and all $M \geq 10$, with probability at least $1-\epsilon(k)$, there exists a path $P$ on $\frac{1}{\mathfrak r} 2^{-3n} \mathbb{Z}^d$ with length at most $4 M \mathfrak r \times 11^n$ that connects $\{z : |z|_1 \leq 2 \}$ and $\{z : |z - 2M \mathbf e_1 |_1 \leq 2\}$ in $\{z : |z|_1 \leq 10M \}$,\footnote{The set $\{z : |z|_1 \leq 10M \}$ is chosen to be an arbitrary box that contains both $\{z : |z|_1 \leq 2 \}$ and $\{z : |z - 2M \mathbf e_1 |_1 \leq 2\}$.} and satisfies
    \begin{equation}\label{eq:prop-WN-variant}
    h_j(x) - h_{3k}(x) \geq \alpha (j-3k) \quad \mbox{for all $j \in [6k, n] \cap \mathbb{Z}$ and $x \in \overline{P}$}.
    \end{equation}
\end{proposition}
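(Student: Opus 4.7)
The plan is to mimic the proof of Proposition~\ref{prop:WN-thick} with one structural modification: replace the path family $\mathscr{P}^{n,M}$ by an analogous family $\widetilde{\mathscr{P}}^{n,M}$ whose paths connect the two prescribed balls $\{z:|z|_1 \leq 2\}$ and $\{z:|z - 2M\mathbf{e}_1|_1 \leq 2\}$ instead of two level sets of $|\cdot|_1$. All of the downstream machinery of Section~\ref{sec:WN}---good paths (Definition~\ref{def:good-WN}), the implication Condition~(a) $\Rightarrow$ Condition~(b) (Lemma~\ref{lem:condition-WN}), the first/second moment Cauchy--Schwarz argument, Lemma~\ref{lem:domination}, and the sequence bounds of Lemma~\ref{lem:sequence}---then transfers verbatim, because none of these inputs use the endpoint geometry of the paths beyond the Lemma~\ref{lem:high-d-intersect}-type intersection estimate and the Lemma~\ref{lem:basic-property-mathscrP}-type refinement geometry.

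Thus the only new work is at level $0$. I would construct a family $\widetilde{\mathcal{P}}^{0,M}$ of self-avoiding oriented paths in $\mathbb{Z}^d$ of length at most $4M$, each starting in $\{z:|z|_1 \leq 1\}$, ending in $\{z:|z - 2M\mathbf{e}_1|_1 \leq 1\}$, and contained in $\{z:|z|_1 \leq 10M-1\}$, such that two uniformly chosen $P,Q \in \widetilde{\mathcal{P}}^{0,M}$ have $|P \cap Q|$ stochastically dominated by a geometric random variable with parameter depending only on $d$ (the analog of Lemma~\ref{lem:high-d-intersect}). A natural candidate is to use directed walks of length $4M$ with drift $\frac12 \mathbf{e}_1$ per step (e.g.\ step $+\mathbf{e}_1$ with probability $1/2$ and $\pm\mathbf{e}_j$ for $j \in \{2,\ldots,d\}$ each with probability $\frac{1}{4(d-1)}$), accepted if the terminal position lies in the target ball; in sufficiently high dimensions the difference of two independent such walks is a mean-zero walk in $\mathbb{Z}^d$, transient by the Lawler--Limic input used in Lemma~\ref{lem:high-d-intersect}, and a quantitative local central limit theorem controls the endpoint conditioning by at most a polynomial factor, giving the required domination uniformly in $M$. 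With $\widetilde{\mathcal{P}}^{0,M}$ in hand, interpolation as in Definition~\ref{def:P0} gives paths on $\frac{1}{\mathfrak r}\mathbb{Z}^d$ of length at most $4M\mathfrak r$, and iterating Definition~\ref{def:Pn-WN} yields $\widetilde{\mathscr{P}}^{n,M}$ on $\frac{1}{\mathfrak r}8^{-n}\mathbb{Z}^d$.

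Once $\widetilde{\mathscr{P}}^{n,M}$ is defined, the length recursion $\mathfrak L_{j+1} = 10(\mathfrak L_j - 1) + 1$ gives $|P| \leq 4M\mathfrak r\cdot 10^n \leq 4M\mathfrak r\cdot 11^n$; Claim~\ref{property-1-scr} of Lemma~\ref{lem:basic-property-mathscrP} bounds the cumulative refinement displacement by $\frac{2}{7\mathfrak r}$ in $|\cdot|_1$, so each refined path stays in $\{z:|z|_1 \leq 10M\}$ with its endpoints in the two target balls of radius $2$ (the slack from $1$ to $2$ absorbs exactly this $\frac{2}{7\mathfrak r}$). The analogs of Lemmas~\ref{lem:intersection-law-WN}, \ref{lem:basic-property-mathscrP}, and \ref{lem:gaussian-to-intersection} for $\widetilde{\mathscr{P}}^{n,M}$ transfer without change. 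Defining good paths via Definition~\ref{def:good-WN} and running the argument of Sections~\ref{subsec:conditional}--\ref{subsec:moment-WN} then yields a good $P \in \widetilde{\mathscr{P}}^{n,M}$ with probability at least $1-\epsilon(k)$. Since $\overline{P} \subset B_{\frac{2}{\mathfrak r}8^{-j}}(P_j)$ by Claim~\ref{property-1-scr} of Lemma~\ref{lem:basic-property-mathscrP}, exactly as in Remark~\ref{rmk:3.16}(\ref{rmk3.16-claim-1}), Condition~(b) of Definition~\ref{def:good-WN} forces~\eqref{eq:prop-WN-variant}. The main obstacle is verifying the uniform-in-$M$ geometric-intersection property for $\widetilde{\mathcal{P}}^{0,M}$: Lemma~\ref{lem:high-d-intersect} uses the translation invariance of the pure directed walk to reduce to transience at the origin, but fixing both endpoints breaks this symmetry, so one needs a quantitative local CLT in the transverse coordinates to control the distortion of the transience estimate by the endpoint conditioning.
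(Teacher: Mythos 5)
Your high-level plan is correct and matches the paper's: the only genuinely new work is constructing the level-$0$ family of paths joining the two prescribed balls with a Lemma~\ref{lem:high-d-intersect}-type intersection bound, after which the refinement machinery, Definition~\ref{def:good-WN}, Lemma~\ref{lem:condition-WN}, and the first/second-moment argument transfer without change. However, your proposed base family leaves a genuine gap, which you acknowledge yourself: drifted directed walks conditioned to end in a small target ball break the translation invariance that underlies Lemma~\ref{lem:high-d-intersect}, and establishing uniform-in-$M$ geometric domination of $|P \cap Q|$ under that bridge conditioning requires a quantitative local CLT argument that you do not supply, and whose uniformity in $M$ is not obvious. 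There is also a second issue you do not flag: with a step distribution that allows $\pm\mathbf{e}_j$ for $j\ge 2$, consecutive transverse steps $+\mathbf{e}_j,\ -\mathbf{e}_j$ return the walk to a previously visited site, so your base paths are not automatically self-avoiding, whereas the entire refinement machinery (Definition~\ref{def:Pn-WN}, Lemma~\ref{lem:property-refine}) is set up for self-avoiding paths.

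The paper sidesteps both problems with an explicit deterministic out-and-back construction. Each $(i_1,\ldots,i_M)\in\{2,\ldots,d\}^M$ gives the path that starts at $\mathbf e_{i_1}$ and moves in the directions
$\mathbf e_1,\ \mathbf e_{i_2},\ \mathbf e_1,\ \ldots,\ \mathbf e_{i_M},\ \mathbf e_1,\ \mathbf e_1,\ -\mathbf e_{i_M},\ \mathbf e_1,\ \ldots,\ -\mathbf e_{i_2},\ \mathbf e_1$:
it marches in $\mathbf e_1$ while depositing the transverse displacements $\mathbf e_{i_2},\ldots,\mathbf e_{i_M}$, then continues in $\mathbf e_1$ while removing them, and ends deterministically at $2M\mathbf e_1+\mathbf e_{i_1}$ with length $4M-1$. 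Self-avoidance is automatic because the $z_1$-coordinate is nondecreasing, takes each value at most twice, and consecutive vertices sharing a $z_1$-value differ transversally. Most importantly, if $\widehat P\in\mathcal{P}^{0,M}$ is the oriented path with the same index sequence $(i_1,\ldots,i_M)$, then $|P\cap Q|\le 4|\widehat P\cap\widehat Q|$, so the geometric domination follows directly from Lemma~\ref{lem:high-d-intersect} (with $d-1$ in place of $d$, since indices range over $\{2,\ldots,d\}$) with no conditioning and no local CLT. In short: your diagnosis of where the new work lies was right, but the base-level construction should be this deterministic parametrization rather than a conditioned random walk.
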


\begin{proof}
    Recall that in the proof of Proposition~\ref{prop:WN-thick}, we start with a set of paths $\mathcal{P}^{0,M}$ that connect $\{z : |z|_1 = 1\}$ to $\{z : |z|_1 = M\}$. To prove this variant, we just need to start with a set of paths $\widetilde{\mathcal{P}^{0,M}}$ that connect $\{z : |z|_1 = 1\}$ to $\{z : |z - 2M\mathbf e_1|_1 = 1\}$ such that the intersecting number of two paths chosen uniformly from $\widetilde{\mathcal{P}^{0,M}}$ can be bounded similar to Lemma~\ref{lem:high-d-intersect} in high dimensions, and the rest of the proof is the same as Proposition~\ref{prop:WN-thick}. 
    
    We can construct $\widetilde{\mathcal{P}^{0,M}}$ as follows. Take $(i_1, i_2,\ldots, i_M)$ from $\{2,3,\ldots,d\}^{M}$ and consider a nearest-neighbor path that starts from $\mathbf e_{i_1}$ and moves in the following directions chronologically
    $$\mathbf e_1, \mathbf e_{i_2}, \mathbf e_1, \ldots, \mathbf e_{i_M}, \mathbf e_1, \mathbf e_1, -\mathbf e_{i_M}, \mathbf e_1, -\mathbf e_{i_{M-1}}, \mathbf e_1,\ldots, -\mathbf e_{i_2}, \mathbf e_1.$$ 
    In particular, this path ends at $2M\mathbf e_1 + \mathbf e_{i_1}$ and has length $4M-1$. The set $\widetilde{\mathcal{P}^{0,M}}$ will consist of all these paths. For a path $P \in \widetilde{\mathcal{P}^{0,M}}$ generated in the above way, we let $\widehat P$ denote the oriented path defined as in Definition~\ref{def:P0-brw} using the same sequence $(i_1, i_2, \ldots, i_M)$. Namely, $\widehat P = ( \mathbf e_{i_1},  \mathbf e_{i_1} +  \mathbf e_{i_2}, \ldots, \sum_{k=1}^M  \mathbf e_{i_k})$. For two paths $P,Q$ chosen uniformly from $\widetilde{\mathcal{P}^{0,M}}$, it is easy to see that $|P \cap Q| \leq 4 |\widehat P \cap \widehat Q|$. Furthermore, by Lemma~\ref{lem:high-d-intersect}, we have that $|\widehat P \cap \widehat Q|$ can be stochastically dominated by a geometric random variable $G$ with success probability $c_1 = c_1(d-1)$ for $d \geq 4$. Here, we have $d-1$ instead of $d$ because the number of choices for each step is $|\{2,3,...,d\}| = d-1$. Therefore, $|P \cap Q|$ can be stochastically dominated by $4G$.
    
    Then, we can follow the proof of Proposition~\ref{prop:WN-thick} in Sections~\ref{subsec:path-WN}--\ref{subsec:moment-WN} to inductively refine the paths in $\widetilde{\mathcal{P}^{0,M}}$ on $\frac{1}{\mathfrak r} 2^{-3n} \mathbb{Z}^d$ and estimate the number of good paths in them. This proves the proposition. By~\eqref{eq:def-mathfrak-l} and Claim~\ref{property-1-scr} in Lemma~\ref{lem:basic-property-mathscrP}, the refinement on $\frac{1}{\mathfrak r} 2^{-3n} \mathbb{Z}^d$ has length at most $(4M-1)\mathfrak r \times 11^n \leq 4M \mathfrak r \times 11^n$ and connects $\{z : |z|_1 \leq 2 \}$ and $\{z : |z - 2M \mathbf e_1 |_1 \leq 2\}$ in $\{z : |z|_1 \leq 10M \}$ (in fact, it connects $\{z : |z|_1 \leq 1 + \frac{2}{7 \mathfrak r} \}$ and $\{z : |z - 2M \mathbf e_1 |_1 \leq 1 + \frac{2}{7 \mathfrak r}\}$). \qedhere
\end{proof}

\begin{proof}[Proof of Theorem~\ref{thm:exponential-metric}]
    Fix $A>0$. Let $H > 2 \log 2$ be any fixed constant. We apply Proposition~\ref{prop:WN-thick-variant} with $\alpha = AH$ and consider sufficiently large $d$ that satisfies Proposition~\ref{prop:WN-thick-variant}. Without loss of generality, assume that $K_1 \supset \{z:|z|_1 \leq 2\}$ and $K_2 \supset \{z:|z - 2M \mathbf e_1|_1 \leq 2\}$ for some integer $M \geq 10$. Other cases can be derived from the same argument using the corresponding analog of Proposition~\ref{prop:WN-thick-variant}. 
    
    Let $n,k \geq 1$ be two integers. We will let $n$ tend to infinity and $k$ tend to infinity slowly with $n$. Applying Proposition~\ref{prop:WN-thick-variant} with this $\alpha,n,k,M$ and using the symmetry of $h_n$, we have that with probability at least $1 - \epsilon(k)$, there exists a path $P$ on $\frac{1}{\mathfrak r} 2^{-3n} \mathbb{Z}^d$ with length at most $4 M \mathfrak r \times 11^n$ that connects $K_1$ and $K_2$ in $\{z : |z|_1 \leq 10 M\}$, and satisfies
    $$
    h_n(x) - h_{3k}(x) \leq - \alpha ( n - 3k) \quad \mbox{for all $x \in \overline{P}$}.
    $$
    This implies that with probability at least $1 - \epsilon(k) - o_n(1)$, $h_n(x) \leq -\frac{\alpha}{2}n$ for all $x \in \overline{P}$. Since $P$ is a path on $\frac{1}{\mathfrak r} 2^{-3n} \mathbb{Z}^d$ with length at most $4 M \mathfrak r \times 11^n$, the Euclidean length of $\overline P$ is at most $\frac{1}{\mathfrak r} 2^{-3n} \times |P| \leq 4M \times (\frac{11}{8})^n$. Then, by Definition~\eqref{eq:def-exponential-metric}, we have
    $$
    D_n(K_1,K_2) \leq \int_0^1 e^{\xi h_n(\overline{P}(t))} | \overline{P}'(t)| dt \leq e^{-\xi \frac{\alpha}{2} n} \times 4M (\frac{11}{8})^n.
    $$
    The right-hand side is upper-bounded by $2^{-A \xi n}$ for all $\xi > \frac{\log (11/8)}{\alpha/2 - A \log 2} = \frac{\log (11/8)}{A(H/2 - \log 2)}$ and sufficiently large $n$ (which may depend on $M, \alpha, \xi$). Taking $k$ to infinity slowly with $n$ and using $\lim_{k \rightarrow \infty} \epsilon(k) = 0$ yields Theorem~\ref{thm:exponential-metric} with $L = \frac{\log (11/8)}{A(H/2 - \log 2)} + 1$. Note that by Proposition~\ref{prop:WN-thick-variant}, $H$ can be sent to infinity slowly with $d$, and thus we can require that $\lim_{x \to \infty} L(x) = 0$.
\end{proof}

\section{Thick points via different approximations}\label{sec:approx}

In this section, we show that the thick point sets defined via different approximations are in fact the same. This follows from similar arguments to~\cite{CH-thick-point}, and we include a proof for completeness. In this section, we assume that
\begin{equation}\label{eq:def-rho}
    \mbox{$\rho$ is a non-negative, compactly supported and smooth function on $\mathbb{R}^d$ with $\int_{\mathbb{R}^d} \rho(x) dx = 1$.}
\end{equation}For integers $n \geq 1$, let $\rho_n(x) = 2^{nd} \rho(2^n x)$. Recall from~\eqref{eq:def-WN} the definition of $h$ and $h_n$. In this section, we only consider $|\cdot|_2$-distance, so we omit $2$ in the subscript.

\begin{lemma}\label{lem:sec4-1}
    For any $\rho$ satisfying~\eqref{eq:def-rho}, there exists a constant $C>0$, which depends on $\rho$ and $d$, such that with probability at least $1 - C e^{-n^{4/3}/C}$, we have
    \begin{equation}\label{eq:lem4.1}
        \sup_{x \in [0,1]^d} \big{|} h_n(x) - h * \rho_n(x) \big{|} \leq n^{2/3}.
    \end{equation}
\end{lemma}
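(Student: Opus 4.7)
The key observation is that $f_n(x) := h_n(x) - h * \rho_n(x)$ is a centered Gaussian field on $[0,1]^d$, so the proof reduces to (i) a uniform pointwise variance bound, (ii) a continuity estimate in $x$, and (iii) the application of Dudley's inequality together with Borel--TIS.

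Writing $\Phi(x,y,t) := \KK((y-x)/t) t^{-(d+1)/2}$, so $h_n(x) = \int_{2^{-n}}^1 \int_{\mathbb{R}^d} \Phi(x,y,t) W(dy,dt)$, Fubini gives
\[
h * \rho_n(x) = \int_0^1 \int_{\mathbb R^d} \widetilde \Phi_n(x,y,t)\, W(dy,dt), \qquad \widetilde \Phi_n(x,y,t) := \int_{\mathbb R^d} \rho_n(x-w) \Phi(w,y,t)\, dw,
\]
so $f_n(x) = \int_0^1 \int_{\mathbb R^d} \Psi_n(x,y,t) W(dy,dt)$ with $\Psi_n(x,y,t) = \mathbf 1_{[2^{-n},1]}(t)\Phi(x,y,t) - \widetilde \Phi_n(x,y,t)$.

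\textbf{Step 1: Uniform variance bound.} I will show $\sup_{n,x}\mathrm{Var}[f_n(x)] \le C_1(\rho,d)$. Split the time integration at $2^{-n}$. For $t \in [2^{-n},1]$, the disagreement between $\mathbf 1_{|z-x|<t}$ and its $\rho_n$-average is supported in a shell of width $\lesssim 2^{-n}$ around $\partial B_t(x)$, of volume $\lesssim 2^{-n} t^{d-1}$, while the integrand is $\lesssim t^{-d-1} \Vol(B_1(0))^{-1}$; integrating in $t$ yields a finite constant. For $t < 2^{-n}$, only $\widetilde \Phi_n$ contributes; using the crude bound $\widetilde\Phi_n(x,y,t)^2 \le c(d)\,t^{-d-1} \min(1,(2^n t)^{2d})$ on a set of $y$ of Lebesgue measure $O((t+2^{-n})^d)$ gives $\int_0^{2^{-n}}\!\int\widetilde\Phi_n^2\,dy\,dt = O(1)$ as well.

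\textbf{Step 2: Continuity estimate.} The hard part is showing there exist $\alpha>0$ and $C_2(\rho,d)$ with
\[
\mathbb E[(f_n(x)-f_n(y))^2] \le C_2 \min\{1, (2^n|x-y|)^{\alpha}\}\quad\text{for all }x,y\in[0,1]^d.
\]
Again one splits in $t$. For $t \in [2^{-n},1]$, $\Phi(x,\cdot,t)-\Phi(y,\cdot,t)$ lives in a shell of width $|x-y|$ around $\partial B_t(x)$, and the analogous statement for $\widetilde\Phi_n(x,\cdot,t)-\widetilde\Phi_n(y,\cdot,t)$ holds with a shell of width $|x-y|+C_\rho 2^{-n}$; the $L^2$-contribution at scale $t$ is $\lesssim |x-y|/t^2$, which integrates to $\lesssim 2^n|x-y|$. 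For $t<2^{-n}$, smoothness of $\rho_n$ (with $\|\nabla\rho_n\|_\infty \le C(\rho) 2^{n(d+1)}$) gives pointwise $|\widetilde\Phi_n(x,y,t)-\widetilde\Phi_n(y',y,t)| \le C 2^n|x-y'|\cdot t^{-(d+1)/2}$ on a set of measure $O(2^{-nd})$, yielding the same type of bound. Taking the minimum with the variance gives the stated estimate with $\alpha=2$ (or $\alpha=1$ if one is willing to save trouble by only taking the Lipschitz bound).

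\textbf{Step 3 and conclusion.} Let $d_n(x,y) := \sqrt{\mathbb E[(f_n(x)-f_n(y))^2]}$. By Step 2, any Euclidean ball of radius $2^{-n}\eta^{2/\alpha}$ has $d_n$-diameter $\le \sqrt{C_2}\,\eta$, so the $d_n$-covering number of $[0,1]^d$ at scale $\eta$ is at most $C(2^n/\eta^{2/\alpha})^d$, and Step 1 bounds the $d_n$-diameter by $2\sqrt{C_1}$. Dudley's inequality yields
\[
\mathbb E\Bigl[\sup_{x\in[0,1]^d} f_n(x)\Bigr] \le C\int_0^{2\sqrt{C_1}}\sqrt{\log N(\eta)}\,d\eta \le C' \sqrt{n}.
\]
Borel--TIS applied to $f_n$, with $\sigma^2=\sup_x \mathrm{Var}[f_n(x)] \le C_1$, then gives
\[
\mathbb P\Bigl[\sup_{x\in[0,1]^d}|f_n(x)| > C'\sqrt{n} + s\Bigr] \le 2 e^{-s^2/(2C_1)}\qquad(s>0);
\]
choosing $s = n^{2/3} - C'\sqrt n \ge n^{2/3}/2$ for $n$ large (and absorbing small $n$ into the constant $C$) produces the desired conclusion $\mathbb P[\sup |f_n| > n^{2/3}] \le C e^{-n^{4/3}/C}$. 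The main technical obstacle is the bookkeeping in Step 2: one has to simultaneously control the shell created by moving $x$ to $y$ and the shell created by mollifying with $\rho_n$, and decide the regime in which each dominates.
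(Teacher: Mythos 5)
Your strategy—uniform variance bound, $L^2$ continuity estimate, then Dudley plus Borel--TIS—is the same as the paper's; the differences are in bookkeeping. You estimate $\|\Psi_n(x,\cdot,\cdot)-\Psi_n(y,\cdot,\cdot)\|_{L^2}$ directly on the white-noise kernel, whereas the paper computes the scalar covariances $\mathbb E[h_n(x)^2]$, $\mathbb E[(h*\rho_n)(x)^2]$, $\mathbb E[h_n(x)(h*\rho_n)(x)]$ via the log-correlation formula (each $= n\log 2+O(1)$) and subtracts. You run Dudley globally on $[0,1]^d$, getting $\mathbb E\sup|f_n|\lesssim\sqrt n$, while the paper runs Dudley only on balls of Euclidean radius $n^{-1}2^{-n}$ (getting $O(1)$ local fluctuations) and finishes with a union bound over $\sim n^d 2^{nd}$ centers together with the one-point Gaussian tail. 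Both chaining variants close.

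There is, however, one real slip in Step 2. For $t<2^{-n}$ you state the pointwise bound $|\widetilde\Phi_n(x,y,t)-\widetilde\Phi_n(y',y,t)|\le C\,2^n|x-y'|\,t^{-(d+1)/2}$ on a $y$-set of measure $O(2^{-nd})$. Feeding this in gives $\|\widetilde\Phi_n(x,\cdot,t)-\widetilde\Phi_n(y',\cdot,t)\|_{L^2_y}^2\lesssim (2^n|x-y'|)^2\,t^{-(d+1)}\,2^{-nd}$, and the time integral $\int_0^{2^{-n}} t^{-(d+1)}\,dt$ \emph{diverges}. What $\|\nabla\rho_n\|_\infty\lesssim 2^{n(d+1)}$ together with $\Vol(B_t)\sim t^d$ actually yields is the sharper bound
\[
|\widetilde\Phi_n(x,y,t)-\widetilde\Phi_n(y',y,t)|\;\lesssim\; 2^{n(d+1)}\,t^{(d-1)/2}\,|x-y'|\;=\;(2^n t)^d\cdot 2^n|x-y'|\,t^{-(d+1)/2},
\]
which carries an extra factor $(2^n t)^d\le 1$ that you dropped. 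With it, $\int_0^{2^{-n}} t^{d-1}\,2^{nd}\,dt=O(1)$ and Step~2 closes. (You did retain the analogous factor $\min(1,(2^n t)^{2d})$ in Step~1, so this is an internal inconsistency rather than a conceptual error.) Separately, the remark that $\alpha=2$ is attainable is not right: the jump of $\mathbf 1_{|z|<t}$ in the kernel $\KK$ caps the modulus at $\alpha=1$, i.e.\ $\mathbb E[(f_n(x)-f_n(y))^2]\lesssim\min\{1,2^n|x-y|\}$, but this is harmless since $\alpha=1$ already yields the Dudley bound $\lesssim\sqrt n$ that your Step~3 needs.
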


\begin{proof}
    The constant $C$ in this proof may change from line to line but only depends on $\rho$ and $d$. Suppose that $\rho$ is supported in $B(0,M)$ for some $M \geq 1$. For simplicity, we write $\tilde h_n (x) = h * \rho_n (x)$ and $Z_n(x) = h_n(x) - \tilde h_n(x)$ for $x \in \mathbb{R}^d$. We claim that there exists a constant $A>0$, depending only on $\rho$ and $d$, such that
    \begin{enumerate}[(1)]
        \item $\mathbb{E} Z_n(x)^2 \leq A$ for all $x \in \mathbb{R}^d$.\label{lem4.1-claim1}
        \item $\mathbb{E}(Z_n(x) - Z_n(y))^2 \leq A n 2^n |x-y|$ for all $x,y \in \mathbb{R}^d$ with $|x-y| \leq 2 \cdot 2^{-n}$.\label{lem4.1-claim2}
    \end{enumerate}
    We first prove~\eqref{eq:lem4.1} assuming these two inequalities. By Dudley's inequality (see e.g.~Equation (2.38) of~\cite{Talagrand-book14}), for some constant $C>0$, we have
    \begin{equation}\label{eq:lem4.1-1}
    \mathbb{E} \Big[\sup_{y \in B(x, \frac{1}{n} 2^{-n})} |Z_n(y) - Z_n(x)| \Big] \leq C \quad \mbox{for all $x \in \mathbb{R}^d$}.
    \end{equation}
    The proof is similar to that of~\eqref{eq:lem3.16-3}, and thus we omit the details. Therefore, by Claim~\eqref{lem4.1-claim2}, \eqref{eq:lem4.1-1}, and Borell-TIS inequality (see e.g.~\cite[Theorem 2.1.1]{adler-taylor-fields}), we further have
    $$
    \mathbb{P}\big[ \sup_{y \in B(x, \frac{1}{n} 2^{-n})} |Z_n(y) - Z_n(x)| \geq \frac{1}{2} n^{2/3}\big] \leq Ce^{-n^{4/3}/C} \quad \mbox{for some constant $C>0$.}
    $$
    We can cover $[0,1]^d$ with $C n^{d} 2^{nd}$ balls of Euclidean radius $\frac{1}{n} 2^{-n}$. Let $J$ be the centers of these balls. Then, by the above inequality and Claim~\eqref{lem4.1-claim1}, we have
    \begin{align*}
    &\quad \mathbb{P}\big[\sup_{x \in [0,1]^d} |Z_n(x)| \geq n^{2/3}\big] \\
    &\leq \sum_{x \in J} \mathbb{P}\big[|Z_n(x)| \geq \frac{1}{2} n^{2/3}\big] + \sum_{x \in J} \mathbb{P}\big[ \sup_{y \in B(x, \frac{1}{n} 2^{-n})} |Z_n(y) - Z_n(x)| \geq \frac{1}{2} n^{2/3}\big] \leq Ce^{-n^{4/3}/C}.
    \end{align*}
    This yields~\eqref{eq:lem4.1}.

    In the rest of the proof, we prove Claims~\eqref{lem4.1-claim1} and~\eqref{lem4.1-claim2}. To prove Claim~\eqref{lem4.1-claim1}, it suffices to show that for all $x \in \mathbb{R}^d$,
    \begin{equation}\label{eq:lem4.1-2}
    \mathbb{E}[h_n(x)^2] = n \log 2; \quad \mathbb{E} [\tilde h_n(x)^2] = n \log 2 + O(1); \quad \mathbb{E} [h_n(x) \tilde h_n(x)] = n \log 2 + O(1).
    \end{equation}
    The first equation in~\eqref{eq:lem4.1-2} follows from simple calculations (see \cite[Lemma 2.3]{dgz-exponential-metric}). The second equation in~\eqref{eq:lem4.1-2} follows from the fact that for all $|x - y| \leq 10 M$,
    \begin{equation}\label{eq:lem4.1-3}
    \mathbb{E} [h(x) h(y)] = \int_0^1 t^{-1} \KK * \KK (\frac{x-y}{t}) dt = \int_{|x-y|/2}^1 t^{-1} (1 - C\frac{|x-y|}{t}) dt + O(1) = \log \frac{1}{|x-y|} + O(1).
    \end{equation}
    In the second equality, we used the relation $\KK * \KK (x) = 1 - C |x| + O(|x|^2)$ as $|x|$ tends to 0, and $\KK * \KK$ is supported in $B_2(0)$. Here, the $O(1)$ terms depend only on $\rho, M, d$. Therefore, we have
    \begin{align*}
        \mathbb{E} [\tilde h_n(x)^2] &= \iint_{y_1, y_2 \in B(x, M \cdot 2^{-n})} \mathbb{E} [h(y_1) h (y_2)] \times \rho_n(x - y_1) \rho_n(x - y_2) dy_1 dy_2 \\
        &= \iint_{y_1, y_2 \in B(x, M \cdot 2^{-n})} \big(\log \frac{1}{|y_1 - y_2|} + O(1) \big) \times \rho_n(x - y_1) \rho_n(x - y_2) dy_1 dy_2 = n \log 2 + O(1).
    \end{align*}
    Similarly, for all $|x-y| \leq M \cdot 2^{-n}$, we have
    $$
    \mathbb{E} [h_n(x) h_n(y)] = \int_{2^{-n}}^1 t^{-1} \KK * \KK (\frac{x-y}{t}) dt = \int_{2^{-n}}^1 t^{-1} (1 - C\frac{|x-y|}{t} + O(\frac{|x-y|^2}{t^2})) dt = n \log 2 + O(1),
    $$
    which implies the third equation in~\eqref{eq:lem4.1-2} as follows:
    \begin{align*}
        \mathbb{E} [h_n(x) \tilde h_n(x)] &= \int_{y \in B(x, M \cdot 2^{-n})} \mathbb{E} [h_n(x) h_n(y)] \times \rho_n(x - y) dy \\
        &= \int_{y \in B(x, M \cdot 2^{-n})} \big(n \log 2 + O(1) \big) \times \rho(x - y) dy = n \log 2 + O(1).
    \end{align*}
    Combining the equalities in~\eqref{eq:lem4.1-2} yields Claim~\eqref{lem4.1-claim1}.
    
    Next, we prove Claim~\eqref{lem4.1-claim2}. It suffices to show that for some constant $C>0$, and for all $x,y \in \mathbb{R}^d$ with $|x-y| \leq 2 \cdot 2^{-n}$,
    \begin{equation}\label{eq:lem4.1-4}
    \mathbb{E} [(h_n(x) - h_n(y))^2] \leq C 2^n |x-y| \quad \mbox{and} \quad \mathbb{E} [(\tilde h_n(x) - \tilde h_n(y))^2] \leq C n 2^n |x-y|.
    \end{equation}
    The first inequality can be derived from $\mathbb{E} [h_n(x)^2] = \mathbb{E} [h_n(y)^2] = n \log 2$ and the following estimate:
    \begin{align*}
        \mathbb{E} [h_n(x) h_n(y)] = \int_{2^{-n}}^1 t^{-1} \KK * \KK (\frac{x-y}{t}) dt \geq \int_{2^{-n}}^1 t^{-1} ( 1 - C\frac{|x-y|}{t}) dt \geq n \log 2 - C 2^n |x-y|,
    \end{align*}
    where we used the fact that $\KK * \KK (x) \geq 1 - C |x| $ for some constant $C>0$. The second inequality in~\eqref{eq:lem4.1-4} can be derived as follows:
    \begin{align*}
        &\quad \mathbb{E} [(\tilde h_n(x) - \tilde h_n(y))^2] = 2 \mathbb{E} [h * \rho_n(x) (h * \rho_n(x) - h * \rho_n(y))]\\
        &=2 \iint_{y_1, y_2 \in B(x, (M+2) \cdot 2^{-n})} \mathbb{E}[h(y_1) h (y_2)] \times \rho_n(x - y_1) (\rho_n(x - y_2) - \rho_n(y - y_2)) dy_1 dy_2 \\
        &\leq C \iint_{y_1, y_2 \in B(x, (M+2) \cdot 2^{-n})} \big(\log \frac{1}{|y_1 - y_2|} + O(1) \big) \times \rho_n(x - y_1) \times 2^{nd} 2^n|x-y|  dy_1 dy_2 \leq Cn 2^n|x-y|.
    \end{align*}
    In the third line, we used~\eqref{eq:lem4.1-3} and the fact that the derivative of $\rho$ is bounded.
\end{proof}

Recall the whole-plane log-correlated field $\phi$ defined in~\eqref{eq:cov-lgf}, where it is normalized such that the average over the unit sphere is zero. As explained in Section 4.1.1 of~\cite{lgf-survey}, $\phi$ differs from $h$ by a random continuous function.

\begin{lemma}\label{lem:lgf-approx}
    There exists a coupling of the space-time white noise $W(dy,dt)$ and $\phi$ such that $h - \phi$ is a random continuous function.
\end{lemma}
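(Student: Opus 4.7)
The plan is to follow the standard construction of log-correlated Gaussian fields from space-time white noise, as explained in Section~4.1.1 of~\cite{lgf-survey} and carried out for smooth kernels in Appendix~B of~\cite{afs-metric-ball}. The only new ingredient needed is that although $\KK = \Vol(B_1(0))^{-1/2} \mathbf{1}_{B_1(0)}$ is merely an indicator function, the autoconvolution $\KK * \KK$ is Lipschitz on a neighborhood of the origin, and this is what the convergence arguments actually require.

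First, I would construct a ``large-scale'' companion field to $h$. For each $R \geq 1$, define
$$
H_R(x) := \int_{\mathbb{R}^d} \int_1^R \KK\Bigl(\frac{y-x}{t}\Bigr) t^{-(d+1)/2} W(dy,dt) - \Lambda_R,
$$
where $\Lambda_R$ is the (random) spherical average of the stochastic integral over $\{|x|_2=1\}$, chosen so that $H_R$ integrates to zero against the uniform probability measure on the unit sphere. Because the integration region is compact in both $y$ and $t$, each $H_R$ is pointwise well-defined, jointly Gaussian in $x$, and a.s.\ continuous on any compact set. As in the computation leading to~\eqref{eq:lem3.16-2},
$$
\mathbb{E}\bigl[(H_R(x) - H_R(x'))^2\bigr] = \int_1^R t^{-1} \bigl[2 - 2(\KK * \KK)\bigl((x-x')/t\bigr)\bigr] dt \leq C |x - x'|_2
$$
uniformly in $R \geq 1$ and $x,x'$ in any compact set, using the geometric bound $1 - (\KK * \KK)(u) \leq C |u|_2$ valid for $|u|_2 \leq 1$, which is exactly the Lipschitz-at-zero regularity of $\KK * \KK$ noted after~\eqref{eq:lem3.2-1}.

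Second, I would promote this $L^2$ increment bound to almost sure uniform convergence. Combining the above variance estimate with a Dudley--Talagrand chaining argument and Borell--TIS concentration (just as in the proof of Lemma~\ref{lem:3.16}), one shows that for any compact $K \subset \mathbb{R}^d$ and any $R_2 > R_1 \geq 1$, the supremum of $|H_{R_2} - H_{R_1}|$ on $K$ is small with overwhelming probability as $R_1 \to \infty$. A Borel--Cantelli argument then produces an a.s.\ uniform-on-compacts limit $H_\infty$, which is continuous and has zero spherical average over $\{|x|_2=1\}$ by construction.

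Third, I would set
$$
\phi(x) := h(x) + H_\infty(x) - \int_{|z|_2=1} h(z)\, d\sigma(z),
$$
where $\sigma$ is the uniform probability measure on the unit sphere. The third term is a random constant, hence $\phi - h = H_\infty - \int h\,d\sigma$ is a random continuous function, giving the required coupling. To verify that $\phi$ has the distribution in~\eqref{eq:cov-lgf} with the correct spherical-average normalization, one pairs with a mean-zero test function $f$: the constant drops out, and $(\phi,f) = (h,f) + (H_\infty,f)$ becomes a stochastic integral of $W$ over the full range $t \in (0,\infty)$, whose variance by Itô isometry equals
$$
\int_0^\infty t^{-1} \iint f(x) f(y) (\KK * \KK)\bigl((x-y)/t\bigr)\, dx\, dy\, dt.
$$
A Fourier/Plancherel computation using radial symmetry of $\KK$ and $\int \KK^2 = 1$ shows that this equals $\iint f(x) f(y) \log(1/|x-y|_2)\, dx\, dy$, matching~\eqref{eq:cov-lgf} up to the required normalization constant.

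The main obstacle is the uniform convergence of $H_R$ on compact sets, where the roughness of $\KK$ could in principle be a problem. However, what enters the chaining argument is only the modulus of continuity of $\mathbb{E}[(H_R(x)-H_R(x'))^2]$, which depends on $\KK$ solely through $\KK * \KK$; and $\KK * \KK(u) = \Vol(B_1(0))^{-1} \Vol(B_1(0) \cap B_1(u))$ is Lipschitz near $u = 0$ by the same geometric calculation used to prove Claim~\ref{est-correlation-1} of Lemma~\ref{lem:est-correlation}. Consequently, the chaining and Borell--TIS steps proceed exactly as in the smooth-kernel case of~\cite{afs-metric-ball}, and no further regularity of $\KK$ itself is needed.
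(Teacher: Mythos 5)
Your proposal is correct and follows essentially the same route as the paper's proof, which cites Appendix B of~\cite{afs-metric-ball} and notes that the only place smoothness of the mollifier is used is to show that the large-scale field (your $H_\infty$, their $\phi_{-\infty,0}$) is a.s.\ continuous modulo a global additive constant, a step they address with ``Gaussian process estimates similar to those in Lemma~\ref{lem:sec4-1}.'' Your Dudley--chaining plus Borell--TIS argument driven by the Lipschitz-near-zero bound $1-\KK*\KK(u)\lesssim |u|_2$ is precisely the expansion of that one-line remark, so the two proofs agree in substance.
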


\begin{proof}
    This result follows from the argument in Appendix B of~\cite{afs-metric-ball}. Their argument was explained in two dimensions but the same proof works for $d \geq 3$. Their argument was also stated only for a smooth function, but the only place they used this assumption was to show that $\phi_{-\infty,0}$, defined there, is a random continuous function defined modulo a global additive constant. (They argued this by showing that $\nabla \phi_{-\infty,0}$ is a random continuous function.) For the function $\KK$, which is not smooth, we can instead use Gaussian process estimates (similar to those in Lemma~\ref{lem:sec4-1}) to show that $\phi_{-\infty,0}(x) - \phi_{-\infty, 0}(0)$ is a random continuous function for $x \in \mathbb{R}^d$. Thus, their Lemma B.2 applies to the function $\KK$.
\end{proof}

Next, we show that under the coupling in Lemma~\ref{lem:lgf-approx}, the thick point sets $\T_\alpha$ and $\TWN_\alpha$, defined in~\eqref{eq:lgf-thick} and~\eqref{eq:WN-thick}, are almost surely the same. As a consequence, we see that the thick point set of $\phi$ does not depend on the choice of $\rho$. 

\begin{lemma}\label{lem:approx-thick}
    For any $\rho$ satisfying~\eqref{eq:def-rho} and $\alpha>0$, under the coupling in Lemma~\ref{lem:lgf-approx}, we have $\T_\alpha = \TWN_\alpha$ almost surely.
\end{lemma}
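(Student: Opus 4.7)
The approach is to show that under the coupling, $h_n(x) - \phi_n(x)$ grows at most sublinearly in $n$ uniformly on compact subsets of $\mathbb{R}^d$, so that dividing by $n$ and taking $\liminf$ leaves the value unchanged. Concretely, I decompose
\begin{equation*}
h_n(x) - \phi_n(x) = \bigl(h_n(x) - (h * \rho_n)(x)\bigr) + \bigl((h-\phi) * \rho_n\bigr)(x)
\end{equation*}
and bound the two pieces separately using Lemma~\ref{lem:sec4-1} and Lemma~\ref{lem:lgf-approx}.

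For the first term, Lemma~\ref{lem:sec4-1} gives probability at least $1 - Ce^{-n^{4/3}/C}$ that $\sup_{x\in[0,1]^d}|h_n(x) - (h * \rho_n)(x)| \leq n^{2/3}$. By the translation invariance in law of the space-time white noise, the analogous estimate holds on any translated unit box with the same constant. Covering $\mathbb{R}^d$ by the countable family of unit boxes indexed by $\mathbb{Z}^d$, applying a union bound over a fixed compact collection, and using Borel--Cantelli together with $\sum_n e^{-n^{4/3}/C} < \infty$, almost surely for every compact $K \subset \mathbb{R}^d$ there exists $n_0 = n_0(K)$ such that
\begin{equation*}
\sup_{x \in K} \bigl|h_n(x) - (h * \rho_n)(x)\bigr| \leq n^{2/3} \quad \text{for all } n \geq n_0.
\end{equation*}

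For the second term, Lemma~\ref{lem:lgf-approx} yields that $f := h - \phi$ is a (random) continuous function on $\mathbb{R}^d$. Since $\rho_n$ is a non-negative, compactly supported mollifier with $\int \rho_n = 1$, the convolution $f * \rho_n$ converges to $f$ uniformly on compact sets, so $\sup_{x \in K}|(h-\phi)*\rho_n(x)|$ is bounded by a finite random constant depending only on $K$ (uniformly in $n$).

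Combining the two estimates shows that almost surely $|h_n(x) - \phi_n(x)| = O(n^{2/3})$ uniformly on every compact subset of $\mathbb{R}^d$. Dividing by $n$ and taking the $\liminf$ gives $\liminf_n \frac{1}{n} h_n(x) = \liminf_n \frac{1}{n}\phi_n(x)$ for every $x \in \mathbb{R}^d$, which immediately yields $\T_\alpha = \TWN_\alpha$ almost surely. The main (minor) point requiring care is the extension of Lemma~\ref{lem:sec4-1} from $[0,1]^d$ to all compact sets for large $n$ simultaneously, which is handled by the countable cover plus Borel--Cantelli argument above; the rest is routine.
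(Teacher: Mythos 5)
Your proof is correct and takes essentially the same route as the paper's: the core step is the Borel--Cantelli argument based on Lemma~\ref{lem:sec4-1} combined with translation invariance to get the a.s.\ uniform $O(n^{2/3})$ bound on $|h_n - h * \rho_n|$ over compacts, and then the observation that $(h-\phi)*\rho_n$ stays locally bounded because $h-\phi$ is continuous under the coupling. The only difference is cosmetic: you make the second step (passing from $h*\rho_n$ to $\phi_n = \phi*\rho_n$) explicit via the decomposition $h_n - \phi_n = (h_n - h*\rho_n) + (h-\phi)*\rho_n$, whereas the paper folds this in implicitly when it identifies $\{x : \liminf_n h*\rho_n(x)/n \geq \alpha\sqrt{\log 2}\}$ with $\T_\alpha$.
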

\begin{proof}
    Fix $\rho$ and $\alpha>0$. By Lemma~\ref{lem:sec4-1} and the Borel–Cantelli lemma, we have
    $$
    \limsup_{n \rightarrow \infty} \sup_{x \in [0,1]^d} \frac{|h * \rho_n(x) - h_n(x)|}{n^{2/3}} \leq 1 \quad \mbox{a.s.}
    $$
    Therefore, we have
    $$
    \{x \in [0,1]^d: \liminf_{n \rightarrow \infty} \frac{h *\rho_n(x)}{n} \geq \alpha \sqrt{\log 2} \} = \{ x \in [0,1]^d: \liminf_{n \rightarrow \infty} \frac{h_n(x)}{n} \geq \alpha \sqrt{\log 2} \} \quad \mbox{a.s.}
    $$
    Together with the translation invariance of $h$ and $h_n$, we obtain
    $$
    \{x \in \mathbb{R}^d: \liminf_{n \rightarrow \infty} \frac{h * \rho_n(x)}{n} \geq \alpha \sqrt{\log 2} \} = \{ x \in \mathbb{R}^d: \liminf_{n \rightarrow \infty} \frac{h_n(x)}{n} \geq \alpha \sqrt{\log 2} \} \quad \mbox{a.s.}
    $$
    Under the coupling in Lemma~\ref{lem:lgf-approx}, the left-hand side equals $\T_\alpha$ while the right-hand side equals $\TWN_\alpha$. This yields the lemma.
\end{proof}

Finally we conclude the proof of Theorems~\ref{thm:lgf-thick} and~\ref{thm:crossing}.

\begin{proof}[Proof of Theorem~\ref{thm:lgf-thick}]
    Combine Theorem~\ref{thm:WN-thick} and Lemma~\ref{lem:approx-thick}.
\end{proof}

\begin{proof}[Proof of Theorem~\ref{thm:crossing}]
    It suffices to show that there exists a constant $C = C(\alpha)$ such that for all $d \geq C$, any fixed box $B$ and any two opposite faces of $B$, we have
    \begin{equation}\label{eq:thm-crossing-fixed}
        \mbox{the set $\TWN_\alpha \cap B$ contains a path connecting these two faces almost surely.}
    \end{equation}This then implies that Claim~\eqref{eq:thm-crossing-fixed} holds for all rational boxes (i.e., boxes with rational coordinates) almost surely, which can be extended to all boxes, since for any box $B$ and any two opposite faces of $B$, there exist a rational box $B'$ and a pair of opposite faces of $B'$ such that the crossing of $B'$ is also a crossing of $B$. To prove Claim~\eqref{eq:thm-crossing-fixed}, we can first prove its analog for $\TWN_\alpha$ using the method in Section~\ref{sec:WN}; see Remark~\ref{rmk:path-choose} and Proposition~\ref{prop:WN-thick-variant}. This, combined with Lemma~\ref{lem:approx-thick}, yields Claim~\eqref{eq:thm-crossing-fixed}, thereby proving Theorem~\ref{thm:crossing}.
\end{proof}

\section{Open problems}\label{sec:open}

In this section, we list some open problems about the log-correlated Gaussian field (recall~\eqref{eq:cov-lgf}).

\subsubsection*{Minimal dimension, threshold of $\alpha$, and phase transition}

In Theorem~\ref{thm:lgf-thick}, we proved that the thick point set of the log-correlated Gaussian field $\T_\alpha$ may contain an unbounded connected component in sufficiently high dimensions. In two dimensions, it was known from~\cite{APP-disconnect} that the set $\T_\alpha$ is totally disconnected for all $\alpha>0$. A natural question is what happens in the intermediate dimensions. Notice that the log-correlated Gaussian field satisfies a restriction property: restricting the $d$-dimensional log-correlated Gaussian field to a $(d-1)$-dimensional hyperplane yields the $(d-1)$-dimensional log-correlated Gaussian field (see \cite[Proposition 1.4]{lgf-survey} and Section 7 of~\cite{fgf-survey}). Therefore, there should exist a minimal dimension $d_c \geq 3$ such that for all $d \geq d_c$, there exists $\alpha = \alpha(d) > 0$ such that $\T_\alpha$ almost surely has a non-singleton connected component.

\begin{prob}\label{prop:dimension-threshold}
    Find the minimal dimension $d_c$.
\end{prob}

The cases for $d = 3$ or $4$ may be of particular interest. In particular, for $d = 4$, the log-correlated Gaussian field corresponds to the bi-Laplacian Gaussian field (also known as the membrane model) and is related to some lattice models~\cite{LawlerSunWu-LERW}. 

It is also natural to consider the dependence of $\T_\alpha$ on $\alpha$ for a fixed dimension. We define $\alpha_c$ to be the largest $\alpha$ such that $\T_\alpha$ almost surely contains a non-singleton connected component. Since $\T_\alpha$ is a decreasing set in $\alpha$, this property holds for all $0 < \alpha < \alpha_c$. We can also introduce another threshold $\alpha_{\infty, c}$ corresponding to the largest $\alpha$ such that $\T_\alpha$ contains an unbounded connected component. By definition, $\alpha_{\infty, c} \leq \alpha_c$. It would be interesting to study the phase transition of $\T_\alpha$ with respect to $\alpha$.

\begin{prob}\label{prob:threshold-alpha}
    Determine the value of $\alpha_c$ or $\alpha_{\infty, c}$ for any $d \geq 3$.
\end{prob}

\begin{prob}
    Does $\T_{\alpha_c}$ contain a non-singleton connected component? Does $\T_{\alpha_{\infty, c}}$ contain an unbounded connected component?
\end{prob}

An upper bound for $\alpha_c$ can be given as follows. By \cite[Proposition 3.5]{falconer-fractal-geometry}, a sufficient condition for a set to be totally disconnected is that its Hausdorff dimension is strictly less than 1. Since the Hausdorff dimension of $\T_\alpha$ is almost surely $d - \frac{\alpha^2}{2}$~\cite{hmp-thick-pts, CH-thick-point}, we conclude that $\alpha_c \leq \sqrt{2 (d-1)}$. 

It is also interesting to study whether all the non-singleton connected components in $\T_\alpha$ are bounded or unbounded. There could be three different phases.

\begin{prob}
    Which of the following three phases will occur or will not occur?
    \begin{enumerate}
        \item $\T_\alpha$ contains only bounded non-singleton connected components.

        \item $\T_\alpha$ contains only unbounded non-singleton connected components.

        \item $\T_\alpha$ contains both bounded and unbounded non-singleton connected components.
    \end{enumerate}
\end{prob}

\subsubsection*{Geometric properties of non-singleton connected components}

In the case that $\T_\alpha$ contains non-singleton connected components, let $\mathscr{C}_\alpha$ be the set of these components. It would also be interesting to study the geometric properties of $\mathscr{C}_\alpha$. A basic question is the Hausdorff dimension of $\mathscr{C}_\alpha$, which is upper-bounded by that of $\T_\alpha$.

\begin{prob}
    What is the Hausdorff dimension of $\mathscr{C}_\alpha$?
\end{prob}

One can also ask other geometric properties of $\mathscr{C}_\alpha$ for instance whether the set $\mathscr{C}_\alpha$ contains some specific geometric objects. 

\begin{prob}
     Is there a loop or hyper-surface in $\mathscr{C}_\alpha$?
\end{prob}

In fact, our argument can likely be used to show that for any fixed $\alpha>0$ and $n \geq 1$, the set $\T_\alpha$ contains an unbounded $n$-dimensional hyper-surface for all sufficiently large $d$. The case $n=1$ corresponds to an unbounded path. This is because we should be able to construct a family of discrete $n$-dimensional surfaces that do not intersect much in high dimensions. Then, we can refine this family of surfaces on sub-lattices and estimate the number of good surfaces it contains similar to Section~\ref{sec:WN}.

\subsubsection*{Super-supercritical exponential metric}

As explained in Remark~\ref{rmk:super-super}, Theorem~\ref{thm:exponential-metric} suggests that there might be a new phase for the exponential metric where the set-to-set distance exponent (if it exists) is negative. It would be interesting to construct the exponential metric in this case.

\begin{prob}
    Is it possible to construct an exponential metric associated with the log-correlated Gaussian field in the super-supercritical phase?
\end{prob}

\bibliographystyle{alpha}
\bibliography{theta,cibib}

@book{pw-gff-notes,
    AUTHOR = {Werner, Wendelin and Powell, Ellen},
     TITLE = {Lecture notes on the {G}aussian free field},
    SERIES = {Cours Sp\'{e}cialis\'{e}s [Specialized Courses]},
    VOLUME = {28},
 PUBLISHER = {Soci\'{e}t\'{e} Math\'{e}matique de France, Paris},
      YEAR = {2021},
     PAGES = {vi+171},
      ISBN = {978-2-85629-952-4},
   MRCLASS = {60-02 (60G15 60G60 60J67 82B20 82B21)},
  MRNUMBER = {4466634},
       eprint = {\arxiv{2004.04720}},
}

@article{ss-contour,
       AUTHOR = {Schramm, Oded and Sheffield, Scott},
     TITLE = {A contour line of the continuum {G}aussian free field},
   JOURNAL = {Probab. Theory Related Fields},
  FJOURNAL = {Probability Theory and Related Fields},
    VOLUME = {157},
      YEAR = {2013},
    NUMBER = {1-2},
     PAGES = {47--80},
      ISSN = {0178-8051},
     CODEN = {PTRFEU},
   MRCLASS = {Preliminary Data},
  MRNUMBER = {3101840},
       DOI = {10.1007/s00440-012-0449-9},
       URL = {http://dx.doi.org/10.1007/s00440-012-0449-9},
       eprint={\arxiv{1008.2447}}, 
       }

@article{hmp-thick-pts,
AUTHOR = {Hu, Xiaoyu and Miller, Jason and Peres, Yuval},
     TITLE = {Thick points of the {G}aussian free field},
   JOURNAL = {Ann. Probab.},
  FJOURNAL = {The Annals of Probability},
    VOLUME = {38},
      YEAR = {2010},
    NUMBER = {2},
     PAGES = {896--926},
      ISSN = {0091-1798},
     CODEN = {APBYAE},
   MRCLASS = {60G15 (28A80 60G18 60G60 81T40)},
  MRNUMBER = {2642894 (2011c:60117)},
       DOI = {10.1214/09-AOP498},
       URL = {http://dx.doi.org/10.1214/09-AOP498},
eprint={\arxiv{0902.3842}}
}

@ARTICLE{dgz-exponential-metric,
       author = {{Ding}, Jian and {Gwynne}, Ewain and {Zhuang}, Zijie},
        title = "{Tightness of exponential metrics for log-correlated Gaussian fields in arbitrary dimension}",
      journal = {ArXiv e-prints},
     keywords = {Mathematics - Probability, Mathematical Physics},
         year = 2023,
        month = oct, 
          doi = {10.48550/arXiv.2310.03996},
archivePrefix = {arXiv},
       eprint = {\arxiv{2310.03996}},
 primaryClass = {math.PR},
       adsurl = {https://ui.adsabs.harvard.edu/abs/2023arXiv231003996D},
      adsnote = {Provided by the SAO/NASA Astrophysics Data System}
}

@ARTICLE{cg-support-thm,
       author = {{Contreras Hip}, Andres A. and {Gwynne}, Ewain},
        title = "{A support theorem for Liouville quantum gravity}",
      journal = {ArXiv e-prints},
     keywords = {Mathematics - Probability},
         year = 2023,
        month = may, 
          doi = {10.48550/arXiv.2305.15588},
archivePrefix = {arXiv},
       eprint = {\arxiv{2305.15588}},
 primaryClass = {math.PR},
       adsurl = {https://ui.adsabs.harvard.edu/abs/2023arXiv230515588C},
      adsnote = {Provided by the SAO/NASA Astrophysics Data System}
}

@article{dg-uniqueness,
    AUTHOR = {Ding, Jian and Gwynne, Ewain},
     TITLE = {Uniqueness of the critical and supercritical {L}iouville
              quantum gravity metrics},
   JOURNAL = {Proc. Lond. Math. Soc. (3)},
  FJOURNAL = {Proceedings of the London Mathematical Society. Third Series},
    VOLUME = {126},
      YEAR = {2023},
    NUMBER = {1},
     PAGES = {216--333},
      ISSN = {0024-6115},
   MRCLASS = {60G60 (60D05 83C45)},
  MRNUMBER = {4535021},
       eprint = {\arxiv{2110.00177}}, 
}

@ARTICLE{ddg-metric-survey,
       author = {{Ding}, Jian and {Dubedat}, Julien and {Gwynne}, Ewain},
        title = "{Introduction to the Liouville quantum gravity metric}",
      journal = {ArXiv e-prints},
     keywords = {Mathematics - Probability, Mathematical Physics, Mathematics - Metric Geometry},
         year = 2021,
        month = sep, 
archivePrefix = {arXiv},
       eprint = {\arxiv{2109.01252}},
 primaryClass = {math.PR},
       adsurl = {https://ui.adsabs.harvard.edu/abs/2021arXiv210901252D},
      adsnote = {Provided by the SAO/NASA Astrophysics Data System}
}

@ARTICLE{dg-critical-lqg,
       author = {{Ding}, Jian and {Gwynne}, Ewain},
        title = "{The critical Liouville quantum gravity metric induces the Euclidean topology}",
      journal = {Frontiers of Mathematics}, 
         year = 2021,  
       eprint = {\arxiv{2108.12067}},
       volume={to appear}
}

@article{pfeffer-supercritical-lqg, 
       author = {{Pfeffer}, Joshua},
        title = "{Weak Liouville quantum gravity metrics with matter central charge $\mathbf{c} \in (-\infty, 25)$}",
      journal = {ArXiv e-prints},
     keywords = {Mathematics - Probability, Mathematical Physics},
         year = 2021,
        month = apr, 
archivePrefix = {arXiv},
       eprint = {\arxiv{2104.04020}},
 primaryClass = {math.PR},
       adsurl = {https://ui.adsabs.harvard.edu/abs/2021arXiv210404020P},
      adsnote = {Provided by the SAO/NASA Astrophysics Data System}
}

@article{lfpp-pos,
    AUTHOR = {Ding, Jian and Gwynne, Ewain and Sep\'{u}lveda, Avelio},
     TITLE = {The distance exponent for {L}iouville first passage
              percolation is positive},
   JOURNAL = {Probab. Theory Related Fields},
  FJOURNAL = {Probability Theory and Related Fields},
    VOLUME = {181},
      YEAR = {2021},
    NUMBER = {4},
     PAGES = {1035--1051},
      ISSN = {0178-8051},
   MRCLASS = {60D05 (60G60)},
  MRNUMBER = {4344137},
       DOI = {10.1007/s00440-021-01093-x},
       URL = {https://doi.org/10.1007/s00440-021-01093-x},
       eprint = {\arxiv{2005.13570}}
}

@article{dg-supercritical-lfpp,
    AUTHOR = {Ding, Jian and Gwynne, Ewain},
     TITLE = {Tightness of supercritical {L}iouville first passage
              percolation},
   JOURNAL = {J. Eur. Math. Soc. (JEMS)},
  FJOURNAL = {Journal of the European Mathematical Society (JEMS)},
    VOLUME = {25},
      YEAR = {2023},
    NUMBER = {10},
     PAGES = {3833--3911},
      ISSN = {1435-9855},
   MRCLASS = {Prelim},
  MRNUMBER = {4634685},
       DOI = {10.4171/jems/1273},
       URL = {https://doi.org/10.4171/jems/1273},
       eprint = {\arxiv{2005.13576}},
}

@article{afs-metric-ball, 
    AUTHOR = {Ang, Morris and Falconet, Hugo and Sun, Xin},
     TITLE = {Volume of metric balls in {L}iouville quantum gravity},
   JOURNAL = {Electron. J. Probab.},
  FJOURNAL = {Electronic Journal of Probability},
    VOLUME = {25},
      YEAR = {2020},
     PAGES = {Paper No. 160, 50},
   MRCLASS = {60D05 (60G15 60G60)},
  MRNUMBER = {4193901},
       DOI = {10.1214/20-ejp564},
       URL = {https://doi.org/10.1214/20-ejp564},
       eprint = {\arxiv{2001.11467}},
}

@article{gm-uniqueness, 
    AUTHOR = {Gwynne, Ewain and Miller, Jason},
     TITLE = {Existence and uniqueness of the {L}iouville quantum gravity
              metric for {$\gamma\in(0,2)$}},
   JOURNAL = {Invent. Math.},
  FJOURNAL = {Inventiones Mathematicae},
    VOLUME = {223},
      YEAR = {2021},
    NUMBER = {1},
     PAGES = {213--333},
      ISSN = {0020-9910},
   MRCLASS = {83C45 (30F99 58 60)},
  MRNUMBER = {4199443},
       DOI = {10.1007/s00222-020-00991-6},
       URL = {https://doi.org/10.1007/s00222-020-00991-6},
   eprint = {\arxiv{1905.00383}},
}

@article{dddf-lfpp,
    AUTHOR = {Ding, Jian and Dub\'{e}dat, Julien and Dunlap, Alexander and
              Falconet, Hugo},
     TITLE = {Tightness of {L}iouville first passage percolation for
              {$\gamma \in (0,2)$}},
   JOURNAL = {Publ. Math. Inst. Hautes \'{E}tudes Sci.},
  FJOURNAL = {Publications Math\'{e}matiques. Institut de Hautes \'{E}tudes
              Scientifiques},
    VOLUME = {132},
      YEAR = {2020},
     PAGES = {353--403},
      ISSN = {0073-8301},
   MRCLASS = {60K35 (60G15 83C45)},
  MRNUMBER = {4179836},
       DOI = {10.1007/s10240-020-00121-1},
       URL = {https://doi.org/10.1007/s10240-020-00121-1},
       eprint = {\arxiv{1904.08021}},
}

@ARTICLE{dg-lqg-dim,
   author = {{Ding}, J. and {Gwynne}, E.},
    title = "{The fractal dimension of {L}iouville quantum gravity: universality, monotonicity, and bounds}",
  journal = {{C}ommunications in {M}athematical {P}hysics}, 
  volume = {374},
  numver = {3},
  pages= {1877-1934},
   eprint = {\arxiv{1807.01072}},  
     year = 2020,
   adsurl = {http://adsabs.harvard.edu/abs/2018arXiv180701072D},
  adsnote = {Provided by the SAO/NASA Astrophysics Data System}
}

@article {pitt-positively-correlated,
    AUTHOR = {Pitt, Loren D.},
     TITLE = {Positively correlated normal variables are associated},
   JOURNAL = {Ann. Probab.},
  FJOURNAL = {The Annals of Probability},
    VOLUME = {10},
      YEAR = {1982},
    NUMBER = {2},
     PAGES = {496--499},
      ISSN = {0091-1798},
   MRCLASS = {62H20},
  MRNUMBER = {665603},
       URL ={http://links.jstor.org/sici?sici=0091-1798(198205)10:2<496:PCNVAA>2.0.CO;2-Q&origin=MSN},
}

@book {mandelbrot-book,
    AUTHOR = {Mandelbrot, Benoit B.},
     TITLE = {The fractal geometry of nature},
      NOTE = {Schriftenreihe f\"ur den Referenten. [Series for the Referee]},
 PUBLISHER = {W. H. Freeman and Co., San Francisco, Calif.},
      YEAR = {1982},
     PAGES = {v+460},
      ISBN = {0-7167-1186-9},
   MRCLASS = {00A69 (51-01 54F45 85A35 86A99)},
  MRNUMBER = {665254},
MRREVIEWER = {S. Dubuc},
}

@article {fgf-survey,
    AUTHOR = {Lodhia, Asad and Sheffield, Scott and Sun, Xin and Watson,
              Samuel S.},
     TITLE = {Fractional {G}aussian fields: a survey},
   JOURNAL = {Probab. Surv.},
  FJOURNAL = {Probability Surveys},
    VOLUME = {13},
      YEAR = {2016},
     PAGES = {1--56},
   MRCLASS = {60G15 (60G20 60G60)},
  MRNUMBER = {3466837},
MRREVIEWER = {Serge Cohen},
       DOI = {10.1214/14-PS243},
       URL = {https://doi.org/10.1214/14-PS243},
   eprint = {\arxiv{1407.5598}}
}

@incollection {lgf-survey,
    AUTHOR = {Duplantier, Bertrand and Rhodes, R\'{e}mi and Sheffield, Scott and
              Vargas, Vincent},
     TITLE = {Log-correlated {G}aussian fields: an overview},
 BOOKTITLE = {Geometry, analysis and probability},
    SERIES = {Progr. Math.},
    VOLUME = {310},
     PAGES = {191--216},
 PUBLISHER = {Birkh\"{a}user/Springer, Cham},
      YEAR = {2017},
   MRCLASS = {60D05 (60G22)},
  MRNUMBER = {3821928},
       DOI = {10.1007/978-3-319-49638-2\_9},
       URL = {https://doi.org/10.1007/978-3-319-49638-2_9},
   eprint = {\arxiv{1407.5605}}
}

@preamble{
   "\def\cprime{$'$} "
}

@book {falconer-fractal-geometry,
    AUTHOR = {Falconer, Kenneth},
     TITLE = {Fractal geometry},
   EDITION = {Third},
      NOTE = {Mathematical foundations and applications},
 PUBLISHER = {John Wiley \& Sons, Ltd., Chichester},
      YEAR = {2014},
     PAGES = {xxx+368},
      ISBN = {978-1-119-94239-9},
   MRCLASS = {28-01 (11K55 28A78 28A80 37C45 37F10)},
  MRNUMBER = {3236784},
MRREVIEWER = {Manuel Mor\'{a}n},
}

@book{durrett,
     AUTHOR = {Durrett, Rick},
     TITLE = {Probability: theory and examples},
    SERIES = {Cambridge Series in Statistical and Probabilistic Mathematics},
   EDITION = {Fourth},
 PUBLISHER = {Cambridge University Press},
   ADDRESS = {Cambridge},
      YEAR = {2010},
     PAGES = {x+428},
      ISBN = {978-0-521-76539-8},
   MRCLASS = {60-01},
  MRNUMBER = {2722836 (2011e:60001)},
       }

@book{lawler-limic-walks,     
    AUTHOR = {Lawler, Gregory F. and Limic, Vlada},
     TITLE = {Random walk: a modern introduction},
    SERIES = {Cambridge Studies in Advanced Mathematics},
    VOLUME = {123},
 PUBLISHER = {Cambridge University Press, Cambridge},
      YEAR = {2010},
     PAGES = {xii+364},
      ISBN = {978-0-521-51918-2},
   MRCLASS = {60G50 (60-02)},
  MRNUMBER = {2677157 (2012a:60132)},
MRREVIEWER = {Andrew R. Wade},
       DOI = {10.1017/CBO9780511750854},
       URL = {http://dx.doi.org/10.1017/CBO9780511750854},
}

@book {adler-taylor-fields,
    AUTHOR = {Adler, Robert J. and Taylor, Jonathan E.},
     TITLE = {Random fields and geometry},
    SERIES = {Springer Monographs in Mathematics},
 PUBLISHER = {Springer, New York},
      YEAR = {2007},
     PAGES = {xviii+448},
      ISBN = {978-0-387-48112-8},
   MRCLASS = {60G60 (58J65)},
  MRNUMBER = {2319516 (2008m:60090)},
MRREVIEWER = {Jos{\'e} Rafael Le{\'o}n},
}

@article {CH-thick-point,
    AUTHOR = {Cipriani, Alessandra and Hazra, Rajat Subhra},
     TITLE = {Thick points for {G}aussian free fields with different
              cut-offs},
   JOURNAL = {Ann. Inst. Henri Poincar\'e{} Probab. Stat.},
  FJOURNAL = {Annales de l'Institut Henri Poincar\'e{} Probabilit\'es et
              Statistiques},
    VOLUME = {53},
      YEAR = {2017},
    NUMBER = {1},
     PAGES = {79--97},
      ISSN = {0246-0203,1778-7017},
   MRCLASS = {60G60 (60G15)},
  MRNUMBER = {3606735},
MRREVIEWER = {Yoshihiro\ Abe},
       DOI = {10.1214/15-AIHP709},
       URL = {https://doi.org/10.1214/15-AIHP709},
}

@article {Chen-thick,
    AUTHOR = {Chen, Linan},
     TITLE = {Thick points of high-dimensional {G}aussian free fields},
   JOURNAL = {Ann. Inst. Henri Poincar\'e{} Probab. Stat.},
  FJOURNAL = {Annales de l'Institut Henri Poincar\'e{} Probabilit\'es et
              Statistiques},
    VOLUME = {54},
      YEAR = {2018},
    NUMBER = {3},
     PAGES = {1492--1526},
      ISSN = {0246-0203,1778-7017},
   MRCLASS = {60G60 (60G15 60G17 60H07)},
  MRNUMBER = {3825889},
       DOI = {10.1214/17-AIHP846},
       URL = {https://doi.org/10.1214/17-AIHP846},
}

@article {APP-disconnect,
    AUTHOR = {Aru, Juhan and Papon, L\'eonie and Powell, Ellen},
     TITLE = {Thick points of the planar {GFF} are totally disconnected for
              all {$\gamma\ne 0$}},
   JOURNAL = {Electron. J. Probab.},
  FJOURNAL = {Electronic Journal of Probability},
    VOLUME = {28},
      YEAR = {2023},
     PAGES = {Paper No. 85, 24},
      ISSN = {1083-6489},
   MRCLASS = {60J67 (60G15 60G60 60G70)},
  MRNUMBER = {4609448},
MRREVIEWER = {Anatoliy\ Malyarenko},
       DOI = {10.1214/23-ejp975},
       URL = {https://doi.org/10.1214/23-ejp975},
}

@book {Talagrand-book14,
    AUTHOR = {Talagrand, Michel},
     TITLE = {Upper and lower bounds for stochastic processes},
    SERIES = {Ergebnisse der Mathematik und ihrer Grenzgebiete. 3. Folge. A
              Series of Modern Surveys in Mathematics [Results in
              Mathematics and Related Areas. 3rd Series. A Series of Modern
              Surveys in Mathematics]},
    VOLUME = {60},
      NOTE = {Modern methods and classical problems},
 PUBLISHER = {Springer, Heidelberg},
      YEAR = {2014},
     PAGES = {xvi+626},
      ISBN = {978-3-642-54074-5; 978-3-642-54075-2},
   MRCLASS = {60G17 (46B99 60B11 60G15 60G52)},
  MRNUMBER = {3184689},
MRREVIEWER = {Sasha\ Sodin},
       DOI = {10.1007/978-3-642-54075-2},
       URL = {https://doi.org/10.1007/978-3-642-54075-2},
}

@article {CCD-fractal,
    AUTHOR = {Chayes, J. T. and Chayes, L. and Durrett, R.},
     TITLE = {Connectivity properties of {M}andelbrot's percolation process},
   JOURNAL = {Probab. Theory Related Fields},
  FJOURNAL = {Probability Theory and Related Fields},
    VOLUME = {77},
      YEAR = {1988},
    NUMBER = {3},
     PAGES = {307--324},
      ISSN = {0178-8051,1432-2064},
   MRCLASS = {60K35 (82A43)},
  MRNUMBER = {931500},
MRREVIEWER = {Massimo\ Campanino},
       DOI = {10.1007/BF00319291},
       URL = {https://doi.org/10.1007/BF00319291},
}

@article{Liggett-domination,
    AUTHOR = {Liggett, T. M. and Schonmann, R. H. and Stacey, A. M.},
     TITLE = {Domination by product measures},
   JOURNAL = {Ann. Probab.},
  FJOURNAL = {The Annals of Probability},
    VOLUME = {25},
      YEAR = {1997},
    NUMBER = {1},
     PAGES = {71--95},
      ISSN = {0091-1798,2168-894X},
   MRCLASS = {60G60 (60G10 60K35)},
  MRNUMBER = {1428500},
MRREVIEWER = {Vincent\ De Valk},
       DOI = {10.1214/aop/1024404279},
       URL = {https://doi.org/10.1214/aop/1024404279},
}

@article {Mad15,
    AUTHOR = {T.~Madaule},
     TITLE = {Maximum of a log-correlated {G}aussian field},
   JOURNAL = {Ann. Inst. Henri Poincar\'{e} Probab. Stat.},
    VOLUME = {51},
      YEAR = {2015},
    NUMBER = {4},
     PAGES = {1369--1431},
}

@article {LawlerSunWu-LERW,
    AUTHOR = {Lawler, Gregory and Sun, Xin and Wu, Wei},
     TITLE = {Four-dimensional loop-erased random walk},
   JOURNAL = {Ann. Probab.},
  FJOURNAL = {The Annals of Probability},
    VOLUME = {47},
      YEAR = {2019},
    NUMBER = {6},
     PAGES = {3866--3910},
      ISSN = {0091-1798,2168-894X},
   MRCLASS = {60G50 (60K35)},
  MRNUMBER = {4038044},
MRREVIEWER = {Thomas\ Polaski},
       DOI = {10.1214/19-aop1349},
       URL = {https://doi.org/10.1214/19-aop1349},
}

@incollection {Chayes-notes,
    AUTHOR = {Chayes, Lincoln},
     TITLE = {Aspects of the fractal percolation process},
 BOOKTITLE = {Fractal geometry and stochastics ({F}insterbergen, 1994)},
    SERIES = {Progr. Probab.},
    VOLUME = {37},
     PAGES = {113--143},
 PUBLISHER = {Birkh\"auser, Basel},
      YEAR = {1995},
      ISBN = {3-7643-5263-9},
   MRCLASS = {60K35},
  MRNUMBER = {1391973},
MRREVIEWER = {Yu\ Zhang},
       DOI = {10.1007/978-3-0348-7755-8\_6},
       URL = {https://doi.org/10.1007/978-3-0348-7755-8_6},
}

@article {Chayes-fractal-3d,
    AUTHOR = {Chayes, J. T. and Chayes, L. and Grannan, E. and Swindle, G.},
     TITLE = {Phase transitions in {M}andelbrot's percolation process in
              three dimensions},
   JOURNAL = {Probab. Theory Related Fields},
  FJOURNAL = {Probability Theory and Related Fields},
    VOLUME = {90},
      YEAR = {1991},
    NUMBER = {3},
     PAGES = {291--300},
      ISSN = {0178-8051,1432-2064},
   MRCLASS = {60K35 (60J80 82B26 82B43)},
  MRNUMBER = {1133368},
MRREVIEWER = {F.\ M.\ Dekking},
       DOI = {10.1007/BF01193747},
       URL = {https://doi.org/10.1007/BF01193747},
}

@incollection {Kesten-high,
    AUTHOR = {Kesten, Harry},
     TITLE = {Asymptotics in high dimensions for percolation},
 BOOKTITLE = {Disorder in physical systems},
    SERIES = {Oxford Sci. Publ.},
     PAGES = {219--240},
 PUBLISHER = {Oxford Univ. Press, New York},
      YEAR = {1990},
   MRCLASS = {60K35 (82B43)},
  MRNUMBER = {1064563},
MRREVIEWER = {J. Theodore Cox},
}

@article {JSW-decomposition,
    AUTHOR = {Junnila, Janne and Saksman, Eero and Webb, Christian},
     TITLE = {Decompositions of log-correlated fields with applications},
   JOURNAL = {Ann. Appl. Probab.},
  FJOURNAL = {The Annals of Applied Probability},
    VOLUME = {29},
      YEAR = {2019},
    NUMBER = {6},
     PAGES = {3786--3820},
      ISSN = {1050-5164},
   MRCLASS = {60G15 (60G57)},
  MRNUMBER = {4047992},
       DOI = {10.1214/19-AAP1492},
       URL = {https://doi.org/10.1214/19-AAP1492},
}

\end{document}